\newcommand\blfootnote[1]{%
  \begingroup
  \renewcommand\thefootnote{}\footnote{#1}%
  \addtocounter{footnote}{-1}%
  \endgroup
}
\makeatletter \@addtoreset{equation}{section} \makeatother
\renewcommand\thetable{\thesection.\@arabic\c@table}
\theoremstyle{plain}
\newtheorem{maintheorem}{Theorem}
\newtheorem{theorem}{Theorem}[section]
\newtheorem{proposition}{Proposition}[section]
\newtheorem{lemma}{Lemma}[section]
\newtheorem{corollary}{Corollary}[section]
\newtheorem{definition}{Definition}[section]
\newtheorem{remark}{Remark}[section]
\newcommand{\te} {\theta}       \newcommand{\Te}{\Theta}
\newcommand{\om}{\omega}        \newcommand{\Om}{\Omega}
\newcommand{\ox}{\omega,x}      \newcommand{\fo}{f_{\omega}}
\newcommand{\oy}{\omega,y}
\newcommand{\Z}{\mathbb{Z}}
\newcommand{\N}{\mathbb{N}}
\newcommand{\R}{\mathbb{R}}
\newcommand{\bP}{\mathbb{P}}
\newcommand{\graph}{\operatorname{graph}}
\newcommand{\Lip}{\operatorname{Lip}}
\newcommand{\loc}{\operatorname{loc}}
\newcommand{\wW}{\widetilde{W}}
\newcommand{\wf}{\widetilde{f}}
\newcommand{\wB}{\widetilde{B}}
\newcommand{\Lnorm}[2]{|#1|'_{#2}}
\newcommand{\cP}{\mathcal{P}}
\newcommand{\cB}{\mathcal{B}}
\newcommand{\cG}{\mathcal{G}}
\newcommand{\cQ}{\mathcal{Q}}
\newcommand{\cF}{\mathcal{F}}
\newcommand{\cT}{\mathcal{T}}
\newcommand{\cW}{\mathcal{W}}
\newcommand{\cS}{\mathcal{S}}
\begin{document}

\begin{sloppypar}
	\title{Characterization of SRB Measures for Random Dynamical Systems in a Banach space}
	
	\author{Chiyi Luo}
	\address{ School of Mathematical Sciences, Center for Dynamical Systems and Differential Equations, Soochow University\\
		Suzhou 215006, Jiangsu, P.R. China}
	\email{luochiyi98@gmail.com}
	
	\author{Yun Zhao}
	\address{ School of Mathematical Sciences, Center for Dynamical Systems and Differential Equations, Soochow University\\
		Suzhou 215006, Jiangsu, P.R. China}
	\email{zhaoyun@suda.edu.cn}
    
    \thanks{This work is partially supported by The National Key Research and Development Program of China (2022YFA1005802), NSFC (12271386) and Qinglan project of Jiangsu Province.}
	
	\begin{abstract}
	This paper considers $C^2$ random dynamical systems in a Banach space, and proves that under some mild conditions, SRB measures are characterized by invariant measures satisfying Pesin's entropy formula, in which entropy is equal to the sum of positive Lyapunov exponents of the system. This can be regarded as a random version of the main result in Blumenthal and Young's paper \cite{Young17}.
\end{abstract}

	\keywords{random dynamical systems, SRB measure, entropy formula, dynamical systems on Banach spaces}
	
	\blfootnote{ 2010 {\it Mathematics Subject classification}: 37A35, 37H15, 37L55}
	\maketitle
    
	\section{Introduction and setting}\label{SEC:1}
       \subsection{Introduction}
	   In the ergodic theory of smooth dynamical systems, SRB measures are the invariant measures most compatible with volume when the volume is not preserved  by a system, see \cite{Young02} for a detailed description. 
	   Roughly speaking, they are invariant measures with smooth density along the expanding (or unstable) directions of the dynamical system. 
	   A principal result concerning SRB measures is the relationship between entropy, volume growth and Lyapunov exponents. 
	   The results show that, for a $C^2$ diffeomorphism on a finite-dimensional compact Riemannian manifold, SRB measures are characterized by invariant measures satisfying Pesin's entropy formula.
	   This formula characterizes the entropy to be the volume growth on unstable manifolds, or equivalently, the sum of positive Lyapunov exponents, see \cite{Ledrappier84, Ledrappier82, Young85} for more details.

       This kind of results has been generalized from two aspects: one is to consider relevant problems in random dynamical systems, the other is to consider similar phenomena in infinite dimensional dynamical systems. 
       It is natural and non-trivial to study related problems by combing these two aspects, i.e., random infinite dimensional dynamical systems.
	   
	   The random dynamical systems consider the composition of sequences of random maps. 
	   This is a more general set-up than the iteration of a certain map, and covers the most important examples of dynamical systems, such as products of random maps, stochastic ordinary and partial differential equations. 
	   We would like to  refer the reader to \cite{Arnold98} and \cite{Liu95} for more details about random dynamical systems. 
	   In the case of two-sided stationary random dynamical system of cocycles of $C^2$ diffeomorphisms on a smooth compact Riemannian manifold over a Polish system, Bahnm\"{u}ller and Liu \cite{Liu98} proved that an invariant measure has the SRB property if and only if it satisfies (random) Pesin's entropy formula, which generalizes  Ledrappier and  Young's result \cite{Young85} for deterministic $C^2$ diffeomorphisms. In the independent and identically distributed (i.i.d.) setting, Liu and  Qian \cite[Chapter VI]{Liu95} gave a detailed proof of the corresponding result, which had first been stated by Ledrappier and Young \cite{LeY88} with a brief proof. See also \cite[Chapter VI]{Liu95} for the proof of the corresponding result in a one-sided stationary setting.  In \cite{liu99} , Liu established Pesin's entropy formula in the setting of both i.i.d. and stationary compositions of random $C^2$ endomorphisms of a compact manifold (not necessarily invertible and maybe with singularities), provided that the sample measures
       are almost all SRB in the random diffeomorphisms case by the absolute continuity property of the unstable manifolds.

       Although it is difficult to establish ergodic theorems in smooth infinite dimensional dynamical systems since the lack of inner product and compactness, there are still many important progresses made in this direction. For example, the multiplicative ergodic theorem \cite{Oseledec68}, which plays a fundamental role in the smooth ergodic theory and nonuniform hyperbolic theory (for more details, see \cite{Pesin07}), has been extended to the infinite dimensional case. Ruelle \cite{Ruelle82} first proved the multiplicative ergodic theorem for compact linear operators on a separable Hilbert space. Ma\~{n}\'{e} \cite{M83} overcame the lack of  inner product structure and extended the multiplicative ergodic theorem to continuous compact operator cocycles on Banach spaces. Later, Thieullen \cite{Thieullen87} proved the multiplicative ergodic theorem for continuous bounded linear operator cocycles on Banach spaces. Lian and Lu \cite{Lian10} established the multiplicative ergodic theorem for strong measurable operator cocycles on a separable Banach space. Froyland \cite{Froyland13} extended it to a continuous semi-invertible cocycle on Banach spaces, and Tokman \textit{et al.} \cite{Quas12} built the multiplicative ergodic theorem for a strong measurable cocycles on a separable Banach space.
	   In addition to the multiplicative ergodic theorem, the nonuniform hyperbolic theory in infinite dimensional dynamical systems has also been studied. Lian and Lu \cite{Lian10} described the dynamics of linear random dynamical systems on a separable Banach space, and obtained a splitting of the phase space related to the Lyapunov exponents and the essential exponent. Moreover, they obtained stable and unstable local random manifolds in this case. 
       Recently, Varzaneh and Riedel \cite{Varzaneh21} obtained corresponding results for cocycles acting on measurable fields of Banach spaces, i.e., the operator is allowed to be non-invertible.  See \cite{Lucas17, Blumenthal16, Young16, Doan09, Lian20, Lian11} and references therein for more results about smooth ergodic theory in infinite dimensional dynamical systems.
	  
       From then on, similar to the finite dimensional case, it is a natural problem to give a description of SRB measures in infinite dimensional dynamical systems.  Li and Shu \cite{Li13}  proved the equivalence of SRB measures and invariant measures satisfying Pesin's entropy formula in Hilbert space for random dynamical systems, and they only \cite{Li12} proved Ruelle's inequality in the case of infinite dimensional Banach space, because there is no clear definition of volume and determination in a Banach setting, which yield  significant challenges. 
	   Recently, Blumenthal and Young \cite{Young17} overcame this difficulty and proved the equivalence of SRB measure and invariant measures satisfying Pesin's entropy formula for a certain mapping on Banach spaces. 
	   For more information on SRB measures for infinite dimensional dynamical systems, see \cite{Lian16, Lian17}.
	  
	   This paper focuses on the relationship between SRB measures and invariant measures satisfying Pesin's entropy formula in random dynamical systems on a Banach space. 
	   There are some new questions should be addressed in our case. 
	   Firstly, because attractors are random and we only have weak measurability conditions, we need to revisit the measurability of certain mappings and sets. 
	   Secondly, since the random maps are not assume to be diffeomorphisms, many problems cannot be dealt with using backward iterations. 
	   This leads to regularity issues such as the continuity of the unstable  subspaces $E^u$ are not uniform under the random mappings.
	  
	   The paper is organized as follows: 
	   Section \ref{SEC:1} presents the settings and the statements of the main results and introduces some notations. 
	 Section \ref{SEC:3} introduces the technical tools called Lyapunov norms and demonstrates the continuity of certain Oseledets splittings. 
	   Section \ref{SEC:4} discusses some properties of non-uniform hyperbolic theory, while Section \ref{SEC:5} presents the proof of the main results.
	   
	   \subsection{Settings and statements of the main results}\label{setting}
    Throughout this paper, let $(\Omega,\cF,\bP)$ be a  probability space, where $\Omega$ is a Polish space, $\bP$ is a Borel probability measure and $\cF$ is the completion of the Borel $\sigma$-algebra of $\Om$ with respect to $\bP$; 
    and let $\theta$ be an invertible ergodic transformation on $\Omega$ with respect to the probability measure $\bP$. 
	   
	   Let $(\cB,|\cdot|)$ be a separable Banach space with a Borel $\sigma$-algebra $\mathscr{B}$.
	   Denote by $B(\cB)$ the space of all bounded linear operators on $\cB$,  and let $|\cdot|$ denote the operator norm on $B(\cB)$ to avoid using too much notations.  
	   The \textit{strong operator topology} $SOT(\cB)$ on $B(\cB)$ is the topology generated by the sub-base 
	   $\{V_{x,y,\epsilon}=\{T\in B(\cB):|T(x)-y|<\epsilon\}:x,y\in\cB, \epsilon>0\}$.
	   A sequence of bounded linear operators $\{T_n\}_{n>0}$ converges to $T\in B(\cB)$ under the strong operator topology if and only if $T_n(x)\rightarrow T(x)$ as $n\rightarrow \infty$ for every $x\in \cB$.
	   The \textit{strong $\sigma$-algebra} on $B(\cB)$ is the Borel $\sigma$-algebra of the strong operator topology $SOT(\cB)$.
	   For a function $L:\Omega\rightarrow B(\cB)$, it is called \textit{strongly measurable}, if $\omega \mapsto L(\om)v$ is measurable for every $v\in \cB$. 
	   By \cite[Lemma A.4]{Quas12}, $L$ is strongly measurable if and only if it is measurable with respect to the strong $\sigma$-algebra on $B(\cB)$.
	   
	   For each $x\in \cB$,  write $\cB_x$ for the tangent space at the point $x$, which can be regarded as $\cB+\{x\}$.
	   
	   A \textit{random dynamical system} can be represented as a measure-preserving skew product map. 
	   Let $(\Omega \times \cB, \cF\otimes \mathscr{B})$ be the product measurable space. 
	   A \textit{random transformation} over $\te$ is a measurable transformation of the form
	   \begin{align*}
	   	\Phi:\Omega \times \cB &\rightarrow \Omega \times \cB \\
	   	(\omega,x)&\mapsto (\theta(\omega),f_{\omega}(x)).
	   \end{align*}
	   where $f_{\omega}$ is a map from $\cB$ to itself.
	   Note that $\Phi$ is measurable implies that $f_{\omega}:\cB \rightarrow \cB$ is measurable for each $\om\in \Om$ and the mapping $\omega \mapsto f_{\omega}(x)$ is measurable for each $x\in\cB$.
	   
	   Let $\mu$ be a $\Phi$-invariant Borel probability measure projection to $\bP$. 
	   	   For every $n\in \N$ and $\om \in \Om$, define the nonlinear cocycle $\{f^{n}_{\omega}\}$ by
	   \begin{equation*}
	   	f^{n}_{\omega}=
	   	\begin{cases}
	   		f_{\theta^{n-1}(\omega)}\circ \cdots \circ f_{\omega} & n>0;\\
	   		Id &n=0.
	   	\end{cases}
	   \end{equation*}
	 The compositions of sequences of maps $\{f^{n}_{\omega}:\omega\in \Omega,n\in\N\}$ is called the random 
  dynamical system corresponding to $(\Phi,\mu)$.
	   
	   We call a set $\Lambda \subset \Omega \times \cB$ is \textit{random compact} if the $\omega$-section $\Lambda_{\omega}:=\{x\in \cB:(\omega,x)\in \Lambda\}$ is compact for every $\om\in \Om$ and the mapping $\omega\mapsto d(x,\Lambda_{\omega})$ is measurable for every $x\in\cB$, where $d(x,\Lambda_{\omega}):=\inf_{y\in \Lambda_{\omega} }|x-y|$ is the distance between a point and a compact set. 
	   Note that a random compact subset of $\Om\times \cB$ is  measurable (see Proposition \ref{Prop:random compact}).
	   
	   \textbf{Basic assumptions} 
	   Throughout the paper, the following properties are assumed:
	   \begin{enumerate}
	   	\item[(H1)]
	   	\begin{enumerate}
	   		\item[(i)]  for each $\omega\in\Omega$, the map $f_{\omega}$ is $C^2$ Fr\'{e}chet differentiable and injective; 
	   		\item[(ii)]  for each $\omega\in\Omega$, the derivative of $f_{\omega}$ at $x\in \cB$, denote by $Df_{\omega}(x)$, is also injective;
	   		\item[(iii)] the mapping $\omega \mapsto f_{\omega}(x)$ is measurable for every $x\in \cB$.
	   	\end{enumerate} 
	   	\item[(H2)] 
	   	\begin{enumerate}
	   		\item[(i)]   the subset $A\subset \Omega\times\cB$ is random compact and satisfies $\Phi(A)=A$; 
	   		\item[(ii)]  $\mu$ is supported on $A$;
	   		\item[(iii)] there exists  $r_0>0$ such that 
	   		$$\int \log^{+}|(f_{\omega}|_{B(A_{\omega},r_0)})|_{C^2}d\bP(\om)<\infty,$$
	   		where $B(A_{\omega},r_0)=\{y\in \cB: d(y,A_{\omega})< r_0\}$.
	   	\end{enumerate} 
	   	\item[(H3)] 
	   	we assume for $\mu$-almost every $(\omega,x)$, 
	   	$$l_{\alpha}(\omega,x):=\lim_{n\rightarrow \infty}\frac{1}{n} \log |Df^n_{\omega}(x)|_{\alpha}<0.$$
	   	Here $|Df_{\omega}(x)|_{\alpha}$ is the Kuratowski measure of the set $Df_{\omega}(x)(B)$, where $B$ is the closed unit ball in $\cB$.
	   \end{enumerate} 

	    Some  \textbf{remarks} on the basic assumptions are given in order. 
	   	Condition (H2) implies that $f_{\omega}(A_{\omega})=A_{\theta(\omega)}$ for every $\om\in \Om$. 
	   	By (H1), one can define $(f_{\omega}|_{A_{\om}})^{-1}$ on $A_{\theta(\omega)}$, which is also continuous. 
	   	Therefore, for every $n>0$, we define the map $f_{\omega}^{-n}$ on $A_{\omega}$ as follows:
	   	$$f_{\omega}^{-n}:=(f_{\theta^{-1}\omega}\circ \cdots \circ f_{\theta^{-n}\omega}|A_{\theta^{-n}\omega})^{-1},$$
	   	and we denote $\Phi^{-n}(\omega,x):=(\theta^{-n}\omega,f_{\omega}^{-n}(x) )$ for every $(\omega,x)\in A$. 
	   	Since $\Phi$ is measurable, by Lusin's theorem, there exists an increasing sequence $\{A_n\}_{ n\in\N}$ of compact subsets of $A$ 
            such that $\mu(\cup_{n\geq 0}A_n)=1$ and $\Phi|_{A_n}$ is continuous for each $n$. 
	   	Let $B_n:=\Phi(A_n), n\geq 0$, which also is a compact subset of $A$, and satisfies $\mu(\cup_{n\geq 0}B_n)=1$. 
	   	Note that $\Phi|_{A_n}$ is a homeomorphism from $A_n$ to $B_n$ since $\Phi|_{A_n}$ is injective and continuous.
	   	This implies that $(\Phi|_{A_n})^{-1}$ is continuous on $B_n$ for each $n\geq 0$.
	   	Hence, by applying Lusin's theorem again, we conclude that $(\Phi|_{A})^{-1}$ is also measurable on $A$.
	   	
	   	The integrand in  (iii) of (H2) is measurable by (iii) of (H1)  and (i) of (H2). 
	   	To demonstrate this, let $\{x_n\}_{n>0}$ be a countable dense subset of $\cB$, we have
	   	$$\sup_{x\in B(A_{\omega},r_0)}| \fo(x)|=\sup_{n>0}|f_{\omega}(x_n)|\cdot \chi_{B(A_{\omega},r_0)}(x_n),$$
	   	where $\chi_{E}$ is the  characteristic function of the set $E$. 
	   	Since $A$ is random compact, 
            the set $\{\omega: d(x_n,A_{\omega})<r_0\}$ is measurable for each $n$. 
	   	This implies that $\omega \mapsto \chi_{B(A_{\omega},r_0)}(x_n)$ is measurable. 
	   	By assumption  (iii) of (H1), $\om\mapsto |f_{\omega}(x_n)|$ is also measurable for each $n$. 
	   	Hence, the $C^0$ norm is measurable. 
	   	For the $C^1$ norm, fix $x\in \cB$ and $u\in \cB_x$. By Fr\'{e}chet differentiability, we have 
	   	$$Df_{\omega}(x)u=\lim_{k\rightarrow \infty} \dfrac{\fo(x+t_ku)-{\fo(x)}}{t_k},$$
	   	where $\{t_k\}_{k>0}$ is a sequence of positive numbers converges to zero. 
	   	Then, by  (iii) of (H1), the map $\omega \mapsto Df_{\omega}(x)u$ is measurable for each $x\in \cB$ and each $u\in \cB$.
	   	Since $\omega\mapsto Df_{\omega}(x_n)u$ is measurable for any $u\in \cB_{x_n}$ and $\cB$ is separable, we can show that $\omega 
            \mapsto |Df_{\omega}(x_n)|$ is measurable for every $n$. Consequently, the $C^1$ norm is measurable.
	   	The measurability of the $C^2$ norm can be demonstrated in the same fashion.
	   	
	   	The limit in condition (H3) exists by the sub-additive ergodic theorem. Moreover, it guarantees that there are at most finitely many non-negative Lyapunov exponents of $(\Phi,\mu)$ and, each of finite multiplicity.
	   
	   \textbf{Two other relevant conditions}
	   \begin{enumerate}
	   	\item[(H4)]	$(\Phi,\mu)$ has no zero Lyapunov exponents.
	   	\item[(H5)] the set $A$ in (H2) is \textit{visible}, means that there exists a compact subset $E\subset \cB$ with finite box-counting dimension such that $\{\omega:A_{\omega}\subset E\}$ has positive $\bP$-measure.
	   \end{enumerate}
	   We state below a provisional definition of SRB measure, and the formal one will be given in Section \ref{SEC:5}.
	  See Section \ref{SEC:Induced volumes} for the definition of induced volume.
    
	   \begin{definition}\label{Def:SRB}
	   	We say $\mu$ is a random SRB measure if 
	   	(i)  it has a positive exponent for $\mu$-almost every $(\omega,x)$, and 
	   	(ii) the conditional measures of $\mu$ on the unstable manifolds are absolutely continuous with respect to the induced volume.
	   \end{definition}
	   
	   By the multiplicative ergodic theorem (see Section \ref{SEC:3}), for $\mu$-almost every points we let $\lambda_1(\omega,x)>\lambda_2(\omega,x)>\cdots$ with multiplicities $m_1(\omega,x),m_2(\omega,x),\cdots$ be the distinct Lyapunov exponents of $(\Phi,\mu)$. 
	   Let $a^{+}=\max\{a,0\}$, and let $h(\mu|\bP)$ be the conditional entropy of $(\Phi,\mu)$, the detailed definition  will be given in Section \ref{SEC:2.1}.
	   
	   The main results of this paper are the following two theorems: one gives the condition for which Pesin's entropy formula hold in random infinite dimensional dynamical systems; the other one gives the characterization of a random SRB measure under suitable conditions. 
	  
    \begin{maintheorem}\label{Thm:A}
	   Suppose $(\Phi,\mu)$ satisfies (H1)-(H3) above, and assume $\mu$ is a random SRB measure, then we have
	   \begin{equation}\label{eq:entropy formula}
	   		h(\mu|\bP)=\int \sum_{i} m_i(\omega,x)\lambda^{+}_{i}(\omega,x)d\mu.
	   \end{equation}
	   \end{maintheorem}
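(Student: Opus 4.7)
The plan is to prove the equality by showing the two opposing inequalities. The $\leq$ direction is the random Ruelle inequality on Banach space, which follows from \cite{Li12} (applied fiberwise and integrated over $\bP$) together with condition (H2)(iii); the content of the theorem therefore lies in the reverse inequality
\[
h(\mu|\bP) \geq \int \sum_i m_i(\omega,x)\,\lambda_i^+(\omega,x)\,d\mu.
\]
Throughout I will work with the skew product $\Phi$ on $\Omega\times\cB$ restricted to $A$, use the $(f_\omega^{-n})$ on fibers guaranteed by (H1)--(H2), and pull the Oseledets splitting along unstable manifolds whose existence for the random cocycle on $\cB$ was set up in Sections \ref{SEC:3} and \ref{SEC:4}.

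The first step is to build an increasing measurable partition $\eta$ of $A$ that is \emph{subordinate to unstable manifolds}: for $\mu$-a.e.\ $(\omega,x)$, the atom $\eta(\omega,x)$ is contained in the global unstable manifold $W^u(\omega,x)$, contains an open neighbourhood of $x$ inside $W^u(\omega,x)$, and $\Phi^{-1}\eta \geq \eta$. In the random Banach setting the standard construction (Ma\~n\'e, Ledrappier--Strelcyn, Liu--Qian) is carried out on the fiber $A_\omega$ using the local unstable manifolds produced in Section \ref{SEC:4}; the measurability of the leaves in $\omega$, together with the random compactness of $A$ and the Lusin decomposition $A=\bigcup A_n$ recorded in the remarks on (H1)--(H2), is what makes the partition measurable in the product space. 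The second step is the Rokhlin-type identity
\[
h_\mu(\Phi,\eta\,|\,\cF) = H_\mu(\eta\,|\,\Phi\eta\vee\cF),
\]
together with the general inequality $h(\mu|\bP) \geq h_\mu(\Phi,\eta|\cF)$, valid whenever $\eta$ is increasing with $\bigvee_{n\geq 0}\Phi^{-n}\eta$ separating $\mu$-a.e.\ point within each $A_\omega$, which I will verify using the fact that diameters of $(\Phi^{-n}\eta)$-atoms along $W^u$ contract under $f_\omega^{-n}$ thanks to (H3) and the Lyapunov-norm estimates from Section \ref{SEC:3}.

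The core analytic step is now to evaluate $H_\mu(\eta\,|\,\Phi\eta\vee\cF)$ by unwinding the SRB hypothesis. Because $\mu$ is SRB, the conditional measures $\mu^\eta_{(\omega,x)}$ on the atoms of $\eta$ are absolutely continuous with respect to the induced volume on the finite-dimensional unstable leaves (introduced in Section \ref{SEC:Induced volumes}); write $\rho_{(\omega,x)}$ for their densities. A change of variables along the leaf, using the fact that $f_\omega$ restricted to the unstable leaf is a $C^2$ diffeomorphism with a well-defined unstable Jacobian $J^u\Phi(\omega,x)$ (the Jacobian of $Df_\omega(x)$ acting on the Oseledets unstable subspace with respect to the induced volumes), yields
\[
H_\mu(\eta\,|\,\Phi\eta\vee\cF)(\omega,x) = \log J^u\Phi(\omega,x) + \log\frac{\rho_{\Phi(\omega,x)}(\Phi(\omega,x))}{\rho_{(\omega,x)}(x)}.
\]
Integrating, the density ratio contributes zero by $\Phi$-invariance of $\mu$ (a standard telescoping/integrability argument; the integrability of $\log\rho$ comes from the local Lipschitz control on $\rho$ along unstable leaves, itself a consequence of $C^2$ control in (H2)(iii)). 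Finally, the multiplicative ergodic theorem on $\cB$ gives
\[
\int \log J^u\Phi\, d\mu = \int \sum_i m_i(\omega,x)\,\lambda_i^+(\omega,x)\, d\mu,
\]
completing the lower bound.

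The main obstacle I anticipate is not the abstract entropy bookkeeping, which follows the Ledrappier--Young / Liu template, but rather the \emph{measurability and integrability} package required to legitimize every step in the random Banach setting: measurability of the unstable foliation and of the densities $\rho$ as functions of $(\omega,x)$, integrability of $\log\rho$ and of $\log J^u\Phi$ (which in infinite dimensions is not a routine determinant but an induced-volume Jacobian on a measurably-varying finite-dimensional subspace), and the justification that the telescoping $\int\log(\rho\circ\Phi/\rho)\,d\mu=0$ actually makes sense. These points are what force the careful treatment of Lyapunov norms, continuity of the Oseledets decomposition across $\omega$, and the Lusin exhaustion $A=\bigcup A_n$ already set up earlier in the paper; assembling them into a clean proof is the technical heart of the argument.
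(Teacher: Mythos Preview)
Your proposal is correct and follows essentially the same route as the paper: Ruelle's inequality from \cite{Li12} for the upper bound, and for the lower bound the construction of a $\Phi$-decreasing partition $\eta$ subordinate to $W^u$ (Proposition~\ref{Prop:Exist P}), the Rokhlin identity $h(\mu|\bP,\eta)=H_\mu(\Phi^{-1}\eta|\eta)$, and the change-of-variables computation yielding $I(\Phi^{-1}\eta|\eta)=\log J^u+\log(\rho\circ\Phi/\rho)$. The only minor deviation is in killing the coboundary term: rather than appealing to Lipschitz control to get $\log\rho\in L^1(\mu)$, the paper observes that $I\geq 0$ and $\log J^u\in L^1(\mu)$ force $(\log(\rho\circ\Phi/\rho))^-\in L^1(\mu)$, and then invokes Lemma~\ref{Lem:Tem} directly---this is slightly cleaner since it avoids having to establish global integrability of $\log\rho$ itself.
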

   
	   \begin{maintheorem}\label{Thm:B}
	   Suppose $(\Phi,\mu)$ satisfies (H1)-(H5). 
	   If $\lambda_1>0$ for $\mu$-almost every point, and the entropy formula \eqref{eq:entropy formula} holds, then $\mu$ is a random SRB measure.
	   \end{maintheorem}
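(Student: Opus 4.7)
The plan is to adapt the Ledrappier--Strelcyn / Ledrappier--Young converse argument to the random Banach setting, using the multiplicative ergodic theorem and Lyapunov norms of Section~\ref{SEC:3} together with the nonuniform hyperbolic theory of Section~\ref{SEC:4}. Since $\la_1>0$ $\mu$-a.e.\ and (H3) forces finitely many positive exponents, the unstable Oseledets subspace $E^u(\om,x)$ is finite-dimensional and there exist local unstable manifolds $W^u_\loc(\om,x)$ that are $C^2$ embedded finite-dimensional submanifolds of $\cB$. Induced Lebesgue volume $\la^u_{(\om,x)}$ on these leaves is therefore well-defined, and the unstable Jacobian $J^u(\om,x):=|\det(Df_\om(x)|_{E^u(\om,x)})|$ satisfies
$$\int \log J^u\,d\mu \;=\; \int\sum_i m_i(\om,x)\la_i^+(\om,x)\,d\mu$$
by the multiplicative ergodic theorem.

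Next I would construct a measurable partition $\xi$ of $\Om\times\cB$ subordinate to the unstable foliation: $\mu$-a.e.\ element of $\xi$ is contained in a single local unstable leaf and contains a neighborhood of its basepoint in that leaf, with $\xi\le\Phi^{-1}\xi$. This is the random Banach analogue of the Ledrappier--Strelcyn construction, and the visibility hypothesis (H5) is used essentially here: it supplies a positive $\bP$-measure set of $\om$ on which $A_\om$ sits inside a fixed compact set $E\subset\cB$ of finite box dimension, giving the ambient compact structure needed to patch a measurable family of finite-dimensional leaves into a partition with positive-diameter elements. For such $\xi$, the random Rokhlin formula yields
$$h_\mu(\Phi,\xi\mid\cF) \;=\; H_\mu(\xi\mid \Phi^{-1}\xi\vee\cF),$$
where $\cF$ is the pullback of the base $\sigma$-algebra on $\Om$.

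The core analytic step is the Jacobian inequality
$$H_\mu(\xi\mid \Phi^{-1}\xi\vee\cF) \;\le\; \int \log J^u\,d\mu,$$
with equality if and only if the conditional measures $\mu^\xi_{(\om,x)}$ of $\mu$ on the elements of $\xi$ are absolutely continuous with respect to $\la^u_{(\om,x)}$. This would be proved leafwise: transport the conditional measure on $\xi(\om,x)$ via $\Phi$, apply a Besicovitch-type differentiation on the finite-dimensional leaf, and invoke Jensen's inequality; the equality case is the standard characterization via strict concavity of $t\mapsto -t\log t$. Combined with the Ledrappier--Young identity $h(\mu|\bP)=h_\mu(\Phi,\xi\mid\cF)$, which expresses that a subordinate partition captures all of the entropy (stable directions contract and contribute nothing, proved via a careful choice of generating refinements of $\xi$ together with (H4) ruling out zero exponents), the assumed entropy equality $h(\mu|\bP)=\int\sum_i m_i\la_i^+\,d\mu=\int\log J^u\,d\mu$ forces equality in the displayed inequality. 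Hence $\mu^\xi_{(\om,x)}\ll\la^u_{(\om,x)}$ for $\mu$-a.e.\ $(\om,x)$, which is precisely the defining property of a random SRB measure.

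The main obstacle is executing the construction of $\xi$ and the Jacobian inequality jointly measurably in $\om$. The bases of subordinate cylinders must be chosen so that charts, Pesin blocks, and unstable leaves fit together measurably on sets of arbitrarily large $\mu$-measure; this is exactly where the Lyapunov norms of Section~\ref{SEC:3} (continuous dependence of Oseledets data on Pesin blocks) and the visibility hypothesis (H5) (ambient compactness) are indispensable. Once the construction is in place, the Banach ambient plays only an auxiliary role, since the Jacobian analysis occurs entirely on the finite-dimensional leaves with their induced smooth structure inherited from $\cB$.
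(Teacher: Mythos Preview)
Your overall architecture matches the paper's: build a decreasing subordinate partition $\eta$, identify $h(\mu|\bP,\eta)=H_\mu(\Phi^{-1}\eta|\eta)$, compare with $\int\log J^u\,d\mu$ via Jensen, and separately prove that $\eta$ captures all the entropy. However, you have misplaced the role of (H5), and this matters.

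The construction of the subordinate partition $\eta$ (Proposition~\ref{Prop:Exist P}) does \emph{not} use visibility. It is built from the random unstable stacks of Section~\ref{SEC:Random US}, which rely only on random compactness of $A$ (H2), the Lyapunov charts, and a Borel--Cantelli argument to control boundary effects. Visibility (H5) enters only in the step $h(\mu|\bP)=h(\mu|\bP,\eta)$, via Lemma~\ref{Lem:visivle}: in infinite dimensions one cannot automatically produce a partition $\cP$ with finite conditional entropy and diameters $\le\rho(\om,x)$ for a prescribed $\log$-integrable $\rho$. Finite box dimension of the ambient compact $E$ is exactly what guarantees such a $\cP$ exists, and then (H4) is used to show $\cP^+(\om,x)_\om\subset W^u_\delta(\om,x)$ so that $\eta\le\cQ^+\vee\eta_0$. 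If you try to use (H5) where you indicated, you will not see why it is needed, and you will be stuck at the entropy-capture step without a finite-entropy generator.

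Two smaller points. First, your Rokhlin formula is written backwards: for $\eta\le\Phi^{-1}\eta$ one has $h(\mu|\bP,\eta)=H_\mu(\Phi^{-1}\eta|\eta)$, not $H_\mu(\eta|\Phi^{-1}\eta\vee\cF)$ (the latter vanishes). Second, the paper does not argue the equality case via Besicovitch differentiation and strict concavity of $-t\log t$; instead it builds the candidate SRB measure $\upsilon$ explicitly with leafwise density $p(\om,z)=\Delta^\om(x,z)/\int\Delta^\om(x,\cdot)\,d\nu_{(\om,x)}$, verifies by change of variables that $\int -\log \upsilon_{\eta}((\Phi^{-n}\eta)_\om)\,d\mu=\int\log\det(Df^n_\om|E^u)\,d\mu$, and then applies Jensen to $\psi_n=\upsilon_{\eta}((\Phi^{-n}\eta)_\om)/\mu_{\eta}((\Phi^{-n}\eta)_\om)$ to conclude $\psi_n\equiv 1$ and hence $\mu=\upsilon$ on $\bigvee_n\Phi^{-n}\eta$. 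Your sketch of this step is too vague to count as a proof in the present setting, where measurability of $p$ and of the leafwise volumes must be checked (Lemmas~\ref{Lem:MofV} and~\ref{Lem:CofFq}).
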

	     
	   \textbf{Guide to notation} The following notations are used throughout the paper.
	   \begin{itemize}
	   	\item $\Om$ is a Polish space, $\cF$ is the completion of the Borel $\sigma$-algebra of $\Om$ with respect to the probability measure $\bP$.
	   	\item $\cB$ is a separable Banach space with norm $|\cdot|$, and $\mathscr{B}$ is the Borel $\sigma$-algebra of $\cB$. 
	   	\item $\cF\otimes \mathscr{B}$ is the product $\sigma$-algebra of the product space $\Om \times \cB$.
	   	\item $E_{\om}=\{x:(\ox)\in E\}$ is the $\om$-section of the subset $E$ of $\Om \times \cB$.
	   	\item $B(x,r)$ is the closed ball of radius $r>0$ with the center $x\in\cB$.
	   	\item $B(\cB)$ is the space of all bounded linear operators on $\cB$, we also use $|\cdot|$ to denote the operator norm, and denote by $SOT(\cB)$ the strong operator topology on $\cB$. 
	   	\item $\cG(\cB)$ is the Grassmannian of all closed subspaces of $\cB$, $\cG_{k}(\cB)$ and $\cG^{k}(\cB)$ are the subsets of all  $k$-dimensional subspaces and $k$-codimensional closed subspaces respectively.
	   	\item $d_{H}$ is the Hausdorff distance on $\cG(\cB)$ and $G(\cdot,\cdot)$ is the gap function, defined in Section \ref{SEC:Gaps}.
	   	\item for $E\in \cG_{k}(\cB)$ and $T\in B(\cB)$, $m_{E}$ is the induced volume and $\det(T|E)$ is the determinate function, which are defined in Section \ref{SEC:Induced volumes}.
	   	\item $\nu_{W}$ is the induced volume on a $C^1$ embedded (or injectively immersed) finite-dimensional submanifold $W\subset \cB$, defined in the end of Section \ref{SEC:Induced volumes}. 
	   	\item $\cB_x$ is the tangent space at point $x\in \cB$, and the exponential map $\exp_x:\cB_x\rightarrow \cB$ is the affine linear map sending $v\in \cB_x$ to $v+x\in \cB$.
	   	\item $\Gamma$ is a $\Phi$-invariant and full $\mu$-measure subset of $\Om\times \cB$ as in Theorem \ref{MET}.
	   	\item for $(\ox)\in \Gamma$,  $\cB_x=E^{u}(\ox)\oplus E^{cs}(\ox)$ is the decomposition of $\cB_x$ in the beginning of Section \ref{SEC:Lyapunov norms}, and $\pi^{\tau}_{(\ox)}$ denotes the projection from $\cB_{x}$ onto $E^{\tau}(\ox)$ via the splitting for $\tau=u,cs$.
	   	\item for $(\ox)\in \Gamma$, $\Lnorm{\cdot}{(\ox)}$ is the Lyapunov norm  defined in Section \ref{SEC:Lyapunov norms}.
	   	\item $\Lnorm{\cdot}{}$ is the operator norm of bounded linear operators with respect to the Lyapunov norms. 
	   	      For example, let  $L:(\cB_{x}, \Lnorm{\cdot}{(\ox)}) \rightarrow (\cB_{f_{\om}(x)}, \Lnorm{\cdot}{\Phi(\ox)})$ and $T:E^{u}(\ox) \rightarrow E^{cs}(\ox)$ be two bounded linear operators, then $\Lnorm{L}{}=\sup\{\Lnorm{Lv}{\Phi(\ox)}:v\in \cB_{x}, \Lnorm{v}{(\ox)}=1\}$ and $\Lnorm{T}{}=\sup\{\Lnorm{Tu}{(\ox)}:u\in E^{u}(\ox), \Lnorm{u}{(\ox)}=1\}$.
	   	\item  $\lambda$, $\varepsilon_0$ are constants as in Section \ref{SEC:Lyapunov norms}, and  $\delta_1$ is a constant as in \eqref{eq:delta1}.
	   	\item for $(\ox)\in \Gamma$, $\wf_{(\ox)}=\exp^{-1}_{f_{\omega}x}\circ f_{\om}\circ \exp_{x}$ is  the connecting map from $\cB_x$ to $\cB_{f_\omega x}$.
	   	\item the function $\ell:\Gamma\rightarrow [1,\infty)$ and the constant $\varepsilon_1$ are the same as in \eqref{eq:functionl}.
	   	\item the subset $\Gamma_{\ell}$ is defined in the beginning of Section \ref{SEC:CCS}.
	   	\item for $r>0$, $\tau=u,cs$ and $(\ox)\in \Gamma$, $\widetilde{B}_{(\omega,x)}(r)=\{v\in \cB_x:\Lnorm{v}{(\ox)}\leq r\}$,
	   	      $\widetilde{B}^{\tau}_{(\omega,x)}(r)=\{v\in E^{\tau}(\ox):\Lnorm{v}{(\ox)}\leq r\}$ and $B^{\tau}_{(\omega,x)}(r)=\{v\in E^{\tau}(\ox):|v|\leq r\}$.
	   	\item for $(\ox)\in \Gamma$, the function $g_{(\ox)}$ is  as in Theorem \ref{Thm:Unstable}, the (local) unstable manifold $W^{u}(\ox)$ is defined in   Section \ref{SEC:4.1}, and we denote by $\nu_{(\ox)}$ the induced volume on $W^{u}(\ox)$.
	   	\item for $\om\in \Om$, $n\in \N$ and $y\in \cB$, $f^{-n}_{\om}(y)$ denotes the unique point satisfies that   $f^{n}_{\te^{-n}\om}(f^{-n}_{\om}(y))=y$, if such a point exists. 
	   	\item for $(\ox)\in \Gamma_{\ell}$, $\cS(\ox)$ is the unstable stack of local unstable manifolds as in \eqref{eq:stack}. 
	   	      The random unstable stack $\cS$ is defined in Section \ref{SEC:Random US}.
	   \end{itemize}
	\section{Preliminaries}\label{SEC:Pre}
\subsection{Conditional entropy}\label{SEC:2.1}
      In this subsection, we recall the definition of conditional entropy, see \cite[Chapter 0]{Liu95} for more details.
      Let $(\Phi,\mu)$ be given as in Section \ref{setting}. 
      The partition $\eta_0:=\{\{\omega\}\times \cB\}_{\omega\in \Omega}$ is a measurable partition of $\Om \times \cB$ since $\Om$ is a Polish space. 
      
      For every measurable partition $\eta$ of $\Omega \times \cB$ with $H_{\mu}(\eta|\eta_0)<\infty$, where $H_\mu(\eta_1|\eta_2)$ is the conditional entropy of $\eta_1$ given by $\eta_2$. 
     The conditional entropy of $\eta$ is given as follows: 
	   \begin{align*}	h(\mu|\bP,\eta):=\lim_{n\rightarrow 
          \infty} \frac{1}{n} H_{\mu}\Big(\bigvee_{i=0}^{n-1}\Phi^{-i}\eta |\eta_0\Big).
	   \end{align*}    
      The conditional entropy of $(\Phi,\mu)$ is given by
	   $$h(\mu|\bP):=\sup\Big\{h(\mu|\bP,\eta):\eta \ \text{is a 
      measurable partition of}\  \Omega \times \cB \ \text{with}\  \ H_{\mu}(\eta|\eta_0)<\infty\Big\}.$$
          
      The definition of $h(\mu|\bP,\eta)$ can be generalized for any measurable partition of $\Omega \times \cB$ as follows. 
      For a measurable partition $\eta$ of $\Omega \times \cB$, let 
          $$h(\mu|\bP,\Phi,\eta):=H_{\mu}\Big( \eta |\bigvee_{i=1}^{\infty}\Phi^{-i}\eta\vee \eta_0\Big).$$
      If $H_{\mu}(\eta|\eta_0)<\infty$, then $h(\mu|\bP,\Phi,\eta)=h(\mu|\bP,\eta)$. Let 
          $$h(\mu|\bP,\Phi):=\sup\Big\{h(\mu|\bP,\Phi,\eta): \eta \ \text{is a measurable partition of} \  \Omega \times \cB\Big\}.$$
      By Theorem 5.1 in \cite{Liu95}, one has $h(\mu|\bP,\Phi)=h(\mu|\bP)$.
      Since $(\Phi|_A)^{-1}$ is measurable and $\mu$ is supported on $A$,
      by Theorem 4.3 in \cite{Liu95}, we have that
          \begin{equation*}
              h(\mu|\bP,\Phi)=h(\mu|\bP)=h(\mu|\bP,(\Phi|_A)^{-1}).
          \end{equation*}

\subsection{Measurability and random compactness}
This subsection collects some results about measurability and random compactness that will be used in this paper.

Let $(\Om,\cF)$ be a measurable space, where $\cF$ is a $\sigma$-algebra of subsets of $\Om$. 
For a Polish space $X$ with a complete metric $d$, denote by $\mathscr{B}(X)$ the Borel $\sigma$-algebra of $X$,
 and let $\cF \otimes \mathscr{B}(X)$ be the product $\sigma$-algebra of $\Om \times X$.

The following lemma gives a criterion for the measurability of a given map. See \cite[Lemma 1.1]{Crauel02} for the detailed proofs. 
\begin{lemma}\label{Lem:MPolish}
	Let $Y$ be a metric space.  Suppose that $(\Om,\cF)$ is a measurable space, $X$ is a separable metric space and
	$g: \Om \times X \rightarrow Y$ satisfies
	\begin{enumerate}
		\item[(a)] $\om \mapsto g(\om,x)$ is measurable for each $x\in X$;
		\item[(b)] $x \mapsto g(\om,x)$ is continuous for each $\om\in \Om$.
	\end{enumerate} 
    Then, $g$ is a measurable map from $\Om \times X$ to $Y$.
\end{lemma}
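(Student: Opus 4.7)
\textbf{Proof plan for Lemma \ref{Lem:MPolish}.} The approach is the classical Carath\'eodory trick: approximate $g$ pointwise by a sequence of functions $g_n$ which are ``simple'' (i.e.\ piecewise constant) in the $x$-variable, whose measurability on $\Om\times X$ reduces to a finite combination of the hypotheses (a) and (b), and then conclude by taking the pointwise limit.

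First, fix a countable dense subset $\{x_k\}_{k\geq 1}\subset X$ (which exists by separability). For each $n\geq 1$ and $1\leq k\leq n$, define
$$A_{n,k}=\Big\{x\in X:d(x,x_k)=\min_{1\leq i\leq n}d(x,x_i)\text{ and }d(x,x_j)>\min_{1\leq i\leq n}d(x,x_i)\text{ for all }j<k\Big\}.$$
Because the distance functions $x\mapsto d(x,x_i)$ are continuous, each $A_{n,k}$ is Borel in $X$, and $\{A_{n,k}\}_{k=1}^n$ is a Borel partition of $X$ (the tie-breaking rule on the index ensures disjointness). Now set
$$g_n(\om,x)=g(\om,x_k)\quad\text{whenever } x\in A_{n,k}.$$

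Next I verify that $g_n:\Om\times X\to Y$ is $(\cF\otimes\mathscr{B}(X))$-measurable. For any Borel $B\subset Y$,
$$g_n^{-1}(B)=\bigsqcup_{k=1}^{n}\Big(\{\om\in\Om:g(\om,x_k)\in B\}\times A_{n,k}\Big),$$
and by hypothesis (a) each slice $\{\om:g(\om,x_k)\in B\}$ lies in $\cF$, while $A_{n,k}\in\mathscr{B}(X)$; hence $g_n^{-1}(B)\in\cF\otimes\mathscr{B}(X)$.

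Then I show pointwise convergence $g_n\to g$. Fix $(\om,x)\in\Om\times X$ and let $k_n=k_n(x)$ denote the unique index with $x\in A_{n,k_n}$. By density, $\min_{1\leq i\leq n}d(x,x_i)\to 0$ as $n\to\infty$, so $d(x,x_{k_n})\to 0$, and continuity in $x$ (hypothesis (b)) gives $g_n(\om,x)=g(\om,x_{k_n})\to g(\om,x)$. Finally, since $Y$ is a metric space, the pointwise limit of a sequence of measurable $Y$-valued functions is measurable (this is a standard fact: preimages of closed sets are obtained as $\bigcap_m\bigcup_{N}\bigcap_{n\geq N}g_n^{-1}(\overline{U_{1/m}(F)})$ for a closed $F$ with $U_{1/m}(F)$ its open $1/m$-neighbourhood). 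Therefore $g$ is measurable on $\Om\times X$.

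There is no genuine obstacle here; the only point that requires care is the bookkeeping in defining the partition $\{A_{n,k}\}$ so that the tie-breaking makes the sets disjoint and Borel, and making sure that the conclusion about pointwise limits is applied in the setting of a metric-space-valued target (rather than, say, a general topological space, where pointwise limits of measurable functions need not be measurable).
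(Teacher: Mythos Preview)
Your proof is correct and is exactly the standard Carath\'eodory approximation argument. The paper does not actually supply a proof of this lemma; it simply cites \cite[Lemma 1.1]{Crauel02}. Your write-up is a faithful reconstruction of the classical proof one finds in such references, so there is nothing to compare.
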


Let $\bP$ be a probability measure on $\Om$. A probability space $(\Omega,\cF,\bP)$ is \textit{complete} if any subset of a measurable set with zero measure is measurable (and has zero measure). See \cite[Theorem III.23]{Castaing77}  for the proof of the following result. 

\begin{proposition}\label{Prop:projec P}
	Let $(\Omega,\cF,\bP)$ be a complete probability space and $X$  a Polish space. 
	Let $\cF \otimes \mathscr{B}(X)$ be the product $\sigma$-algebra of $\Om \times X$.
	Denote by $\pi_{\Om}$ the canonical projection from $\Om \times X$ onto $\Om$. Then $\pi_{\Om}(E)\in \cF$ for every $E\in \cF \otimes \mathscr{B}(X)$.
\end{proposition}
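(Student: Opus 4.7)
The plan is to prove the measurable projection theorem via the theory of analytic (Souslin) sets and Choquet's capacitability theorem. A naive monotone class argument fails at the crucial step: the class of sets $E \subset \Omega \times X$ with $\pi_\Omega(E) \in \cF$ is stable under countable unions and contains the measurable rectangles $A \times B$ (since $\pi_\Omega(A\times B) = A$ when $B$ is nonempty), but it is not obviously stable under complements, because complementation in the product is incompatible with projection. So the proof has to exploit the completeness of $(\Omega,\cF,\bP)$ in an essential way.

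First I would set up the Souslin operation. Recall that a set is a \emph{Souslin set} over a class $\cC$ if it is obtained from $\cC$ by the operation $\cA$ over $\N^\N$, i.e.\ as $\bigcup_{s \in \N^\N} \bigcap_{n} C_{s|n}$ for some family $\{C_\alpha\}$ indexed by finite sequences of integers from $\cC$. The first key step is to show that every element of $\cF \otimes \mathscr{B}(X)$ is Souslin over the class of measurable rectangles. This uses the fact that $\mathscr{B}(X)$, being the Borel $\sigma$-algebra of a Polish space, coincides with the $\cA$-operation applied to the closed sets, and that $\cA$ commutes appropriately with products.

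The second key step is that the image of a Souslin set under a projection is again Souslin, since the projection $\pi_\Omega$ commutes with arbitrary unions and with the countable intersections that appear in the $\cA$-operation (this uses the inner regularity built into the Souslin scheme). Thus $\pi_\Omega(E)$ is Souslin over $\cF$ itself, i.e.\ obtained by applying the $\cA$-operation to a system of sets in $\cF$.

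Finally I would invoke Choquet's capacitability theorem, applied to the outer measure $\bP^*$ generated by $\bP$ on $\Omega$. Since $\bP^*$ is a Choquet capacity (it is monotone, continuous from below on arbitrary increasing sequences, and continuous from above on decreasing sequences in $\cF$), every Souslin set over $\cF$ is $\bP^*$-capacitable, hence $\bP$-measurable. Because $(\Omega,\cF,\bP)$ is assumed complete, $\bP$-measurable sets lie in $\cF$, and we conclude $\pi_\Omega(E) \in \cF$. The main obstacle is exactly this last step: without completeness of $\cF$ with respect to $\bP$, projections of Borel sets would only be universally measurable, not measurable for the chosen $\sigma$-algebra, and the proposition would fail; the whole point of the argument is to pass from the Souslin/analytic description of $\pi_\Omega(E)$ to membership in $\cF$ via completeness and capacitability.
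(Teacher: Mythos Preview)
The paper does not prove this proposition; it simply cites Castaing--Valadier, \emph{Convex Analysis and Measurable Multifunctions}, Theorem~III.23. Your outline via Souslin (analytic) sets and completeness is precisely the standard route taken there, so at the level of strategy you are reproducing the cited argument rather than offering an alternative. One minor slip: $\mathscr{B}(X)$ does not \emph{coincide} with $\cA(\text{closed sets})$ --- in any uncountable Polish space the latter is the strictly larger class of analytic sets --- but only the inclusion $\mathscr{B}(X)\subset\cA(\text{closed})$ is needed, and that holds.

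There is, however, a real gap in your second step. You claim $\pi_{\Omega}$ ``commutes \ldots\ with the countable intersections that appear in the $\cA$-operation,'' but projections do \emph{not} commute with intersections: for decreasing $C_n\subset\Omega\times X$ one has only $\pi_{\Omega}\bigl(\bigcap_n C_n\bigr)\subset\bigcap_n\pi_{\Omega}(C_n)$, and the inclusion is typically strict. The vague appeal to ``inner regularity built into the Souslin scheme'' does not repair this. The actual mechanism, and the heart of the cited proof, is compactness: one first embeds $X$ as a Borel subset of a compact metrizable space (e.g.\ the Hilbert cube), so that the Souslin scheme may be taken over rectangles $A_\sigma\times K_\sigma$ with $A_\sigma\in\cF$ and $K_\sigma$ \emph{compact}. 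Along each branch $s\in\N^{\N}$, the finite intersection property then gives $\bigcap_n K_{s|n}\neq\emptyset$ whenever all partial intersections are nonempty, and one checks directly that $\pi_{\Omega}\bigl(\bigcap_n A_{s|n}\times K_{s|n}\bigr)$ is either $\bigcap_n A_{s|n}\in\cF$ or $\emptyset$. This is what shows $\pi_{\Omega}(E)$ is genuinely $\cF$-Souslin; after that, your final step (a $\bP$-complete $\sigma$-algebra is closed under the Souslin operation, via Choquet capacitability or the direct Marczewski argument) is correct.
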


For a closed subset $Z\subset X$ and for $x\in X$, put $d(x,Z):=\inf_{y\in Z} d(x,y)$.
Recall that a subset $\Lambda \subset \Om \times X$ is \textit{random compact} if 
$\Lambda_{\omega}:=\{x\in \cB:(\omega,x)\in \Lambda\}$ 
is compact for every $\om\in \Om$, 
and the mapping $\om \mapsto d(x,\Lambda_{\omega})$ is measurable for every $x\in X$. 

The following proposition \cite[Theorem III.30]{Castaing77} (also see \cite[Proposition 2.4]{Crauel02}) 
gives characterizations of random compact subsets. 
\begin{proposition}\label{Prop:random compact}
	Let $(\Omega,\cF,\bP)$ be a probability space and $X$  a Polish space, and let $K$  be a subset of $\Omega \times X$ with compact $\om$-section $K_{\om}$ for every $\om\in \Om$.
	Consider the following conditions
	\begin{enumerate}
		\item[(i)]   $\{\om:K_{\om}\cap U\neq \emptyset\}\in \cF$ for every open set $U\subset X$;
		\item[(ii)]  $\om \mapsto d(x,K_{\omega})$ is measurable for every $x\in X$;
		\item[(iii)] $K$ is a measurable subset of $\Omega \times X$, i.e.,  $K\in \cF \otimes \mathscr{B}(X)$.
	\end{enumerate} 
    Then, (i) and (ii) are equivalent, and either of them implies (iii). Furthermore, (i), (ii) and (iii) are equivalent if $(\Omega,\cF,\bP)$ is a complete probability space. 
\end{proposition}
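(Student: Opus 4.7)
The plan is to establish (i)$\Leftrightarrow$(ii) directly, then (ii)$\Rightarrow$(iii), and finally close the loop with (iii)$\Rightarrow$(i) under the completeness assumption. The three tools needed are already available in the excerpt: the separability of $X$ (for the first equivalence), Lemma \ref{Lem:MPolish} (for the passage to joint measurability), and the Castaing--Valadier projection result Proposition \ref{Prop:projec P} (for the final implication).

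For (i)$\Rightarrow$(ii), I would fix $x\in X$ and $r>0$ and note that
\[
\{\om:d(x,K_\om)<r\}=\{\om:K_\om\cap B(x,r)\neq\emptyset\},
\]
where $B(x,r)$ denotes the open $r$-ball and we adopt the convention $d(x,\emptyset)=+\infty$. Openness of $B(x,r)$ puts this set in $\cF$ by (i), making $\om\mapsto d(x,K_\om)$ measurable. For the converse, given any open $U\subset X$, separability of $X$ lets me write $U=\bigcup_n B(x_n,r_n)$ with $\{x_n\}$ countable dense and $r_n$ positive rationals, so
\[
\{\om:K_\om\cap U\neq\emptyset\}=\bigcup_n\{\om:d(x_n,K_\om)<r_n\}\in\cF
\]
by (ii).

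For (ii)$\Rightarrow$(iii), I would introduce the function $h:\Om\times X\to[0,+\infty]$ defined by $h(\om,x):=d(x,K_\om)$. It is measurable in $\om$ for each fixed $x$ by (ii), and $1$-Lipschitz in $x$ for each fixed $\om$ because $|d(x,K_\om)-d(y,K_\om)|\leq d(x,y)$. Lemma \ref{Lem:MPolish} then gives joint measurability of $h$ on $\Om\times X$. Since each $K_\om$ is compact and hence closed, $d(x,K_\om)=0$ iff $x\in K_\om$, so $K=h^{-1}(\{0\})\in\cF\otimes\mathscr{B}(X)$.

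The remaining step (iii)$\Rightarrow$(i), which uses completeness, proceeds by writing
\[
\{\om:K_\om\cap U\neq\emptyset\}=\pi_\Om\bigl(K\cap(\Om\times U)\bigr).
\]
The intersection $K\cap(\Om\times U)$ lies in $\cF\otimes\mathscr{B}(X)$ by (iii) and the openness of $U$, and Proposition \ref{Prop:projec P} places its $\Om$-projection in $\cF$. The main obstacle is precisely this last implication: in general, projections of product-measurable sets are only analytic rather than Borel, so the equivalence with (i) really does need the completion hypothesis on $(\Om,\cF,\bP)$. By contrast, the other implications are essentially bookkeeping powered by separability of $X$ and the joint-measurability lemma.
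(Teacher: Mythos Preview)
The paper does not give its own proof of this proposition; it simply cites \cite[Theorem III.30]{Castaing77} and \cite[Proposition 2.4]{Crauel02}. Your argument is correct and is essentially the standard proof found in those references, using precisely the two tools the paper has already provided (Lemma \ref{Lem:MPolish} for joint measurability and Proposition \ref{Prop:projec P} for the projection step under completeness).
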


The following proposition \cite[Proposition 2.15]{Crauel02} gives a version of tightness for random compact sets.
\begin{proposition}\label{Prop:Trandom compact}
	Let $(\Omega,\cF,\bP)$ be a probability space and $X$  a Polish space. 
	Suppose that $K \subset \Omega\times X$ is a random compact subset. 
	Then, for any $\epsilon>0$ there exists a (non-random) compact subset $K_{\epsilon}\subset X$ such that 
	$\bP\{\omega:K_{\omega}\subset K_{\epsilon}\}>1-\epsilon$. 
\end{proposition}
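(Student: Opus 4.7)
The plan is to exploit the separability of $X$ and a diagonal argument over decreasing scales, together with the measurability characterisation in Proposition \ref{Prop:random compact}, to squeeze the random fibers into a single deterministic totally bounded set on a large-probability event. Fix a complete metric $d$ generating the topology of $X$ (which exists because $X$ is Polish) and a countable dense subset $\{x_n\}_{n \ge 1} \subset X$. For integers $k, N \ge 1$ set
\[
A_{k,N} := \Big\{\omega \in \Omega : K_\omega \subset \bigcup_{n=1}^{N} \bar B(x_n, 1/k)\Big\},
\]
the event that $K_\omega$ admits a $1/k$-cover by the first $N$ closed balls centred at $\{x_n\}$.

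First I would check that $A_{k,N} \in \cF$. Writing $V_{k,N} := X \setminus \bigcup_{n=1}^{N} \bar B(x_n, 1/k)$, which is open, we have $\omega \in A_{k,N}$ iff $K_\omega \cap V_{k,N} = \emptyset$; since the implication (ii)$\Rightarrow$(i) in Proposition \ref{Prop:random compact} does not require completeness of $(\Omega,\cF,\bP)$, the set $\{\omega : K_\omega \cap V_{k,N} \neq \emptyset\}$ lies in $\cF$, hence so does its complement $A_{k,N}$. Next, compactness of each $K_\omega$ forces total boundedness under $d$, so for every $\omega$ and every $k$ there exists some $N$ with $\omega \in A_{k,N}$; therefore $A_{k,N} \uparrow \Omega$ as $N \to \infty$ and $\bP(A_{k,N}) \to 1$.

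Given $\epsilon > 0$, for each $k$ pick $N_k$ with $\bP(A_{k,N_k}) > 1 - \epsilon\, 2^{-k-1}$ and define
\[
K_\epsilon := \bigcap_{k \ge 1} \bigcup_{n=1}^{N_k} \bar B(x_n, 1/k), \qquad \Omega_\epsilon := \bigcap_{k \ge 1} A_{k,N_k}.
\]
Subadditivity gives $\bP(\Omega_\epsilon) > 1 - \epsilon/2 > 1 - \epsilon$, and by construction $K_\omega \subset K_\epsilon$ for every $\omega \in \Omega_\epsilon$. The set $K_\epsilon$ is closed (a countable intersection of closed sets) and totally bounded (the finite centres $\{x_1,\dots,x_{N_k}\}$ form a $1/k$-net for each $k$); because $d$ is complete, $K_\epsilon$ is complete, and hence compact.

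The argument is mostly bookkeeping; the one place that requires genuine care is the measurability of $A_{k,N}$, which rests on having the direction (ii)$\Rightarrow$(i) of Proposition \ref{Prop:random compact} available without assuming $(\Omega,\cF,\bP)$ complete. Once that is secured, the combination of a Borel--Cantelli-style diagonalisation and the elementary metric fact that closed plus totally bounded equals compact in a complete metric space delivers the claim.
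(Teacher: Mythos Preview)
Your argument is correct and is the standard proof of this tightness result; the paper itself does not prove Proposition~\ref{Prop:Trandom compact} but simply cites it from \cite[Proposition~2.15]{Crauel02}. The only point worth flagging is that your invocation of Proposition~\ref{Prop:random compact} is exactly right: the equivalence (i)$\Leftrightarrow$(ii) there holds without completeness of $(\Omega,\cF,\bP)$, which is what you need, and your check that $A_{k,N}\uparrow\Omega$ goes through because a compact $K_\omega$ can be covered by finitely many $1/(2k)$-balls, each of which sits inside some $\bar B(x_n,1/k)$ by density of $\{x_n\}$.
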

 
\subsection{Gaps and distance between closed linear subspaces}\label{SEC:Gaps}
In this subsection, we recall the definitions of gap and distance in a Banach space, see  \cite[Chapter 4.2]{Kato95}  or \cite[Section 3]{Young17} for more details.

Let ($\cB,|\cdot|$) be a separable Banach space, and let $\cG(\cB)$ be the Grassmannian of all closed subspaces of the Banach space $\cB$. 
We consider the metric topology on $\cG(\cB)$ defined by the Hausdorff distance $d_H$ between unit spheres: 
for non-trivial subspaces $E,F\in \cG(\cB)$, let
\begin{align*}
	d_{H}(E,F)=\max\Big\{\sup_{v\in S_E} d(v,S_F),\sup_{u\in S_F} d(u,S_E)\Big\},
\end{align*}
where $S_*=\{v\in *:|v|=1\}$ and $d(v,S_*)=\inf_{u\in S_*} |v-u|$, $*=E,\, F$.
For every $k\in\N$, we denote by $\cG_{k}(\cB)$ the collection of $k$-dimensional subspaces of $\cB$, and denote by $\cG^{k}(\cB)$ the collection of $k$-codimensional closed subspaces of $\cB$.

\begin{lemma}[\cite{Quas12}  Lemma B.11]\label{Lem:SPGk}
	Suppose that ($\cB,|\cdot|$) is a separable Banach space, then ($\cG_{k}(\cB),d_H$) is a separable metric space.
\end{lemma}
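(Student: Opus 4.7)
The plan is first to verify that $d_H$ is indeed a metric on $\cG_k(\cB)$, and then to exhibit an explicit countable dense subset. For the metric structure, note that every $E \in \cG_k(\cB)$ is finite-dimensional, so the unit sphere $S_E$ is compact; consequently $d_H(E,F)$ is finite for all $E, F \in \cG_k(\cB)$. Symmetry is immediate from the definition and the triangle inequality follows by the usual argument for the Hausdorff distance. For non-degeneracy, if $d_H(E,F) = 0$ then every $v \in S_E$ satisfies $d(v, S_F) = 0$, and since $S_F$ is compact hence closed, $v \in S_F \subset F$; scaling shows $E \subseteq F$, and the symmetric argument gives $E = F$.

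For separability, I would fix a countable dense subset $\{x_n\}_{n \geq 1}$ of $\cB$ and let $\mathcal{D}$ be the (countable) family of all $k$-dimensional subspaces spanned by linearly independent $k$-tuples drawn from $\{x_n\}$. The claim is that $\mathcal{D}$ is dense in $(\cG_k(\cB), d_H)$. Given $E \in \cG_k(\cB)$ and $\epsilon > 0$, choose a basis $e_1, \ldots, e_k$ of $E$ with $|e_i| = 1$; since all norms on the finite-dimensional space $E$ are equivalent, there is a constant $C = C(E) > 0$ such that every $v = \sum_{i=1}^k c_i e_i \in S_E$ satisfies $\sum_{i=1}^k |c_i| \leq C$. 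For $\delta > 0$ to be chosen later, select $y_i \in \{x_n\}$ with $|y_i - e_i| < \delta$. A standard perturbation argument shows that for $\delta$ small the tuple $(y_1, \ldots, y_k)$ remains linearly independent, so $F := \mathrm{span}(y_1, \ldots, y_k) \in \mathcal{D}$. For any $v = \sum c_i e_i \in S_E$, the vector $w = \sum c_i y_i \in F$ satisfies
\begin{equation*}
|w - v| \leq \sum_{i=1}^k |c_i|\,|y_i - e_i| \leq C\delta,
\end{equation*}
whence $|w|$ is close to $1$ and $w/|w| \in S_F$ is within $O(C\delta)$ of $v$. The reverse estimate, bounding $d(u, S_E)$ for $u \in S_F$, follows analogously, since the analogous coefficient constant for $F$ differs from $C$ by $O(\delta)$. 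Choosing $\delta$ small enough gives $d_H(E, F) < \epsilon$.

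The only technical points are the control of the coordinate constant $C$ and the verification that the perturbed tuple is linearly independent; both are routine finite-dimensional facts, so no serious obstacle arises beyond careful book-keeping. Conceptually, the separability of the \emph{ambient} Banach space supplies the countable data, while the finite-dimensionality of the subspaces is what makes the Hausdorff geometry on unit spheres well-behaved and continuous in the choice of basis.
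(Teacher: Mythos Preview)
Your argument is correct. The paper does not supply its own proof of this lemma; it simply cites \cite{Quas12}, Lemma~B.11, and uses the result as a black box. Your self-contained verification---checking that $d_H$ is a genuine metric on $\cG_k(\cB)$ and then exhibiting the countable family of spans of $k$-tuples drawn from a fixed dense sequence as a dense subset---is the standard direct proof and is essentially what one finds in the cited reference. The only places requiring care are the uniform coordinate bound $C$ (equivalence of norms on a finite-dimensional space) and the stability of linear independence and of the coordinate constant under small perturbations of the basis; you have identified both and they are indeed routine.
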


 For every two non-trivial subspaces $E,F\in \cG(\cB)$,  the \textit{gap} $G(E,F)$ is defined as follows: 
\begin{align*}
G(E,F):=\sup_{v\in S_{E}} d(v,F)
\end{align*}
where $d(v,F)=\inf_{u\in F}|v-u|$. 
Note that $G(\cdot,\cdot)$ is not symmetric, 
the relations between $G(E,F)$ and $G(F,E)$ given by  the following result.

\begin{lemma}[\cite{Blumenthal16} Lemma 2.6]\label{Lem:gap}
	Let $k>0$, $E,F\in \cG_{k}(\cB)$ or  $E,F\in \cG^{k}(\cB)$. If $G(E,F)<1/k$, then we have that 
	$$G(F,E)\leq \dfrac{kG(E,F)}{1-kG(E,F)}.$$
\end{lemma}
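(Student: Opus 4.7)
The plan is to handle the two cases separately: treat $E,F\in\cG_{k}(\cB)$ directly via an Auerbach basis on the finite-dimensional space $E$, and then reduce the codimension case $E,F\in\cG^{k}(\cB)$ to the dimensional case by passing to annihilators in $\cB^{*}$.

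For the finite-dimensional case, set $\delta:=G(E,F)$ and fix a small $\eta>0$. First I would choose an Auerbach basis $\{e_{1},\ldots,e_{k}\}$ of $E$, that is, unit vectors whose biorthogonal functionals $e_{i}^{*}\in E^{*}$ all have norm one; this gives $\max_{i}|a_{i}|\leq|\sum a_{i}e_{i}|$ for any scalars $a_{i}$. By definition of $G$, for each $i$ I can pick $f_{i}\in F$ with $|e_{i}-f_{i}|\leq\delta+\eta$. The next step is to check that $\{f_{1},\ldots,f_{k}\}$ is a basis of $F$: if $\sum a_{i}f_{i}=0$ with $\max|a_{i}|=1$, then
\[
1\;\leq\;\Bigl|\sum a_{i}e_{i}\Bigr|\;=\;\Bigl|\sum a_{i}(e_{i}-f_{i})\Bigr|\;\leq\;k(\delta+\eta),
\]
contradicting $k(\delta+\eta)<1$; since $\dim F=k$, linear independence forces span.

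Now for arbitrary $u\in S_{F}$, expand $u=\sum b_{i}f_{i}$ and set $w:=\sum b_{i}e_{i}\in E$. Then $|u-w|\leq(\delta+\eta)\sum|b_{i}|$ and $|w|\leq 1+(\delta+\eta)\sum|b_{i}|$, while the Auerbach property applied in $E$ gives $\max|b_{i}|\leq|w|$. Chaining these,
\[
\sum|b_{i}|\;\leq\;k\max|b_{i}|\;\leq\;k\bigl(1+(\delta+\eta)\textstyle\sum|b_{i}|\bigr),
\]
so $\sum|b_{i}|\leq k/(1-k(\delta+\eta))$ and hence $|u-w|\leq k(\delta+\eta)/(1-k(\delta+\eta))$. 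Taking $\eta\downarrow 0$ and then the supremum over $u\in S_{F}$ yields $G(F,E)\leq kG(E,F)/(1-kG(E,F))$, as desired.

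For the codimension case I would appeal to the duality identity $G(E,F)=G(F^{\perp},E^{\perp})$ for annihilators in $\cB^{*}$ (Kato, Chapter IV). Since $E,F\in\cG^{k}(\cB)$ implies $E^{\perp},F^{\perp}\in\cG_{k}(\cB^{*})$, the already-established finite-dimensional estimate applied to the pair $(F^{\perp},E^{\perp})$ in $\cB^{*}$ translates back to exactly the claimed bound for $G(F,E)$. The main obstacle is the threshold $k(\delta+\eta)<1$ required to keep $\{f_{i}\}$ independent and to invert the inequality for $\sum|b_{i}|$; this is precisely what produces the $1-kG(E,F)$ denominator and explains why the hypothesis $G(E,F)<1/k$ is sharp for this type of argument.
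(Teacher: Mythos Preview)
The paper does not supply its own proof of this lemma: it is quoted verbatim from \cite{Blumenthal16} (Lemma~2.6) and left without argument, so there is nothing in the present paper to compare against. Your argument is correct and is essentially the standard one: Auerbach bases give the coefficient control $\max_i|a_i|\le|\sum a_ie_i|$ that drives the bootstrap on $\sum|b_i|$, and the duality $G(E,F)=G(F^{\perp},E^{\perp})$ from Kato reduces the codimension-$k$ case to the dimension-$k$ case in $\cB^{*}$. One small point worth making explicit in the write-up is that $d(u,E)\le|u-w|$ uses the distance to the whole subspace $E$ (not just to $S_E$), so there is no need to renormalize $w$; you handle this correctly but silently.
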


For non-trivial subspaces $E,F\in \cG(\cB)$, let further that 
\begin{align*}
\hat{G}(E,F)=\max\{G(E,F),G(F,E)\}.
\end{align*} 

On Hilbert spaces, $\hat{G}$ is a metric and coincides with the operator norm of the difference between orthogonal projections. 
$\hat{G}$ is not a metric on Banach spaces,   since it does not satisfy the triangle inequality in general \cite[Chapter 4.2]{Kato95}.
However, $\hat{G}$ and the distance $d_{H}$ satisfy the following properties (see \cite[Chapter 4]{Kato95} or \cite[Lemma 2.4]{Young16}):
\begin{equation}\label{eq:gap}
\hat{G}(E,F) \leq d_{H}(E,F) \leq  2\hat{G}(E,F).
\end{equation}

Consider a splitting $\cB=E\oplus F$ for $E,F\in \cG(\cB)$, denote by $\pi_{E//F}$ and $\pi_{F//E}$ the projections onto $E$ and $F$ via the splitting respectively.
Then $\pi_{E//F}$ and $\pi_{F//E}$ are bounded linear operators by the closed graph theorem.
\begin{lemma}\label{Lem:ProDh}
	Let $\cB=E\oplus F$ for $E,F\in \cG(\cB)$. 
	If a sequence of closed subspaces $\{E_n\}_{n\in \N}\subset \cG(\cB)$ satisfy that $d_{H}(E_n,E)\rightarrow 0$ as $n\rightarrow \infty$. 
	Then, $|\pi_{E//F}|_{E_n}|\rightarrow 1$ and $|\pi_{F//E}|_{E_n}|\rightarrow 0$ as $n\rightarrow \infty$.
\end{lemma}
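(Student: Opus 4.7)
The plan is to reduce everything to the single estimate $\Lnorm{\pi_{F//E}|_{E_n}}{} \to 0$ and then bootstrap to get $|\pi_{E//F}|_{E_n}| \to 1$ essentially for free, using the decomposition $v = \pi_{E//F}(v) + \pi_{F//E}(v)$ together with the triangle inequality.

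First, I would control $\pi_{F//E}|_{E_n}$. Fix any unit vector $v \in E_n$ and any $\delta > 0$. By the definition of $d_H$ on unit spheres, there is some $u \in S_E$ with $|v-u| \le d_H(E_n,E) + \delta$. Since $u \in E$ we have $\pi_{F//E}(u) = 0$, and therefore
\begin{equation*}
|\pi_{F//E}(v)| \;=\; |\pi_{F//E}(v-u)| \;\le\; |\pi_{F//E}|\,\bigl(d_H(E_n,E)+\delta\bigr).
\end{equation*}
Letting $\delta \to 0$ and taking the supremum over unit $v \in E_n$ gives $|\pi_{F//E}|_{E_n}| \le |\pi_{F//E}| \cdot d_H(E_n,E)$, which tends to $0$ by hypothesis. (Note that $E_n$ is non-trivial for all large $n$ because $E$ is; this is where the unit-sphere definition of $d_H$ applies.)

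Next, for the projection onto $E$, I would use the identity $v = \pi_{E//F}(v) + \pi_{F//E}(v)$ for $v \in E_n$. For any unit $v \in E_n$, the triangle inequality gives
\begin{equation*}
1 - |\pi_{F//E}|_{E_n}| \;\le\; |\pi_{E//F}(v)| \;\le\; 1 + |\pi_{F//E}|_{E_n}|,
\end{equation*}
and taking the supremum over the unit sphere of $E_n$ yields $\bigl||\pi_{E//F}|_{E_n}| - 1\bigr| \le |\pi_{F//E}|_{E_n}|$. Combining with the first step, $|\pi_{E//F}|_{E_n}| \to 1$.

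There is no real obstacle here; the two ingredients I would double-check are that the decomposition $\cB = E \oplus F$ makes $\pi_{E//F}$ and $\pi_{F//E}$ genuinely bounded (this is the closed graph theorem, already noted in the paragraph preceding the lemma), and that the approximation of $v \in S_{E_n}$ by $u \in S_E$ up to error $d_H(E_n,E)+\delta$ really follows from the Hausdorff distance on unit spheres. Both are standard, so the lemma reduces to the two-line computation above.
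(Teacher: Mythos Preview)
Your proof is correct and follows essentially the same approach as the paper: first bound $|\pi_{F//E}|_{E_n}|$ by $|\pi_{F//E}|\cdot d_H(E_n,E)$ via approximation on unit spheres, then use the identity $Id|_{E_n}=\pi_{E//F}|_{E_n}+\pi_{F//E}|_{E_n}$ and the triangle inequality to sandwich $|\pi_{E//F}|_{E_n}|$ between $1\pm|\pi_{F//E}|_{E_n}|$. The only cosmetic difference is that the paper parametrizes the approximation error multiplicatively (a factor $c>1$) while you use an additive $\delta$.
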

\begin{proof}
	We first prove that $|(\pi_{F//E})|_{E_n}|\rightarrow 0$ as $n\rightarrow \infty$. 
	Fix a unit vector $v\in E_n$. 
	By the definition of $d_H$, for every $c>1$ arbitrarily close to 1, there exists a unit vector $v'\in E$ such that $|v-v'|\leq cd_H(E,E_n)$. 
	Then,
	$$|\pi_{F//E} v|=|\pi_{F//E} (v-v')|\leq |\pi_{F//E}|\cdot |v-v'|\leq c|\pi_{F//E}|\cdot d_H(E,E_n).$$
	Therefore, we get 
	$$|\pi_{F//E}|_{E_n}|\leq  |\pi_{F//E}|\cdot d_H(E,E_n).$$
    This yields that $|\pi_{F//E}|_{E_n}|\rightarrow 0$ as $n\rightarrow \infty$.
	
	Note that $Id|_{E_n}=\pi_{E//F}|_{E_n}+\pi_{F//E}|_{E_n}$. This implies that
	$$1-|\pi_{F//E}|_{E_n}|\leq |\pi_{E//F}|_{E_n}| \leq 1+|\pi_{F//E}|_{E_n}|.$$
	Hence, we have that $|\pi_{E//F}|_{E_n}|\rightarrow 1$ as $n\rightarrow \infty$. 
	This completes the proof.
\end{proof}

\begin{lemma}\label{Lem:CdhA}
   	Let $k>0$ and $\cB=E\oplus F$ for $E\in \cG_k(\cB), F\in \cG^{k}(\cB)$. 
   	Assume that $T$ is a linear operator from $E$ into $F$, and $\{T_n:E\rightarrow F\}_{n\geq 0}$ is a sequence of linear operators. 
   	Denote by $E_T=(Id+T)E$ and $E_n=(Id+T_n)E$. 
   	If $d_{H}(E_T,E_n)\rightarrow 0$ as $n\rightarrow \infty$, then $|T_n-T|\rightarrow 0$ as $n\rightarrow \infty$.
\end{lemma}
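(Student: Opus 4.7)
The core idea is that both $E_T$ and $E_n$ project bijectively onto $E$ along $F$: for any $u\in E$, the base point $u$ lifts to $u+Tu\in E_T$ and to $u+T_nu\in E_n$. Once one lift can be approximated by the other inside $\cB$, the difference $T_nu-Tu$ will emerge by applying $\pi_{F//E}$ to that difference. My plan is thus: (i) establish a uniform bound $\sup_n|T_n|<\infty$; (ii) prove pointwise convergence $T_nv\to Tv$ for each $v\in E$; (iii) upgrade (ii) to operator-norm convergence using that $\dim E=k<\infty$.

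\textbf{Main obstacle.} Step (i) is where I expect the real work, because it is where the geometric hypothesis $d_H(E_T,E_n)\to 0$ must be converted into an analytic bound. Since $E$ is finite-dimensional, the linear map $u\mapsto u+Tu$ is a bicontinuous bijection of $E$ onto $E_T$; in particular there exists $c>0$ (concretely $c=(1+|T|)^{-1}$) with $|\pi_{E//F}\,y|\ge c$ for every unit $y\in E_T$. Using $d_H(E_T,E_n)\to 0$ together with boundedness of $\pi_{E//F}$, I transport this lower bound to $E_n$: for $n$ large, every unit $y\in E_n$ still satisfies $|\pi_{E//F}\,y|\ge c/2$. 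Applying this to the unit vector $(u+T_nu)/|u+T_nu|\in E_n$ for unit $u\in E$ (which is nonzero because $T_n$ is injective and $E\cap F=\{0\}$) gives $|u+T_nu|\le 2/c$, and therefore $|T_n|\le 2/c+1$ for all sufficiently large $n$.

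\textbf{Pointwise convergence.} For step (ii), I fix a unit $v\in E$ and form $v+Tv\in E_T$, which is nonzero for the same reason as above. The definition of $d_H$ supplies unit $y_n\in E_n$ close to the normalization of $v+Tv$; rescaling yields $w_n:=|v+Tv|\,y_n\in E_n$ with $w_n\to v+Tv$ in $\cB$. Writing $w_n=u_n+T_nu_n$ and applying the two bounded projections,
\[
u_n=\pi_{E//F}\,w_n\longrightarrow \pi_{E//F}(v+Tv)=v,\qquad T_nu_n=\pi_{F//E}\,w_n\longrightarrow Tv.
\]
Combining these with the decomposition
\[
T_nv-Tv=T_n(v-u_n)+(T_nu_n-Tv)
\]
and the uniform bound from step (i) forces both terms on the right to vanish.

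\textbf{Norm convergence.} Step (iii) is routine. I fix a basis $e_1,\dots,e_k$ of $E$; equivalence of norms on the $k$-dimensional space $E$ yields a constant $C$ with $|c_i|\le C$ whenever $v=\sum_ic_ie_i$ has $|v|=1$, so
\[
|T_n-T|\le C\sum_{i=1}^{k}|T_ne_i-Te_i|\longrightarrow 0
\]
by step (ii), completing the argument. The only serious content is the uniform bound in step (i), which is the place where the geometric transversality of $E_n$ and $F$ is extracted from $d_H(E_T,E_n)\to 0$.
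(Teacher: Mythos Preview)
Your proof is correct and takes a genuinely different route from the paper's. The paper reparametrizes by viewing $E_n$ as a graph over $E_T$: it sets $L_n:=(T_n-T)\circ\pi_{E//F}\colon E_T\to F$, so that $E_n=(Id+L_n)E_T$, and then uses the projection $\pi_{F//E_T}$ (onto $F$ parallel to $E_T$, not to $E$) to extract $|L_nv|$ directly from the Hausdorff distance, arriving at the quantitative bound $|L_n|\big/\bigl(|\pi_{F//E_T}|(1+|L_n|)\bigr)\le d_H(E_T,E_n)$; a single pass through $|T_n-T|\le |L_n|\cdot m(\pi_{E//F}|_{E_T})^{-1}$ then finishes. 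Your approach keeps the original splitting $E\oplus F$ throughout and follows the standard template of uniform bound, pointwise limit, finite-dimensional upgrade; it is more elementary in that it never introduces the auxiliary splitting $E_T\oplus F$ or the operator $L_n$, but it is less quantitative and leans on $\dim E<\infty$ more visibly at step~(iii). Both arguments are valid; the paper's is shorter and yields an explicit rate in terms of $d_H$.
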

\begin{proof}
	For each $n\in \N$, denote $L_n:E_T \rightarrow F$ by $L_n:= (T_n-T)\circ \pi_{E//F}$.
	Then, $E_n=(Id+L_n)E_T$ for every $n\in \N$. 
	Note that $\cB=E_T \oplus F$.
    Fix a unit vector $v \in E_T$, let $u=v+L_n v\in E_n$. 
	For every $c>1$ arbitrarily close to 1, there exists a unit vector $u'\in E_T$ such that $|u'-(u/|u|)| \leq cd_H(E_T,E_n)$. 
    Then,
	$$|L_n v|= |u| \cdot |\pi_{F//E_T}(u'-(u/|u|))|\leq (1+|L_n|) \cdot |\pi_{F//E_T}| \cdot cd_H(E_T,E_n).$$
	Therefore, we have
	$$ \dfrac{|L_n|}{|\pi_{F//E_T}|\cdot(1+|L_n|)}\leq d_H(E_T,E_n).$$
	Since $d_{H}(E_T,E_n)\rightarrow 0$, one has $|L_n|\rightarrow 0$ as $n\rightarrow \infty$.
	
	Note that $\pi_{E//F}|_{E_{T}}$ is a linear isomorphism, the minimum norm $m(\pi_{E//F}|_{E_{T}}):=\min \{|\pi_{E//F} v|:v\in E_T, |v|=1\}$ is strictly larger than $0$. 
	Then,
    $$|T_n-T|\leq |L_n|\cdot (m(\pi_{E//F}|_{E_{T}}))^{-1}.$$
	Hence, one has $|T_n-T|\rightarrow 0$ as $n\rightarrow \infty$.
\end{proof}

\subsection{Induced volumes}\label{SEC:Induced volumes}
We will give the detailed definition of induced volume in Definition \ref{Def:SRB}, which was introduced by Blumenthal and  Young  \cite[Section 3]{Young17}.

We first introduce the notion of induced volumes on finite-dimensional subspaces.
Let $E\subset \cB$ be a finite-dimensional subspace,  denote by $m_E$ the unique Haar measure on $E$ for which
$$m_E\{u\in E:|u|\leq 1\}=w_k$$ 
where $k=\dim E$ and $w_k$ is the volume of the Euclidean unit ball in $\R^{k}$. 
Note that $m_E$ depends on the choice of the norm $|\cdot|$ on $\cB$.
A measure of the form $m_E$  is called an \textit{induced volume} on $E$ from $(\cB,|\cdot|)$.

The notion of \textit{determinant} is defined by induced volume naturally.
For two Banach spaces $(\cB_1,|\cdot|_1)$, $(\cB_2,|\cdot|_2)$ and a bounded linear operator $A:\cB_1\rightarrow \cB_2$.
Let $E\subset \cB_1$ be a finite-dimensional subspace of $\cB_1$, the determinant of $A|_E$ is defined as follows:
\begin{equation*}
	\det(A|_{E}):=
	\begin{cases}
		\dfrac{m_{A(E)}(A(B_E))}{m_E(B_E)} &\dim A(E)=\dim E,\\
		0                         &\text{otherwise},
	\end{cases}	    	
\end{equation*}
where $B_E=\{v\in E: |v|_1\leq 1\}$. 
Note that $\det(A|_{E})$ depends on the choices of the norms $|\cdot|_1$, $|\cdot|_2$ on $\cB_1$ and $\cB_2$ respectively. 
Some basic properties about determinant are given in the following.

\begin{proposition}[\cite{Young17} Section 3]\label{Prop:Det}
	Let $E,F,G$ be $k$-dimensional subspaces of $\cB$, and let $A,B:\cB\rightarrow\cB$ be bounded linear maps for which $A(E)\subset F$, $B(F)\subset G$. 
	Then:
	\begin{enumerate}
		\item[(1)] $m_F(A(S))=\det(A|_E)\cdot m_E(S)$ for every Borel set $S\subset E$;
		\item[(2)] $\det(BA|_E)=\det(B|_F)\cdot\det(A|_E)$;
		\item[(3)] for each unit vector $v\in E$, there exists a unit basis $\{v_i\}^{k}_{i=1}$ of $E$ (independent of $A$) such that $v_1=v$ and
		           $$\det(A|_E)\leq k^{\frac{k}{2}} \prod_{i=1}^{k}|Av_i|;$$
		\item[(4)] if $A:E\rightarrow F$ is invertible, and there are splittings $E=E_1\oplus E_2, F=F_1\oplus F_2$  $AE_1=F_1$, $AE_2=F_2$, 
		           then there exists a constant $C_k$ (depends only on $k$) such that
		           $$\dfrac{1}{(|\pi_{F_1//F_2}|^q C_k)} \leq \dfrac{\det(A|_E)}{\det(A|_{E_1})\det(A|_{E_2})}\leq |\pi_{E_1//E_2}|^q C_k,$$
		           where $\pi_{E_1//E_2}$ is the projection from $E_1\oplus E_2$ onto $E_1$ and $q=\dim E_1$.
	\end{enumerate}
\end{proposition}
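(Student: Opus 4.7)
\textbf{Plan for Proposition \ref{Prop:Det}.} Parts (1) and (2) reduce to the uniqueness of Haar measure on a finite-dimensional vector space. For (1), if $A|_E$ fails to be injective then $A(E)$ is a proper subspace of $F$ hence $m_F$-null, and both sides of the identity vanish by the convention $\det(A|E)=0$. Otherwise $A|_E:E\to F$ is a linear bijection of $k$-dimensional spaces; the pushforward $(A|_E)_\ast m_E$ is a translation-invariant Borel measure on $F$, so by Haar uniqueness it equals a scalar multiple of $m_F$, and comparing on $B_E$ identifies the scalar with $\det(A|E)$. Part (2) then follows by applying (1) to $A$ and to $B$ in succession and cancelling $m_E$.

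For part (3), I would take an Auerbach basis $\{v_i\}_{i=1}^k$ of $E$ (existence by a standard compactness argument): these are unit vectors $|v_i|=1$ with biorthogonal unit functionals $v_i^\ast\in E^\ast$. Every $x\in B_E$ then expands as $x=\sum v_i^\ast(x)v_i$ with $|v_i^\ast(x)|\leq 1$, so $B_E$ is contained in the parallelepiped $Q=\{\sum t_iv_i:|t_i|\leq 1\}$ and hence $A(B_E)\subset A(Q)$. To estimate $m_{AE}(A(Q))$, endow $AE$ with an inner product whose unit ellipsoid $\cE$ is John's ellipsoid inside $B_{AE}$, giving $|\cdot|\leq |\cdot|_\langle\leq \sqrt{k}|\cdot|$ on $AE$; in an orthonormal basis of this inner product, the Euclidean volume of $A(Q)$ equals $2^k|\det M|$ where $M$ is the matrix of the $Av_i$'s, and Hadamard's inequality gives $|\det M|\leq \prod|Av_i|_\langle\leq k^{k/2}\prod|Av_i|$. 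Converting Euclidean volume to $m_{AE}$ (the conversion factor is controlled by $\text{Eucl}(B_{AE})\geq w_k$, since $\cE\subset B_{AE}$) and dividing by $m_E(B_E)=w_k$ yields $\det(A|E)\leq k^{k/2}\prod|Av_i|$ up to dimensional constants.

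The central device for part (4) is the linear isomorphism $\Sigma_E:E_1\oplus E_2\to E$, $(v_1,v_2)\mapsto v_1+v_2$, and its analogue $\Sigma_F$. Defining $\gamma_E,\gamma_F$ by $(\Sigma_E)_\ast(m_{E_1}\otimes m_{E_2})=\gamma_E\,m_E$ and similarly for $F$, the compatibility $A\circ\Sigma_E=\Sigma_F\circ(A|_{E_1}\oplus A|_{E_2})$ together with part (1) immediately yields
\[
\frac{\det(A|E)}{\det(A|E_1)\det(A|E_2)}=\frac{\gamma_E}{\gamma_F}.
\]
Thus it suffices to bound each $\gamma$ two-sidedly in terms of the appropriate projection norm. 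For the upper bound on $\gamma_E$ I slice $\Sigma_E^{-1}(B_E)=\{(v_1,v_2):|v_1+v_2|\leq 1\}$ over $v_1\in E_1$: each $E_2$-slice is $E_2\cap(B_E-v_1)$, whose $m_{E_2}$-measure is bounded by the central slice measure $m_{E_2}(E_2\cap B_E)=w_{k-q}$ via Brunn--Minkowski applied to the symmetric convex body $B_E$; and the set of $v_1$ with non-empty slice satisfies $|v_1|=|\pi_{E_1//E_2}(v_1+v_2)|\leq |\pi_{E_1//E_2}|$, so it lies in $|\pi_{E_1//E_2}|\,B_{E_1}$ with $m_{E_1}$-measure $|\pi_{E_1//E_2}|^q w_q$. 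Integrating gives $\gamma_E\leq |\pi_{E_1//E_2}|^q\,w_qw_{k-q}/w_k$. The lower bound on $\gamma_F$ comes from the simple inclusion $\Sigma_F^{-1}(B_F)\supset \tfrac{1}{2}B_{F_1}\times\tfrac{1}{2}B_{F_2}$, and the symmetric argument on the $F$-side (exchanging the roles) yields the lower bound in terms of $|\pi_{F_1//F_2}|^q$. Combining produces the double inequality with a universal $C_k$ absorbing $2^k,w_q,w_{k-q},w_k$.

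The main obstacle is the exponent $q$ in part (4): a naive estimate using $\Sigma_E^{-1}(B_E)\subset |\pi_{E_1//E_2}|B_{E_1}\times|\pi_{E_2//E_1}|B_{E_2}$ only gives the weaker product $|\pi_{E_1//E_2}|^q|\pi_{E_2//E_1}|^{k-q}$, and collapsing this to a single factor $|\pi_{E_1//E_2}|^q$ genuinely requires the Brunn--Minkowski principle that the central slice of a centrally symmetric convex body is the largest. A secondary difficulty is tracking that the constant $C_k$ really depends only on $k$ and not on the splitting, which is clean once one organises the estimate through the single iso $\Sigma_E$.
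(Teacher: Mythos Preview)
The paper does not give its own proof of this proposition; it is quoted from \cite{Young17}, Section~3, and used as a black box. So there is nothing in the paper to compare your argument against, and I can only comment on the soundness of your plan.

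Your treatment of (1), (2) and (4) is correct. The Haar-uniqueness argument for (1) and the cascade to (2) are exactly the right idea. For (4), the factorisation $\det(A|E)/(\det(A|E_1)\det(A|E_2))=\gamma_E/\gamma_F$ via the summing maps $\Sigma_E,\Sigma_F$ is the clean way to organise the estimate; your use of Brunn's central-section theorem to control $(m_{E_1}\otimes m_{E_2})(\Sigma_E^{-1}B_E)$ by $|\pi_{E_1//E_2}|^q\,w_qw_{k-q}$, paired with the trivial inclusion $\tfrac12 B_{F_1}\times\tfrac12 B_{F_2}\subset\Sigma_F^{-1}B_F$ on the other side, gives both inequalities with $C_k$ of order $2^k$. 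This matches what is actually needed downstream (the paper only ever uses that $C_k$ depends on $k$ alone).

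For (3) your Auerbach--John--Hadamard chain is correct in spirit, but note that it delivers $\det(A|E)\le (2^k/w_k)\,k^{k/2}\prod_i|Av_i|$ rather than the bare $k^{k/2}$ in the statement; you flag this yourself (``up to dimensional constants''). In every application within the paper (notably Proposition~\ref{Prop:DetL}, where the bound is used only to establish $\psi^+\in L^1$ and to identify an exponential growth rate) any dimension-dependent constant suffices, so this is harmless for the paper's purposes. If you want the sharp constant as written you would have to follow the specific construction in \cite{Young17} rather than the generic John-ellipsoid comparison.
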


\begin{proposition}[\cite{Young17} Proposition 3.15]\label{Prop:CofDet}
	Let $(\cB,|\cdot|)$ be a Banach space.
	For any $k\geq 1$ and any $M>1$, there exist $L,\delta$ such that, 
	for every two injective bounded linear operators $A_1,A_2:\cB \rightarrow \cB$ and two $k$-dimensional subspaces $E_1,E_2$ of $\cB$ satisfying that 
	\begin{align*}
		\max_{j=1,2}\{|A_j|, |(A_j|_{E_j})^{-1}|\}\leq M\quad \text{and} \quad \max\{ |A_1-A_2|&, d_H(E_1,E_2)\}\leq \delta,
	\end{align*}
	then we have that
	$$\Big|\log \dfrac{\det(A_1|_{E_1})}{\det(A_2|_{E_2})}\Big|\leq L(|A_1-A_2|+d_H(E_1,E_2)).$$
\end{proposition}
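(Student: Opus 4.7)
The plan is to reduce the ratio $\det(A_1|_{E_1})/\det(A_2|_{E_2})$ to a comparison on the single subspace $E_1$ by exploiting multiplicativity (Proposition \ref{Prop:Det}\,(2)), and then to bound the resulting log-ratio of two determinants on $E_1$ by a Lipschitz argument in finite dimensions.

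As the first ingredient, I would construct a bridge isomorphism $Q\colon E_2 \to E_1$. Since $\dim E_1 = k < \infty$, the subspace $E_1$ admits a closed complement $F$ in $\cB_1$; setting $Q := \pi_{E_1 // F}|_{E_2}$, the argument of Lemma \ref{Lem:ProDh} shows that for $\delta$ small enough in terms of $k$ and $M$, $Q$ is a linear isomorphism satisfying $|Q|, |Q^{-1}| \le 1 + C\, d_H(E_1,E_2)$ and $|Q^{-1}v - v| \le C\, d_H(E_1,E_2)\,|v|$ for every $v \in E_1$. Writing $B := A_2 \circ Q^{-1}\colon E_1 \to \cB_2$, one gets $|(A_1 - B)|_{E_1}| \le C(|A_1 - A_2| + d_H(E_1, E_2))$ while $B$ still satisfies $|B|, |B^{-1}| \le 2M$. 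By Proposition \ref{Prop:Det}\,(2), $\det(B|_{E_1}) = \det(A_2|_{E_2})\,\det(Q^{-1}|_{E_1})$, hence
\begin{equation*}
\log\frac{\det(A_1|_{E_1})}{\det(A_2|_{E_2})} = \log\frac{\det(A_1|_{E_1})}{\det(B|_{E_1})} + \log\det(Q^{-1}|_{E_1}).
\end{equation*}
The second summand is a volume ratio $m_{E_2}(Q^{-1}(B_{E_1}))/w_k$, and since $Q^{-1}(B_{E_1})$ lies within Hausdorff distance $O(d_H(E_1,E_2))$ of the unit ball of $E_2$, a direct convex-body volume comparison on the finite-dimensional $E_2$ bounds it by $C\, d_H(E_1,E_2)$.

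The main remaining step — and the principal obstacle — is the Lipschitz estimate
\begin{equation*}
\bigl|\log \det(A_1|_{E_1}) - \log \det(B|_{E_1})\bigr| \le C \,|(A_1 - B)|_{E_1}|
\end{equation*}
for two maps on the \emph{same} subspace $E_1$ whose norms and inverses are uniformly controlled. Because the induced-volume definition depends non-polynomially on the ambient Banach norm, I would restrict everything to the at most $2k$-dimensional ambient $V := A_1(E_1) + B(E_1) \subset \cB_2$ with the restricted norm, where John's theorem bounds the Banach-Mazur distance to the Euclidean norm by a constant depending only on $k$. Fixing the unit basis $\{v_1,\dots,v_k\}$ of $E_1$ provided by Proposition \ref{Prop:Det}\,(3), the determinants $\det(A_1|_{E_1})$ and $\det(B|_{E_1})$ become continuous functions of the $k$-tuples $(A_1 v_i)_i$ and $(B v_i)_i$ in $V$, and the uniform lower bound on the inverses (giving $\det \ge (2M)^{-k}$ via part (3) applied to $B^{-1}$, $A_1^{-1}$) keeps them bounded away from $0$. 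A mean-value argument applied to $\log\det$ on the resulting compact set of uniformly invertible $k$-tuples in $V$, with Lipschitz constant controlled through the John-ellipsoid bound, yields the required estimate. The delicate point is keeping the constant $L$ dependent only on $k$ and $M$ and not on the particular $A_1,A_2,E_1,E_2$; this is forced by using the \emph{universal} basis from Proposition \ref{Prop:Det}\,(3) and the $k$-dependent norm-equivalence constants on $V$.
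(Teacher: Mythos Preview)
The paper does not give its own proof of this proposition; it is quoted from \cite{Young17} (Proposition~3.15 there) and followed only by Remark~\ref{Rem:SOTC}. A refined version with explicit constants appears later as Lemma~\ref{Lem:CofDet}, again cited without proof. So there is nothing in this paper to compare your argument against directly.

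On the merits of your proposal: the reduction to a single domain subspace $E_1$ via a projection $Q$ is the right first move, though you should choose the complement $F$ so that $|\pi_{E_1//F}|$ is bounded by a constant depending only on $k$ (Auerbach or Kadec--Snobar), otherwise your constant $C$ in the $Q$-estimates could depend on the particular $E_1$.

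The more serious gap is in your last paragraph. When you pass to the finite-dimensional target $V=A_1(E_1)+B(E_1)$ and invoke John's theorem, you treat $\det(A_1|_{E_1})$ and $\det(B|_{E_1})$ as smooth functions of the image $k$-tuples $(A_1v_i)$, $(Bv_i)$ alone. But the induced volume $m_{A_1(E_1)}$ is normalized by the \emph{Banach} unit ball of $A_1(E_1)$, and likewise $m_{B(E_1)}$ by that of $B(E_1)$; these are different subspaces of $V$ with differently shaped unit balls. Passing to a John Euclidean norm on $V$ introduces, for each $k$-dimensional $W\subset V$, a correction factor $\lambda_W:=m_W^{\mathrm{Eucl}}(B_W^{|\cdot|_2})/w_k$, and your ratio picks up an extra factor $\lambda_{B(E_1)}/\lambda_{A_1(E_1)}$. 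Bounding $\bigl|\log(\lambda_{A_1(E_1)}/\lambda_{B(E_1)})\bigr|$ by $C\,d_H(A_1(E_1),B(E_1))$ with $C=C(k)$ is precisely the regularity of $W\mapsto m_W$ that the proposition encodes, and your mean-value argument on $k$-tuples does not touch it. This step needs a genuine argument (in \cite{Young17} it is isolated as a separate lemma on continuity of induced volumes); without it the proof is incomplete.
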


\begin{remark}\label{Rem:SOTC}
	Fix a finite-dimensional subspace $E\subset \cB$. 
 Using the same arguments as in the proof of Proposition 3.5 in \cite{Young17}, one can show that the function $T\mapsto \det(T|_E)$ is continuous on the space of all injective bounded linear operators under the strong operator topology.
\end{remark}

Let $U\subset \R^{k}$ be an open set, $\phi:U\rightarrow \cB$ be a $C^{1}$ Fr\'{e}chet embedding and $W:=\phi(U)$. 
For each Borel set $V\subset W$,  define the \textit{induced volume} $\nu_{W}$ by
$$\nu_{W}(V)=\int_{\phi^{-1}(V)}\det(D\phi(y))dy$$
where $\det(D\phi(y))$ is the determinant of the linear map $D\phi(y)$ from $\R^{k}$ with Euclidean volume onto the tangent space $T_{\phi(y)} W$ with the induced volume $m_{T_{\phi(y)} W}$. 
One can easily show that $\nu_{W}$ does not depend on the choices of $(U,\phi)$ by the change of variables formula for Lebesgue measures, and so it is well defined. 

The previous idea of defining volumes can be easily extended to injectively immersed finite dimensional submanifolds, such as unstable manifolds. 
Assume that $W,{W}'$ are two embedded $k$-dimensional submanifolds and $f:\cB\rightarrow \cB$ is a $C^1$ map that maps $W$ diffeomorphically onto ${W}'$. Then, for each $y \in {W}'$ we have
$$\dfrac{df_{\ast}(\nu_{W})}{d\nu_{{W}'}}(y)=\dfrac{1}{\det(Df(f^{-1}(y))|T_{f^{-1}(y)}W)}$$
where $f_{\ast}(\nu_{W})$ is the pushforward of the measure $\nu_{W}$ by $f$. 
		\section{Multiplicative ergodic theorem}\label{SEC:3}
	Let $T: X\rightarrow X$ be an invertible measure preserving transformation of a probability space $(X,\mu)$. 
	For a given measurable function $\psi:X \rightarrow [1,+\infty)$, we call $\psi$ is $T$-\textit{temperate} (with respect to $\mu$), if 
	$$\lim_{n\rightarrow \pm \infty} \frac{1}{n} \log\psi(T^n(x))=0
          \quad \text{for} \ \mu\text{-}a.e. \ x\in X.$$
	
	The following lemmas are well-known, see Lemmas 8 and 9 in \cite{Walters93} for the first result, and Lemma 5.4 in \cite{Young17} for the second result.
	
	\begin{lemma} \label{Lem:Tem}
		Let $T: X\to X $ be an invertible measure preserving transformation of a probability space $(X,\mu)$, and let $\psi:X\rightarrow \R$ be a measurable function. Assume that either $(\psi\circ T-\psi)^{+}$ or $(\psi\circ T-\psi)^{-}$ is integrable, then $\psi\circ T-\psi$ is integrable, and
		$$\lim_{n \rightarrow \pm \infty} \frac{1}{n} \psi\circ T^n(x)=0 \quad \mu\text{-}a.e. \ x\in X.$$
	\end{lemma}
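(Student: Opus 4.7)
The plan is to upgrade the one-sided integrability assumption on $g:=\psi\circ T-\psi$ to two-sided integrability with $\int g\,d\mu=0$, and then to apply Birkhoff's ergodic theorem. Assume without loss of generality that $g^+\in L^1(\mu)$; the other case reduces to this one by replacing $\psi$ with $-\psi$. To obtain integrability, I would approximate $\psi$ by the bounded truncations $\psi_N:=(\psi\wedge N)\vee(-N)$ and set $g_N:=\psi_N\circ T-\psi_N$. Since $\psi_N$ is bounded, $g_N\in L^1$ and $\int g_N\,d\mu=0$ by $T$-invariance of $\mu$. Because $t\mapsto (t\wedge N)\vee(-N)$ is monotone non-decreasing and $1$-Lipschitz, $g_N(x)$ has the same sign as $g(x)$ and satisfies $|g_N(x)|\leq|g(x)|$; consequently $g_N^\pm\leq g^\pm$ pointwise. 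Combined with $g_N\to g$ a.e., dominated convergence yields $\int g_N^+\to\int g^+<\infty$, hence $\int g_N^-\to\int g^+$ as well, and Fatou's lemma then produces $\int g^-\leq\int g^+<\infty$. A final DCT with dominator $|g|\in L^1$ confirms $\int g\,d\mu=0$.

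With $g\in L^1$, Birkhoff's ergodic theorem applied to $g$ gives $\frac{1}{n}\sum_{k=0}^{n-1}g\circ T^k\to\mathbb{E}(g\mid\mathcal{I})$ $\mu$-a.e., where $\mathcal{I}$ is the $T$-invariant $\sigma$-algebra. The sum telescopes to $\psi\circ T^n-\psi$, and $\psi/n\to 0$ a.e. because $\psi$ is a.e. finite, so $\frac{1}{n}\psi\circ T^n\to\mathbb{E}(g\mid\mathcal{I})$ a.e. The $n\to-\infty$ half follows identically from the identity $\psi\circ T^{-n}-\psi=-\sum_{k=1}^{n}g\circ T^{-k}$ and Birkhoff applied to the automorphism $T^{-1}$, whose invariant $\sigma$-algebra coincides with $\mathcal{I}$.

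The remaining and most delicate step is to show $\mathbb{E}(g\mid\mathcal{I})=0$ a.e., since a priori one knows only that its integral vanishes. The set $A:=\{\mathbb{E}(g\mid\mathcal{I})>0\}$ is $T$-invariant, and on $A$ one would have $\psi\circ T^n\to+\infty$; but $T$-invariance of $\mu$ forces $\mu\{\psi\circ T^n>M\}=\mu\{\psi>M\}$ for every $n$ and $M$, and the latter tends to $0$ as $M\to\infty$ because $\psi$ is a.e. finite. Applying Fatou's lemma to $\mathbf{1}_{\{\psi\circ T^n>M\}}$ then yields $\mu(A)\leq\mu\{\psi>M\}\to 0$, so $\mu(A)=0$. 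The symmetric argument, with $T$ replaced by $T^{-1}$, eliminates $\{\mathbb{E}(g\mid\mathcal{I})<0\}$. This identification is the principal obstacle: it requires controlling a possibly non-integrable $\psi$ using only measure-preservation, which rigidly fixes the distributions of the iterates $\psi\circ T^n$ and prevents escape to infinity on a set of positive measure.
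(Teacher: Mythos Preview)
Your proof is correct. The paper does not give its own argument for this lemma but simply cites it from the literature (Lemmas~8 and~9 of the Walters reference); your approach---truncation $\psi_N$ to get $g\in L^1$ with $\int g\,d\mu=0$, Birkhoff's theorem to identify $\lim \tfrac{1}{n}\psi\circ T^n$ with $\mathbb{E}(g\mid\mathcal{I})$, and the distributional rigidity $\mu\{\psi\circ T^n>M\}=\mu\{\psi>M\}$ to force this conditional expectation to vanish---is a clean self-contained proof of the cited result.
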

	\begin{lemma}\label{Lem:slowly}
		Let $T: X \to X$ be an invertible measure preserving transformation of a probability space $(X,\mu)$, and let $\psi:X \rightarrow \R$ be a $T$-temperate function.
		Then, for every $\varepsilon>0$, there exists a  measurable function ${\psi}':X \rightarrow [1,\infty)$ such that
		${\psi}'(x) \geq \psi(x)$ and
		${\psi}'(T^{\pm}x) \leq e^{\varepsilon}{\psi}'(x)$ for $\mu$-almost every  $x\in X$.
	\end{lemma}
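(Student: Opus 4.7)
The plan is to construct $\psi'$ via the standard ``slowly varying envelope'' formula
\[
\psi'(x) := \sup_{n \in \mathbb{Z}} \max\bigl(\psi(T^n x),\, 1\bigr)\, e^{-\varepsilon|n|}.
\]
This is a countable supremum of measurable functions, hence measurable. Taking $n=0$ immediately gives $\psi'(x) \geq \psi(x)$ and $\psi'(x) \geq 1$, so once finiteness is established the function lands in $[1,\infty)$ as required.

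The first step is to verify that $\psi'(x) < \infty$ for $\mu$-almost every $x$. Since $\psi$ is $T$-temperate, on a full-measure invariant set $X_0$ one has $n^{-1}\log\psi(T^n x) \to 0$ as $n \to \pm\infty$. Therefore, for each $x \in X_0$, there exists $N(x)$ so that $\log\psi(T^n x) \leq (\varepsilon/2)|n|$ for all $|n| \geq N(x)$, which forces $\max(\psi(T^n x),1)\,e^{-\varepsilon|n|} \leq e^{-(\varepsilon/2)|n|} \to 0$. Consequently the supremum is attained at some finite $n$, and $\psi'(x) < \infty$.

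Next I verify the two-sided slow variation inequality. Shifting the index,
\[
\psi'(Tx) = \sup_{n \in \mathbb{Z}} \max(\psi(T^{n+1}x),1)\, e^{-\varepsilon|n|}
         = \sup_{m \in \mathbb{Z}} \max(\psi(T^{m}x),1)\, e^{-\varepsilon|m-1|}.
\]
Using the elementary inequality $|m-1| \geq |m| - 1$ (hence $e^{-\varepsilon|m-1|} \leq e^{\varepsilon} e^{-\varepsilon|m|}$), we get $\psi'(Tx) \leq e^{\varepsilon}\psi'(x)$. The argument for $T^{-1}$ is identical with $m+1$ in place of $m-1$, using $|m+1| \geq |m| - 1$.

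There is no real obstacle here: the only subtlety is ensuring finiteness of the supremum almost everywhere, and this is exactly what the $T$-temperate hypothesis is designed to deliver. The $\max(\cdot,1)$ inside the supremum is a cosmetic fix that guarantees $\psi' \geq 1$ without altering the decay analysis, since the temperate estimate $\psi(T^n x) \leq e^{(\varepsilon/2)|n|}$ for large $|n|$ dominates the constant $1$ only after multiplication by $e^{-\varepsilon|n|}$, which still tends to $0$.
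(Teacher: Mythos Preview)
Your proof is correct and is precisely the standard ``tempering kernel'' construction. The paper does not actually supply a proof of this lemma; it merely cites Lemma~5.4 in \cite{Young17}, whose argument is essentially the one you have written down. One minor remark: you should note (as you implicitly use) that the full-measure set $X_0$ on which the temperate limit holds is $T$-invariant, so that $\psi'(T^{\pm}x)$ is also finite there and the inequality is meaningful.
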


	\textit{In the rest of this paper, we assume that $(\Phi,\mu)$ satisfies (H1)-(H3). For simplicity the discussion, we assuming $\mu$ is $\Phi$-ergodic from here through Section \ref{SEC:NON-ergodic}.}
	
	\subsection{Oseledets splitting} 
    Notice that $(\ox) \mapsto Df_{\om}(x)$ is strongly measurable.
    Specifically, for a fixed $v\in \cB$, 
    the mapping $x \mapsto Df_{\om}(x) v$ is continuous for each $\om\in \Om$.
    Moreover, the remark in the basic assumptions (see Section \ref{setting}) demonstrates that the mapping $\omega \mapsto Df_{\omega}(x)v$ is measurable for each $x\in \mathcal{B}$. 
	By applying Lemma \ref{Lem:MPolish}, it follows that $(\ox) \mapsto Df_{\omega}(x)v$ is measurable.
	
	Since $\mu$ is $\Phi$-ergodic, the function $l_{\alpha}: \Om\times \cB\rightarrow \R$ defined in (H3) is constant for $\mu$-almost everywhere.
    Write $l_{\alpha}(\ox)=l_{\alpha}$ for $\mu$-almost every $(\ox)$, and choose $\lambda_{\alpha} \in \R$ such that $0>\lambda_{\alpha}>l_{\alpha}$. 
	Then, the following theorem guarantees the existence of Lyapunov exponents of $(\Phi,\mu)$.
	\begin{theorem}[Z. Lian and K. Lu \cite{Lian10}]\label{MET}
		Let $(\Phi,\mu)$ and $\lambda_{\alpha}$ be as in above. 
		Then there exists a $\Phi$-invariant subset $\Gamma$ of $\Omega\times \cB$ and at most finitely many real numbers
		$$\lambda_1>\lambda_2>\cdots>\lambda_{r}$$ 
		with $\lambda_{r}>\lambda_{\alpha}$ such that for every $(\omega,x)\in \Gamma$, there is a unique splitting of the tangent space $\cB_x$
		$$\cB_x=E_1(\omega,x) \bigoplus \cdots E_{r}(\omega,x)  \bigoplus F(\omega,x)$$ 
		with the following properties:
		\begin{enumerate}
			\item[(a)] for $i=1,\cdots,r$, $\dim E_{i}(\omega,x)=m_i<\infty$ is finite and 
            $Df_{\omega}(x)E_{i}(\omega,x)=E_{i}(\theta(\omega),f_{\omega}(x))$, 
			for any $v\in E_{i}(\omega,x) \setminus \{0\}$, we have
			$$\lim_{n\rightarrow \pm \infty} \frac{1}{n}\log |Df_{\omega}^{n}(x)v|=\lambda_{i},$$
			where $Df_{\omega}^{-n}(x)v=(Df_{\theta^{-n}\omega}^{n}(x)|_{E_{i}(\theta^{-n}\omega,f_{\omega}^{-n}(x))})^{-1}v$ is well defined by injectivity;
			\item[(b)] $F(\omega,x)$ is closed and finite co-dimensional, satisfies that
			$Df_{\omega}(x) F(\omega,x)$ $\subset F(\theta(\omega),f_{\omega}(x))$ and 
			$$\limsup_{n\rightarrow \infty} \frac{1}{n}\log |Df_{\omega}^{n}(x)|_{F(\omega,x)}|\leq \lambda_{\alpha};$$
			\item[(c)] the mappings $(\omega,x)\mapsto E_{i}(\omega,x)$, $i=1,\cdots r$, and $(\omega,x)\mapsto F(\omega,x)$ are measurable from $\Om\times \cB$ into $\cG(\cB)$;
			\item[(d)] denote by $\pi_{i}(\omega,x)$, $\pi_{F}(\omega,x)$ the projection of $\cB_x $ onto $E_{i}(\omega,x)$ and $F(\ox)$ via the splitting, 
			then for every $\pi=\pi_{i}$, any $i$ and $\pi=\pi_{F}$ we have that $(\ox) \mapsto \pi(\ox)$ is strongly measurable and for $\mu$-almost every $(\ox)$
			$$\lim_{n\rightarrow \pm\infty} \frac{1}{n}\log |\pi(\Phi^{n}(\omega,x))|=0.$$
		\end{enumerate}
	\end{theorem}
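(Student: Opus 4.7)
The plan is to invoke the multiplicative ergodic theorem of Lian--Lu \cite{Lian10} for strongly measurable operator cocycles on a separable Banach space, applied to the derivative cocycle $Df$ over the base system $(\Omega\times\cB,\Phi,\mu)$, so the first task is to verify their hypotheses in the present setting. Strong measurability of $(\ox)\mapsto Df_\om(x)$ into $(B(\cB),SOT(\cB))$ was already established in the discussion following the basic assumptions, once Lemma \ref{Lem:MPolish} is applied to $(\ox)\mapsto Df_\om(x)v$ (measurable in $\om$, continuous in $x$). Integrability $\int \log^{+}|Df_\om|\,d\bP<\infty$ follows from (H2)(iii), since $|Df_\om|\le |f_\om|_{C^2}$ on $B(A_\om,r_0)$. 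Ergodicity of $\mu$ turns the almost-sure limit in (H3) into the constant $l_\alpha<0$; fix any $\lambda_\alpha\in(l_\alpha,0)$ to serve as the threshold separating the ``discrete'' exponents from the compact part of the spectrum.

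Next I would build the filtration and then the splitting. For each $\lambda>\lambda_\alpha$, the subspace
\[
V^{\lambda}(\ox):=\Bigl\{v\in\cB_x:\limsup_{n\to\infty}\tfrac{1}{n}\log|Df_\om^n(x)v|\leq\lambda\Bigr\}
\]
is closed, $Df_\om(x)$-invariant, and of finite codimension, because the Kuratowski bound $|Df_\om^n(x)|_\alpha\leq e^{n(l_\alpha+\vep)}$ forces all but finitely many principal directions to have growth rate $\leq l_\alpha+\vep<\lambda$. Kingman's subadditive ergodic theorem together with this finite codimension yields at most finitely many exponents $\lambda_1>\cdots>\lambda_r>\lambda_\alpha$ with finite multiplicities $m_i$, giving the filtration $V^{\lambda_1}\supset\cdots\supset V^{\lambda_r}\supset F$. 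To pass from the filtration to a direct sum decomposition, I would use invertibility of $\Phi$ together with injectivity of $Df_\om(x)$ from (H1)(ii): pulling back along $\Phi^{-n}$ the finite-dimensional complements of $V^{\lambda_{i+1}}$ in $V^{\lambda_{i}}$ and passing to the backward-invariant limit produces $E_i(\ox)$ on which the two-sided exponent is exactly $\lambda_i$, giving part (a).

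For (c), each $E_i(\ox)$ is an intersection (resp.\ image) of measurable subspace-valued maps, so measurable selection in the separable metric space $(\cG_{m_i}(\cB),d_H)$ (Lemma \ref{Lem:SPGk}) yields measurability of $(\ox)\mapsto E_i(\ox)$; the same argument handles $F$. Strong measurability of the projections in (d) follows by combining Lemmas \ref{Lem:ProDh} and \ref{Lem:CdhA} with the measurability of the direct summands. Finally, the temperedness $\tfrac{1}{n}\log|\pi(\Phi^n(\ox))|\to 0$ follows from Lemma \ref{Lem:Tem} once one verifies that $\log|\pi\circ\Phi|-\log|\pi|$ is integrable; this is controlled by the one-step distortion of the angle between $E_i$ and its complement, which in turn is controlled by $\log^{+}|Df|$ and the bounds on $|(Df|_{E_i})^{-1}|$ coming from the exponent estimates.

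The hardest step is handling the closed, infinite-codimensional piece $F(\ox)$. For the finite-dimensional summands $E_i$, classical Oseledets arguments transfer smoothly, but controlling $F$ rests entirely on the Kuratowski-measure hypothesis (H3) and Kato's perturbation theory for closed subspaces. One must verify that $F$ is truly closed, that $Df_\om(x)F(\ox)\subset F(\Phi(\ox))$, and that $\limsup_{n}\tfrac{1}{n}\log |Df_\om^n(x)|_{F(\ox)}|\le \lambda_\alpha$ (not merely $\le l_\alpha+\vep$ in some weaker sense). This requires the gap inequality \eqref{eq:gap} together with Lemma \ref{Lem:gap} to pass between $d_H$ and $\hat{G}$ when estimating how $F$ is deformed by the cocycle---and it is precisely here that the Lian--Lu construction does most of the heavy lifting that we are invoking as a black box.
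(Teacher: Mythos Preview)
The paper does not give its own proof of this theorem: it is quoted from Lian--Lu \cite{Lian10} and used as a black box. The only work the paper does here is to check that the hypotheses of \cite{Lian10} are met in the present setting, namely that $(\omega,x)\mapsto Df_\omega(x)$ is strongly measurable (done in the paragraph immediately preceding the theorem statement, via Lemma~\ref{Lem:MPolish}), that $\log^{+}|Df_\omega|$ is integrable (from (H2)(iii)), and that the Kuratowski measure condition (H3) forces only finitely many exponents above $\lambda_\alpha$. Your proposal identifies exactly these verifications and is consistent with what the paper does.

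Where you go further is in sketching the internal architecture of the Lian--Lu proof itself (the filtration $V^\lambda$, the backward-pullback construction of the $E_i$, the gap/perturbation analysis for $F$). None of that appears in the present paper, and your sketch is a reasonable outline of how such MET proofs proceed, but it is not something to be compared against here. One small caution: your justification of temperedness in (d) via Lemma~\ref{Lem:Tem} is morally right, but the integrability of $\log|\pi\circ\Phi|-\log|\pi|$ is not quite as immediate as you suggest---controlling the one-step change in projection norm by $\log^+|Df|$ alone requires an a priori bound on $|(Df|_{E_i})^{-1}|$ that is itself an output of the MET, so there is a mild circularity to watch for; the actual Lian--Lu argument handles this more carefully inside the construction.
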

	
	\begin{lemma}\label{Lem:M3}
		For every $E=E_i$, any $i$, and $E=F$ the following maps are measurable:
		\begin{enumerate}
			\item[(i)] 
			$(\omega,x)\mapsto |Df_{\omega}(x)|_{E(\omega,x)}|$,
			\item[(ii)]
			$(\omega,x)\mapsto m(Df_{\omega}(x)|_{E(\omega,x)})$ where $m(T|_{V})$ is the minimum norm,
			\item[(iii)]
			$(\omega,x)\mapsto \det(Df_{\omega}(x)|_{E(\omega,x)})$ for $E=E_i$.
		\end{enumerate}
	\end{lemma}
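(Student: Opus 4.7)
The lemma will follow from three ingredients: (a) joint measurability of $(\omega, x, v) \mapsto Df_\omega(x) v$; (b) Borel measurability of the Oseledets bundles into the Grassmannian; (c) a measurable selection of vectors inside each Oseledets subspace.

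For (a), the strong measurability of $(\omega, x) \mapsto Df_\omega(x)$ established just before this lemma gives that $(\omega, x) \mapsto Df_\omega(x) v$ is measurable for every fixed $v \in \cB$; since $v \mapsto Df_\omega(x) v$ is linear and continuous, Lemma \ref{Lem:MPolish} upgrades this to joint measurability of $(\omega, x, v) \mapsto Df_\omega(x) v$ on $\Omega \times \cB \times \cB$. For (c), Theorem \ref{MET}(c) gives that $(\omega, x) \mapsto E(\omega, x)$ is Borel measurable into the separable metric space $(\cG(\cB), d_H)$. Since $d_H$ is by definition the Hausdorff distance between unit spheres $S_E = \{v \in E : |v| = 1\}$ in $\cB$, the closed-valued multifunction $(\omega, x) \rightrightarrows S_{E(\omega, x)} \subset \cB$ is Effros-measurable: for any open $U \subset \cB$, the family $\{S \subset \cB \text{ closed} : S \cap U \neq \emptyset\}$ is open in the Hausdorff metric, and its preimage under $(\omega, x) \mapsto S_{E(\omega, x)}$ is measurable. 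The Kuratowski--Ryll-Nardzewski selection theorem then produces a countable family of measurable maps $w_n : \Omega \times \cB \to \cB$ with $w_n(\omega, x) \in S_{E(\omega, x)}$ and $\{w_n(\omega, x)\}_{n \geq 1}$ dense in $S_{E(\omega, x)}$. For each finite-dimensional $E_i$ (with dimension $m_i$ a.s.\ constant by $\Phi$-ergodicity), the same selection argument also yields measurable basis sections $v_1, \ldots, v_{m_i}$ of $E_i(\omega, x)$.

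Parts (i) and (ii) are then immediate: each $(\omega, x) \mapsto |Df_\omega(x) w_n(\omega, x)|$ is measurable as a composition of measurable maps, and density in the unit sphere together with continuity of $|Df_\omega(x)\,\cdot\,|$ yields
\begin{equation*}
|Df_\omega(x)|_{E(\omega, x)}| = \sup_{n \geq 1} |Df_\omega(x) w_n(\omega, x)|, \qquad m(Df_\omega(x)|_{E(\omega, x)}) = \inf_{n \geq 1} |Df_\omega(x) w_n(\omega, x)|,
\end{equation*}
measurable as countable sup/inf. For (iii), with $k = m_i$, injectivity of $Df_\omega(x)$ from (H1)(ii) makes $Df_\omega(x) v_1, \ldots, Df_\omega(x) v_k$ a basis of $Df_\omega(x)(E_i)$, so I write the determinant as the ratio of parallelepiped volumes
\begin{equation*}
\det(Df_\omega(x)|_{E_i}) = \frac{m_{Df_\omega(x)(E_i)}\bigl(P(Df_\omega(x) v_1, \ldots, Df_\omega(x) v_k)\bigr)}{m_{E_i}\bigl(P(v_1, \ldots, v_k)\bigr)},
\end{equation*}
where $P(u_1, \ldots, u_k) = \{\sum_j a_j u_j : 0 \leq a_j \leq 1\}$. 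The identity $m_E(P(u_1, \ldots, u_k)) = w_k / \mathrm{Leb}_{\mathbb{R}^k}\{a \in \mathbb{R}^k : |\sum_j a_j u_j| \leq 1\}$, valid for any basis $(u_j)$ of a $k$-dimensional $E \subset \cB$, expresses the parallelepiped volume as a Borel function of the basis vectors (via dominated convergence on the indicator functions), and the measurability of $\det(Df_\omega(x)|_{E_i})$ follows from (a).

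\textbf{Main obstacle.} The one delicate point is the measurable selection step, in particular for the infinite-dimensional central subspace $F$: one must pass from Borel measurability of $(\omega, x) \mapsto F(\omega, x)$ into the Grassmannian $(\cG(\cB), d_H)$ to the Effros-style measurability of the associated multifunction $(\omega, x) \rightrightarrows S_{F(\omega, x)} \subset \cB$ demanded by Kuratowski--Ryll-Nardzewski. Fortunately $d_H$ is by construction a Hausdorff-type distance on unit spheres, so this passage is essentially tautological and does not require compactness of the unit ball of $F$; otherwise one would need to invoke the gap/distance comparison from Section \ref{SEC:Gaps}.
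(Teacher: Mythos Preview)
Your argument is correct, but it takes a different route from the paper's, and the comparison is instructive.

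The paper treats the two cases separately. For $E=E_i$ it never selects vectors at all: instead it shows that each of the three scalar maps $(T,V)\mapsto |T|_V|$, $(T,V)\mapsto m(T|_V)$, $(T,V)\mapsto \det(T|V)$ is a Carath\'eodory function on $B(\cB)\times \cG_{m_i}(\cB)$ (continuous in $V$ for fixed $T$, and $SOT$-continuous in $T$ for fixed $V$, the latter for $\det$ coming from Remark \ref{Rem:SOTC}), then invokes Lemma \ref{Lem:MPolish} together with the separability of $\cG_{m_i}(\cB)$ (Lemma \ref{Lem:SPGk}) to conclude joint measurability, and composes with the measurable map $(\omega,x)\mapsto (Df_\omega(x),E_i(\omega,x))$. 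For $E=F$ the paper avoids selection theorems entirely by exploiting the strongly measurable projection $\pi_F$ supplied by Theorem \ref{MET}(d): if $\{v_n\}$ is a fixed countable dense set in $\cB$, then $\{\pi_F(\omega,x)v_n\}$ is dense in $F(\omega,x)$, and one writes the sublevel and superlevel sets of the norm and minimum norm as countable intersections of sets of the form $\{|Df_\omega(x)\pi_F(\omega,x)v_n|\le c|\pi_F(\omega,x)v_n|\}$.

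Your approach trades these two ad hoc devices for a single uniform one via Castaing-type dense selections in $S_{E(\omega,x)}$; this is cleaner conceptually and handles (i)--(ii) for both $E_i$ and $F$ in one stroke, at the cost of importing the Kuratowski--Ryll-Nardzewski machinery. (A small terminological point: KRN itself yields one selector; the countable dense family you use is the Castaing representation, which is a standard corollary.) For (iii) your explicit parallelepiped formula is also correct and more hands-on than the paper's abstract continuity argument; the paper's version has the mild advantage that it reuses Proposition \ref{Prop:CofDet}, which is needed elsewhere anyway.
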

	\begin{proof}
		Let $E=E_i$ and $m=\dim E$. 
		Consider the mappings $(T,V)\mapsto |T|_{V}|$, $(T,V)\mapsto m(T_{V})$ defined on $B(\cB)\times \cG_{m}(\cB)$, as well as the mapping $(T,V)\mapsto \det(T|V)$ defined on $B_{inj}(\cB) \times \cG_{m}(\cB)$, where $B_{inj}(\cB)$ denotes the subset of injective linear bounded operators in $B(\cB)$.
		
		Before discussing measurability, we first need to specify the $\sigma$-algebra.
        We consider the $\sigma$-algebra on $B(\cB)\times \cG_{m}(\cB)$ to be the product of the strong $\sigma$-algebra on $B(\cB)$ and the Borel $\sigma$-algbra on $\cG_{m}(\cB)$.
  
		For a fixed $T$, the mappings $V\mapsto |T|_{V}|$, $V\mapsto m(T|_{V})$ are continuous on $\cG_{m}(\cB)$, and $V\mapsto \det(T|V)$ is also continuous on $\mathcal{G}{m}(\mathcal{B})$ by Proposition \ref{Prop:CofDet}.
		For a fixed $V$, $T\mapsto |T|_{V}|$, $T\mapsto m(T|_{V})$ are continuous on $B(\cB)$ under the strong operator topology ($SOT(\cB)$), and $T \mapsto \det(T|V)$ is also continuous on $B_{inj}(\cB)$ under the $SOT(\cB)$ as stated in Remark \ref{Rem:SOTC}. 
		Note that $\cG_{m}(\cB)$ is a separable metric space (see Lemma \ref{Lem:SPGk}). 
		Then, by Lemma \ref{Lem:MPolish}, it follows that the three mappings mentioned above are measurable with respect to $(T,V)$.
		Therefore, it is sufficient to demonstrate that $(\omega,x)\mapsto (Df_{\omega}(x),E(\omega,x))$ is measurable.

        Recall that $(\omega,x)\mapsto Df_{\omega}(x)$ is strongly measurable, which also means it is measurable with respect to the strong $\sigma$-algebra on $B(\cB)$. Furthermore, by Theorem \ref{MET}, we know that $(\omega,x)\mapsto E(\omega,x)$ is measurable.
        Consequently, the mapping $(\omega,x)\mapsto (Df_{\omega}(x),E(\omega,x))$ is measurable.
		
		Let $E=F$, and let $\{v_n\}_{n=1}^{\infty}\subset \cB$ be a countable dense subset of $\cB$. 
		Then, $\{\pi_{F}(\ox) v_n\}_{n=1}^{\infty}$ forms a countable dense subset of the subspace $F(\ox)$. 
		It is worth noting that for every $c\geq 0$,
		\begin{align*}
			&\{(\ox):|Df_{\omega}(x)|_{F(\omega,x)}|\leq c\}=\bigcap_{n>0}\{(\ox):|Df_{\omega}(x)\pi_{F}(\ox) v_n|\leq c|\pi_{F}(\ox) v_n |\}\\
			&\{(\ox):m(Df_{\omega}(x)|_{F(\omega,x)})\geq c\}=\bigcap_{n>0}\{(\ox):|Df_{\omega}(x) \pi_{F}(\ox) v_n|\geq c|\pi_{F}(\ox) v_n|\}.
		\end{align*}
		By Theorem \ref{MET} (d), the mapping $(\ox)\mapsto \pi_{F}(\ox)$ is strongly measurable. 
		Then, the function
        $(\ox)\mapsto |\pi_{F}(\ox)v_n|$ is measurable for each $n$. 
		Additionally, the mapping $(A_1,A_2)\mapsto A_1A_2$ is continuous under the strong operator topology. 
		By the strongly measurable properties of $(\ox)\mapsto D\fo(\ox)$ and $(\ox)\mapsto \pi_{F}(\ox)$, it follows that  
		$(\omega,x) \mapsto Df_{\omega}(x) \pi_{F}(\ox)$ is also strongly measurable. 
		As a result, both of the sets mentioned above are measurable.
		This completes the proof.
	\end{proof}
	
	\begin{proposition}\label{Prop:DetL}
		For every $k\leq r$ and any 
		$1\leq i_1\leq \cdots \leq i_k\leq r$ 
		the mapping 
		$(\omega,x)\mapsto
		\det(Df_{\omega}(x)|\oplus_{j=1}^{k}E_{i_j}(\omega,x))$ is measurable.
		Moreover, for $\mu$-amolst every $(\omega,x)$ we have
		$$\lim_{n\rightarrow \infty} \frac{1}{n}\log
		\det(Df^{n}_{\omega}(x)|\bigoplus_{j=1}^{k}E_{i_j}(\omega,x))=\sum_{j=1}^{k}m_{i_j}\lambda_{i_j}.$$
	\end{proposition}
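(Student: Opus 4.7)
The plan is to establish the measurability assertion first and then prove the asymptotic formula in two stages: a single-space growth estimate, followed by a recombination argument based on Proposition \ref{Prop:Det}(4).

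\textbf{Measurability.} Following the scheme of Lemma \ref{Lem:M3}, it suffices to show that $(\omega,x)\mapsto (Df_\omega(x),\widetilde E(\omega,x))$ is measurable, where $\widetilde E(\omega,x):=\bigoplus_{j=1}^{k}E_{i_j}(\omega,x)$ is viewed as an element of $\cG_{d}(\cB)$ with $d=\sum_j m_{i_j}$; Proposition \ref{Prop:CofDet} together with Remark \ref{Rem:SOTC} then give joint continuity of $(T,V)\mapsto \det(T|V)$ on $(B_{inj}(\cB),SOT(\cB))\times(\cG_d(\cB),d_H)$, and Lemma \ref{Lem:MPolish} finishes the job. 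Strong measurability of $(\omega,x)\mapsto Df_\omega(x)$ has already been established, so only measurability of $\widetilde E$ remains. This follows from Theorem \ref{MET}(c): each summand $E_{i_j}$ is measurable into $\cG_{m_{i_j}}(\cB)$, and the direct-sum operation is $d_H$-continuous on the (open) set of transverse $k$-tuples, so $(\omega,x)\mapsto \widetilde E(\omega,x)$ is measurable into $\cG_d(\cB)$.

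\textbf{Single-space growth.} Fix $j$, and set $E:=E_{i_j}(\omega,x)$, $m:=m_{i_j}$. Proposition \ref{Prop:Det}(3) provides a unit basis $\{v_1,\dots,v_m\}\subset E$ with
$$\det(Df^n_\omega(x)|E)\leq m^{m/2}\prod_{l=1}^{m}|Df^n_\omega(x)v_l|.$$
Since each $v_l$ has forward Lyapunov exponent $\lambda_{i_j}$ by Theorem \ref{MET}(a), taking $\tfrac{1}{n}\log$ yields $\limsup_{n\to\infty}\tfrac{1}{n}\log\det(Df^n_\omega(x)|E)\leq m\lambda_{i_j}$. For the matching liminf, apply Proposition \ref{Prop:Det}(2) to the identity $Df^{-n}_{\Phi^n(\omega,x)}\circ Df^n_\omega(x)=\mathrm{Id}_E$ to obtain
$$\det(Df^n_\omega(x)|E)=\bigl[\det(Df^{-n}_{\Phi^n(\omega,x)}|E_{i_j}(\Phi^n(\omega,x)))\bigr]^{-1},$$
and repeat the upper-bound argument on the backward cocycle, whose exponent on $E_{i_j}$ equals $-\lambda_{i_j}$. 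This gives $\tfrac{1}{n}\log\det(Df^n_\omega(x)|E)\to m_{i_j}\lambda_{i_j}$.

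\textbf{Recombination and main obstacle.} We proceed by induction on $k$, writing $\widetilde E=E_{i_1}\oplus \widetilde E'$ with $\widetilde E':=\bigoplus_{j\geq 2}E_{i_j}$. Applying Proposition \ref{Prop:Det}(4) to $A=Df^n_\omega(x)$ with the splittings $\widetilde E(\omega,x)=E_{i_1}(\omega,x)\oplus \widetilde E'(\omega,x)$ and $\widetilde E(\Phi^n(\omega,x))=E_{i_1}(\Phi^n(\omega,x))\oplus \widetilde E'(\Phi^n(\omega,x))$ yields a two-sided bound of the form
$$\frac{1}{|\pi^n_-|^{q}C_{d}}\leq \frac{\det(Df^n_\omega(x)|\widetilde E)}{\det(Df^n_\omega(x)|E_{i_1})\cdot \det(Df^n_\omega(x)|\widetilde E')}\leq |\pi^n_+|^{q}C_{d},$$
where $\pi^n_{\pm}$ are projections associated with the source and target splittings and $q=m_{i_1}$. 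The principal technical difficulty is that $\pi^n_+$ is a projection along the image splitting of $\widetilde E(\Phi^n(\omega,x))$, which drifts along the orbit; only the temperate behaviour $\tfrac{1}{n}\log|\pi(\Phi^n(\omega,x))|\to 0$ guaranteed by Theorem \ref{MET}(d) lets us discard these factors after taking $\tfrac{1}{n}\log$. Combining this with the single-space limit from the previous paragraph and the inductive hypothesis on $\widetilde E'$ delivers $\lim_{n\to\infty}\tfrac{1}{n}\log\det(Df^n_\omega(x)|\widetilde E)=\sum_{j=1}^{k} m_{i_j}\lambda_{i_j}$.
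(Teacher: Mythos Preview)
Your measurability and recombination steps are essentially the same as the paper's, and they are fine. The gap is in the single-space lower bound.

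You write $\det(Df^n_\omega(x)|E)=\bigl[\det(Df^{-n}_{\Phi^n(\omega,x)}|E_{i_j}(\Phi^n(\omega,x)))\bigr]^{-1}$ and then say ``repeat the upper-bound argument on the backward cocycle, whose exponent on $E_{i_j}$ equals $-\lambda_{i_j}$.'' But Proposition~\ref{Prop:Det}(3) applied to the right-hand side requires a unit basis of $E_{i_j}(\Phi^n(\omega,x))$, a space that moves with $n$. The Lyapunov-exponent statement in Theorem~\ref{MET}(a) is an asymptotic as time $\to\infty$ from a \emph{fixed} base point; here the base point $\Phi^n(\omega,x)$ and the time $n$ vary together, so the MET does not apply. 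What you are tacitly using is $\liminf_n\tfrac{1}{n}\log m(Df^n_\omega(x)|_{E_{i_j}})\ge\lambda_{i_j}$, i.e.\ uniform growth on the unit sphere of $E_{i_j}(\omega,x)$, which is not part of the pointwise statement in Theorem~\ref{MET} and would need a separate argument.

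The paper avoids this difficulty by a Birkhoff trick that you omitted. Set $\psi(\omega,x)=\log\det(Df_\omega(x)|E_{i_j}(\omega,x))$, observe via Proposition~\ref{Prop:Det}(3) and (H2)(iii) that $\psi^{+}\in L^{1}(\mu)$, and apply Birkhoff's ergodic theorem to the two-sided system. Since $\psi$ is additive along orbits (Proposition~\ref{Prop:Det}(2)), the forward average is exactly $\tfrac{1}{n}\log\det(Df^n_\omega(x)|E_{i_j})$ and the backward average equals $-\tfrac{1}{n}\log\det(Df^{-n}_\omega(x)|E_{i_j}(\omega,x))$; Birkhoff guarantees both converge to the same constant $\gamma$. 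Now the backward determinant is computed at the \emph{fixed} point $(\omega,x)$, so a fixed unit basis of $E_{i_j}(\omega,x)$ together with Theorem~\ref{MET}(a) for $n\to-\infty$ gives $\gamma\ge m_{i_j}\lambda_{i_j}$, matching the upper bound. You need either this argument or an independent proof of the minimum-norm exponent to close the gap.
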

	\begin{proof}
		We follow the proof of Proposition 4.7 in \cite{Young17}.
		
		First, we will prove the case for $k=1$, using $\lambda$, $E$, and $m$ as shorthand for $\lambda_{i_1}$, $E_{i_1}$, and $m_{i_1}$, respectively, for simplicity.
		Let $\psi(\ox):=\log \det(D\fo(x)|E(\ox))$ for $(\ox)
		\in \Gamma$. 
		It follows immediately from Lemma \ref{Lem:M3} that $\psi$ is measurable. 
		By Proposition \ref{Prop:Det} (3), we have
		$$\psi(\ox)\leq \log m^{\frac{m}{2}}+m\log |\fo|_{B(A_{\omega},r_0)}|_{C^2}.$$
		Hence, by assumption (H2) (iii), we have 
		$\psi^{+}(\ox)\in L^1({u})$. 
		Using the Birkhoff ergodic theorem, there is a constant $\gamma\in [-\infty,\infty)$ such that for $\mu$-almost every $(\ox)$,
		$$ \gamma=\lim_{n\rightarrow \infty}\frac{1}{n}\sum_{i=0}^{n-1} \psi(\Phi^{i}(\ox))=\lim_{n\rightarrow \infty}\frac{1}{n}\sum_{i=0}^{n-1} \psi(\Phi^{-i}(\ox)).$$
		It suffices to show that $\gamma=m\lambda$. 
		On the one hand, by Proposition \ref{Prop:Det} (3) we can choose a unit basis $\{v_i\}_{i=1}^{m}$ for $E(\ox)$ such that 
		$$\det(Df^{n}_{\omega}(x)|E(\omega,x))\leq m^{m/2} \prod_{i=1}^{m}|Df^{n}_{\omega}(x)v_i|.$$
		By Theorem \ref{MET}, it follows that $\gamma\leq m\lambda$. 
		On the other hand, observe that
		\begin{align*}
			\sum_{i=0}^{n-1} \psi(\Phi^{-i}(\ox))&=\det(Df^{n}_{\te^{-n}\omega}(\fo^{-n}x)|E(\Phi^{-n}(\ox)))\\
			&=(\det(Df^{-n}_{\omega}(x)|E(\ox)))^{-1}.
		\end{align*}
		Similarly, we choose a unit basis $\{u_i\}_{i=1}^{m}$ for $E(\ox)$ such that 
		$$\det(Df^{-n}_{\omega}(x)|E(\omega,x))\leq m^{m/2} \prod_{i=1}^{m}|Df^{-n}_{\omega}(x)u_i|.$$
		Again, by Theorem \ref{MET}, we have  $\gamma\geq m\lambda$. 
		This established the case when $k=1$.
		
		To establish the general case, it suffices to provide a proof for $k=2$. 
		Applying Proposition \ref{Prop:Det}, we have the following inequality:
		\begin{align*}
			\dfrac{1}{C_{m_{i_1}+m_{i_2}}\cdot|\pi_{i_1}(\Phi^{n}(\ox))|^{m_{i_1}}}&\leq \dfrac{\det(Df^{n}_{\omega}(x)|E_{i_1}(\omega,x)\oplus E_{i_2}(\omega,x))}{\det(Df^{n}_{\omega}(x)|E_{i_1}(\omega,x)
				)\det(Df^{n}_{\omega}(x)|E_{i_2}(\omega,x))}\\
			&\leq C_{m_{i_1}+m_{i_2}}|\pi_{i_1}(\ox)|^{m_{i_1}}.
		\end{align*}
		Recall that Theorem \ref{MET} states that $|\pi_{i_1}|$ is $\Phi$-temperate. 
		Thus, combined with the case $k=1$, we can conclude that the case for $k=2$ is also valid.
	\end{proof}
 
    \begin{corollary} \label{Cor:InE}
    	Let $1\leq i_1\leq \cdots \leq i_k\leq r$ and let
        $E(\ox):=\bigoplus_{j=1}^{k}E_{i_j}(\omega,x)$.
        Then, we have 
    	$$\int \log \det(Df_{\omega}(x)|E(\omega,x)) d\mu<\infty \ \text{and} \ \int\log^{+}|(Df_{\omega}(x)|_{E(\omega,x)})^{-1}|d\mu<\infty.$$
    \end{corollary}
    \begin{proof}
        Let $\psi(\ox):=\log \det(Df_{\omega}(x)|E(\omega,x))$. 
        It follows immediately from Proposition \ref{Prop:DetL} that $\psi \in L^1(\mu)$.
        Let $m= \dim E(\ox)$ for $\mu$-almost every $(\ox)$.
        Then, for any $v \in E(\ox)$ with $|v|=1$, we have (see Proposition \ref{Prop:Det})
        $$\det(Df_{\omega}(x)|E(\omega,x)) \leq m^m |Df_{\omega}(x)|^{m-1} \cdot |Df_{\omega}(x)v|.$$
        This implies that 
        $$\log |(Df_{\omega}(x)|_{E(\omega,x)})^{-1}|\leq m\log m+(m-1)\log |(f_{\om}|_{B(A_{\om},r_0)})|_{C^2}-\psi(\ox).$$
        Therefore, by $\psi\in L^1(\mu)$ and the assumption (H2) (iii), we have 
        $$\int\log^{+}|(Df_{\omega}(x)|_{E(\omega,x)})^{-1}|d\mu<\infty.$$
        This completes the proof.
    \end{proof}

	\subsection{Lyapunov norms}\label{SEC:Lyapunov norms}
	Recall that $\lambda_{\alpha}<0$, it is sufficient to make a distinction between unstable, center, and stable subspaces, defined by
	\begin{align*}
		&E^{u}(\omega,x)=\bigoplus\{E_{i}(\omega,x):\lambda_i>0\}, \ 
		E^{c}(\omega,x)=\bigoplus\{E_{i}(\omega,x):\lambda_i=0\}\\
		&\text{and} \ E^{s}(\omega,x)=\bigoplus\{E_{i}(\omega,x):\lambda_i<0\}
		\bigoplus F(\omega,x).
	\end{align*} 
	Let $\pi^{\tau}_{(\omega,x)}$ be the projection onto  $E^{\tau}(\omega,x)$ according to the splitting
	$\cB_x=E^{u}(\omega,x)\oplus E^{c}(\omega,x)\oplus E^{s}(\omega,x)$ 
	for $\tau=u,c,s$ respectively, and let $\pi^{cs}_{(\ox)}=\pi^{c}_{(\ox)}+\pi^{s}_{(\ox)}$.
	
	We now introduce a new norm $\Lnorm{\cdot}{(\ox)}$ on the tangent space $\cB_x$, in which the expansions and contractions of vectors are reflected in a single time step and called \textit{Lyapunov norms}. 
	Let $\lambda^{+}=\min\{\lambda_{i}:\lambda_{i}>0\}$ and $\lambda^{-}=\max\{\lambda_{\alpha},\lambda_{i}:\lambda_{i}<0\}$. 
	Denote by $\lambda_0=\min\{\lambda^{+},-\lambda^{-}\}$, and fix $\varepsilon_0\ll \lambda_0$, let $\lambda=\lambda_0-\varepsilon_0$. 
	For each $(\omega,x)\in \Gamma$, we let 
	\begin{equation*}
		\begin{alignedat}{2}
		|u|'_{(\omega,x)}:=&\sum_{n\geq 0}\dfrac{|Df^{-n}_{\omega}(x) u|}{e^{-n\lambda}} 
			\ &\text{for}& \  u\in E^{u}(\omega,x), \\
		|v|'_{(\omega,x)}:=&\sum_{n \in \Z}\dfrac{|Df^{n}_{\omega}(x) v|}{e^{|n|\varepsilon_0}} 
			\ &\text{for}& \  v\in E^{c}(\omega,x),\\
		|w|'_{(\omega,x)}:=&\sum_{n\geq 0}\dfrac{|Df^{n}_{\omega}(x) w|}{e^{-n\lambda}} 
			\ &\text{for}& \  w\in E^{s}(\omega,x)
		\end{alignedat}
		\end{equation*}
	For each $p\in \cB$, let
	$$|p|'_{(\omega,x)}:=
	\max\{\Lnorm{\pi^{u}_{(\omega,x)}p}{(\ox)},\Lnorm{\pi^{c}_{(\omega,x)}p}{(\ox)},\Lnorm{\pi^{s}_{(\omega,x)}p}{(\ox)}\}.$$
	To estimate the difference between these new norms and the original ones, we let
	\begin{align*}
		&C_{u}(\omega,x)=\sup_{n\geq 0}\dfrac{\sup_{v\in E^{u}(\omega,x),|v|=1}|Df_{\omega}^{-n}(x) v|}{e^{-n(\lambda_0-\frac{\varepsilon_0}{2})}} \\
		&C_{c}(\omega,x)=\sup_{n\in \Z}\dfrac{\sup_{v\in E^{c}(\omega,x),|v|=1}|Df_{\omega}^{n}(x) v|}{e^{\frac{|n|\varepsilon_0}{2}}}\\
		&C_{s}(\omega,x)=\sup_{n\geq 0}\dfrac{\sup_{v\in E^{s}(\omega,x),|v|=1}|Df_{\omega}^{n}(x) v|}{e^{-n(\lambda_0-\frac{\varepsilon_0}{2})}}.
	\end{align*}
	and let 
	$$C(\ox)=\max\{C_{u}(\omega,x),C_{c}(\omega,x),C_{s}(\omega,x),|\pi^{u}_{(\omega,x)}|,|\pi^{c}_{(\omega,x)}|,|\pi^{s}_{(\omega,x)}|\}.$$
	By Theorem \ref{MET} and Lemma \ref{Lem:M3}, these functions are sll measurable and finite-valued on $\Gamma$. 
        Moreover, one can easily show that 
        $C_{\tau}$ is $\Phi$-temperate for $\tau=u,c,s$ by using Lemma \ref{Lem:Tem} and Corollary \ref{Cor:InE}.
	This together with Theorem \ref{MET} (d), we have $C$ is also $\Phi$-temperate.
	\begin{lemma}[Linear picture]\label{Lem:LinP}
		The following hold for all $(\omega,x)\in \Gamma$:
		\begin{align}
			e^{\lambda}|u|'_{(\omega,x)} \leq &|Df_{\omega}(x) u|'_{\Phi(\omega,x)} \label{eq:41};  \\
			e^{-\varepsilon_0}|v|'_{(\omega,x)}\leq &|Df_{\omega}(x) v|'_{\Phi(\omega,x)} \leq e^{\varepsilon_0} |v|'_{(\omega,x)}  \label{eq:42};\\
			&|Df_{\omega}(x) w|'_{\Phi(\omega,x)} \leq e^{-\lambda}|w|'_{(\omega,x)} \label{eq:43}.
		\end{align}
		Where $u\in E^{u}(x,h)$, $v\in E^{c}(x,h)$ and $w\in E^{s}(x,h)$. Moreover, the relations between the norms are
		\begin{equation}\label{eq:norms}
			\frac{1}{3}|\cdot|\leq |\cdot|'_{(\omega,x)}\leq 
			\frac{3}{1-e^{-\frac{\varepsilon_0}{2}}}C(\omega,x)^{2} |\cdot|
		\end{equation}
	\end{lemma}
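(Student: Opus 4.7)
The plan is to verify inequalities \eqref{eq:41}--\eqref{eq:43} by direct manipulation of the sums defining the Lyapunov norms, using the cocycle identity to shift the summation index, and then to derive the norm comparison \eqref{eq:norms} from elementary geometric-series estimates controlled by the functions $C_{\tau}$.

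For \eqref{eq:41}, starting from the definition of $\Lnorm{\cdot}{\Phi(\ox)}$ on $E^{u}(\Phi(\ox))$ and using the cocycle identity $Df^{-n}_{\te\om}(f_{\om}x)\circ Df_{\om}(x) = Df_{\om}^{-(n-1)}(x)$ on $E^{u}$, I would relabel $m=n-1$ to split off the $n=0$ boundary term. The computation yields
\begin{equation*}
\Lnorm{Df_{\om}(x)u}{\Phi(\ox)} \;=\; |Df_{\om}(x)u| + e^{\lambda}\Lnorm{u}{(\ox)},
\end{equation*}
from which \eqref{eq:41} follows by dropping the nonnegative first term. The inequality \eqref{eq:43} is perfectly analogous: reindexing the one-sided forward sum on $E^{s}$ gives $\Lnorm{Df_{\om}(x)w}{\Phi(\ox)} = e^{-\lambda}(\Lnorm{w}{(\ox)} - |w|)$, which is bounded above by $e^{-\lambda}\Lnorm{w}{(\ox)}$. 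For \eqref{eq:42}, the two-sided sum defining $\Lnorm{\cdot}{(\ox)}$ on $E^{c}$ only shifts its index by one, and the elementary bound $|m-1|\in [|m|-1,|m|+1]$ immediately produces the two-sided inequality with factors $e^{\pm \varepsilon_{0}}$.

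For \eqref{eq:norms}, I would bound each defining sum by substituting the defining inequality for $C_{\tau}(\ox)$. For $u\in E^{u}(\ox)$, using $\lambda = \lambda_{0}-\varepsilon_{0}$,
\begin{equation*}
\Lnorm{u}{(\ox)} \;\leq\; C_{u}(\ox)|u|\sum_{n\geq 0} e^{n\lambda - n(\lambda_{0}-\varepsilon_{0}/2)} \;=\; \frac{C_{u}(\ox)}{1-e^{-\varepsilon_{0}/2}}|u|,
\end{equation*}
and the analogous bounds hold on $E^{s}$ (forward sum) and on $E^{c}$ (where the two-sided sum contributes an extra factor bounded by $2$). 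Taking the maximum over $\tau=u,c,s$ and applying $|\pi^{\tau}_{(\ox)}p|\leq C(\ox)|p|$ gives the upper half of \eqref{eq:norms} with constant $3C(\ox)^{2}/(1-e^{-\varepsilon_{0}/2})$. The lower half is immediate: each defining sum dominates its $n=0$ term, so $\Lnorm{\pi^{\tau}_{(\ox)}p}{(\ox)}\geq |\pi^{\tau}_{(\ox)}p|$ for each $\tau$, and combining with $|p|\leq |\pi^{u}_{(\ox)}p| + |\pi^{c}_{(\ox)}p| + |\pi^{s}_{(\ox)}p|$ yields $|p|\leq 3\Lnorm{p}{(\ox)}$.

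The lemma presents no conceptual difficulty; the whole argument is routine Pesin-theory bookkeeping rather than an estimate with a genuine obstacle. The only place requiring care is the index-shift in Steps 1--3: because $E^{u}$ and $E^{s}$ use one-sided sums while $E^{c}$ uses a two-sided sum, the relabelling produces different boundary terms, and one must track these carefully to see where the single-step factors $e^{\pm\lambda}$ and $e^{\pm \varepsilon_{0}}$ come from. Everything else reduces to summing a geometric series of ratio $e^{-\varepsilon_{0}/2}$ and invoking the temperate nature of $C$ only implicitly through the pointwise bound $|\pi^{\tau}_{(\ox)}|\leq C(\ox)$.
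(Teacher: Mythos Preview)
Your proposal is correct; the paper omits the proof entirely (``The proof is a simple computation and we omitted it here''), and what you have written is precisely the standard Pesin-norm computation that the authors have in mind. Your index-shift identities and geometric-series bounds are all accurate, and your upper constant $2C(\ox)^{2}/(1-e^{-\varepsilon_{0}/2})$ on $E^{c}$ is in fact slightly sharper than the paper's stated $3C(\ox)^{2}/(1-e^{-\varepsilon_{0}/2})$, which is just a convenient uniform constant.
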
  
	The proof is a simple computation and we omitted it here.
	
	For each $(\omega,x)\in \Gamma$.
	Let $\widetilde{B}_{(\omega,x)}(r):=\{v\in \cB_x:\Lnorm{v}{(\ox)} \leq r\}$. Let the connecting map
	$\widetilde{f}_{(\omega,x)}:=\exp^{-1}_{f_{\omega}(x)} \circ f_{\omega}\circ \exp_{x}$,
	and for each $n\in \Z$ denote by 
	$\widetilde{f}^{n}_{(\omega,x)}:=\exp^{-1}_{f^{n}_{\omega}(x)} \circ f^{n}_{\omega}\circ \exp_{x}$.
	Recall the constant $r_0$ as in assumption (H2) (iii).
	\begin{lemma}[Nonlinear picture]\label{Lem:NlinP}
		Define a measurable function 
		${\ell}':\Gamma\rightarrow [1,\infty)$ 
		by
		$${\ell}'(\ox):=\frac{27}{1-e^{-\frac{\varepsilon_0}{2}}} C(\Phi(\ox))^{2}\cdot \max\{1,|(\fo|_{B(A_{\omega},r_0)})|_{C^2}\}.$$
		Then, for every $0<\delta<r_0$ and every $(\omega,x)\in \Gamma$ the following holds for nonlinear map
		$$\widetilde{f}_{(\omega,x)}: (\widetilde{B}_{(\omega,x)}(\delta {\ell}'(\ox)^{-1}),|\cdot|'_{(\omega,x)}) \rightarrow (\cB_{f_{\omega}(x)},|\cdot|'_{\Phi(\omega,x)}).$$
		\begin{enumerate}
			\item[(1)] $\Lip(\widetilde{f}_{(\omega,x)}-Df_{\omega}(x))\leq \delta$;
			\item[(2)] 
			the mapping $z\mapsto D\widetilde{f}_{(\omega,x)}(z)$ satisfies $\Lip (D\widetilde{f}_{(\omega,x)})\leq {\ell}'(\ox)$.
		\end{enumerate}
	\end{lemma}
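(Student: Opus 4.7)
The plan is to exploit that the exponential maps are affine, so that $\wf_{(\ox)}(v)=f_\om(x+v)-f_\om(x)$ and $D\wf_{(\ox)}(v)=Df_\om(x+v)$, reduce both estimates to $C^2$-control of $f_\om$ in the ambient norm $|\cdot|$, and then translate through the comparison \eqref{eq:norms}. As a preliminary observation, each $C_\tau(\ox)\geq 1$ from its definition (the supremum includes $n=0$), so $C(\ox)\geq 1$, hence ${\ell}'(\ox)\geq 27$. By \eqref{eq:norms} the Lyapunov ball $\wB_{(\ox)}(\delta{\ell}'(\ox)^{-1})$ is contained in the ambient ball of radius $3\delta{\ell}'(\ox)^{-1}<\delta<r_0$, so $x+v\in B(A_\om,r_0)$ throughout it and the bound $|f_\om|_{C^2}:=|(f_\om|_{B(A_\om,r_0)})|_{C^2}$ applies.

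For part (2), the $C^2$-control yields the ambient estimate $|Df_\om(x+z_1)-Df_\om(x+z_2)|\leq |f_\om|_{C^2}\,|z_1-z_2|$ on the ball. Testing against any $v\in\cB_x$ with $|v|'_{(\ox)}=1$, the lower bound of \eqref{eq:norms} gives $|v|\leq 3$ and $|z_1-z_2|\leq 3|z_1-z_2|'_{(\ox)}$, and the upper bound at $\Phi(\ox)$ converts the image back via the factor $3C(\Phi(\ox))^2/(1-e^{-\varepsilon_0/2})$; together,
\[
|(Df_\om(x+z_1)-Df_\om(x+z_2))v|'_{\Phi(\ox)}\leq \frac{27\,C(\Phi(\ox))^2}{1-e^{-\varepsilon_0/2}}|f_\om|_{C^2}\,|z_1-z_2|'_{(\ox)}\leq {\ell}'(\ox)\,|z_1-z_2|'_{(\ox)},
\]
where the last step uses $\max\{1,|f_\om|_{C^2}\}\geq|f_\om|_{C^2}$.

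For part (1), the same $C^2$-control shows that on the convex Lyapunov ball the derivative $Df_\om(x+z)-Df_\om(x)$ of $\wf_{(\ox)}-Df_\om(x)$ has ambient norm at most $|f_\om|_{C^2}\,|z|\leq 3\delta{\ell}'(\ox)^{-1}|f_\om|_{C^2}$, so the mean value theorem furnishes this same number as the ambient Lipschitz constant. One more factor of $3$ (for the input displacement) and one factor of $3C(\Phi(\ox))^2/(1-e^{-\varepsilon_0/2})$ (for the image) from \eqref{eq:norms} then upgrade this to the Lyapunov Lipschitz bound
\[
\Lip\bigl(\wf_{(\ox)}-Df_\om(x)\bigr)\leq \frac{27\,C(\Phi(\ox))^2}{1-e^{-\varepsilon_0/2}}|f_\om|_{C^2}\cdot \delta{\ell}'(\ox)^{-1}\leq \delta,
\]
again by the definition of ${\ell}'$.

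The main work is purely bookkeeping: three factors of $3$ arise from \eqref{eq:norms} — one for the ambient radius of the Lyapunov ball, one for the input displacement, and one for the image — and they must be absorbed exactly by the constant $27$ together with the $C(\Phi(\ox))^2/(1-e^{-\varepsilon_0/2})$ factor in ${\ell}'(\ox)$. The $\max\{1,|f_\om|_{C^2}\}$ is inserted precisely to force ${\ell}'(\ox)\geq 1$ and to cover the regime $|f_\om|_{C^2}<1$ uniformly, without weakening the estimates.
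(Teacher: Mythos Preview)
Your proof is correct and essentially the same as the paper's: both reduce to the $C^2$ bound on $f_\om$ in the ambient norm and then convert via \eqref{eq:norms}, picking up exactly the factor $27\,C(\Phi(\ox))^2/(1-e^{-\varepsilon_0/2})$. The only organizational difference is that the paper proves (2) first and then derives (1) directly from it via $\Lip(\wf_{(\ox)}-D\wf_{(\ox)}(0))\leq \sup_{y}|D\wf_{(\ox)}(y)-D\wf_{(\ox)}(0)|'\leq \Lip(D\wf_{(\ox)})\cdot\delta\ell'(\ox)^{-1}\leq\delta$, whereas you redo the estimate for (1) from the ambient $C^2$ bound; the arithmetic is identical.
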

	\begin{proof}
		Let $\Lnorm{\cdot}{}$ denote the operator norm of bounded linear operators with respect to the Lyapunov norms. 
		
		For any $y,z\in \wB_{(\ox)}(\delta {\ell}'(\ox)^{-1})$, we have $|y-x|\leq r_0$ and $|z-x|\leq r_0$. 
            Hence, we can estimate as follows
		\begin{align*}
			|D\wf_{(\ox)}(y)-D\wf_{(\ox)}(z)|'
			&\leq 
			\frac{9}{1-e^{-\frac{\varepsilon_0}{2}}} C(\Phi(\ox))^{2} \cdot |(\fo|_{B(A_{\omega},r_0)})|_{C^2} \cdot |z-y|\\
			&\leq \frac{27}{1-e^{-\frac{\varepsilon_0}{2}}} C(\Phi(\ox))^{2}\cdot |(\fo|_{B(A_{\omega},r_0)})|_{C^2} \cdot \Lnorm{z-y}{(\ox)}\\
			&\leq {\ell}'(\ox)\Lnorm{z-y}{(\ox)},
		\end{align*}
		Thus, we have shown that (2) holds. To prove (1), we provide the following estimate
		\begin{align*}
			\Lip(\wf_{(\ox)}-D\wf_{(\ox)}(0))
			&\leq \sup_{y\in \wB_{(\ox)}(\delta {\ell}'(\ox)^{-1})}|D(\wf_{(\ox)}-D\wf_{(\ox)}(0))(y)|'\\
			&\leq \sup_{y\in \wB_{(\ox)}(\delta {\ell}'(\ox)^{-1})}|D\wf_{(\ox)}(y)-D\wf_{(\ox)}(0)|'\\
			&\leq \Lip (D\widetilde{f}_{(\omega,x)})\cdot \Lnorm{y}{(\ox)} \leq \delta.
		\end{align*}
	    Here, we utilize item (2) in the third inequality..
	\end{proof}
        By assumption (H2) (iii) and Lemma \ref{Lem:Tem}, we have $\omega\mapsto |(\fo|_{B(A_{\omega},r_0)})|_{C^2}$ is $\theta$-temperate.
	Since $C$ is $\Phi$-temperate, the function ${\ell}'$ is also $\Phi$-temperate. 
	Let $\varepsilon_1\ll \varepsilon_0$. 
        By applying Lemma \ref{Lem:slowly}, there is a measurable function 
        $\ell:\Gamma\rightarrow [1,\infty)$  such that $\ell \geq {\ell}'$ and
	\begin{equation}\label{eq:functionl}
		\ell(\Phi^{\pm}(\ox))\leq e^{\varepsilon_1}\ell(\ox).
	\end{equation}
	By the definition of the function $\ell$, for each $(\ox)\in \Gamma$, we obtain
	\begin{equation}\label{eq:lox}
    \begin{aligned}
       &\frac{1}{3}\Lnorm{\cdot}{(\ox)}\leq |\cdot|\leq \ell(\ox)\Lnorm{\cdot}{(\ox)}, \ \text{and}\\  
       &\max\{|\pi^{u}_{(\ox)}|, \ |\pi^{cs}_{(\ox)}|,\  |(f_{\om}|_{B(A_{\om},r_0)})|_{C^2}\}\leq \ell(\ox).  
    \end{aligned}	
    \end{equation}

	Choose $0<\delta_1<r_0$ to be sufficiently small such that
	\begin{equation}\label{eq:delta1}
		e^{\varepsilon_0}+\delta_1<e^{\frac{1}{2}\lambda}<e^{\lambda}-\delta_1.
	\end{equation}
	\begin{corollary}\label{Cor:NlinP}
		Let $(\omega,x)\in \Gamma$. 
		Denote $\wf^{k}:=\wf^{k}_{(\ox)}$, $\Lnorm{\cdot}{k}:=\Lnorm{\cdot}{\Phi^k(\ox)}$, $\ell(k)=\ell(\Phi^{k}(\ox))$ and $\pi^{\tau}_{k}=\pi^{\tau}_{\Phi^k(\ox)}$ for $k\in \Z$ and $\tau=u,cs$. 
		For every $0<\delta\leq\delta_1$, if $u,v\in \cB_x$ and $n\in \N$ satisfy  
		$\Lnorm{\wf^{k}(u)}{k},\Lnorm{\wf^{k}(v)}{k} \leq \delta \ell(k)^{-1}$ for $k=0,1,\cdots,n-1$, 
		then the following statements holds:
		\begin{enumerate}
			\item[(1)] If $\Lnorm{u-v}{0}=\Lnorm{\pi^{u}(u-v)}{0}$, then for $k=1,\cdots,n$
			\begin{align*}
				\Lnorm{\wf^{k}(u)-\wf^{k}(v)}{k}&=\Lnorm{\pi^{u}_{k}(\wf^{k}(u)-\wf^{k}(v))}{k}\\
				&\geq (e^{\lambda}-\delta)^{k}\Lnorm{u-v}{0}.
			\end{align*}
			\item[(2)] 
			If $\Lnorm{\wf^{n}(u)-\wf^{n}(v)}{n}=\Lnorm{\pi^{cs}_{n}(\wf^{n}(u)-\wf^{n}(v))}{n}$, then for $k=0,1,\cdots,n-1$
			\begin{align*}
				\Lnorm{\wf^{k}(u)-\wf^{k}(v)}{k}&=  \Lnorm{\pi^{cs}_{k}(\wf^{k}(u)-\wf^{k}(v))}{k}\\
				&\leq (e^{\varepsilon_0}+\delta)^{k}\Lnorm{u-v}{0}.
			\end{align*}
		\end{enumerate}
	    Furthermore, if $E^{c}(\omega,x)=\{0\}$, then the right-hand side in item (2) is given by
	    $$\leq (e^{-\lambda}+\delta)^{k}\Lnorm{u-v}{0}.$$
	\end{corollary}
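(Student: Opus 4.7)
\textbf{Proof plan for Corollary \ref{Cor:NlinP}.} The plan is to reduce both statements to a single cone-invariance argument. Write $A_k := D\wf_k(0) = Df_{\te^k\om}(f^k_\om x)$ and $\phi_k := \wf_k - A_k$. By Lemma \ref{Lem:NlinP}(1), applied with the given $\delta \leq \delta_1$, we have $\Lnorm{\phi_k(y)-\phi_k(z)}{k+1}\leq \delta\,\Lnorm{y-z}{k}$ for any $y,z$ in the admissible ball $\wB_{\Phi^k(\ox)}(\delta\ell(k)^{-1})$. Setting $\Delta_k := \wf^k_0(u)-\wf^k_0(v)$, this yields the fundamental recursion
\begin{equation*}
\Delta_{k+1} = A_k\Delta_k + r_k, \qquad \Lnorm{r_k}{k+1}\leq \delta\,\Lnorm{\Delta_k}{k}.
\end{equation*}
Crucially, the Oseledets invariance $Df_\om(x)E^\tau(\ox) = E^\tau(\Phi(\ox))$ from Theorem \ref{MET}(a) gives $\pi^\tau_{k+1}A_k = A_k\pi^\tau_k$ for $\tau=u,cs$, so the recursion decomposes as $\pi^u_{k+1}\Delta_{k+1} = A_k\pi^u_k\Delta_k + \pi^u_{k+1}r_k$ and similarly for the $cs$ part. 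Using the linear estimates \eqref{eq:41}--\eqref{eq:43}, $A_k$ expands $E^u$ by at least $e^\lambda$ and contracts $E^{cs}$ by at most $e^{\varepsilon_0}$ (or $e^{-\lambda}$ when $E^c=\{0\}$) in Lyapunov norm, while each projection $\pi^\tau_{k+1}$ is a contraction for the max-norm.

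For part (1), I would argue by forward induction on $k$ that $\Lnorm{\pi^u_k\Delta_k}{k}\geq \Lnorm{\pi^{cs}_k\Delta_k}{k}$ and $\Lnorm{\pi^u_k\Delta_k}{k}\geq (e^\lambda-\delta)^k\Lnorm{\Delta_0}{0}$. The inductive step combines the recursion with \eqref{eq:41} to give $\Lnorm{\pi^u_{k+1}\Delta_{k+1}}{k+1}\geq e^\lambda\Lnorm{\pi^u_k\Delta_k}{k}-\delta\Lnorm{\Delta_k}{k}\geq (e^\lambda-\delta)\Lnorm{\pi^u_k\Delta_k}{k}$, while $\Lnorm{\pi^{cs}_{k+1}\Delta_{k+1}}{k+1}\leq (e^{\varepsilon_0}+\delta)\Lnorm{\pi^u_k\Delta_k}{k}$. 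Domination at step $k+1$ is then precisely the content of the choice \eqref{eq:delta1}: $e^{\varepsilon_0}+\delta < e^\lambda - \delta$. This gives the two conclusions of (1).

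For part (2), I would use the contrapositive of (1) run as a backward induction. If at some intermediate $k$ the unstable component dominated, then by exactly the step above applied forward to time $n$, the unstable component would still dominate at time $n$, contradicting the hypothesis $\Lnorm{\Delta_n}{n} = \Lnorm{\pi^{cs}_n\Delta_n}{n}$. Hence $\Lnorm{\Delta_k}{k}=\Lnorm{\pi^{cs}_k\Delta_k}{k}$ for all $k=0,\dots,n$. With this equality in hand, the recursion combined with \eqref{eq:42}--\eqref{eq:43} gives $\Lnorm{\Delta_{k+1}}{k+1}\leq (e^{\varepsilon_0}+\delta)\Lnorm{\Delta_k}{k}$, and iterating produces the claimed $(e^{\varepsilon_0}+\delta)^k$ bound. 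The sharper bound under $E^c(\ox)=\{0\}$ comes from replacing \eqref{eq:42} with \eqref{eq:43}, so that $A_k$ contracts $E^{cs}=E^s$ by $e^{-\lambda}$ rather than merely $e^{\varepsilon_0}$.

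The one subtlety to keep in mind, rather than any serious obstacle, is that the Lipschitz bound on $\phi_k$ only holds inside the ball $\wB_{\Phi^k(\ox)}(\delta\ell(k)^{-1})$; the hypothesis $\Lnorm{\wf^k_0(u)},\Lnorm{\wf^k_0(v)}{k}\leq \delta\ell(k)^{-1}$ ensures we may legitimately invoke it at every step of the induction. All other manipulations are routine max-norm arithmetic on the $E^u \oplus E^{cs}$ splitting.
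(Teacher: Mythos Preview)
Your proposal is correct and follows essentially the same approach as the paper's proof: a one-step cone-invariance estimate derived from Lemma~\ref{Lem:NlinP} and the linear bounds \eqref{eq:41}--\eqref{eq:43}, propagated by forward induction for item~(1) and by the contrapositive for item~(2), with the choice of $\delta_1$ in \eqref{eq:delta1} ensuring the cone is preserved. Your explicit notation $A_k$, $\phi_k$, $\Delta_k$ and the observation $\pi^\tau_{k+1}A_k = A_k\pi^\tau_k$ make the structure a bit more transparent, but the argument is the same.
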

    \begin{proof}
         For $u,v\in \cB_x$ with $\Lnorm{u}{0},\Lnorm{v}{0}\leq \delta \ell(0)^{-1}$. 
         We first assume that $\Lnorm{u-v}{0}=\Lnorm{\pi^{u}(u-v)}{0}$. 
         By Lemma \ref{Lem:NlinP} we have
         \begin{equation}\label{eq:21}
          	\begin{aligned}
            \Lnorm{\pi^{u}_{1}(\wf^{1} u-\wf^{1} v)}{1}\geq& \Lnorm{\pi^{u}_{1}(D\wf^{1}(0)(u-v))}{1}\\
                                                           &-\Lnorm{\pi^{u}_{1}  ((\wf^{1}-D\wf^{1}(0))u-(\wf^{1}-D\wf^{1}(0))v)}{1}\\
                                                       \geq& \Lnorm{D\wf^{1}(0)(\pi^{u}(u-v))}{1}-\delta\Lnorm{u-v}{0}\\
                                                       \geq& (e^{\lambda}-\delta)\Lnorm{u-v}{0}.
          	\end{aligned}
         \end{equation}
         Similarly, we can show
         \begin{equation}\label{eq:22}
         	\Lnorm{\pi^{cs}_{1}(\wf^{1} u-\wf^{1} v)}{1}\leq (e^{\varepsilon_0}+\delta)\Lnorm{u-v}{0}.
         \end{equation}
         By the choice of $\delta_1$, we have 
         $\Lnorm{\pi^{u}_{1}(\wf^{1} u-\wf^{1} v)}{1}\geq \Lnorm{\pi^{cs}_{1}(\wf^{1} u-\wf^{1} v)}{1}$. 
         This implies that
         $\Lnorm{\wf^{1} u-\wf^{1} v}{1}=\Lnorm{\pi^{u}_{1}(\wf^{1} u-\wf^{1} v)}{1}$ when $\Lnorm{u-v}{0}=\Lnorm{\pi^{u}(u-v)}{0}$. 
         
         Now let's assume that $u,v$ and $n$ satisfy the conditions stated in the corollary. 
         For item (1), if $\Lnorm{u-v}{0}=\Lnorm{\pi^{u}(u-v)}{0}$. 
         By induction, one can show 
         $\Lnorm{\wf^{k}(u)-\wf^{k}(v)}{k}=\Lnorm{\pi^{u}_{k}(\wf^{k}(u)-\wf^{k}(v))}{k}$ for every $k=1,\cdots n$. 
         Then, using (\ref{eq:21}) we have        
         $$\Lnorm{\wf^{k}(u)-\wf^{k}(v)}{k}\geq (e^{\lambda}-\delta)\Lnorm{\wf^{k-1}(u)-\wf^{k-1}(v)}{k-1}$$
         for every $k=1,\cdots n$. This proves item (1). 
         
         For item (2), if $\Lnorm{\wf^{n}(u)-\wf^{n}(v)}{n}=\Lnorm{\pi^{cs}_{n}(\wf^{n}(u)-\wf^{n}(v))}{n}$, 
         then we must have
         $\Lnorm{\wf^{k}(u)-\wf^{k}(v)}{k}=\Lnorm{\pi^{cs}_{k}(\wf^{k}(u)-\wf^{k}(v))}{k}$ 
         for every $k=0,\cdots n-1$. 
         Together with (\ref{eq:22}), we have
         $$\Lnorm{\wf^{k}(u)-\wf^{k}(v)}{k}\leq (e^{\varepsilon_0}+\delta)\Lnorm{\wf^{k-1}(u)-\wf^{k-1}(v)}{k-1}$$
         for every $k=1,\cdots n$. This proves item (2). 
         
         Moreover, if $E^{c}(\ox)=\{0\}$, then (\ref{eq:22}) is given by
         $$\Lnorm{\pi^{s}_{1}(\wf^{1} u-\wf^{1} v)}{1}\leq (e^{-\lambda}+\delta)\Lnorm{u-v}{0}.$$ 
         Using the same argument above one can show that
         $$\Lnorm{\wf^{k}(u)-\wf^{k}(v)}{k}=  \Lnorm{\pi^{s}_{k}(\wf^{k}(u)-\wf^{k}(v))}{k}\leq (e^{-\lambda}+\delta)^{k}\Lnorm{u-v}{0}$$
         when $\Lnorm{\wf^{n}(u)-\wf^{n}(v)}{n}=\Lnorm{\pi^{s}_{n}(\wf^{n}(u)-\wf^{n}(v))}{n}$. 
         This completes the proof.
    \end{proof}

	\subsection{Continuity of certain spaces on uniformly sets}\label{SEC:CCS}
	For $\ell_0\geq 1$, we let 
	$$\Gamma_{\ell_0}:=\{(\ox)\in \Gamma: \ell(\ox)\leq \ell_0\}$$
	and called the sets of the form $\Gamma_{\ell_0}$ as \textit{uniformly sets}. Since $\Omega \times \cB$ is a Polish space, one can assume that $\Gamma_{\ell_0}$ is compact for each $\ell_0$.
	
	In finite-dimensional case or Hilbert case, there are more general results about the continuity of Oseledets subspaces, which could reference \cite{Simion16} and \cite{Froyland18}.
	\begin{proposition}\label{Prop:CCS}
		For each $\om\in \Gamma$. 
		The mappings $x \mapsto E^{cs}(\ox)$ and $x \mapsto E^{u}(\ox)$ are continuous on $(\Gamma_{\ell_0})_{\omega}$ with respect to the $d_H$-metric on $\cG(B)$. 
		More specifically, for any $\epsilon>0$, there exists $\delta(\ell_0,\varepsilon)>0$ and ${\delta}'(\ell_0,\varepsilon,\omega)>0$
		such that for every $(\ox),(\oy)\in \Gamma_{\ell_0}$, 
		if $|x-y|\leq \delta$ then  $d_H(E^{cs}(\ox),E^{cs}(\oy))<\epsilon$, 
		if $|x-y|\leq {\delta}'$ then $d_H(E^{u}(\ox),E^{u}(\oy))<\epsilon.$
	\end{proposition}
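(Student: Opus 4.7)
The plan is to derive both continuity statements by contradiction, reducing each $d_{H}$-bound to the smallness of a ``wrong-direction'' projection. Ergodicity forces $d := \dim E^{u}(\omega,x)$ to be constant on $\Gamma$, hence $E^{cs} \in \cG^{d}(\cB)$ and $E^{u} \in \cG_{d}(\cB)$; by Lemma \ref{Lem:gap} together with \eqref{eq:gap} it suffices to control the gaps $G(E^{cs}(\omega,y),E^{cs}(\omega,x))$ and $G(E^{u}(\omega,y),E^{u}(\omega,x))$, and since for any unit $v$ in the source one has $d(v,E^{cs}(\omega,x)) \leq |\pi^{u}_{(\omega,x)}v|$ and $d(v,E^{u}(\omega,x)) \leq |\pi^{cs}_{(\omega,x)}v|$, this further reduces to bounding $|\pi^{u}_{(\omega,x)}|_{E^{cs}(\omega,y)}|$ and $|\pi^{cs}_{(\omega,x)}|_{E^{u}(\omega,y)}|$, respectively.

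First I would record uniform hyperbolic estimates on $\Gamma_{\ell_{0}}$. Combining \eqref{eq:lox} (which on $\Gamma_{\ell_{0}}$ gives the uniform equivalence $\tfrac{1}{3}\Lnorm{\cdot}{(\omega,y)} \leq |\cdot| \leq \ell_{0}\Lnorm{\cdot}{(\omega,y)}$), Lemma \ref{Lem:LinP}, and the $\Phi$-temperateness $\ell(\Phi^{k}(\omega,y)) \leq e^{|k|\varepsilon_{1}}\ell_{0}$, one obtains for every $(\omega,y) \in \Gamma_{\ell_{0}}$ and every $k \geq 0$
\begin{equation*}
|Df^{k}_{\omega}(y) u| \geq \tfrac{1}{3\ell_{0}} e^{k\lambda} \text{ for unit } u \in E^{u}(\omega,y), \qquad |Df^{k}_{\omega}(y) w| \leq 3\ell_{0}\, e^{k(\varepsilon_{0}+\varepsilon_{1})} \text{ for unit } w \in E^{cs}(\omega,y).
\end{equation*}

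For $x \mapsto E^{cs}(\omega,x)$, assume toward contradiction that along some $y_{n} \to x$ in $(\Gamma_{\ell_{0}})_{\omega}$ there are unit $w_{n} \in E^{cs}(\omega,y_{n})$ with $|\pi^{u}_{(\omega,x)} w_{n}| \geq \epsilon$. Decompose $w_{n} = w_{n}^{u} + w_{n}^{cs}$ at $(\omega,x)$; since $E^{u}(\omega,x)$ is $d$-dimensional and $|w_{n}^{u}| \leq \ell_{0}$, extract a subsequence with $w_{n}^{u} \to w^{\star}$, $|w^{\star}| \geq \epsilon$. Invariance of the splitting at $(\omega,x)$ gives $\pi^{u}_{\Phi^{k}(\omega,x)}(Df^{k}_{\omega}(x) w_{n}) = Df^{k}_{\omega}(x) w_{n}^{u}$, hence $|Df^{k}_{\omega}(x) w_{n}| \geq (\ell_{0}e^{k\varepsilon_{1}})^{-1}(3\ell_{0})^{-1}e^{k\lambda}|w_{n}^{u}|$. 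The Lyapunov-norm Lipschitz bound $\Lip(D\wf_{(\omega,x)}) \leq \ell_{0}$ from Lemma \ref{Lem:NlinP}(2), propagated along the composition defining $\wf^{k}_{(\omega,x)}$ and converted to $|\cdot|$ via \eqref{eq:lox}, yields $|Df^{k}_{\omega}(y_{n}) w_{n} - Df^{k}_{\omega}(x) w_{n}| \to 0$ as $n \to \infty$ with a rate depending only on $\ell_{0}$ and $k$. Together with $|Df^{k}_{\omega}(y_{n}) w_{n}| \leq 3\ell_{0}e^{k(\varepsilon_{0}+\varepsilon_{1})}$ and the lower bound above, passing to the limit forces an inequality that fails once $k$ is large (possible because $\varepsilon_{0},\varepsilon_{1} \ll \lambda$); this contradiction delivers an $\omega$-independent $\delta(\ell_{0},\varepsilon)$.

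For $x \mapsto E^{u}(\omega,x)$, I would run a parallel forward-iteration contradiction with unit $u_{n} \in E^{u}(\omega,y_{n})$ satisfying $|\pi^{cs}_{(\omega,x)} u_{n}| \geq \epsilon$, splitting $u_{n} = u_{n}^{u}+u_{n}^{cs}$ at $(\omega,x)$. The main obstacle, compared with the $E^{cs}$ case, is that $|Df^{k}_{\omega}(x) u_{n}^{cs}|$ admits no useful \emph{lower} bound because stable components decay exponentially; the contradiction must instead be extracted by coupling the known growth $|Df^{k}_{\omega}(y_{n}) u_{n}| \geq \tfrac{1}{3\ell_0} e^{k\lambda}$ with the already-established $E^{cs}$-continuity applied at the iterate $\Phi^{k}(\omega,\cdot)$ in the larger uniform set $\Gamma_{\ell_{0}e^{k\varepsilon_{1}}}$, forcing $Df^{k}_{\omega}(y_{n}) u_{n} \in E^{u}(\Phi^{k}(\omega,y_{n}))$ to be nearly parallel to $E^{u}(\Phi^{k}(\omega,x))$ and producing a quantitative mismatch with the decomposition $Df^{k}_{\omega}(x) u_{n} = Df^{k}_{\omega}(x) u_{n}^{u} + Df^{k}_{\omega}(x) u_{n}^{cs}$ via invariance. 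The intricate propagation of constants through this forward iteration couples the required depth $k$ with the temperate growth of $\ell$ along the orbit of $\omega$, which is the origin of the $\omega$-dependence of $\delta'(\ell_{0},\varepsilon,\omega)$.
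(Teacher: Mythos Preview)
Your argument for $E^{cs}$ is essentially correct and matches the paper's mechanism: forward-iterate, observe that a unit $w\in E^{cs}(\omega,y)$ with a definite $E^{u}(\omega,x)$-component would grow like $e^{k\lambda}$ under $Df^{k}_{\omega}(x)$ while $Df^{k}_{\omega}(y)w$ grows only like $e^{k\varepsilon_{0}}$, and close the discrepancy via $|Df^{k}_{\omega}(x)-Df^{k}_{\omega}(y)|\le C(k,\ell_{0})|x-y|$. The paper carries this out as a direct estimate rather than by contradiction, but the content is identical and yields the $\omega$-independent $\delta(\ell_{0},\epsilon)$.

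The $E^{u}$ part, however, has a genuine gap. Forward iteration cannot produce the bound you need: if $|\pi^{cs}_{(\omega,x)}u_{n}|\ge\epsilon$, then $Df^{k}_{\omega}(x)u_{n}^{cs}\in E^{cs}(\Phi^{k}(\omega,x))$ is \emph{contracted} (or grows only subexponentially), so showing that its image after $k$ steps is small says nothing about $|u_{n}^{cs}|$ at time zero. Your proposed remedy---using $E^{cs}$-continuity at the iterate to force $Df^{k}_{\omega}(y_{n})u_{n}$ nearly parallel to $E^{u}(\Phi^{k}(\omega,x))$---also fails: closeness of the $E^{cs}$-subspaces does not imply closeness of their complementary $E^{u}$-subspaces, and even if one obtains alignment of $E^{u}$ at time $k$ by comparing growth rates (which does indeed hold), this is continuity at $\Phi^{k}$, not at time $0$; there is no way to transport that conclusion backward to recover a bound on $|u_{n}^{cs}|$.

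The paper instead iterates \emph{backward} along the orbit in $A$. For unit $v\in E^{u}(\omega,y)$ it takes the preimage $v_{-n}\in E^{u}(\theta^{-n}\omega,y_{-n})$ (well defined by injectivity on $E^{u}$), with $|v_{-n}|\le 3\ell_{0}e^{-n\lambda}$, decomposes $v_{-n}=v^{u}_{-n}+v^{s}_{-n}$ at $(\theta^{-n}\omega,x_{-n})$, and bounds
\[
|v^{s}|\le |\pi^{cs}_{(\omega,x)}|\cdot|v-Df^{n}_{\theta^{-n}\omega}(x_{-n})v_{-n}|+|Df^{n}_{\theta^{-n}\omega}(x_{-n})v^{s}_{-n}|.
\]
The second term decays like $\ell_{0}^{3}e^{-n(\lambda-\varepsilon_{0}-2\varepsilon_{1})}$ (center-stable growth is beaten by the shrinkage of $|v_{-n}|$), while the first is controlled by $C(n,\ell_{0}e^{n\varepsilon_{1}})\,|x_{-n}-y_{-n}|$. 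The $\omega$-dependence of $\delta'$ arises precisely here: having fixed $n=k(\epsilon,\ell_{0})$, one makes $|x_{-k}-y_{-k}|$ small by invoking the continuity of $f^{-k}_{\omega}$ on the compact set $A_{\omega}$, and this continuity has no uniform modulus in $\omega$.
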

	\begin{proof}
		For every $n\in \N$ and every $(\ox), (\omega,y)\in \Gamma_{\ell}$.
		We first show that
		$$|Df^n_{\omega}(x)-Df^n_{\omega}(y)|\leq C(n,\ell)\cdot|x-y|.$$
		Indeed, we have
		\begin{align*}
			|Df^n_{\omega}(x)-Df^n_{\omega}(y)|
			&\leq (\prod^{n-1}_{m=0}|(f_{\te^m \omega}|_{B(A_{\te^m \om,r_0})})|_{C^2})\cdot \sum_{j=0}^{n-1}|f^{j}_{\om}x-f^{j}_{\om}y|\\
			&\leq n\ell^{2n} e^{(n^2+n)\varepsilon_1 }\cdot |x-y|=C(n,\ell)\cdot|x-y|.
		\end{align*}
		
		Assume that $(\ox),(\oy)\in \Gamma_{\ell_0}$.
		Fix a unit vector $v\in E^{cs}(\oy)$. 
		Let $v=v^s+v^u$ be the decomposition with respect to the splitting $E^{cs}(\ox)\oplus E^{u}(\ox)$. 
		For each $n\in \N$. 
		On the one hand, by Lemma \ref{Lem:LinP} and (\ref{eq:lox}) we have
		\begin{align*}
			|D\fo^{n}(x) v|&\geq |D\fo^{n}(x) v^u|-|D\fo^{n}(x) v^{cs}| \\             
			&\geq (3\ell_0)^{-1}e^{n(\lambda-\varepsilon_1)}|v^u|-3\ell_0 e^{n\varepsilon_0}\cdot|\pi^{cs}_{(\ox)}|
		\end{align*}
		On the other hand, 
		\begin{align*}
			|D\fo^{n}(x) v|&\leq 
			|D\fo^{n}(y)v|+|(D\fo^{n}(x)-D\fo^{n}(y))v|\\         &\leq 3\ell_0e^{n\varepsilon_0}+C(n,\ell_0)|x-y|.
		\end{align*}
		Recall that $\max\{|\pi^{u}_{(\omega,z)}|,|\pi^{cs}_{(\omega,z)}|\}\leq \ell(\omega,z)$ for every $(\omega,z)\in \Gamma$.
		Therefore, we have that
		$$|v^u|\leq (3\ell_0)^2(1+\ell_0)e^{-n(\lambda-\varepsilon_0-\varepsilon_1)}+3\ell_0 e^{-n\lambda}C(n,\ell_0)|x-y|.$$
		For each $\epsilon>0$, take ${n}':={n}'(\ell_0,\epsilon)$ large enough so that $(3\ell_0)^2(1+\ell_0)e^{-{n}'(\lambda-\varepsilon_0-\varepsilon_1)}<\frac{\epsilon}{4}$,
		and take $\delta:=\delta(\ell_0,\epsilon)$ small enough such that $3\ell_0 e^{-{n}'\lambda}C({n}',\ell_0)\delta< \frac{\epsilon}{4}$.
		Then, by the definition of the gap $G(\cdot,\cdot)$ we have 
		$G(E^{cs}(\oy),E^{cs}(\ox))\leq \frac{\epsilon}{2}$ when $|x-y|\leq \delta$.
		Symmetrically, replacing $(x,y)$ by $(y,x)$ one has $G(E^{cs}(\ox),E^{cs}(\omega,y))<\frac{\epsilon}{2}$. 
		So, we get that $d_{H}(E^{cs}(\ox),E^{cs}(\omega,y))<\epsilon$ when $|x-y|\leq \delta$.
		
		Note that we cannot iterate backwards to estimate $d_H(E^{u}(\ox),E^{u}(\oy))$.
		Fix a unit vector $v\in E^{u}(\omega,y)$, and decompose it as $v=v^u+v^s$  with respect to the splitting $E^{u}(\ox)\oplus E^{cs}(\ox)$. 
		Denote by $x_{-n}$ and $y_{-n}$ the points $f^{-n}_{\omega}(x)$ and $f^{-n}_{\omega}(y)$ respectively, and let $v_{-n}$ be the unique vector in $E^{u}(\te^{-n}\omega,y_{-n})$ such that $Df^n_{\te^{-n}\omega}(y_{-n}) v_{-n}=v$. 
		We decompose $v_{-n}$ by
		$$v_{-n}=v^u_{-n}+v^s_{-n}\in E^{u}(\te^{-n} \omega,x_{-n})\oplus E^{cs}(\te^{-n}\omega,x_{-n}).$$
		By using (\ref{eq:lox}) and Lemma \ref{Lem:LinP}, we have $|v_{-n}|\leq 3\ell_0e^{-n\lambda}$. 
		We can bound $v^s$ as follows:
		$$|v^s|\leq |v^s-Df^n_{\te^{-n}\omega}(x_{-n}) v^s_{-n}|+|Df^n_{\te^{-n}\omega}(x_{-n}) v^s_{-n}|.$$
		The first term can be upper-bounded as
		$$\leq |\pi^{cs}_{(\omega,x)}|\cdot |v-Df^n_{\te^{-n}\omega}(x_{-n}) v_{-n}|\leq 3\ell^{2}_0 e^{-n\lambda} C(n,\ell_0e^{n\varepsilon_1})|x_{-n}-y_{-n}|.$$
		The second term has the following estimation:
		\begin{align*}
			|Df^n_{\te^{-n}\omega}(x_{-n}) v^s_{-n}|&\leq 3\ell_0e^{n(\varepsilon_0+\varepsilon_1)} |v^s_{-n}|\\
			&\leq3\ell_0 e^{n(\varepsilon_0+\varepsilon_1)}|\pi^{cs}_{\Phi^{-n}(\ox)}|\cdot|v_{-n}|\\
			&\leq9\ell_0^{3}e^{-n(\lambda-\varepsilon_0-2\varepsilon_1)}. 
		\end{align*}
		For each $\epsilon>0$,  choose $k:=k(\epsilon,\ell_0)$ large enough such that the second term is upper-bounded by $\epsilon/4$. 
		Therefore, by the definition of $G(\cdot,\cdot)$ we have
		$$G(E^u(\oy),E^u(\ox))\leq \frac{\epsilon}{4}+ 3\ell^{2}_0 e^{-k\lambda}C(k,\ell_0e^{k\varepsilon_1})\cdot |x_{-k}-y_{-k}|.$$
		Choose ${\delta}'':={\delta}''(\epsilon,\ell_0)$ to be sufficiently small such that $G(E^u(\oy),E^u(\ox))< \frac{\epsilon}{2}$ when $|x_{-k}-y_{-k}|<{\delta}''$.  
		Symmetrically, replacing $(x,y)$ by $(y,x)$ we have $G(E^u(\ox),E^u(\omega,y))<\frac{\epsilon}{2}$. 
            Therefore, we obtain $d_{H}(E^u(\ox),E^u(\omega,y))< \epsilon$ when $|x_{-k}-y_{-k}|<{\delta}''$.
		
		To ensure that  $|x_{-k}-y_{-k}|<{\delta}''$, we use the fact that $f^{-k}_{\omega}$ continuous on the compact subset $A_{\omega}$. 
		Therefore, we choose a real number ${\delta}'(\epsilon,\ell_0,\omega)$ to be sufficiently small such that for every $x,y\in A_{\om}$, 
            if $|x-y|\leq {\delta}'$ then $ |x_{-k}-y_{-k}|<{\delta}''$. 
		This completes the proof.
	\end{proof}
		\section{Nonuniform hyperbolic theory}\label{SEC:4}
	\subsection{Unstable manifolds}\label{SEC:4.1}
	For each $(\omega,x)\in \Gamma$.
	Let $\widetilde{B}^{\tau}_{(\omega,x)}(r):=\{v\in E^{\tau}(\omega,x):{|v|}'_{(\omega,x)}\leq r\}$ for $\tau=u,cs$ and $r>0$. 
	The definition of the Lyapunov norms implies that $\widetilde{B}_{(\ox)}(r)=\widetilde{B}^{u}_{(\ox)}(r)+\widetilde{B}^{cs}_{(\ox)}(r)$.
	
	\begin{theorem}[Local unstable manifolds] \label{Thm:Unstable}
		There exists ${\delta}'_1>0$ such that, ${\delta}'_1<\delta_1$ (where $\delta_1$ as in Corollary \ref{Cor:NlinP}) 
		and for every $\delta<{\delta}'_1$ there exists a unique family of continuous maps 
		$$\{g_{(\omega,x)}:\widetilde{B}^{u}_{(\omega,x)}(\delta \ell(\omega,x)^{-1})\rightarrow \widetilde{B}^{cs}_{(\omega,x)}(\delta \ell(\omega,x)^{-1})  \}_{(\omega,x)\in\Gamma}$$ 
		satisfied that
		$$g_{(\omega,x)}(0)=0 \ \text{and} \ \widetilde{f}_{(\omega,x)}\graph(g_{(\omega,x)})\supset \graph(g_{\Phi(\omega,x)}).$$
		With respect to the $\Lnorm{\cdot}{(\ox)}$ norms on $\widetilde{B}_{(\ox)}(\delta \ell(\ox)^{-1})$, the family $\{g_{(\omega,x)}\}_{(\omega,x)\in \Gamma}$ has the following additional properties:
		\begin{enumerate}
			\item[(1)] $g_{(\omega,x)}$ is $C^{1+\Lip}$ Fr\'{e}chet differentiable, with $Dg_{(\omega,x)}(0)=0$;
			\item[(2)] $\Lip g_{(\omega,x)}\leq 1/10$ and $\Lip(Dg_{(\omega,x)})\leq C_0 \ell(\omega,x)$ where $C_0> 0$ is independent of $(\omega,x)$; 
			\item[(3)] if $u_i\in \graph(g_{(\omega,x)})$ and $\widetilde{f}_{(\omega,x)}u_i\in \graph(g_{\Phi(\omega,x)})$ for $i=1,2$, then
			\begin{align*}
				\Lnorm{ \wf_{(\omega,x)}(u_1)-\wf_{(\omega,x)}(u_2) }{\Phi(\omega,x)}
				\geq (e^\lambda-\delta) \Lnorm{u_1-u_2}{(\omega,x)}
			\end{align*}
		\end{enumerate}
	\end{theorem}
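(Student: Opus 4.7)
I would use the classical Hadamard--Perron graph transform carried out in the Lyapunov charts of Section \ref{SEC:Lyapunov norms}. Fix $(\ox) \in \Gamma$ and abbreviate $\ell_k := \ell(\Phi^{-k}(\ox))$, $R_k := \delta \ell_k^{-1}$, $\wf_k := \wf_{\Phi^{-k}(\ox)}$, $\pi^\tau_k := \pi^\tau_{\Phi^{-k}(\ox)}$, and $E^\tau_k := E^\tau(\Phi^{-k}(\ox))$. Choose ${\delta}'_1 \in (0,\delta_1)$ so small that for every $\delta < {\delta}'_1$ one has $(e^{\varepsilon_0}+\delta)e^{\varepsilon_1} < e^{\lambda/2} < (e^\lambda - \delta)e^{-\varepsilon_1}$; together with the tempering property \eqref{eq:functionl}, this forces $(e^{\varepsilon_0}+\delta)R_{k+1} \le R_k$ and $(e^\lambda-\delta)R_{k+1} \ge R_k$, which is exactly what is needed to iterate between consecutive Lyapunov balls.

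\textbf{Graph transform.} For each $k \ge 0$ let $\mathcal{G}_k$ be the space of maps $h : \wB^u_{\Phi^{-k}(\ox)}(R_k) \to \wB^{cs}_{\Phi^{-k}(\ox)}(R_k)$ with $h(0) = 0$ and $\Lip h \le 1/10$ in the Lyapunov norm; each $\mathcal{G}_k$ is complete in the sup metric. Using Lemma \ref{Lem:NlinP} together with Corollary \ref{Cor:NlinP}(1), I would verify that $\pi^u_k \circ \wf_{k+1}$ restricted to $\graph(h_{k+1})$ is a bi-Lipschitz injection whose image contains $\wB^u_{\Phi^{-k}(\ox)}(R_k)$, so there is a unique $h_k$ whose graph equals $\wf_{k+1}(\graph(h_{k+1})) \cap (\pi^u_k)^{-1}(\wB^u_{\Phi^{-k}(\ox)}(R_k))$. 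A standard cone-field computation yields
\begin{equation*}
\Lip h_k \le \frac{(e^{\varepsilon_0}+\delta)\Lip h_{k+1} + \delta}{(e^\lambda-\delta) - \delta\,\Lip h_{k+1}} \le \tfrac{1}{10},
\end{equation*}
and the range condition is guaranteed by $(e^{\varepsilon_0}+\delta)R_{k+1} \le R_k$; hence $\mathcal{T}(h_{k+1}) := h_k$ defines an operator $\mathcal{T} : \mathcal{G}_{k+1} \to \mathcal{G}_k$.

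\textbf{Convergence, invariance, and expansion.} Starting from the zero section at depth $n$, set $g^{(n)} := \mathcal{T}^n(0) \in \mathcal{G}_0$. Corollary \ref{Cor:NlinP}(2) gives that each application of $\mathcal{T}$ contracts the sup-distance between graphs by a factor at most $(e^{\varepsilon_0}+\delta)/(e^\lambda-\delta) < e^{-\lambda/2}$, so $\{g^{(n)}\}$ is Cauchy and converges to the desired $g_{(\ox)} \in \mathcal{G}_0$. Performing the construction at $\Phi(\ox)$ and comparing backward iterates yields the invariance $\wf_{(\ox)}(\graph(g_{(\ox)})) \supset \graph(g_{\Phi(\ox)})$, and uniqueness in the admissible class follows from the same contraction. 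Property (3) is an immediate consequence of Corollary \ref{Cor:NlinP}(1), since the $1/10$ Lipschitz bound forces the difference of two points on $\graph(g_{(\ox)})$ to sit in a strictly unstable cone.

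\textbf{Smoothness and the main obstacle.} To obtain $C^{1+\Lip}$ regularity I would run a parallel fixed point argument on the Grassmannian bundle of candidate tangent planes, parametrising each plane as the graph of an operator $E^u_k \to E^{cs}_k$. Lemma \ref{Lem:NlinP}(2) furnishes $\Lip(D\wf_k) \le \ell_k$, and the induced cocycle on planes inherits the same contraction rate $e^{-\lambda/2}$, producing a Lipschitz section whose value at $0$ gives $Dg_{(\ox)}$, with $Dg_{(\ox)}(0) = 0$ by invariance of $E^u_0$ under the linearisation at $x$. Summing a geometric series in $e^{-\lambda/2}e^{\varepsilon_1}$ yields $\Lip(Dg_{(\ox)}) \le C_0 \ell(\ox)$, where $C_0$ depends only on $\lambda, \varepsilon_0, \varepsilon_1$. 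This uniformity of $C_0$ is the main technical obstacle: it requires the tempering bound \eqref{eq:functionl} to offset the non-uniform Lipschitz constant $\ell_k$ of $D\wf_k$, and the transition from Lyapunov-norm bounds back to the $|\cdot|_x$-norm statements in (1)--(2) demands some additional care. The earlier contraction mapping arguments are comparatively routine once the Lyapunov-chart apparatus is in place.
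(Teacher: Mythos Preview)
The paper does not give its own proof of Theorem \ref{Thm:Unstable}; immediately after Remark \ref{Rem:mofg} it simply refers to the literature (Lian--Lu \cite{Lian10}, Varzaneh--Riedel \cite{Varzaneh21}, Li--Shu \cite{Li13}, Blumenthal--Young \cite{Young17,Young16}) and then records the graph-transform Lemma \ref{Lem:Gtran}, quoted from \cite[Lemma 6.2]{Young17}, as the key ingredient. Your proposal is precisely the standard Hadamard--Perron graph transform carried out in the Lyapunov charts of Section \ref{SEC:Lyapunov norms}, which is exactly the method those references employ; in particular your operator $\mathcal{T}$ coincides with the $\cT_{(\ox)}$ of Lemma \ref{Lem:Gtran}, and your contraction rate $(e^{\varepsilon_0}+\delta)/(e^\lambda-\delta)$ is the constant $c$ appearing there. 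So your sketch is correct and aligned with the cited approach.

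One minor remark: the phrase ``$|\cdot|_{x}$ norms'' in the theorem statement is a typographical slip for the Lyapunov norms $|\cdot|'_{(\ox)}$ --- this is clear from how properties (1)--(3) are used downstream (e.g.\ in Lemma \ref{Lem:Distortion} and the proof of Lemma \ref{Unstacks}, where all Lipschitz bounds are manipulated in the adapted norm). There is therefore no separate passage back to the ambient norm to arrange, and your concern about that step can be dropped.
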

	\begin{remark}\label{Rem:mofg}
		The measurability of the mapping $(\ox)\mapsto E^{u}(\ox)$ allows us to can find a measurable basis $\{e_i{(\ox)}\}_{i=1}^{\dim E^{u}}$ that spans $E^{u}(\ox)$ (see \cite[Corollary 7.3]{Lian10}). 
		Moreover, for any fiexd $v=(v_i)_{i=1}^{\dim E^{u}}\in \R^{\dim E^{u}}$, define
		$$v_{\ox}=:\sum_{i=1}^{\dim E^{u}} v_i e_i{(\ox)}.$$
		Then, by \cite[Remark 9.15]{Lian10}, the mapping $(\ox)\mapsto g_{(\omega,x)}(v_{\ox})$ is measurable on the measurable subset 
		$\{(\ox):\Lnorm{v_{\ox}}{(\ox)}\leq \delta \ell(\ox)^{-1}\}$.
	\end{remark}
    These results are well-known for finite-dimensional systems. 
    For infinite-dimensional random dynamical systems, the stable and unstable manifolds theorem has been proven in \cite{Varzaneh21} for fields of Banach spaces, in \cite[Chapter 9]{Lian10} for Banach spaces, and in \cite{Li13} for Hilbert spaces. They are also mentioned in \cite{Young17} and \cite{Young16} for certain maps on a Banach space.
    
	For each $(\omega,x)\in \Gamma$. Let $\wW_{\delta}^{u}(\omega,x)=\graph(g_{(\omega,x)})$ and define the \textit{local unstable manifold} at $(\ox)$ by 
	$$W_{\delta}^{u}(\ox)=\exp_x \wW_{\delta}^{u}(\ox).$$ 
	The \textit{global unstable manifold} at $(\ox)$ defined to be
	$$W^{u}(\ox)=\bigcup_{n\geq 0}f^{n}_{\theta^{-n}(\omega)} W_{\delta}^{u}(\Phi^{-n}(\ox)).$$
	
	\begin{remark}\label{Rem:ivinm}
		Let $(\ox)\in \Gamma$ and $0<\delta<\delta_1'$, it is possible that $W_{\delta}^{u}(\ox)\subset A_{\om}$. 
        However, by Theorem \ref{Thm:Unstable}, we have $W_{\delta}^{u}(\ox)\subset f^n_{\te^{-n}\om} W_{\delta}^{u}(\Phi^{-n}(\ox))$ for each $n>0$. 
        Hence, for every $y\in W_{\delta}^{u}(\ox)$, we denote by $f^{-n}_{\om}(y) $
		the unique point in $W_{\delta}^{u}(\Phi^{-n}(\ox))$ such that $f^{n}_{\te^{-n}(\om)} (f^{-n}_{\om}(y))=y$.
	\end{remark}

	The key point in proving the unstable manifolds theorem is the result of graph transforms. 
    Let $(\ox)\in \Gamma$ and  $0<\delta \leq {\delta}'_1$. 
    In the following statements, the term "$\Lip$" refers to the Lipschitz constant with respect to the $\Lnorm{\cdot}{(\ox)}$ norm.
	Let
	$$\cW(\ox):=\{g:\widetilde{B}^{u}_{(\ox)}(\delta \ell(\ox)^{-1})\rightarrow E^{cs}(\ox) \ \text{with} \ g(0)=0, \ \Lip(g) \leq \frac{1}{10}\}.$$
	For any $g \in \cW(\ox)$, the graph transform of $g$ is denote by $\cT_{(\ox)}(g)$, which is a map $\cT_{(\ox)}(g): \widetilde{B}^{u}_{\Phi(\ox)}(\delta \ell(\Phi(\ox))^{-1}) \rightarrow E^{cs}(\Phi(\ox))$ satisfying
	$$\wf_{(\omega,x)}(\graph(g)) \supset \graph(\cT_{(\ox)}(g)).$$
	\begin{lemma}[\cite{Young17} Lemma 6.2]\label{Lem:Gtran}
		Let $(\omega,x)\in \Gamma$,
		\begin{enumerate}
			\item[(1)] for every $g \in \cW(\ox)$, $\cT_{(\ox)}(g)$ is well defined and belong to $\cW(\Phi(\ox))$.
			\item[(2)] there exists a constant $c \in (0,1)$ independent of $(\ox)$ such that for all $g_1, g_2 \in \cW(\ox)$
			$$|||\cT_{(\ox)}(g_1)-\cT_{(\ox)}(g_2) |||_{\Phi(\ox)}\leq c|||g_1-g_2|||_{(\ox)} $$
			where 
			$$|||g|||_{(\ox)}=\sup_{v\in \widetilde{B}^{u}_{(\ox)}(\delta \ell(\ox)^{-1})\setminus \{0\}} \dfrac{\Lnorm{g(v)}{(\ox)}}{\Lnorm{v}{(\ox)}}.$$
		\end{enumerate}
	\end{lemma}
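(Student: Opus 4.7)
The plan is to run the Hadamard graph transform argument in the Lyapunov coordinates constructed in Section \ref{SEC:Lyapunov norms}. Throughout, abbreviate $\wf:=\wf_{(\ox)}$, $D_0:=D\wf(0)=Df_\om(x)$, and $R:=\wf-D_0$; by Lemma \ref{Lem:NlinP}(1) one has $\Lip(R)\le \delta$ in the Lyapunov norm on the ball of radius $\delta\ell(\ox)^{-1}$. The linear facts I will repeatedly use are that $D_0$ preserves the splitting ($D_0 E^u(\ox)=E^u(\Phi(\ox))$ with equality by finite-dimensionality and injectivity of $Df_\om(x)$, and $D_0 E^{cs}(\ox)\subseteq E^{cs}(\Phi(\ox))$), that $\Lnorm{(D_0|_{E^u(\ox)})^{-1}}{}\le e^{-\lambda}$ and $\Lnorm{D_0|_{E^{cs}(\ox)}}{}\le e^{\varepsilon_0}$ by Lemma \ref{Lem:LinP}, and that the Lyapunov operator norms of $\pi^u_{\Phi(\ox)}$ and $\pi^{cs}_{\Phi(\ox)}$ both equal $1$ by the very definition of the Lyapunov norm.

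For part (1), given $g\in \cW(\ox)$ I would introduce
\[
F_g(v):=\pi^u_{\Phi(\ox)}\,\wf(v+g(v)),\qquad v\in \wB^u_{(\ox)}(\delta\ell(\ox)^{-1}).
\]
Decomposing $\wf=D_0+R$ and using that $\pi^u_{\Phi(\ox)}D_0(v+g(v))=D_0 v$ since $D_0$ preserves the splitting, one obtains $\Lnorm{F_g(v_1)-F_g(v_2)}{\Phi(\ox)}\ge (e^\lambda-\delta)\Lnorm{v_1-v_2}{(\ox)}$, so $F_g$ is injective with a bi-Lipschitz inverse. Since $F_g(0)=0$ and (\ref{eq:delta1}) gives $e^\lambda-\delta\ge e^{\lambda/2}>e^{\varepsilon_1}$, the tempered bound (\ref{eq:functionl}) yields $\delta\ell(\Phi(\ox))^{-1}\le e^{\varepsilon_1}\delta\ell(\ox)^{-1}\le (e^\lambda-\delta)\delta\ell(\ox)^{-1}$, so the image of $F_g$ covers $\wB^u_{\Phi(\ox)}(\delta\ell(\Phi(\ox))^{-1})$. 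I define $\cT_{(\ox)}(g)(u):=\pi^{cs}_{\Phi(\ox)}\,\wf(v_u+g(v_u))$ with $v_u:=F_g^{-1}(u)$. Then $\cT(g)(0)=0$ (since $\wf(0)=0$), and using $\pi^{cs}D_0 v=0$ for $v\in E^u$, the bound $\Lip(g)\le 1/10$, and $\Lip(R)\le \delta$, a direct computation gives $\Lip(\cT(g))\le (e^{\varepsilon_0}/10+\delta)/(e^\lambda-\delta)$, which is $\le 1/10$ after possibly shrinking ${\delta}'_1$; hence $\cT(g)\in \cW(\Phi(\ox))$.

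For part (2), fix $g_1,g_2\in \cW(\ox)$ and $u\in \wB^u_{\Phi(\ox)}(\delta\ell(\Phi(\ox))^{-1})$; set $v_i:=F_{g_i}^{-1}(u)$ and $p_i:=v_i+g_i(v_i)$. Since $\pi^u_{\Phi(\ox)}(\wf(p_1)-\wf(p_2))=0$, Corollary \ref{Cor:NlinP}(2) at $n=1$ applies and gives
\[
\Lnorm{\cT(g_1)(u)-\cT(g_2)(u)}{\Phi(\ox)}=\Lnorm{\wf(p_1)-\wf(p_2)}{\Phi(\ox)}\le (e^{\varepsilon_0}+\delta)\Lnorm{p_1-p_2}{(\ox)}.
\]
The crucial step is to bound $\Lnorm{p_1-p_2}{(\ox)}$: from the identity $D_0(v_1-v_2)=-\pi^u_{\Phi(\ox)}(R(p_1)-R(p_2))$ one gets $\Lnorm{v_1-v_2}{(\ox)}\le \delta e^{-\lambda}\Lnorm{p_1-p_2}{(\ox)}$; combining with $\Lnorm{g_1(v_1)-g_2(v_2)}{(\ox)}\le \tfrac{1}{10}\Lnorm{v_1-v_2}{(\ox)}+|||g_1-g_2|||_{(\ox)}\Lnorm{v_2}{(\ox)}$ and absorbing the $v_1-v_2$ term yields $\Lnorm{p_1-p_2}{(\ox)}\le (1+O(\delta))|||g_1-g_2|||_{(\ox)}\Lnorm{v_2}{(\ox)}$. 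Finally $\Lnorm{v_2}{(\ox)}\le (e^\lambda-\delta)^{-1}\Lnorm{u}{\Phi(\ox)}$ from the expansion of $F_{g_2}$, so
\[
|||\cT(g_1)-\cT(g_2)|||_{\Phi(\ox)}\le \frac{(e^{\varepsilon_0}+\delta)(1+O(\delta))}{e^\lambda-\delta}\,|||g_1-g_2|||_{(\ox)},
\]
and by (\ref{eq:delta1}) the right-hand ratio is bounded by some $c\in(0,1)$ independent of $(\ox)$.

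The main obstacle is the bookkeeping for $\Lnorm{p_1-p_2}{(\ox)}$ in part (2): naively, $p_1-p_2$ mixes horizontal ($E^u$) and vertical ($E^{cs}$) components attached to different graphs, and the triangle inequality does not immediately give an estimate proportional to $|||g_1-g_2|||_{(\ox)}$. The resolution is to exploit the defining equation $\pi^u\wf(p_1)=\pi^u\wf(p_2)$ to force $\Lnorm{v_1-v_2}{(\ox)}=O(\delta)\Lnorm{p_1-p_2}{(\ox)}$, so that the horizontal part is absorbed and the contraction reduces to the genuine hyperbolic ratio $(e^{\varepsilon_0}+\delta)/(e^\lambda-\delta)<1$.
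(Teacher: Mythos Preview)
Your argument is correct and is precisely the standard Hadamard graph-transform proof; the paper itself does not give a proof of this lemma but simply cites \cite[Lemma 6.2]{Young17}, so there is nothing to compare against beyond noting that your sketch is the expected argument in Lyapunov charts.

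Two minor points worth tightening. First, in part (1) the sentence ``the image of $F_g$ covers $\wB^u_{\Phi(\ox)}(\delta\ell(\Phi(\ox))^{-1})$'' deserves one more line: write $F_g=(D_0|_{E^u})\circ H$ with $H=\mathrm{Id}+(D_0|_{E^u})^{-1}\pi^u R(\cdot+g(\cdot))$, note $\Lip(H-\mathrm{Id})\le e^{-\lambda}\delta(1+\tfrac{1}{10})<1$, and invert $H$ by contraction; since $E^u$ is finite-dimensional this gives the covering directly without appealing to degree theory. Second, your invocation of Corollary \ref{Cor:NlinP}(2) at $n=1$ is slightly off as stated (that item only gives information for $k\le n-1$); what you actually use is the elementary estimate $\Lnorm{\pi^{cs}_{\Phi(\ox)}(\wf(p_1)-\wf(p_2))}{\Phi(\ox)}\le e^{\varepsilon_0}\Lnorm{\pi^{cs}_{(\ox)}(p_1-p_2)}{(\ox)}+\delta\Lnorm{p_1-p_2}{(\ox)}$, which is immediate from $\wf=D_0+R$, $\Lnorm{D_0|_{E^{cs}}}{}\le e^{\varepsilon_0}$, and $\Lip(R)\le\delta$. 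With these cosmetic fixes your proof is complete.
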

 
	The notion of "backward graph transformation" is defined only when $E^{c}(\ox)={0}$. 
    We provide the formal definition below.
	
	Take $\delta >0$ be sufficiently small. 
    Let $(\omega,x)\in \Gamma$ and assume that $E^{c}(\ox)=\{0\}$, define
	\begin{align*}
		\cW^{s}(\ox):=\{h:\widetilde{B}^{s}_{(\ox)}(\delta \ell(\ox)^{-1}) &\rightarrow E^{u}(\ox) \ \text{with}\\ \Lnorm{h(0)}{(\ox)}
		&\leq \frac{\delta}{2} \ell(\ox)^{-1}, \ \Lip(h) \leq \frac{1}{10}\}.
	\end{align*}    
    For any $h\in \cW^{s}(\ox)$. If there exists ${h}'\in\cW^{s}(\Phi^{-1}(\ox))$ such that
	$$\wf_{\Phi^{-1}(\omega,x)}(\graph {h}')\subset \graph h,$$
	then we say that ${h}'$ is a backwards graph transformation of $h$ by $ \wf_{\Phi^{-1}(\omega,x)}$, written as $\cT^{s}_{(\ox)}(h):={h}'$. 
	\begin{lemma}[\cite{Young17} Lemma 6.4]\label{Lem:GtranS}
		Let $(\omega,x)\in \Gamma$. 
        Assume that $E^{c}(\ox)=0$ and $h\in \cW^{s}(\ox)$. Then
		\begin{enumerate}
			\item[(1)] $\cT^{s}_{(\ox)}(h)$ exists, and unique.
			\item[(2)] for $z_1,z_2\in \graph (\cT^{s}_{(\ox)}(h))$ ,
			$$\Lnorm{\wf_{\Phi^{-1}(\omega,x)}z_1-\wf_{\Phi^{-1}(\omega,x)}z_2}{(\ox)}\leq (e^{-\lambda}+\delta)\Lnorm{z_1-z_2}{\Phi^{-1}(\ox)}.$$
		\end{enumerate}
		(1) and (2) continue to hold for graph transforms by maps which are $C^{1}$ sufficiently close to $\wf_{\Phi^{-1}(\omega,x)}$ with respect to the Lyapunov norms.
	\end{lemma}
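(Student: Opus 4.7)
The plan is to establish (1) via a Lyapunov--Perron type fixed-point argument in $E^u(\Phi^{-1}(\ox))$, and to derive (2) as a short direct computation. The key input is that with $E^c(\ox)=\{0\}$ both linear pieces of $D\wf_{\Phi^{-1}(\ox)}(0)$ are genuinely hyperbolic in Lyapunov norms: $\Lnorm{A_u^{-1}}{}\leq e^{-\lambda}$ for $A_u := D\wf_{\Phi^{-1}(\ox)}(0)|_{E^u(\Phi^{-1}(\ox))}$ from Lemma~\ref{Lem:LinP}~\eqref{eq:41}, and $\Lnorm{A_s}{}\leq e^{-\lambda}$ for $A_s := D\wf_{\Phi^{-1}(\ox)}(0)|_{E^s(\Phi^{-1}(\ox))}$ by \eqref{eq:43} (the ``furthermore'' of Corollary~\ref{Cor:NlinP} is the discrete shadow of this).

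For (1), write $\wf_{\Phi^{-1}(\ox)} = D\wf_{\Phi^{-1}(\ox)}(0) + g$, where $\Lip(g)\leq \delta$ in Lyapunov norms by Lemma~\ref{Lem:NlinP}(1). The condition $\wf_{\Phi^{-1}(\ox)}(\graph h') \subset \graph h$ decodes pointwise into the equation
\[ u' = F_{w'}(u') := A_u^{-1}\bigl[\,h\bigl(A_s w' + \pi^s_{(\ox)} g(w'+u')\bigr) - \pi^u_{(\ox)} g(w'+u')\,\bigr] \]
with $u'=h'(w')$, for each admissible $w'\in E^s(\Phi^{-1}(\ox))$. Using $\Lnorm{A_u^{-1}}{}\leq e^{-\lambda}$, the fact that Lyapunov projections have operator norm $1$, $\Lip(g)\leq \delta$, and $\Lip(h)\leq 1/10$, one checks that $F_{w'}$ is a strict contraction with Lipschitz constant $\leq e^{-\lambda}(1+1/10)\delta$ for $\delta$ small. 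A size estimate of $\Lnorm{F_{w'}(0)}{\Phi^{-1}(\ox)}$, using the bound $\Lnorm{h(0)}{(\ox)}\leq (\delta/2)\ell(\ox)^{-1}$ and the temperateness $\ell(\Phi^{-1}(\ox))\leq e^{\varepsilon_1}\ell(\ox)$ from \eqref{eq:functionl}, shows (after shrinking $\delta$) that a ball of radius $(\delta/2)\ell(\Phi^{-1}(\ox))^{-1}$ in $E^u(\Phi^{-1}(\ox))$ is stabilised. Banach's theorem then produces a unique fixed point $h'(w')$. Subtracting the fixed-point equations for $w'_1,w'_2$ and absorbing the self-term yields $\Lip(h')\leq 1/10$, while the size estimate at $0$ gives $\Lnorm{h'(0)}{\Phi^{-1}(\ox)} \leq (\delta/2)\ell(\Phi^{-1}(\ox))^{-1}$. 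Hence $h'\in \cW^s(\Phi^{-1}(\ox))$, and uniqueness is automatic.

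For (2), given $z_1,z_2\in \graph(\cT^s_{(\ox)}(h))$ write $z_i = w'_i + h'(w'_i)$; the bound $\Lip(h')\leq 1/10$ gives $\Lnorm{z_1-z_2}{\Phi^{-1}(\ox)} = \Lnorm{w'_1-w'_2}{\Phi^{-1}(\ox)}$. Set $v_i = \wf_{\Phi^{-1}(\ox)}(z_i)\in \graph h$; applying $\Lip(h)\leq 1/10$ at the target gives $\Lnorm{v_1-v_2}{(\ox)} = \Lnorm{\pi^s_{(\ox)}(v_1-v_2)}{(\ox)}$. Decomposing
\[ \pi^s_{(\ox)}(v_1-v_2) = A_s(w'_1-w'_2) + \pi^s_{(\ox)}\bigl(g(z_1)-g(z_2)\bigr) \]
and applying $\Lnorm{A_s}{}\leq e^{-\lambda}$ together with $\Lip(g)\leq \delta$ yields exactly $\Lnorm{v_1-v_2}{(\ox)} \leq (e^{-\lambda}+\delta)\Lnorm{z_1-z_2}{\Phi^{-1}(\ox)}$. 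The final assertion of the lemma (stability under $C^1$-small perturbations of $\wf_{\Phi^{-1}(\ox)}$) follows because every estimate above depends only on $\Lnorm{\cdot}{}$-operator-norm bounds on $D\wf_{\Phi^{-1}(\ox)}(0)$ and a Lipschitz bound on $g$, both of which are $C^1$-robust.

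The only step that is not immediate bookkeeping is the ball-stability check in Step 2 --- i.e., ensuring that the fixed point lands inside the prescribed domain and meets the precise size bound $\Lnorm{h'(0)}{\Phi^{-1}(\ox)}\leq (\delta/2)\ell(\Phi^{-1}(\ox))^{-1}$; the Lipschitz estimate on $h'$ and item (2) are then direct consequences.
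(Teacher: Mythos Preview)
Your argument is correct. The paper does not give its own proof of this lemma; it simply cites \cite[Lemma 6.4]{Young17}, so there is no in-paper argument to compare against. Your Lyapunov--Perron style fixed-point construction is a standard and valid route: the pointwise equation for $u'=h'(w')$ is set up correctly, the contraction constant $e^{-\lambda}(1+\tfrac{1}{10})\delta$ follows from Lemma~\ref{Lem:LinP} and Lemma~\ref{Lem:NlinP}(1), the ball-stability and size checks go through using the slow-variation bound \eqref{eq:functionl} on $\ell$, and the derivation of the Lipschitz bound $\Lip(h')\le 1/10$ by subtracting the two fixed-point equations is fine (the case $\Lnorm{h'(w'_1)-h'(w'_2)}{}>\Lnorm{w'_1-w'_2}{}$ is self-excluding for small $\delta$). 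Item (2) is exactly the computation you wrote, using that the Lyapunov norm on graphs with slope $\le 1/10$ is realized on the base component. The robustness under $C^1$-small perturbations is immediate since a perturbation only shifts $g(0)$ and slightly changes $\Lip(g)$, neither of which disturbs the contraction or the ball-stability for $\delta$ sufficiently small.
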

	For more details on graph transformations, please refer to \cite[Section 4]{Lian20}. 
    The proof of Lemma \ref{Lem:GtranS} can be found in the proof of Proposition 9 in \cite{Lian20}.
	
	Note that in the following lemma, we do not assume $E^c=\{0\}$, which is different from \cite[Lemma 6.3]{Young17}. 
	\begin{lemma} \label{Lem:CUnstable}
		For each $(\ox)\in \Gamma$ and for every $0<\delta\leq {\delta}'_1$. 
		\begin{enumerate}
			\item[(a)]	The set $W_{\delta}^{u}(\ox)$ can be equivalently described as follows:
			\begin{equation}\label{eq:CofW}
				\begin{aligned}
					\exp_{x}\{v\in \wB_{(\ox)}(\delta \ell(\ox)^{-1}): \forall n \in &\N, \exists v_n \in \wB_{\Phi^{-n}(\ox)}(\delta_1 \ell(\Phi^{-n}(\ox))^{-1}) \\ \text{such that} \  
					\widetilde{f}^{n}_{\Phi^{-n}(\ox)}v_n=v \ &\text{and} \ \limsup_{n\rightarrow \infty} \frac{1}{n}\log |v_n|\leq -\frac{\lambda}{2}\}.
				\end{aligned}
			\end{equation}
			Furthermore, if $E^{c}(\ox)=\{0\}$ then the description that
			$$\limsup_{n\rightarrow \infty} \frac{1}{n}\log |v_n|\leq -\frac{\lambda}{2}$$ in (\ref{eq:CofW}) can be omitted.
			\item[(b)] For every $y\in W_{\delta}^{u}(\ox)$, the tangent space $T_{y} W_{\delta}^{u}(\ox)$ characterized by
			\begin{equation}\label{Eq:CofT}
				\{v\in \cB_{y}: \ \forall n \in \N, Df^{-n}_{\omega}(y)v \ \text{exists and} \ 
				\limsup_{n\rightarrow \infty} \frac{1}{n}\log |Df^{-n}_{\omega}(y)v|\leq -\lambda\}.	
			\end{equation}
		\end{enumerate}
	\end{lemma}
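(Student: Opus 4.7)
The plan is to prove both identities by double inclusion, exploiting the backward-invariance of the graph family $\{\graph(g_{\Phi^{k}(\omega,x)})\}_{k\in\Z}$ from Theorem \ref{Thm:Unstable} and the sharp expansion/contraction estimates in Corollary \ref{Cor:NlinP}. Throughout I write $y^{\ast}:=\exp_{x}^{-1}y$, $y_{n}^{\ast}:=\exp_{f^{-n}_{\omega}x}^{-1}f^{-n}_{\omega}(y)$, and use the shortened indices $\Lnorm{\cdot}{k}=\Lnorm{\cdot}{\Phi^{k}(\omega,x)}$, $\pi^{\tau}_{k}=\pi^{\tau}_{\Phi^{k}(\omega,x)}$ from Corollary \ref{Cor:NlinP}. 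For the forward inclusion of (a), the graph invariance in Theorem \ref{Thm:Unstable} lets one solve backwards and obtain a unique $v^{\ast}_{n}\in\graph(g_{\Phi^{-n}(\omega,x)})$ with $\wf^{n}_{\theta^{-n}\omega}v^{\ast}_{n}=v^{\ast}$; iterating Theorem \ref{Thm:Unstable}(3) backwards gives $\Lnorm{v^{\ast}_{n}}{-n}\leq (e^{\lambda}-\delta)^{-n}\Lnorm{v^{\ast}}{0}$, and combining \eqref{eq:norms} with $(e^{\lambda}-\delta)>e^{\lambda/2}$ from \eqref{eq:delta1} yields $\limsup\frac{1}{n}\log|v^{\ast}_{n}|\leq -\lambda/2$, establishing this inclusion.

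For the reverse inclusion of (a), given $v$ satisfying the stated hypotheses, I define the companion $u^{\ast}:=\pi^{u}_{0}v+g_{(\omega,x)}(\pi^{u}_{0}v)\in\graph(g_{(\omega,x)})$, so that $v-u^{\ast}=\pi^{cs}_{0}(v-u^{\ast})$. The backward orbit $\{u^{\ast}_{n}\}$ sits inside the graphs and hence inside $\wB_{\Phi^{-n}(\omega,x)}(\delta_{1}\ell^{-1})$, while $\{v_{n}\}$ sits there by hypothesis, so Corollary \ref{Cor:NlinP}(2) with $\delta=\delta_{1}$ applies to the endpoint difference and gives
\[
\Lnorm{v-u^{\ast}}{0}\leq (e^{\varepsilon_{0}}+\delta_{1})^{n}\,\Lnorm{v_{n}-u^{\ast}_{n}}{-n}.
\]
Using $\Lnorm{u^{\ast}_{n}}{-n}\leq (e^{\lambda}-\delta)^{-n}\Lnorm{u^{\ast}}{0}$ and converting the hypothesis on $|v_{n}|$ into a Lyapunov-norm bound via $\Phi$-temperateness of $C$ and $\ell$, together with \eqref{eq:delta1}, forces the right-hand side to zero and hence $v=u^{\ast}\in\graph(g_{(\omega,x)})$. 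In the special case $E^{c}(\omega,x)=\{0\}$, the \emph{furthermore} clause of Corollary \ref{Cor:NlinP} replaces $(e^{\varepsilon_{0}}+\delta_{1})^{n}$ by $(e^{-\lambda}+\delta_{1})^{n}$; shrinking ${\delta}'_{1}$ so that $e^{-\lambda}+\delta_{1}<1$ makes the contractive prefactor alone annihilate the trivial bound $\Lnorm{v_{n}-u^{\ast}_{n}}{-n}\leq 2\delta_{1}$, so the decay condition on $|v_{n}|$ is no longer needed.

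Part (b) runs the same dichotomy at the infinitesimal level. For the forward inclusion, $v\in T_{y}W^{u}_{\delta}(\omega,x)$ decomposes as $v=v_{u}+Dg_{(\omega,x)}(\pi^{u}_{0}y^{\ast})v_{u}$; since $\Lip g\leq 1/10$ gives $\Lnorm{v}{0}=\Lnorm{\pi^{u}_{0}v}{0}$, the infinitesimal form of Corollary \ref{Cor:NlinP}(1) along $\{y^{\ast}_{n}\}$ yields $\Lnorm{Df^{-n}_{\omega}(y)v}{-n}\leq (e^{\lambda}-\delta)^{-n}\Lnorm{v}{0}$, and sharpening to the claimed rate $-\lambda$ uses that $\Lnorm{y^{\ast}_{n}}{-n}\to 0$ geometrically so that the nonlinear defect $D\wf(y^{\ast}_{n})-Df_{\theta^{-n}\omega}(x_{-n})$ may be absorbed, leaving the linear rate from Lemma \ref{Lem:LinP}. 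For the reverse inclusion, given $v$ in the characterization set, set $\xi:=\pi^{u}_{0}v+Dg_{(\omega,x)}(\pi^{u}_{0}y^{\ast})\pi^{u}_{0}v\in T_{y}W^{u}_{\delta}(\omega,x)$; by the forward direction $\xi$ also lies in the set, so $w:=v-\xi$ does too by linearity of $Df^{-n}_{\omega}(y)$, and $\pi^{u}_{0}w=0$. Setting $w_{n}:=Df^{-n}_{\omega}(y)w$, the identity $D\wf^{n}(y^{\ast}_{n})w_{n}=w\in E^{cs}(\omega,x)$ places me in the infinitesimal form of Corollary \ref{Cor:NlinP}(2), giving $\Lnorm{w}{0}\leq (e^{\varepsilon_{0}}+\delta_{1})^{n}\Lnorm{w_{n}}{-n}$, i.e.\ $\Lnorm{w_{n}}{-n}\geq (e^{\varepsilon_{0}}+\delta_{1})^{-n}\Lnorm{w}{0}$; converting via \eqref{eq:norms} and $\Phi$-temperateness of $\ell$ yields $\liminf\frac{1}{n}\log|w_{n}|\geq -\log(e^{\varepsilon_{0}}+\delta_{1})-\varepsilon_{1}>-\lambda$, contradicting $\limsup\leq -\lambda$ unless $w=0$. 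The main obstacle is pinning down the sharp rate $-\lambda$ (rather than the naive $-\log(e^{\lambda}-\delta)$) in the forward half of (b): this requires tracking the geometric decay of $D\wf(y^{\ast}_{k})-Df_{\theta^{-k}\omega}(x_{-k})$ along the backward orbit so that the product of linearizations is asymptotically indistinguishable from $Df^{-n}_{\omega}(x)|_{E^{u}(\omega,x)}$, whose rate is exactly $-\lambda$ by Lemma \ref{Lem:LinP}.
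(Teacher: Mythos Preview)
Your proposal is correct and follows essentially the same architecture as the paper's proof: both parts are proved by comparing an arbitrary point (or vector) with its ``projection'' onto the graph of $g_{(\omega,x)}$ and then invoking the cone-preservation estimates of Corollary \ref{Cor:NlinP} (or their infinitesimal analogues obtained from Lemma \ref{Lem:NlinP}) to force the difference to vanish.

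One small difference worth noting: in the reverse inclusion of (b) the paper first passes to a large backward time $n_1$ and only \emph{there} chooses the tangential comparison vector $u_{-n_1}\in T\wW^{u}_{\delta}$ with matching $\pi^{u}$-component. You instead choose $\xi$ at time $0$. Your choice works because the uniform bound $\Lnorm{D\wf_{-k}(y^{\ast}_{-k})-D\wf_{-k}(0)}{}\le \ell(-k)\Lnorm{y^{\ast}_{-k}}{-k}\le \delta'_{1}\le \delta_{1}$ holds for \emph{all} $k\ge 1$ (not just asymptotically), so the cone estimates of Corollary \ref{Cor:NlinP} carry over verbatim to the linearized maps $D\wf_{-k}(y^{\ast}_{-k})$ with constant $\delta_{1}$; in particular $cs$-dominance at time $0$ propagates backward for every $k$, and your inequality $\Lnorm{w_{-n}}{-n}\ge (e^{\varepsilon_0}+\delta_1)^{-n}\Lnorm{w}{0}$ is valid from the start. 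This is a mild simplification of the paper's argument. Your remark that the sharp rate $-\lambda$ in the forward half of (b) requires absorbing the geometrically decaying defect $D\wf_{-k}(y^{\ast}_{-k})-D\wf_{-k}(0)$ is exactly the mechanism the paper uses to upgrade $(e^{\lambda}-\delta_1)^{-1}$ to $(1-c)^{-1}e^{-\lambda}$ with $c\to 0$.
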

	\begin{proof}
		To simplify the notation, write $\wf_{-n}:=\wf_{\Phi^{-n}(\ox)}$, $g_{-n}:=g_{\Phi^{-n}(\ox)}$, $\Lnorm{\cdot}{-n}:=\Lnorm{\cdot}{\Phi^{-n}(\ox)}$ and $\pi^{\tau}_{-n}:=\pi^{\tau}_{\Phi^{-n}(\ox)}$ for $n\geq 0$ and $\tau=u,cs$. 
		
		For item (a), the fact that $W^{u}_{\delta}(\ox)$ is a subset of the set \eqref{eq:CofW} follows from Theorem \ref{Thm:Unstable}. 
		It suffices to prove the reverse relation holds.
		
		If there exists $v$ belong to the set \eqref{eq:CofW} but $v\notin \wW^{u}_{\delta}(\ox)$,
		then there exists $u\in \wW^{u}_{\delta}(\ox)$ such that $\pi^{u}_{0}u=\pi^{u}_{0}v$ and $\pi^{cs}_{0}u\neq \pi^{cs}_{0}v$. 
		Therefore, for every $n\in \N$ by Corollary \ref{Cor:NlinP} we have
		\begin{equation}\label{eq:L42}
			\Lnorm{u_n-v_n}{-n}=\Lnorm{\pi^{cs}_{-n}(u_n-v_n)}{-n} \geq (e^{\varepsilon_0}+\delta)^{-n}\Lnorm{u-v}{0},
		\end{equation}
		where $u_n\in \wW^{u}_{\delta}(\Phi^{-n}(\ox))$ is the unique point satisfying $\wf^{n}_{-n}(u_n)=u$. 
		Together with $(\ref{eq:norms})$ we have
		$$\liminf_{n\rightarrow\infty}\frac{1}{n} \log |u_n-v_n|\geq -\log(e^{\varepsilon_0}+\delta),$$
		which is a contradiction with 
		$$\limsup_{n\rightarrow\infty}\frac{1}{n}\log|u_n|\leq -\frac{\lambda}{2} \ \text{and} \ \limsup_{n\rightarrow\infty}\frac{1}{n}\log|v_n|\leq -\frac{\lambda}{2},$$
		So, we must have $v\in \wW^{u}_{\delta}(\ox)$. 
		
		Moreover, if $E^{c}(\ox)=0$, then by Corollary \ref{Cor:NlinP}, for each $n\in \N$ the inequality (\ref{eq:L42}) is 
		$$\Lnorm{u_n-v_n}{\Phi^{-n}(\ox)}=\Lnorm{\pi^{s}_{-n}(u_n-v_n)}{-n}\geq (e^{-\lambda}+\delta)^{-n}\Lnorm{u-v}{(\ox)},$$ 
		which contradicts 
		$$\Lnorm{u_n-v_n}{\Phi^{-n}(\ox)} \leq  2\delta_1 \ell(\Phi^{-n}(\ox))^{-1}\leq 2\delta_1e^{n\varepsilon_1} \ell(\ox)^{-1}\quad \forall n\geq 0.$$
		This proves item (a). 
		
		For item (b), consider a fixed $y\in W_{\delta}^{u}(\ox)$.
		Let $y^{x}:=y-x$ and denote by $y^{x}_{-n}\in \wW^{u}_{\delta}(\Phi^{-n}(\ox))$ the unique point satisfying  $\wf^{n}_{-n}y^{x}_{-n}=y^{x}$ for each $n\geq 0$.
		We will use the $|\cdot|'$ to denote the operator norm of bounded linear operators with respect to the Lyapunov norms.
		 
		By Theorem \ref{Thm:Unstable}, for every $n\geq 0$, we have
		$$
		T_{y^{x}_{-n}} \wW_{\delta}^{u}(\Phi^{-n}(\ox))=(Id+Dg_{\Phi^{-n}(\ox)}(y^{x}_{-n}))E^{u}(\Phi^{-n}(\ox)).
		$$
		By item (a), we can choose $n_0\in \N$ such that $|y^{x}_{-n}|\leq e^{-\frac{n\lambda}{3}}$ for every $n\geq n_0$.		
		Then, for every $n\geq n_0$, by (\ref{eq:lox}) and Theorem \ref{Thm:Unstable}, one has 
		\begin{align*}
			|Dg_{-n}(y^{x}_{-n})|'&\leq  C_0 e^{2n\varepsilon_0} \ell(\ox)^{2} |y^{x}_{-n}|\\
			                     &\leq C_0e^{-n(\frac{\lambda}{3}-2\varepsilon_0)}\ell(\ox)^{2},                            
		\end{align*}
		and by (\ref{eq:lox}) and Lemma \ref{Lem:NlinP}, one has 
		\begin{equation}\label{eq:L43}
			\begin{aligned}
			 |D\wf_{-n}(y^{x}_{-n})-D\wf_{-n}(0)|'&\leq  e^{2n\varepsilon_0} \ell(\ox)^{2} |y^{x}_{-n}|\\
			 &\leq  e^{-n(\frac{\lambda}{3}-2\varepsilon_0)} \ell(\ox)^{2}.
			\end{aligned}
		\end{equation}
        By (\ref{eq:lox}) and the choice of the number ${\delta}'_1$,
        \begin{align*}
        	|D\wf_{-n}(y^{x}_{-n})|'&\leq3\ell(\Phi^{-n+1}(\ox)) \cdot |(f_{\te^{-n} \om}|_{B(A_{\te^{-n} \om},r_0)})|_{C^2}\\
        	                       &\leq 3e^{(n-1)\varepsilon_0}\ell(\ox)\ell(\Phi^{-n}(\ox)) \\
        	                       &\leq 3e^{2n\varepsilon_0}\ell(\ox)^{2}.
        \end{align*}
        Take $c>0$ arbitrarily small. 
        Enlarging $n_0$ if necessary such that for every $n\geq n_0$,
        $$\max\{3C_0\ell(\ox)^{2}, C_0, 1\}\cdot e^{-n(\frac{\lambda}{3}-4\varepsilon_0)} \ell(\ox)^{2}\leq \max\{1,\dfrac{c}{2}e^{\lambda}\}.$$
		For each $n\geq n_0$ and each $v=u+Dg_{-n}(y^{x}_{-n})u \in T_{y^{x}_{-n}} \wW_{\delta}^{u}(\Phi^{-n}(\ox))$, where $u\in E^{u}(\Phi^{-n}(\ox))$. 
		Since $\Lnorm{Dg_{-n}(y^{x}_{-n})}{}<1$, by the definition of the Lyapunov norms we have $\Lnorm{v}{-n}=\Lnorm{u}{-n}$. 
		Then,
		\begin{align*}
			\Lnorm{D\wf_{-n}(y^{x}_{-n})v}{-n+1}\geq&  \Lnorm{D\wf_{-n}(0)u}{-n+1}-\Lnorm{(D\wf_{-n}(y^{x}_{-n})-D\wf_{-n}(0))u}{-n+1}\\-&\Lnorm{D\wf_{-n}(y^x_{-n})Dg_{-n}(y^x_{-n})u}{-n+1}\\
			\geq&e^{\lambda}\Lnorm{u}{-n}-ce^{\lambda}\Lnorm{u}{-n}=(1-c)e^{\lambda}\Lnorm{v}{-n}.
		\end{align*}
	    Therefore, for every $v_0\in T_yW^{u}_{\delta}(\ox)$ and every $n\geq n_0$ we have
	    $$\Lnorm{Df^{-n-1}_{\om}(y)v_0}{-n-1}\leq (1-c)^{-1}e^{-\lambda} \Lnorm{Df^{-n}_{\om}(y)v_0}{-n}.$$
		Together with (\ref{eq:norms}), one has 
		$$\limsup_{k\rightarrow \infty}\frac{1}{k}\log|Df^{-k}_{\omega}(y)v_0|\leq -\lambda-\log(1-c).$$
		Since $c$ is arbitrariness, this prove that $T_yW^{u}_{\delta}(\ox)$ is a subset of (\ref{Eq:CofT}). 
		
		To prove the reverse holds. We denote $E(\omega,y)$ by 
		$$\{v\in \cB_{y}: \ \forall n \in \N, Df^{-n}_{\omega}(y)v \ \text{exists and} \ 
		\limsup_{n\rightarrow \infty} \frac{1}{n}\log |Df^{-n}_{\omega}(y)v|\leq -\lambda\}.$$
		It is easy to show that $E(\omega,y)$ is a subspace of $\cB_y$ and $Df_{\om}(y)E(\omega,y)=E(\Phi(\omega,y))$.
		Assume that there exists $v \in E(\oy)$ but $v\notin E^{u}(\oy)$. Denote by $v_{-n}:=Df^{-n}_{\omega}(y) v$. 
	    By \eqref{eq:L43}, for every $n\geq n_0$ we have
		$$|D\wf_{-n}(y^{x}_{-n})-D\wf_{-n}(0)|'\leq e^{-n(\frac{\lambda}{3}-2\varepsilon_0)} \ell(\ox)^{2}=C(n).$$ 
        For every $n\geq n_0$, every $u,w \in \cB$ with $\Lnorm{u-w}{-n}=\Lnorm{\pi^{u}_{-n} (u-w)}{-n}$, we have
		\begin{align*}
			&\Lnorm{\pi^{u}_{-n+1}(D\wf_{-n}(y^{x}_{-n})(u-w))}{-n+1}\\
			\geq &\Lnorm{\pi^{u}_{-n+1}(D\wf_{-n}(0)(u-w))}{-n+1}
			-\Lnorm{(D\wf_{-n}(y^{x}_{-n})-D\wf_{-n}(0))(u-w)}{-n+1}\\
			\geq &e^{\lambda}\Lnorm{\pi^{u}_{-n} (u-w)}{-n}-C(n)\Lnorm{(u-w)}{-n}\\
			\geq &(e^{\lambda}-C(n))\Lnorm{(u-w)}{-n},
		\end{align*}    	
		and similarly, we also have
		\begin{align}\label{eq:LSS}
			\Lnorm{\pi^{cs}_{-n+1}(D\wf_{-n}(y^{x}_{-n})(u-w))}{-n+1}\leq (e^{\varepsilon_0}+C(n)) \Lnorm{(u-w)}{-n}.
		\end{align}
		Take $n_1>n_0$ large enough, such that for every $n\geq n_1$ one has
		$$e^{\lambda}-C(n)> e^{2\varepsilon_0}> e^{\varepsilon_0}+C(n).$$
		For each $n\geq n_1$, if $u,w \in \cB$ with $\Lnorm{u-w}{-n}=\Lnorm{\pi^{u}_{-n} (u-w)}{-n}$ then
		$$\Lnorm{D\wf_{-n}(y^{x}_{-n})(u-w)}{-n+1} =\Lnorm{\pi^{u}_{-n+1}(D\wf_{-n}(y^{x}_{-n})(u-w))}{-n+1}.$$
		Let $u_{-n_1}\in T_{y^{x}_{-n_0}}\wW^{u}(\Phi^{-n_1}(\ox))$ with $\pi^{u}_{-n_1}u_{-n_1}=\pi^{u}_{-n_1}v_{-n_1}$. Note that $v\notin T_{y}W^{u}(\ox)$ implies that $v_{-n_1}\neq u_{-n_1}$.
		Denote by $u_{-n}:=D\wf^{n_1-n}_{-n_1}(y^x_{-n_1})u_{-n_1}$,
		we have $$\Lnorm{u_{-n_1}-v_{-n_1}}{-n_1}=\Lnorm{\pi^{cs}_{-n_1}(u_{-n_1}-v_{-n_1})}{-n_1}>0,$$
		then for every $n>n_1$ one has
		$$\Lnorm{u_{-n}-v_{-n}}{-n}=\Lnorm{\pi^{cs}_{-n}(u_{-n}-v_{-n})}{-n}.$$
		Using inequality (\ref{eq:LSS}) we have that for every $n>n_1$
		$$\Lnorm{u_{-n}-v_{-n}}{-n}\geq e^{-2(n-n_1)\varepsilon_0}\Lnorm{u_{-n_1}-v_{-n_1}}{-n_1}.$$
		This together with (\ref{eq:norms}) one has
		$$\liminf_{n\rightarrow \infty} \frac{1}{n}\log|u_{-n}-v_{-n}|\geq -2\varepsilon_0,$$
		which is a contradiction with 
		$$\limsup_{n\rightarrow\infty}\frac{1}{n}\log|u_{-n}|\leq -\lambda \ \text{and} \ \limsup_{n\rightarrow\infty}\frac{1}{n}\log|v_{-n}|\leq -\lambda.$$
		Hence, one has $v\in E^{u}(\oy)$. This completes the proof.
	\end{proof}
	
	The following corollary is direct.
	\begin{corollary}\label{Cor:CofT}
		For each $(\ox)\in\Gamma$ and every $(\oy)\in\Gamma$ with $y\in W^{u}(\ox)$. Then, we have $T_y W^{u}(\ox)=E^{u}(\oy)$.
	\end{corollary}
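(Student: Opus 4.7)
The plan is to reduce the global statement to the local tangent-space characterization of Lemma \ref{Lem:CUnstable}(b), transported back by the cocycle. Given $y\in W^u(\omega,x)$, the definition
\[
W^u(\omega,x)=\bigcup_{n\geq 0} f^n_{\theta^{-n}\omega}\!\left(W^u_\delta(\Phi^{-n}(\omega,x))\right)
\]
furnishes some $n\geq 0$ with $z:=f^{-n}_\omega(y)\in W^u_\delta(\Phi^{-n}(\omega,x))$. Since each $f_\omega$ is $C^2$ Fr\'echet with injective derivative by (H1), the map $f^n_{\theta^{-n}\omega}$ is a $C^2$ diffeomorphism from $W^u_\delta(\Phi^{-n}(\omega,x))$ onto a $C^{1+\mathrm{Lip}}$ injectively immersed finite-dimensional submanifold containing an open neighborhood of $y$ in $W^u(\omega,x)$; hence
\[
T_y W^u(\omega,x)=Df^n_{\theta^{-n}\omega}(z)\,T_z W^u_\delta(\Phi^{-n}(\omega,x)).
\]

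Next, by $\Phi$-invariance of $\Gamma$, the point $(\theta^{-n}\omega,z)=\Phi^{-n}(\omega,y)$ lies in $\Gamma$, so Lemma \ref{Lem:CUnstable}(b), applied at the base point $\Phi^{-n}(\omega,x)$, identifies $T_z W^u_\delta(\Phi^{-n}(\omega,x))$ with the characterization set
\[
E(\theta^{-n}\omega,z)=\bigl\{v\in\cB_z:\,Df^{-k}_{\theta^{-n}\omega}(z)v\text{ exists for all }k,\ \limsup_{k\to\infty}\tfrac{1}{k}\log|Df^{-k}_{\theta^{-n}\omega}(z)v|\leq -\lambda\bigr\}.
\]
To identify this set with $E^u(\theta^{-n}\omega,z)$, I would check both inclusions. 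The inclusion $E^u\subset E$ is immediate from Theorem \ref{MET}(a): any $v$ in an Oseledets subspace $E_i$ with $\lambda_i>0$ admits backward iterates through the inverse on $E_i$ with $\lim \tfrac{1}{k}\log|Df^{-k}v|=-\lambda_i\leq -\lambda^+<-\lambda$, where the last inequality uses $\lambda=\lambda_0-\varepsilon_0<\lambda_0\leq\lambda^+$. The reverse inclusion $E\subset E^u$ is precisely the contradiction argument carried out in the second half of the proof of Lemma \ref{Lem:CUnstable}(b), which exploits the decomposition into unstable and center-stable together with the nonlinear estimates from Corollary \ref{Cor:NlinP}.

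Combining $T_z W^u_\delta(\Phi^{-n}(\omega,x))=E^u(\theta^{-n}\omega,z)$ with the first display and with the MET invariance $Df^n_{\theta^{-n}\omega}(z)\,E^u(\theta^{-n}\omega,z)=E^u(\omega,y)$ of Theorem \ref{MET}(a) yields $T_y W^u(\omega,x)=E^u(\omega,y)$. The only substantive step is the local identification $T_z W^u_\delta=E^u$ at the pulled-back point, whose essential ingredients are already present inside the proof of Lemma \ref{Lem:CUnstable}(b); the remainder is bookkeeping about how the global unstable manifold is assembled from its local pieces under the cocycle, which is presumably why the authors label the corollary ``direct.''
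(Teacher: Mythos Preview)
Your argument is correct and is essentially the reasoning the paper has in mind (the authors give no proof, labelling the corollary ``direct''). The reduction to the local picture via the cocycle, the use of the intrinsic characterization in Lemma~\ref{Lem:CUnstable}(b), and the push-forward by the MET invariance are exactly the right steps.

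One small point of precision in your step identifying $E(\theta^{-n}\omega,z)$ with the Oseledets space $E^{u}(\theta^{-n}\omega,z)$: the contradiction argument in the second half of the proof of Lemma~\ref{Lem:CUnstable}(b) is carried out relative to a chosen base point in $\Gamma$ and shows that the characterization set equals the \emph{tangent space to the local unstable manifold through that base point}. It does not by itself compare with the Oseledets space at $z$. The clean way to get what you want is to invoke Lemma~\ref{Lem:CUnstable}(b) a second time, now with base point $(\theta^{-n}\omega,z)=\Phi^{-n}(\omega,y)\in\Gamma$ and with the evaluation point equal to $z$ itself: this gives $E(\theta^{-n}\omega,z)=T_z W^{u}_{\delta}(\theta^{-n}\omega,z)$, and the latter equals $E^{u}(\theta^{-n}\omega,z)$ by Theorem~\ref{Thm:Unstable} since $Dg_{(\theta^{-n}\omega,z)}(0)=0$. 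With this adjustment your proof is complete, and it is presumably the argument the authors had in mind.
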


	\subsection{Distortion properties}\label{SEC:Distortion}
	Note that the local unstable manifold $W_{\delta}^{u}(\ox)$ is an embedded finite dimensional submanifold and, the global one is injectively immersed finite dimensional submanifold for $(\ox)\in \Gamma$. 
	Using the notion of induced volumes introduced in Section \ref{SEC:Induced volumes},  we denote by $\nu_{(\ox)}$ the induced volume $\nu_{W^{u}(\ox)}$ for each $(\ox)\in \Gamma$. 
	
	We also use $E^{u}(\omega,y)$ to denote the tangent space to $W^{u}(\ox)$ at $y\in W^{u}(\ox)$. 
	This notation is clear. 
	If the Lyapunov exponents for $(\omega,y)$ exist, then by Corollary \ref{Cor:CofT}, the unstable subspace of $(\oy)$ equivalent to the tangent space $T_y W^{u}(\ox)$.
	Note that $f_{\omega}$ maps $W^{u}(\ox)$ diffeomorphically onto $W^{u}(\Phi(\ox))$, so we have the relations between the measure $\nu_{\Phi(\ox)}$ and the measure $(f_{\omega})_{\ast}\nu_{(\ox)}$ on $W^{u}(\Phi(\ox))$ as follows (recall $f^{-1}_{\te\omega}(y)$ is the unique point such that $f_{\om}(f^{-1}_{\te\omega}(y))=y$)
	\begin{equation}\label{eq:Var}
		\dfrac{d((f_{\omega})_{\ast}\nu_{(\ox)})}{d\nu_{\Phi(\ox)}}(z)=\dfrac{1}{\det(Df_{\omega}(f^{-1}_{\te\omega}(z))|_{E^u(\omega,f^{-1}_{\te\omega}(z))})} \ \text{for} \ z\in W^{u}(\Phi(\ox)).
	\end{equation}
	The distortion property of the determinant which we have written below is important for us. 
	Let $W^{u}_{\loc}(\ox):=W^{u}_{{\delta}'_1}(\ox)$, where ${\delta}'_1$ be chosen in Theorem \ref{Thm:Unstable}.
	Recall the uniformly set is defined by $\Gamma_{\ell}:=\{(\ox): \ell(\ox)\leq \ell\}$.
	\begin{proposition}\label{Prop:dis}
		For every $\ell\geq 1$, there is a constant $D_{\ell}$ such that for every $(\ox)\in \Gamma_{\ell}$ the following holds:
		\begin{enumerate}
			\item[(a)] For all $y_1,y_2\in W^{u}_{\loc}(\ox)$ and all $n\geq 1$, we have
			\begin{equation}
				 \bigg| \log \dfrac{\det(Df^{n}_{\te^{-n}\om}(f^{-n}_{\om}y_1)|E^u(\te^{-n}\omega,f^{-n}_{\om}y_1))}{\det(Df^{n}_{\te^{-n}\om}(f^{-n}_{\omega}y_2)|E^u(\te^{-n}\om,f^{-n}_{\om}y_2))} \bigg| \leq D_{\ell} |y_1-y_2|.
			\end{equation}
			\item[(b)] For fixed ${x}'\in  W^{u}_{\loc}(\ox)$, the functions $y \mapsto \log \Delta_{N}({x}',y)$, where
			\begin{equation}\label{eq:Delta n}
				\Delta_{N}({x}',y):= \prod_{k=1}^{N} \dfrac{\det(Df_{\te^{-k}\om}(f^{-k}_{\om}{x}')|E^u(\te^{-k}\om,f^{-k}_{\omega}{x}'))}{\det(Df_{\te^{-k}\om}(f^{-k}_{\om}y)|E^u(\te^{-k}\om,f^{-k}_{\om}y))} \ N=1,2,\cdots
			\end{equation}
			defined for $y\in W^{u}_{\loc}(\ox)$ converges uniformly as $N\rightarrow \infty$ to a Lipschitz function with constant $\leq D_{\ell}$ in the $|\cdot|$ norm.
		\end{enumerate}
	\end{proposition}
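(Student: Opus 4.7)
The plan is to exploit the cocycle identity for determinants (Proposition \ref{Prop:Det}(2)) to telescope the $n$-step ratio into a sum of single-step log-ratios, and bound each term via Proposition \ref{Prop:CofDet}. Writing $z_i^{(k)} := f^{-k}_\omega y_i$ and $\ell_k := \ell(\Phi^{-k}(\omega,x))$, so that $\ell_k \leq e^{k\varepsilon_1}\ell$ by \eqref{eq:functionl}, we get
\begin{equation*}
\log\frac{\det(Df^{n}_{\theta^{-n}\omega}(z_1^{(n)})|E^u(\Phi^{-n}(\omega,y_1)))}{\det(Df^{n}_{\theta^{-n}\omega}(z_2^{(n)})|E^u(\Phi^{-n}(\omega,y_2)))}
=\sum_{k=1}^{n}\log\frac{\det(Df_{\theta^{-k}\omega}(z_1^{(k)})|E^u(\theta^{-k}\omega,z_1^{(k)}))}{\det(Df_{\theta^{-k}\omega}(z_2^{(k)})|E^u(\theta^{-k}\omega,z_2^{(k)}))},
\end{equation*}
so the task reduces to showing each summand is bounded by a quantity geometrically small in $k$ and linear in $|y_1-y_2|$.

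Three Lipschitz-type ingredients supply these bounds. First, backward contraction on $W^u$: Theorem \ref{Thm:Unstable}(3) inverted, combined with the norm comparison in \eqref{eq:lox}, yields
\begin{equation*}
|z_1^{(k)}-z_2^{(k)}| \leq 3\ell_k(e^\lambda-\delta_1')^{-k}|y_1-y_2|.
\end{equation*}
Second, Lipschitz variation of $E^u$ along $W^u$: by Corollary \ref{Cor:CofT}, the tangent space $E^u(\theta^{-k}\omega, z_i^{(k)})$ equals $T_{z_i^{(k)}}W^u(\Phi^{-k}(\omega,x))$, which in the graph representation of Theorem \ref{Thm:Unstable} is $(\mathrm{Id}+Dg_{\Phi^{-k}(\omega,x)}(\cdot))E^u(\Phi^{-k}(\omega,x))$; the Lipschitz estimate $\Lip(Dg)\leq C_0\ell_k$ in Theorem \ref{Thm:Unstable}(2), together with the standard graph-to-$d_H$ comparison used in Lemma \ref{Lem:CdhA}, gives
\begin{equation*}
d_H\bigl(E^u(\theta^{-k}\omega,z_1^{(k)}),E^u(\theta^{-k}\omega,z_2^{(k)})\bigr)\leq C_2\ell_k|z_1^{(k)}-z_2^{(k)}|.
\end{equation*}
Third, the $C^2$-bound $|(f_{\theta^{-k}\omega}|_{B(\cdot,r_0)})|_{C^2}\leq\ell_k$ in \eqref{eq:lox} gives $|Df_{\theta^{-k}\omega}(z_1^{(k)})-Df_{\theta^{-k}\omega}(z_2^{(k)})|\leq\ell_k|z_1^{(k)}-z_2^{(k)}|$. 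The operator-norm and minimum-norm bounds on $Df_{\theta^{-k}\omega}|_{E^u}$ are polynomial in $\ell_k$, the lower bound coming from Lemma \ref{Lem:LinP}.

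Applying Proposition \ref{Prop:CofDet} with $M=M_k$ polynomial in $\ell_k$, the $k$-th summand is bounded, whenever the smallness thresholds $\delta(M_k)$ are met, by $L(M_k)\cdot(1+C_2)\ell_k\cdot 3\ell_k(e^\lambda-\delta_1')^{-k}|y_1-y_2|$. When the thresholds fail for small $k$, subdivide the arc on $W^u_{\loc}(\omega,x)$ from $y_1$ to $y_2$ into a finite chain $y_1=w_0,\dots,w_N=y_2$ with consecutive pairs close enough to apply Proposition \ref{Prop:CofDet} at step $k$; the chain argument preserves the linear-in-$|y_1-y_2|$ form of the bound, since it only uses the Lipschitz regularity of $z\mapsto(Df_{\theta^{-k}\omega}(z),E^u(\theta^{-k}\omega,z))$ along $W^u$. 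Summing over $k$ gives a geometric series with ratio $(e^\lambda-\delta_1')^{-1}e^{O(\varepsilon_1)}<1$ (since $\varepsilon_1\ll\lambda$), yielding a uniform Lipschitz constant $D_\ell$, which proves (a). For (b), the partial sums $\log\Delta_N(x',\cdot)$ are Cauchy uniformly on $W^u_{\loc}(\omega,x)$ by the geometric tail bound, and the uniform limit inherits the same Lipschitz constant.

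The main obstacle is purely quantitative bookkeeping: verifying that the constant $L(M)$ and the smallness threshold $\delta(M)$ in Proposition \ref{Prop:CofDet} depend at most polynomially on $M$, so that the sub-geometric growth of $\ell_k=O(e^{k\varepsilon_1})$ is decisively beaten by the contraction rate $(e^\lambda-\delta_1')^{-k}$. This quantitative control must be read off from the proof of Proposition \ref{Prop:CofDet} (following \cite{Young17}); given that, the rest is a geometric-series summation and a chain-argument for finitely many initial steps.
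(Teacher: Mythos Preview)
Your proposal is correct and follows the same overall strategy as the paper: telescope into single-step log-ratios, apply the quantitative refinement of Proposition~\ref{Prop:CofDet} (which the paper states explicitly as Lemma~\ref{Lem:CofDet}, giving the polynomial dependence $L(M),\delta(M)^{-1}\le C_m M^{10m}$ that you correctly flag as the crux), sum a geometric series, and handle pairs that are initially too far apart by a chain argument along the leaf.

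The one organizational difference worth noting is that the paper does not work directly in the original norm. Instead it first proves the estimate for the determinant $\det'$ taken with respect to the Lyapunov norms $|\cdot|'_{-k}$ at the base orbit (Lemma~\ref{Lem:Distortion}); in those norms the inverse bound $|(D\widetilde f_{-k}|_{E^i_{-k}})^{-1}|'\le 1$ is immediate from Theorem~\ref{Thm:Unstable}(3), so one can take $M_k=3e^{2k\varepsilon_1}\ell(0)^2$ cleanly. It then converts back to $\det$ by writing the ratio as a product $I\cdot II\cdot III$, where $III$ is the $\det'$ ratio and $I,II$ are the volume-form conversion factors $dm_E/dm'_E$ at the top and bottom of the orbit segment, each of which is estimated by one more application of Lemma~\ref{Lem:CofDet}. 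Your direct approach avoids this decomposition at the cost of carrying the norm-conversion factors into $M_k$; both routes yield summands of size $e^{O(\varepsilon_1)k}(e^\lambda-\delta_1')^{-k}$ and hence the same conclusion. One small citation fix: the minimum-norm bound for $Df_{\theta^{-k}\omega}|_{E^u}$ at the \emph{nearby} point $z_i^{(k)}$ comes from Theorem~\ref{Thm:Unstable}(3) (expansion along the leaf), not from Lemma~\ref{Lem:LinP}, which is stated only at the base point.
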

	For certain maps on Banach spaces, the distortion property is given by \cite[Proposition 6.8]{Young17}, and the same method can be carried out in the random case. 
	We give the detailed proofs as follows.
	
	Fixed $(\ox)\in \Gamma$, for ease notations, we denote by $\wf_{-k}:=\wf_{\Phi^{-k}(\ox)}$, $\Lnorm{\cdot}{-k}:=\Lnorm{\cdot}{\Phi^{-k}(\ox)}$,
	$g_{-k}:=g_{\Phi^{-k}(\ox)}$, $\ell(-k):=\ell(\Phi^{-k}(\ox))$ for $k\in \N$, and denote by $|\cdot|'$ the operator norm of bounded linear operators with respect to the Lyapunov norms.
	For $k\in \Z$ and a finite-dimensional subspace $E\subset \cB_{f^{-k}_{\om}(x)}$, we denote by $m'_{E}$ the induced volume on $E$ from the Lyapunov norm $\Lnorm{\cdot}{-k}$, 
	and we use the notion ``$\det'$" to denote the determinant with respect to these volumes.
	
	\begin{lemma}[\cite{Young17}, Proposition 6.9]\label{Lem:Distortion}
		For any $\ell>0$, there is a constant $D'_{\ell}$ with the following property. 
		Fixed $(\ox)\in \Gamma$. 
		Then for any $z^1,z^2\in \graph g_{0}$ with $\Lnorm{z^1-z^2}{0}\leq \delta'_1 (D'_{\ell(0)})^{-1}$ and any $n\in \N$ we have that
		\begin{align*}
			\bigg| \log  \dfrac{\det'(D\wf^n_{-n}(z^1_{-n})|E^1_{-n})}{\det'(D\wf^n_{-n}(z^2_{-n})|E^2_{-n})} \bigg|\leq D'_{\ell(0)}\Lnorm{z^1-z^2}{0},
		\end{align*}
		where $z^i_{-n}$ is the unique point in $\graph g_{-n}$ with $D\wf^n_{-n}(z^i_{-n})=z^i$, and $E^i_{-n}$ is the tangent space to $\graph g_{-n}$ at $z^i_{-n}$.
	\end{lemma}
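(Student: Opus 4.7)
The plan is to telescope the log-ratio of determinants using the multiplicativity of $\det'$ (Proposition \ref{Prop:Det}(2)) and control each resulting factor via the modulus-of-continuity estimate (Proposition \ref{Prop:CofDet}), then sum the series using backwards contraction along unstable graphs.

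First, writing $\wf_{-k} := \wf_{\Phi^{-k}(\ox)}$ and exploiting $D\wf_{-k}(z^i_{-k}) E^i_{-k} = E^i_{-k+1}$ (invariance of tangent spaces under the graph transform, from Theorem \ref{Thm:Unstable}), Proposition \ref{Prop:Det}(2) yields the telescoping identity
\begin{align*}
\log \frac{\det'(D\wf^n_{-n}(z^1_{-n})|E^1_{-n})}{\det'(D\wf^n_{-n}(z^2_{-n})|E^2_{-n})} = \sum_{k=1}^n \log \frac{\det'(D\wf_{-k}(z^1_{-k})|E^1_{-k})}{\det'(D\wf_{-k}(z^2_{-k})|E^2_{-k})}.
\end{align*}
I would then bound the $k$-th summand by applying Proposition \ref{Prop:CofDet} with $\cB_1 = \cB_{f^{-k}_{\om}x}$ and $\cB_2 = \cB_{f^{-k+1}_{\om}x}$, each equipped with its Lyapunov norm. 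Uniform upper bounds for $\Lnorm{D\wf_{-k}(z^i_{-k})}{}$ come from combining Lemma \ref{Lem:LinP} and \eqref{eq:lox} at the origin with the Lipschitz control of $D\wf_{-k}$ from Lemma \ref{Lem:NlinP}(2); uniform lower bounds for $\Lnorm{(D\wf_{-k}|_{E^i_{-k}})^{-1}}{}$ come from the Lyapunov-norm expansion \eqref{eq:41} together with $\Lip g_{-k}\leq 1/10$ (Theorem \ref{Thm:Unstable}(2)), which keeps $E^i_{-k}$ uniformly transverse to $E^{cs}$.

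Next I would quantify the two variation terms entering Proposition \ref{Prop:CofDet}. Lemma \ref{Lem:NlinP}(2) directly gives
\begin{align*}
\Lnorm{D\wf_{-k}(z^1_{-k}) - D\wf_{-k}(z^2_{-k})}{} \leq \ell(-k)\,\Lnorm{z^1_{-k} - z^2_{-k}}{-k}.
\end{align*}
Since $E^i_{-k} = (\mathrm{Id} + Dg_{-k}(\pi^u_{-k}z^i_{-k}))E^u(\Phi^{-k}(\ox))$, Theorem \ref{Thm:Unstable}(2) combined with \eqref{eq:gap} and Lemma \ref{Lem:CdhA} yields
\begin{align*}
d_H(E^1_{-k}, E^2_{-k}) \leq C\,\ell(-k)\,\Lnorm{z^1_{-k} - z^2_{-k}}{-k}
\end{align*}
for an absolute constant $C$. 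The crucial geometric decay comes from iterating Theorem \ref{Thm:Unstable}(3) backwards along the unstable graphs,
\begin{align*}
\Lnorm{z^1_{-k} - z^2_{-k}}{-k} \leq (e^\lambda - \delta)^{-k}\,\Lnorm{z^1 - z^2}{0},
\end{align*}
and by the temperedness bound $\ell(-k) \leq \ell(0) e^{k\varepsilon_1}$ from \eqref{eq:functionl}, each summand in the telescope is bounded by $L(\ell(0))\,\ell(0)\,e^{k\varepsilon_1}(e^\lambda - \delta)^{-k}\Lnorm{z^1 - z^2}{0}$. Since $\varepsilon_1 \ll \lambda$, the geometric series converges and gives the claimed bound with some $D'_{\ell(0)}$.

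The main obstacle I anticipate is verifying the smallness hypothesis of Proposition \ref{Prop:CofDet} uniformly in $k$: its constants $L,\delta$ depend on the uniform operator bound $M = M(\ell(0))$, so one must ensure $\Lnorm{D\wf_{-k}(z^1_{-k}) - D\wf_{-k}(z^2_{-k})}{} + d_H(E^1_{-k}, E^2_{-k}) \leq \delta$ for \emph{every} $k \geq 1$. This is precisely the role of the size hypothesis $\Lnorm{z^1 - z^2}{0} \leq \delta'_1 (D'_{\ell(0)})^{-1}$ in the statement: after provisionally summing the series one discovers the required $D'_{\ell(0)}$, then enlarges it so that the worst-case telescoped distance $e^{k\varepsilon_1}(e^\lambda - \delta)^{-k}\Lnorm{z^1 - z^2}{0}$ stays inside the valid regime of Proposition \ref{Prop:CofDet}. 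A minor secondary subtlety is that the ambient Banach space changes at each step of the telescope, but this causes no difficulty since Proposition \ref{Prop:CofDet} is formulated for arbitrary pairs $(\cB_1, \cB_2)$.
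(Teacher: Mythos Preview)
Your overall strategy---telescoping via Proposition~\ref{Prop:Det}(2), bounding each factor by the determinant-continuity proposition, and summing using the backwards contraction from Theorem~\ref{Thm:Unstable}(3)---is exactly the paper's. However, there is a genuine gap in the claim that the operator norms $\Lnorm{D\wf_{-k}(z^i_{-k})}{}$ admit a \emph{uniform} bound $M=M(\ell(0))$ independent of $k$. They do not: the one-sided Lyapunov norm on $E^u$ only controls backward iterates, so a direct computation (or the crude conversion via \eqref{eq:lox}) gives
\[
\Lnorm{D\wf_{-k}(z^i_{-k})}{}\;\le\; 3\,\ell(-k)\cdot\big|\,(f_{\te^{-k}\om}|_{B(A_{\te^{-k}\om},r_0)})\,\big|_{C^2}\;\le\; 3e^{2k\varepsilon_1}\ell(0)^{2}\;=:\;M_k,
\]
which grows with $k$. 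Consequently the constants $L,\delta$ furnished by Proposition~\ref{Prop:CofDet} depend on $k$ through $M_k$, and without knowing \emph{how} they depend, your geometric series need not converge (nor can the smallness hypothesis be verified for all $k$ by a single choice of $D'_{\ell(0)}$).

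The paper closes this gap by invoking a quantitative refinement of Proposition~\ref{Prop:CofDet} (stated as Lemma~\ref{Lem:CofDet}, taken from \cite[Remark~3.16]{Young17}) in which the Lipschitz constant and the smallness threshold are both explicitly $C_{m_u}M^{10m_u}$. Plugging in $M_k=3e^{2k\varepsilon_1}\ell(0)^2$ makes the $k$-th summand bounded by a constant times $\ell(0)^{20m_u+1}\big(e^{(20m_u+1)\varepsilon_1}/(e^\lambda-\delta'_1)\big)^k\Lnorm{z^1-z^2}{0}$, and this is summable precisely because $\varepsilon_1$ was chosen with $\varepsilon_1\ll\lambda$ (so that $(20m_u+1)\varepsilon_1<\lambda$). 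The same explicit power of $M_k$ is what lets one choose $D'_{\ell(0)}$ large enough to enforce the smallness hypothesis at every $k$ simultaneously. Your argument becomes correct once you replace Proposition~\ref{Prop:CofDet} by this refinement and track the $k$-dependent $M_k$ through the estimates.
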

	\begin{proof}
		Fixed $n\in \N$. 
		By Proposition \ref{Prop:Det}, for arbitrary $z^1,z^2\in \graph g_{0}$ we have
		\begin{equation}\label{eq:4.51}
			 \dfrac{\det'(D\wf^n_{-n}(z^1_{-n})|E^1_{-n})}{\det'(D\wf^n_{-n}(z^2_{-n})|E^2_{-n})}
			=\prod_{k=1}^{n} \dfrac{\det'(D\wf_{-k}(z^1_{-k})|E^1_{-k})}{\det'(D\wf_{-k}(z^2_{-k})|E^2_{-k})}.
		\end{equation}

		We will use the following refinement version of Proposition \ref{Prop:CofDet}, which emphasizes the dependence of the constants and can be referred to \cite[Remark 3.16]{Young17}.
		\begin{lemma}[\cite{Young17} Lemma 6.10]\label{Lem:CofDet}
			For any $m\in \N$, there is a constant $C_m>1$ with the following property.
			Let $(\cB_1,|\cdot|_1)$, $(\cB_2,|\cdot|_2)$ be Banach spaces and fixed $M>1$. 
			Assume $A_1,A_2:\cB_1 \rightarrow \cB_2$ are two injective bounded linear operators and $E_1,E_2$ are $m$-dimensional subspaces of $\cB_1$ satisfied that 
			\begin{align*}
				|A_j|&, |(A_j|_{E_j})^{-1}|\leq M \quad \text{for} \ j=1,2, \\
				|A_1-A_2|&, d_H(E_1,E_2)\leq \dfrac{1}{C_m M^{10m}},
			\end{align*}
			then, we have 
			$$|\log \dfrac{\det(A_1|_{E_1})}{\det(A_2|_{E_2})}|\leq C_m M^{10m}(|A_1-A_2|+d_H(E_1,E_2)).$$
		\end{lemma}
		
		Let $m_u:=\dim E^{u}(\ox)$. 
		For each fixed $1\leq k\leq n$, we choose $M_k$ so that $D\wf_{-k}(z^i_{-k})$, $E^i_{-k}$, $i=1,2$ and $C_{m_u}$ satisfy the hypotheses of Lemma \ref{Lem:CofDet}. We need the following:
		\begin{align}
			&\Lnorm{D\wf_{-k}(z^i_{-k})}{},
			 \Lnorm{(D\wf_{-k}(z^i_{-k})|_{E^i_{-k}})^{-1}}{}\leq M_k \ \text{for} \ i=1,2, \label{eq:111} \\
			&\Lnorm{D\wf_{-k}(z^1_{-k})-D\wf_{-k}(z^2_{-k})}{}
			, d'_H(E^1_{-k},E^2_{-k})\leq C_{m_u} M_{k}^{10m_u},\label{eq:222}
		\end{align}
		where $d'_H$ refers to the Hausdorff distance in the adapted norm $\Lnorm{\cdot}{-k}$.
		
		First we choose $M_k$ so that \eqref{eq:111} holds. 
		Note that $\Lnorm{(D\wf_{-k}(z^i_{-k})|_{E^i_{-k}})^{-1}}{}\leq 1$ for $i=1,2$.
		By the choice of the function $\ell(\ox)$, for $i=1,2$ we have
		$$\Lnorm{D\wf_{-k}(z^i_{-k})}{}\leq 3\ell(-k)\cdot |(f_{\te^{-k}\om}|_{B(A_{\te^{-k}\om},r_0)})|_{C^2}\leq 3e^{2k\varepsilon_1}\ell(0)^{2}.$$
		So, on setting $M_k:=3e^{2k\varepsilon_1}\ell(0)^{2}$, $\eqref{eq:111}$ satisfied.
		
		Next, we estimate the two terms on \eqref{eq:222}.
		By Lemma \ref{Lem:NlinP} and Theorem \ref{Thm:Unstable} (3) we have
		\begin{align*}
			\Lnorm{D\wf_{-k}(z^1_{-k})-D\wf_{-k}(z^2_{-k})}{}&\leq \ell(k) \cdot\Lnorm{z^1_{-k}-z^2_{-k}}{-k}\\
			&\leq  (\dfrac{e^{\varepsilon_1}}{e^{\lambda}-\delta'_1})^k\cdot \ell(0) \cdot \Lnorm{z^1-z^2}{0}.
		\end{align*}
		By a simple computation and Theorem \ref{Thm:Unstable} (2),
		\begin{align*}
			d'_H(E^1_{-k},E^2_{-k})&\leq 2|Dg_{-k}(u^1_{-k})-Dg_{-k}(u^2_{-k})|'\\
			&\leq  2C_0\ell(-k)\cdot\Lnorm{u^1_{-k}-u^2_{-k}}{-k}\\
			&=     2C_0\ell(-k)\cdot\Lnorm{z^1_{-k}-z^2_{-k}}{-k}\\
			&\leq  (\dfrac{e^{\varepsilon_1}}{e^{\lambda}-\delta_1})^k\cdot 2C_0\ell(0) \cdot \Lnorm{z^1-z^2}{0},
		\end{align*}
		where $z^i_{-k}=u^i_{-k}+g_{-k}(u^i_{-k})$ for some $u^i_{-k}\in E^u(\Phi^{-k}(\ox))$ and $i=1,2$.
		To fulfill the inequality \eqref{eq:222}, we take $\Lnorm{z^1-z^2}{0}$ small enough so that
		$$\max\{2C_0,1\}(\dfrac{e^{\varepsilon_1}}{e^{\lambda}-\delta_1})^k\cdot \ell(0) \cdot \Lnorm{z^1-z^2}{0}\leq( C_{m_u} M^{10m_u}_{k})^{-1}.$$
		That is,
		$$\Lnorm{z^1-z^2}{0}\leq \Big(3^{10m_u}C_{m_u}\ell(0)^{20m_u+1} \max\{2C_0,1\}(\dfrac{e^{(20m_u+1)\varepsilon_1}}{e^{\lambda}-\delta'_1})^k \Big)^{-1}.$$
		By the choice of $\varepsilon_1\ll \lambda$ one may assume $e^{(20m_u+1)\varepsilon_1}<e^{\lambda}-\delta'_1$. 
		So, we take $D'_{\ell(0)}$ large enough for which
		$$D'_{\ell(0)}\geq \delta'_1 3^{10m_u}C_{m_u}\ell(0)^{20m_u+1} \max\{2C_0,1\},$$
		one has \eqref{eq:222} holds for every $z^1,z^2\in \graph g_{0}$ with $\Lnorm{z^1-z^2}{0}\leq \delta'_1 (D'_{\ell(0)}) ^{-1}$. 
		
		Therefore, by Lemma \ref{Lem:CofDet} for every $z^1,z^2\in \graph g_{0}$ with $\Lnorm{z^1-z^2}{0}\leq \delta'_1 (D'_{\ell(0)})^{-1}$ and every $1\leq k\leq n$ we have
		\begin{equation}\label{eq:45.2}
			\begin{aligned}
			\bigg| \log	\dfrac{\det'(D\wf_{-k}(z^1_{-k})|E^1_{-k})}{\det'(D\wf_{-k}(z^2_{-k})|E^2_{-k})}\bigg|\leq 
			K' \ell(0)^{20m_u+1} \Lnorm{z^1-z^2}{0}(\frac{e^{(20m_u+1)\varepsilon_1}}{e^{\lambda}-\delta'_1})^k,
			\end{aligned}
		\end{equation}
	    where $K':=C_{m_u}3^{10m_u}(2C_0+1)$.
	    Together with \eqref{eq:4.51}, one has 
	    \begin{align*}
	    	\bigg| \log  \dfrac{\det'(D\wf^n_{-n}(z^1_{-n})|E^1_{-n})}{\det'(D\wf^n_{-n}(z^2_{-n})|E^2_{-n})} \bigg| &\leq K' \ell(0)^{20m_u+1} \Lnorm{z^1-z^2}{0} \sum_{k=1}^{n}(\frac{e^{(20m_u+1)\varepsilon_1}}{e^{\lambda}-\delta'_1})^k\\
	    	&\leq K''\ell(0)^{20m_u+1}\Lnorm{z^1-z^2}{0},
	    \end{align*}
	    where $K''$ is independent of $n$ and $(\ox)$. 
	    By increasing $D'_{\ell(0)}$ once more so that $D'_{\ell(0)}\geq K''\ell(0)^{20m_u+1}$, the statement of Lemma \ref{Lem:Distortion} follows.
	\end{proof}
    We now give the proof of Proposition \ref{Prop:dis}.
    \begin{proof}[proof of Proposition \ref{Prop:dis}]
    	Fixed $(\ox)\in \Gamma_{\ell}$. For $y^1,y^2\in W^{u}_{\loc}(\ox)$. 
    	We denote by $z^i:=\exp_{x}^{-1}y_i$ and $y^{i}_{-k}:=\fo^{-k}y_i$ for $k\in \N$ and $i=1,2$, and we will use the same notation as in \ref{Lem:Distortion}, such as $z^i_{-n}$ and $E^{i}_{-n}$.
    	Since $\ell(\ox)\leq \ell$, by the choice of $D'_{\ell(0)}$ one may assume $D'_{\ell}\geq D'_{\ell(0)}$ so that Lemma \ref{Lem:CofDet} also holds for $D'_{\ell}$.
    	
    	For item (a), we first assume that $y^1,y^2\in W^{u}_{\loc}(\ox)$ with $\Lnorm{z^1-z^2}{0}\leq \delta'_1 (D'_{\ell})^{-1}$. 
    	For fixed $n\geq 1$, we have
    	$$ \dfrac{\det(D\wf^n_{-n}(z^1_{-n})|E^1_{-n})}{\det(D\wf^n_{-n}(z^2_{-n})|E^2_{-n})}=
    	\underbrace{ \dfrac{dm_{E^{1}_{0}}/dm'_{E^{1}_{0}} }{dm_{E^{2}_{0}}/dm'_{E^{2}_{0}}}}_{I}
    	\underbrace{ \dfrac{dm_{E^{1}_{-n}}/dm'_{E^{1}_{-n}} }{dm_{E^{1}_{-n}}/dm'_{E^{1}_{-n}}}}_{II}
    	\underbrace{\dfrac{\det'(D\wf^n_{-n}(z^1_{-n})|E^1_{-n})}{\det'(D\wf^n_{-n}(z^2_{-n})|E^2_{-n})}}_{III}.$$

    	For $i=1,2$, let $z^i=u^i_{0}+g_{-k}(u^i_{0})$ for some $u^i_{0}\in E^u(\ox)$. 
    	For term $ I $, observe that if $L:E^u(\ox)\rightarrow E^{cs}(\ox)$ is a linear map with $\Lnorm{L}{0}\leq 1$, then the definition of $\Lnorm{\cdot}{0}$ implies that $Id+L$ is an isometry on $E^u(\ox)$ with respect to the norm $\Lnorm{\cdot}{0}$. 
    	By the definition of induced volumes, one has
    	$$\dfrac{dm_{(Id+L)E^{u}(\ox)}}{dm'_{(Id+L)E^{u}(\ox)}}=
    	  \dfrac{m_{(Id+L)E^{u}(\ox)}((Id+L)B)}{m'_{(Id+L)E^{u}(\ox)}((Id+L)B)}=
    	  \dfrac{m_{(Id+L)E^{u}(\ox)}((Id+L)B)}{m'_{E^{u}(\ox)}(B)},$$
    	where $B$ is the unit closed ball of $E^{u}(\ox)$. 
    	Then, since $\Lip g_0\leq 1$, the term $ I $ is
    	$$I=\dfrac{\det((Id+Dg_0(u^1_{0}))|E^{u}(\ox))}{\det((Id+Dg_0(u^2_{0}))|E^{u}(\ox))},$$
    	where the determinants involved are in the natural norm $|\cdot|$.
    	
    	We now estimate $I$ by applying Lemma \ref{Lem:CofDet}. 
    	On the one hand
    	\begin{align*}
    		|Id+Dg_0(u^1_{0})|&\leq 3\ell \Lnorm{Id+Dg_0(u^1_{0})}{}=3\ell,\\
    		|(Id+Dg_0(u^1_{0})|_{E^{u}(\ox)})^{-1}|&\leq 3\ell
    		\Lnorm{(Id+Dg_0(u^1_{0})|_{E^{u}(\ox)})^{-1}}{}=3\ell.
    	\end{align*}
        So, we take $M=3\ell$. 
        On the other hand, by Theorem \ref{Thm:Unstable} (2)
       \begin{align*}
       	|Dg_0(u^1_{0})-Dg_0(u^2_{0})|&\leq 3\ell\Lnorm{Dg_0(u^1_{0})-Dg_0(u^2_{0})}{}\\
       	&\leq 3C_0\ell^2\Lnorm{u^1_{0}-u^2_{0}}{0}\\
       	&=3C_0\ell^2\Lnorm{z^1-z^2}{0}.
       \end{align*}
       It suffices to enlarge $D'_{\ell}$ if necessary for which $D'_{\ell}\geq (3\ell)^{10m_u+1}\ell\delta'_1C_0C_{m_u}$. 
       Then, by Lemma \ref{Lem:CofDet} we have
       $$|\log I|\leq C_{m_u}C_03^{10m_u+1}\ell^{10m_u+2}\Lnorm{z^1-z^2}{0}=K'''\ell^{10m_u+2}\Lnorm{z^1-z^2}{0}.$$
       
       Using the same argument, term $II$ has the following estimate
       \begin{equation}\label{eq:45.3}
       	\begin{aligned}
       	|\log II|&\leq K'''\ell(-n)^{10m_u+2}\Lnorm{z^1_{-n}-z^2_{-n}}{-n}\\
       	&\leq K'''\ell(0)^{10m_u+2} (\dfrac{e^{(10m_u+2)\varepsilon_1}}{e^{\lambda}-\delta'_1})^n \Lnorm{z^1-z^2}{0}\\
       	&\leq K'''\ell^{10m_u+2}\Lnorm{z^1-z^2}{0}.
       	\end{aligned}
       \end{equation}

       By Lemma \ref{Lem:Distortion}, we have
       $$|\log III|\leq D'_{\ell}\Lnorm{z^1-z^2}{0}.$$
       Therefore, take $D_{\ell}\geq \ell(2K'''\ell^{10m_u+2}+D'_{\ell})$. 
       This completes the proof of item (a) for $y^1,y^2\in W^{u}_{\loc}(\ox)$ with $\Lnorm{z^1-z^2}{0}\leq \delta'_1 (D'_{\ell})^{-1}$.
       
       For $y^1,y^2\in W^{u}_{\loc}(\ox)$ but $\Lnorm{z^1-z^2}{0}>\delta'_1 (D'_{\ell})^{-1}$, we only need to choose a finite sequence of points $\{u'_i\}^{N}_{i=0}$ on the line segment with the endpoints $u^1_{0}$ and $u^2_{0}$, so that $u'_0=u^1_{0}$, 
       $u'_N=u^2_{0}$ and $\Lnorm{u'_{i}-u'_{i-1}}{0}\leq \delta'_1 (D'_{\ell})^{-1}$ for any $1\leq i\leq N$.
       Take $\hat{z}^{i}=u'_{i}+g_0u'_{i}$, then 
       $\Lnorm{\hat{z}^{i}-\hat{z}^{i-1}}{0}=\Lnorm{u'_{i}-u'_{i-1}}{0}\leq \delta'_1 (D'_{\ell})^{-1}$ for any $1\leq i\leq N$. Using the argument above one has
       \begin{align*}
       	\bigg| \log \dfrac{\det(D\wf^n_{-n}(z^1_{-n})|E^1_{-n})}{\det(D\wf^n_{-n}(z^2_{-n})|E^2_{-n})}\bigg|
       	&=\bigg|\log\prod_{i=1}^{N} \dfrac{\det(D\wf^n_{-n}(\hat{z}^{i}_{-n})|\hat{E}^{i}_{-n})}{\det(D\wf^n_{-n}(\hat{z}^{i-1}_{-n})|\hat{E}^{i-1}_{-n})}\bigg|\\
       	&\leq D_{\ell}(\sum_{i=1}^{N}|\hat{z}^{i}-\hat{z}^{i-1}|)
       	=D_{\ell}|z^1-z^2|,
       \end{align*}
     where $\hat{z}^i_{-n}$ is the unique point in $\graph g_{-n}$ with
     $D\wf^n_{-n}(\hat{z}^i_{-n})=\hat{z}^i$, and $\hat{E}^i_{-n}$ is the tangent space to $\graph g_{-n}$ at $\hat{z}^i_{-n}$ for $0\leq i\leq N$.
     This completes the proof of item (a).
     
     For item (b), it suffices to show that $y \mapsto \log \Delta_{k}({x}',y), k\in \N$ is a Cauchy sequence. Note that for $M<N$,
     $$\bigg| \log \dfrac{\Delta_{N}({x}',y)}{\Delta_{M}({x}',y)}\bigg|= \bigg| \log \prod_{k=M+1}^{N} \dfrac{\det(Df_{\te^{-k}\om}(f^{-k}_{\om}{x}')|E^u(\te^{-k}\om,f^{-k}_{\omega}{x}'))}{\det(Df_{\te^{-k}\om}(f^{-k}_{\om}y)|E^u(\te^{-k}\om,f^{-k}_{\om}y))}\bigg|.$$
     Using the estimations in \eqref{eq:45.2} and \eqref{eq:45.3} one can show the right term above is upper-bounded by
     \begin{align*}
     	&\leq (2K'''\ell(\ox)^{10m_u+2}+K'\ell(\ox)^{20m_u+1})\cdot 3\sum_{i=M}^{N}(\dfrac{e^{(20m_u+1)\varepsilon_1}}{e^{\lambda}-\delta'_1})^i \Lnorm{x'-y}{0}\\& \leq(2K'''\ell(\ox)^{10m_u+1}+K'\ell(\ox)^{20m_u})\cdot 6\sum_{i=M}^{N}(\dfrac{e^{(20m_u+1)\varepsilon_1}}{e^{\lambda}-\delta'_1})^i \delta'_1
     \end{align*}
     for every $y\in W^{u}_{\loc}(\ox)$. This completes the proof of item (b).
    \end{proof}

	\subsection{Unstable stacks of local unstable manifolds}\label{SEC:4.2}
	Now, we discuss the regularity of local unstable manifolds. 
	Notice that the functions $g_{(\ox)}$ and $g_{(\oy)}$ belong to the different coordinate system if $x\neq y$.
    To address this, we need to construct $g^{y}_{(\ox)}$, which represents the coordinate transformation of $g_{(\oy)}$ under the coordinate system $E^{u}(\ox)\oplus E^{cs}(\ox)$.
	
	To avoid using multiple norms that depend on the point, we let $B^{\tau}_{(\ox)}(r)=\{v\in E^{\tau}(\ox):|v|\leq r\}$ for $\tau=u,cs$.
	
	Recall the uniformly set $\Gamma_{\ell}:=\{(\ox): \ell(\ox)\leq \ell\}$ and the $\om$-section $(\Gamma_{\ell})_{\om}:=\{x\in\cB:(\ox)\in \Gamma_{\ell}\}$.

    \begin{figure}[H]
	\includegraphics[width=1.0\linewidth]{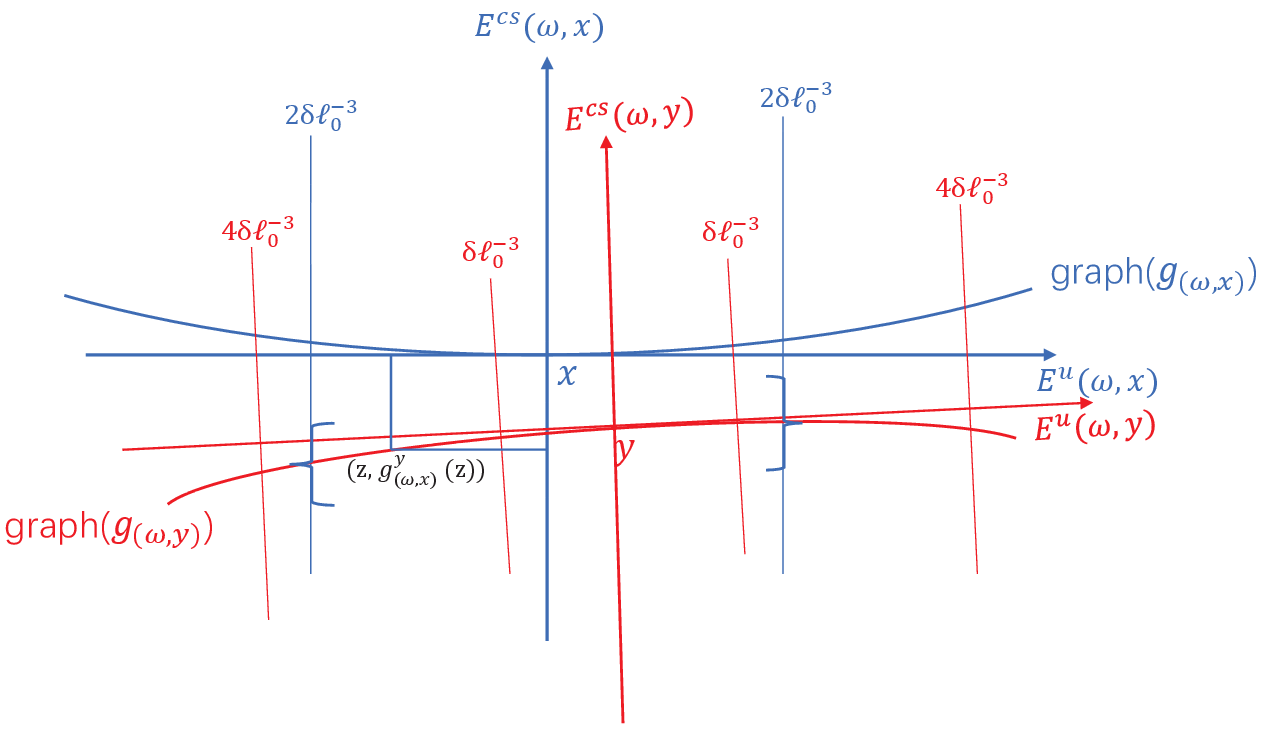}
	\caption{the coordinate transformation of $g_{(\oy)}$ under $E^{u}(\ox)\oplus E^{cs}(\ox)$} 
	\label{Fig: unstablestack}
    \end{figure}

	\begin{lemma}\label{Unstacks}
		Let $\delta>0$ be sufficiently small.
        Let $\ell_0$ be fixed with $\mu(\Gamma_{\ell_0})>0$, and fixed $(\om, x_{\om})\in \Gamma_{\ell_0}$. 
		For every $r>0$ and every $(\ox) \in \Gamma_{\ell_0}$ we let
		$$U_{\omega}(x,r):=(\Gamma_{\ell_0})_{\omega} \cap \{z : |z-x| \leq r\},$$
		and let $g_{(\ox)}$ be as in Theorem \ref{Thm:Unstable}.
		Then, there exists $r_1(\ell_0,\delta,\omega)>0$ such that 
  \begin{enumerate}
		\item[(1)]for $y\in U_{\omega}(x_{\om},r_1)$, there exists a continuous map 
		$g_{(\om,x_{\om})}^{y}: B^{u}_{(\om,x_{\om})}(2\delta \ell_0^{-3}) \rightarrow E^{cs}(\om,x_{\om})$ 
		such that 
		\begin{align*}
		\exp_{y}\graph(g_{(\oy)}|_{B^{u}_{(\oy)}(\delta \ell_{0}^{-3})})
		&\subset
		\exp_{x_{\om}}\graph(g_{(\om,x_{\om})}^{y})\\                          
		&\subset 
		\exp_{y}\graph(g_{(\oy)}|_{B^{u}_{(\oy)}(4\delta \ell_{0}^{-3})}).
		\end{align*}
		\item[(2)] 
		the mapping  $\Theta_{\om}=\Theta_{\om}^{x_{\om}}:U_{\omega}(x_{\om},r_1)\rightarrow C(B_{(\omega, x_{\om})}^{u}(2\delta \ell_0^{-3}),E^{cs}(\om,x_{\om}))$ defined by $\Theta_{\omega}(y)=g_{(\om,x_{\om})}^{y}$ is continuous with respect to the $C^0$ norm $||\cdot||$.
    \end{enumerate}
	\end{lemma}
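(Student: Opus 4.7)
The strategy is to re-express the local unstable manifold $W^u_\delta(\omega,y)=\exp_y\graph(g_{(\omega,y)})$ in coordinates centered at $x_\omega$. Since $\exp_y(u+g_{(\omega,y)}(u))=\exp_{x_\omega}(u+g_{(\omega,y)}(u)+(y-x_\omega))$, the task reduces to writing the translated graph as the graph of a function $g^y_{(\omega,x_\omega)}:E^u(\omega,x_\omega)\to E^{cs}(\omega,x_\omega)$ on a suitable ball. I will define the candidate reparametrization
\[\Psi_y(u):=\pi^u_{(\omega,x_\omega)}\bigl(u+g_{(\omega,y)}(u)+(y-x_\omega)\bigr),\qquad u\in E^u(\omega,y),\]
and set $g^y_{(\omega,x_\omega)}(\tilde u):=\pi^{cs}_{(\omega,x_\omega)}\bigl(\Psi_y^{-1}(\tilde u)+g_{(\omega,y)}(\Psi_y^{-1}(\tilde u))+(y-x_\omega)\bigr)$. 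By Proposition \ref{Prop:CCS} together with Lemma \ref{Lem:ProDh}, the restriction $\pi^u_{(\omega,x_\omega)}|_{E^u(\omega,y)}$ is close to a linear isomorphism onto $E^u(\omega,x_\omega)$ once $y$ is close to $x_\omega$ on $(\Gamma_{\ell_0})_\omega$; combined with $\Lip(g_{(\omega,y)})\le 1/10$ from Theorem \ref{Thm:Unstable} and the bound $\max\{|\pi^u_{(\omega,x_\omega)}|,|\pi^{cs}_{(\omega,x_\omega)}|\}\le \ell_0$ from \eqref{eq:lox}, $\Psi_y$ will be a bilipschitz homeomorphism on the relevant balls.

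For item (1), the radius $r_1=r_1(\ell_0,\delta,\omega)$ is chosen so that, for every $y\in U_\omega(x_\omega,r_1)$, both $|y-x_\omega|$ and $d_H(E^\tau(\omega,y),E^\tau(\omega,x_\omega))$ (for $\tau=u,cs$) are small enough that
\[\Psi_y\bigl(B^u_{(\omega,y)}(\delta\ell_0^{-3})\bigr)\subset B^u_{(\omega,x_\omega)}(2\delta\ell_0^{-3})\subset \Psi_y\bigl(B^u_{(\omega,y)}(4\delta\ell_0^{-3})\bigr),\]
from which the sandwiched inclusions in (1) follow. The $\omega$-dependence of $r_1$ enters only through the pointwise continuity rate of $x\mapsto E^\tau(\omega,x)$ in Proposition \ref{Prop:CCS}, while the exponent $-3$ on $\ell_0$ comes from combining the lower bound $\ell(\omega,y)^{-1}\ge \ell_0^{-1}$ on the domain radius of $g_{(\omega,y)}$, the comparison between the Lyapunov and the ambient norm (factor $\ell_0^{-1}$ from \eqref{eq:lox}), and the projection bound (factor $\ell_0$).

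For item (2), continuity of $\Theta_\omega$ in the $C^0$ topology reduces to continuity, in $y\in U_\omega(x_\omega,r_1)$, of the three ingredients of the construction: the splitting $E^\tau(\omega,y)$ (Proposition \ref{Prop:CCS}), the reparametrization $\Psi_y^{-1}$ and projections $\pi^\tau_{(\omega,y)}$ (via Lemmas \ref{Lem:ProDh} and \ref{Lem:CdhA}), and the unstable parametrization $g_{(\omega,y)}$ itself. The main obstacle is this last piece, because the domain $\tilde B^u_{(\omega,y)}(\delta\ell(\omega,y)^{-1})$ of $g_{(\omega,y)}$ varies with $y$. I will handle it by viewing $g_{(\omega,y)}$, after pullback to a fixed domain in $E^u(\omega,x_\omega)$ via the preceding reparametrization, as the unique fixed point of the infinite composition of graph transforms furnished by Lemma \ref{Lem:Gtran}. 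The contraction rate $c\in(0,1)$ is uniform in $y\in \Gamma_{\ell_0}$; the cocycle data $Df_{\theta^{-k}\omega}(\cdot)$ and the Oseledets subspaces $E^\tau(\Phi^{-k}(\omega,y))$ depend continuously on $y$ by the continuity of $Df_{\theta^{-k}\omega}$, the continuity of $(\Phi|_{A_n})^{-1}$ on the compact pieces noted in the remarks following (H2), and Proposition \ref{Prop:CCS} applied at each $\Phi^{-k}(\omega,y)$. A standard fixed-point perturbation argument then yields $C^0$-continuity of $y\mapsto g_{(\omega,y)}$, and combining with the continuity of the reparametrization produces the required continuity of $\Theta_\omega$.
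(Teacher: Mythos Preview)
Your proposal is correct and follows essentially the same route as the paper: for item (1) you re-express the translated graph via the projection $\pi^u_{(\omega,x_\omega)}$ and invert using Proposition \ref{Prop:CCS} together with Lemma \ref{Lem:ProDh} and the Lipschitz control on $g_{(\omega,y)}$, and for item (2) you invoke the uniform contraction of the graph transforms from Lemma \ref{Lem:Gtran} plus continuity of the finitely-iterated data, which is exactly what the paper does (it just makes the fixed-point perturbation explicit through the finite approximations $\phi_{y^n_{-k}}=\cT_{\Phi^{-1}(\omega,y^n)}\circ\cdots\circ\cT_{\Phi^{-k}(\omega,y^n)}\mathbf{0}$ and a three-term triangle inequality). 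One small correction: the continuity of backward iterates you need is that $f_\omega^{-k}$ is continuous on $A_\omega$ (as noted after (H2)), not the Lusin-set statement about $(\Phi|_{A_n})^{-1}$; and note that the paper additionally extracts the sharper bound $\Lip_{|\cdot|}(g_y)\le 12C_0\delta$ from the $C^{1+\Lip}$ control in Theorem \ref{Thm:Unstable}(2) on the small domain, though your formulation (using $|\pi^u_{(\omega,x_\omega)}|_{E^{cs}(\omega,y)}|\to 0$) does not strictly require it.
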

   
	\begin{proof}
		With a minor modification of the proof of \cite[Lemma 6.5]{Young17}, we give the proof of this lemma in the following.

		Let $(\ox),(\oy)\in \Gamma_{\ell_0}$. 
		For ease notations, we denote by $g_y:=g_{(\oy)}$, $E^{\tau}_y:=E^{\tau}(\oy)$, $\pi^{\tau}_{y}:=\pi^{\tau}_{(\oy)}$ and 
		$W^{u}_{x,\delta,y}:=\exp^{-1}_x \exp_y \graph(g_y|_{\wB^{u}_{(\oy)}(\delta \ell^{-2}_{0})})$ for $\tau=u,cs$ and $\delta>0$.
		
		We first show that, $\pi^{u}_{x}$ is invertible on $W^{u}_{x,4\delta,y}$ when $|x-y|$ and $\delta$ are sufficiently small.  
		This is guaranteed if $|\pi^{cs}_{x}(w_1-w_2)|<|w_1-w_2|$ for all $w_1, w_2\in W^{u}_{x,4\delta,y}$.
		By (\ref{eq:lox}) and Theorem \ref{Thm:Unstable}, for every $u_1, u_2\in \graph(g_y|_{\wB^{u}_{(\oy)}(4\delta \ell^{-2}_{0})})$ we have
		\begin{align*}
			|\pi^{cs}_{x}(u_1-u_2)|&=|\pi^{cs}_{x}(\pi^{u}_{y}(u_1-u_2))+\pi^{cs}_{x}(g_y(\pi^{u}_{y} u_1)-g_y(\pi^{u}_{y} u_2))|\\
			                    &\leq |\pi^{cs}_x|_{E^{u}_y}|  \cdot |\pi^{u}_{y}(u_1-u_2)|+
			                    |\pi^{cs}_x|_{E^{cs}_y}| \cdot |g_y(\pi^{u}_{y} u_1)-g_y(\pi^{u}_{y} u_2)|\\
			                 &\leq \ell_{0}|\pi^{cs}_x|_{E^{u}_y}|\cdot |u_1-u_2|+|\pi^{cs}_x|_{E^{cs}_y}| \cdot |g_y(\pi^{u}_{y} u_1)-g_y(\pi^{u}_{y} u_2)|.
		\end{align*}
		By (\ref{eq:lox}) and the definition of the Lyapunov norms,
		\begin{align*}
			|g_y(\pi^{u}_{y} u_1)-g_y(\pi^{u}_{y} u_2)|&\leq 3 \Lnorm{g_y(\pi^{u}_{y} u_1)-g_y(\pi^{u}_{y} u_2)}{(\oy)}\\
			               &\leq 3{\Lip}'_{y}(g_y) \cdot \Lnorm{\pi^{u}_{y}(u_1-u_2)}{(\oy)} \\
			               &\leq 3(\sup_{w\in \wB^{u}_{(\oy)}(4\delta\ell_0^{-2})} \Lnorm{Dg_y (w)}{}) \cdot\Lnorm{u_1-u_2}{(\oy)}\\
			               &\leq 3\ell_{0} (\sup_{w\in \wB^{u}_{(\oy)}(4\delta\ell_0^{-2})} \Lnorm{Dg_y (w)}{}) \cdot |u_1-u_2|\\
			               &\leq 12\delta \ell_{0}^{-1} {\Lip}'_{y}(Dg_y) \cdot |u_1-u_2|,
		\end{align*}
		where ``${\Lip}'_{y}$" is the Lipschitz constant with respect to the norm $\Lnorm{\cdot}{(\oy)}$.
		By Theorem \ref{Thm:Unstable}, we have ${\Lip}'_{y}(Dg_y)\leq C_0\ell_0$.
		Therefore, we have 
		$$|g_y(\pi^{u}_{y} u_1)-g_y(\pi^{u}_{y} u_2)|\leq 12C_0\delta|u_1-u_2|,$$
		and we may assume that the number $12C_0\delta\ll 1$.
		
		By Lemma \ref{Lem:ProDh}, we have $|\pi^{cs}_x|_{E^{u}_y}|\rightarrow 0, |\pi^{cs}_x|_{E^{cs}_y}| \rightarrow 1$ whenever 
		$d_H(E^u_y,E^u_x)\rightarrow 0$ and $d_H(E^{cs}_y,E^{cs}_x)\rightarrow 0$.
		Thus, with Proposition \ref{Prop:CCS}, take ${r}'_1(\ell_0, \om)$ small enough such that for every $x, y\in U_{\om}(x_{\om},{r}'_1)$, one has $d_H(E^u_y,E^u_x)$ and $d_H(E^{cs}_y,E^{cs}_x)$ are sufficiently small so that 
		$$(\ell_0|\pi^{cs}_x|_{E^{u}_y}|+|\pi^{cs}_x|_{E^{cs}_y}|\cdot 12C_0\delta)<1.$$
		
		We define the function $g^{y}_{(\ox)}$ by $g^{y}_{(\ox)}(u):=\pi^{cs}_x((\pi^{u}_x)^{-1}(u))$ for $u\in \pi^{u}_x(W^{u}_{x,4\delta,y})$, where $(\pi^{u}_x)^{-1}(u)$ is the unique point in $W^{u}_{x,4\delta,y}$ such that $\pi^{u}_x((\pi^{u}_x)^{-1}(u))=u$.
		Take $r_1(\ell_0,\delta,\om)<{r}'_1$ small enough, so that for every $x, y\in U_{\om}(x_{\om},r_1)$ one has
		\begin{align*}
			 \pi^u_x (\graph(g_y|_{B^{u}_{(\oy)}(\delta \ell_{0}^{-3})})+y-x)
			&\subset B^{u}_{(\ox)}(3\delta \ell_0^{-3})\\
			&\subset \pi^u_x(\graph(g_y|_{B^{u}_{(\oy)}(4\delta \ell_{0}^{-3})})+y-x).
		\end{align*}
	    Hence, the function $g^{y}_{(\ox)}$	is well-defined and continuous on $B^{u}_{(\ox)}(3\delta \ell_0^{-3})$.
	    	
		In particularly, for each $y\in U_{\om}(x_{\om},r_1)$ we consider the function $g^{y}_{(\om,x_{\om})}$ on $B^{u}_{(\om,x_{\om})}(2\delta \ell_0^{-3})$. 
		By the construction $g^{y}_{(\om,x_{\om})}$ satisfying item (1).
    
		For item (2), fix $x\in U_{\omega}(x_{\om},r_1)$, let $\{y^n\}_{n\in \N} \subset U_{\omega}(x_{\om},r_1)$ be a sequence with $y^n \rightarrow x$. 
		Shirking $r_1(\ell_0,\delta,\om)$ if necessary, such that for every $x_1, x_2\in U_{\om}(x_{\om},r_1)$ one has
		$$\exp_{x_{\omega}} \graph(g^{x_1}_{(\om,x_{\om})}|_{B^{u}_{(\om,x_{\om})}(2\delta \ell_0^{-3})})\subset 
		  \exp_{x} \graph( g^{x_1}_{(\om,x_2)}|_{B^{u}_{(\om,x_2)}(3\delta \ell_0^{-3})}).$$
		Therefore, to prove $||\Theta_{\om}(y^n)-\Theta_{\om}(x)||\rightarrow 0$. 
		It suffices to prove that $||g^{y^n}_{(\ox)}-g_{(\ox)}||\rightarrow 0$ on $B^{u}_{(\ox)}(3\delta \ell_0^{-3})$.
		Let $\gamma>0$ be fixed, we prove that $||g^{y^n}_{(\ox)}-g_{(\ox)}||<\gamma$ for $n$ large enough. 
		
		For $k,n\in \N$, let $x_{-k}=f^{-k}_{\omega}x$, $y^n_{-k}=f^{-k}_{\omega}y^n$,
	    let
		$$\textbf{0}_{y^n_{-k}}: \wB^{u}_{\Phi^{-k}(\omega,y^n)}(4\delta \ell(\Phi^{-k}(\omega,y^n))^{-1})\rightarrow E^{cs}(\Phi^{-k}(\omega,y^n))$$ 
		be the function that is identically equal to $0$, and let 
		$$\phi_{y^n_{-k}}:=\cT_{\Phi^{-1}(\omega,y^n)}\circ\cdots\circ \cT_{\Phi^{-k}(\omega,y^n)} \textbf{0}_{y^n_{-k}},$$ 
		where $\cT$ is the graph transformation as in Lemma \ref{Lem:Gtran}. 
		Likewise, define $\phi_{x_{-k}}$.
		 	
		For fixed $k\in \N$, we have $y_{-k}^n, x_{-k}\in (\Gamma_{\ell_0e^{k\varepsilon_1}})_{\te^{-k}\omega}$ for every $n\in \N$.
	    By Proposition \ref{Prop:CCS}, one has $E^u(\te^{-k}\omega,y_{-k}^n)\rightarrow E^u(\te^{-k}\omega,x_{-k})$ as $n\rightarrow \infty$. 
	    This implies that
	    \begin{equation}\label{eq:444}
	    \exp_{y^n_{-k}} B^{u}_{\Phi^{-k}(\om,y^n)}(10\delta \ell_0^{-3}) \rightarrow \exp_{x_{-k}} B^{u}_{\Phi^{-k}(\ox)}(10\delta \ell_0^{-3}) \ \text{as} \ n\rightarrow \infty,
	    \end{equation}
	    and so, their $f_{\te^{-k} \om}^{k}$-images as well.
	    Then, analogous to  item (1), for sufficiently large $n$ the coordinate transformation $\phi^{y^n_{-k}}_{(\ox)}$ of $\phi_{y^n_{-k}}$ is defined on $B^{u}_{(\ox)}(3\delta \ell_0^{-3})$, which satisfied that
	    $$\exp_{x}\graph(\phi^{y^n_{-k}}_{(\ox)})=\exp_{y}\graph(\phi_{y^n_{-k}}|_{Dom(y^n)})$$
	    for some $Dom(y^n)\subset B^{u}_{(\om,y^n)}(10\delta \ell_0^{-3})$.
	    Moreover, also by (\ref{eq:444}) we have
	    $||\phi_{x_{-k}}-\phi^{y^n_{-k}}_{(\ox)}||\rightarrow 0$ on $B^{u}_{(\ox)}(3\delta \ell_0^{-3})$ as $n\rightarrow \infty$. 
	      
		By Lemma \ref{Lem:Gtran}, and the uniform equivalence of the $|\cdot|$ and $\Lnorm{\cdot}{(\oy)}$ norms on $\Gamma_{\ell_0}$. There exists $k_0:=k_0(\ell_{0},\gamma)\in \N$ such that for all sufficiently large $n$ we have
		$$||\phi^{y^n_{-k_0}}_{(\ox)}-g_{(\ox)}^{y^n}||<\gamma/3 \ \text{and}\
		  ||\phi_{x_{-k_0}}-g_{(\ox)}||<\gamma/3$$
		on $B^{u}_{(\ox)}(3\delta \ell_0^{-3})$.		
		To finish, we estimate $||g_{(\ox)}^{y^n}-g_{(\ox)}||$ by
		\begin{align*}
			||g_{(\ox)}^{y^n}-g_{(\ox)}^{x}||\leq ||g_{(\ox)}^{y^n}-\phi^{y^n_{-k_0}}_{(\ox)}||+||\phi^{y^n_{-k_0}}_{(\ox)}-\phi_{x_{-k_0}}||+||g_{(\ox)}-\phi_{x_{-k_0}}||.
		\end{align*}
		The first and third terms above are $<\gamma/3$, and for $n$ large enough the middle term is $< \gamma/3$. 
		This completes the proof.
	\end{proof}
	We referred to sets of the form
	\begin{equation}\label{eq:stack}
		\cS(\om,x_{\om})=\bigcup_{y\in \bar{U}_{\omega}} \exp_{x_{\om}}(\graph \Theta_{\omega}(y)),
	\end{equation}
	where $x_{\om},U_{\omega}(x_{\om},r_1)$ and $\Theta_{\om}$ as in above and $\bar{U}_{\omega}\subset U_{\omega}(x_{\om},r_1)$ is a compact subset, 
	as a \textit{unstable stack of local unstable manifolds} through the point $(\om,x_{\om})$. 
	By Lemma \ref{Unstacks} (2), the set $\cS(\om,x_{\om})$ is a compact subset of $\cB$.
	\begin{lemma}\label{UStaL}
		For fixed $(\om,x_{\om})\in \Gamma_{\ell_0}$. Let 
		$\Theta_{\omega}$ be defined in Lemma \ref{Unstacks}. 
		For $y_i \in U_{\omega}(x_{\om},r_1), i=1,2$ with 
		$$\exp_{x_{\om}}(\graph \Theta_{\omega}(y_1))\neq \exp_{x_{\om}}(\graph \Theta_{\omega}(y_2)).$$
		Then, we have 
		$$\exp_{x_{\om}}(\graph \Theta_{\omega}(y_1)) \cap \exp_{x_{\om}}(\graph \Theta_{\omega}(y_2))=\varnothing.$$
	\end{lemma}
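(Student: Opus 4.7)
The plan is to prove the contrapositive: assuming $W_1 \cap W_2 \neq \varnothing$, where $W_i := \exp_{x_{\om}}(\graph \Theta_{\omega}(y_i))$, we must show $W_1 = W_2$. Pick $z \in W_1 \cap W_2$ and decompose $\exp_{x_\om}^{-1}(z) = u^* + \Theta_\omega(y_i)(u^*)$ for each $i$, where $u^* \in B^u_{(\om, x_\om)}(2\delta \ell_0^{-3})$; the equality of the $E^u(\om, x_\om)$-components of the two decompositions forces the same $u^*$ on both sides and gives $\Theta_\omega(y_1)(u^*) = \Theta_\omega(y_2)(u^*)$.

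First I would argue that $y_1$ and $y_2$ lie on the same global unstable manifold. The containment in Lemma \ref{Unstacks}(1) places $z \in W^u_{\delta_1'}(\om, y_i)$ for both $i = 1, 2$. By the characterization in Lemma \ref{Lem:CUnstable}(a), together with the exponential contraction coming from Corollary \ref{Cor:NlinP}, the preimages $f^{-n}_{\om}(z)$ exist and stay exponentially close to $f^{-n}_{\om}(y_i)$, at rate at least $e^{-n\lambda/2}$. The triangle inequality then yields the same exponential rate for $|f^{-n}_{\om}(y_1) - f^{-n}_{\om}(y_2)|$, and feeding this back into Lemma \ref{Lem:CUnstable}(a) at $(\om, y_1)$ gives $y_2 \in W^u_{\delta_1'}(\om, y_1)$. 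Consequently $W := W^u(\om, y_1) = W^u(\om, y_2)$.

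Next, both $W_1$ and $W_2$ are subsets of this common manifold $W$ and are graphs over the connected domain $B^u_{(\om, x_\om)}(2\delta \ell_0^{-3})$ via $\pi^u_{(\om, x_\om)}$. Introduce the coincidence set
\[ V := \{u \in B^u_{(\om, x_\om)}(2\delta \ell_0^{-3}) : \Theta_\omega(y_1)(u) = \Theta_\omega(y_2)(u)\}. \]
Then $V$ is nonempty (it contains $u^*$) and closed by continuity of $\Theta_\omega(y_i)$ (Lemma \ref{Unstacks}(2)). For openness, fix $u_0 \in V$ and let $p$ be the common point of $W_1, W_2$ above $u_0$. By Corollary \ref{Cor:CofT} we have $T_p W = E^u(\om, p)$; combining the Lipschitz control of $Dg_{(\om, y_i)}$ from Theorem \ref{Thm:Unstable}(2) with Proposition \ref{Prop:CCS} shows $E^u(\om, p)$ is $d_H$-close to $E^u(\om, x_\om)$ when $r_1(\ell_0, \delta, \om)$ is chosen sufficiently small. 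By Lemma \ref{Lem:ProDh}, $\pi^u_{(\om, x_\om)}$ then restricts to a local diffeomorphism on $W$ near $p$, and the uniform smallness of $\Lip g_{(\om, y_i)} \leq 1/10$ in Theorem \ref{Thm:Unstable} upgrades this to global injectivity of $\pi^u_{(\om, x_\om)}|_W$ on a tubular neighborhood containing both $W_1$ and $W_2$. Hence both sections $\Theta_\omega(y_i)$ coincide with the unique local inverse of this projection near $u_0$, so $V$ is open. Connectedness of the ball yields $V = B^u_{(\om, x_\om)}(2\delta \ell_0^{-3})$, and thus $W_1 = W_2$.

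The main technical obstacle is the global injectivity of $\pi^u_{(\om, x_\om)}|_W$ over the entire graph domain, since the construction of $\Theta_\omega(y_i)$ in Lemma \ref{Unstacks} only guaranteed single-valuedness locally around each fixed base point $y$. Resolving this amounts to shrinking $r_1(\ell_0, \delta, \om)$ once more so that $W_1 \cup W_2$ is confined to a region where the tangent spaces $E^u(\om, p)$ are uniformly close to $E^u(\om, x_\om)$; the relevant inputs are the $d_H$-continuity of unstable subspaces on $(\Gamma_{\ell_0})_\om$ from Proposition \ref{Prop:CCS} and the uniform bound $\Lip(Dg_{(\om, y)}) \leq C_0 \ell_0$ from Theorem \ref{Thm:Unstable}(2).
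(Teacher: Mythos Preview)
Your argument is correct but takes a longer route than the paper. You use the backward-orbit triangle inequality only to show $y_2\in W^u(\om,y_1)$, and then invoke a connectedness argument on the coincidence set $V$, which in turn requires local injectivity of $\pi^u_{(\om,x_\om)}$ on the common unstable manifold. The paper avoids all of this by applying the same triangle-inequality idea more aggressively: for \emph{every} $z'\in G_1$ (not just $z'=y_1$) it bounds
\[
|f^{-n}_\om(z')-f^{-n}_\om(y_2)|\leq |f^{-n}_\om(z')-f^{-n}_\om(y_1)|+|f^{-n}_\om(z)-f^{-n}_\om(y_1)|+|f^{-n}_\om(z)-f^{-n}_\om(y_2)|,
\]
and concludes directly from Lemma~\ref{Lem:CUnstable}(a) that $z'$ lies on a local unstable manifold of $(\om,y_2)$, hence $z'\in G_2$. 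Symmetrizing gives $G_1=G_2$. What your approach buys is an explicit identification of both $W_i$ as pieces of a single immersed manifold before comparing them, which is conceptually clean; what the paper's approach buys is that no connectedness, no openness argument, and no discussion of the projection's injectivity are needed at all---the dynamical characterization of $W^u_\delta$ does the whole job in two lines.
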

	\begin{proof}
		Denote by $G_1:=\exp_{x_{\om}}(\graph \Theta_{\omega}(y_1))$ and $G_2:=\exp_{x_{\om}}(\graph \Theta_{\omega}(y_2))$.
		If there exists $z \in G_1\cap G_2$, then $z$ contained in some local unstable manifolds of $(\omega,y_1)$ and $(\omega,y_2)$.
		For each ${z}'\in G_1$, by Lemma \ref{Lem:CUnstable} one has 
		\begin{align*}
			\limsup_{n\rightarrow\infty}\frac{1}{n}
			\log|\fo^{-n}({z}')- \fo^{-n}(y_2)|
			&\leq \limsup_{n\rightarrow\infty} \frac{1}{n}\log(|\fo^{-n}({z}')- \fo^{-n}(y_1)|\\
		   +&|\fo^{-n}(z)- \fo^{-n}(y_1)|+|\fo^{-n}(z)- \fo^{-n}(y_2)|) \leq -\frac{\lambda}{2}.
		\end{align*}
		Therefore, also by Lemma \ref{Lem:CUnstable}, one can easily to show that ${z}'$ contained in some local unstable manifold of $(\omega,y_2)$. 
		This implies that ${z}'\in G_2$ and so, $G_1 \subset G_2$. 
		Symmetrically, we also have $G_1 \supset G_2$. 
	\end{proof}
	\begin{lemma}\label{GUStacks}
		Let $\cS(\omega,x_{\om})$ be defined above, and let $\Sigma_{(\omega,x_{\om})}:=\exp_{x_{\om}}^{-1}\cS(\omega,x_{\om}) \cap E^{cs}(\omega,x_{\om})$. 
		Then $\cS(\omega,x_{\om})$ is homeomorphic to  
		$\Sigma_{(\omega,x_{\om})}\times B^{u}_{(\omega,x_{\om})}(2\delta \ell^{-3}_0)$
		under the mapping $\Psi(\sigma,u):=\exp_{x_{\omega}}(u+g^{\omega}_{\sigma}(u))$,
		where $g^{\omega}_{\sigma}=\Theta_{\omega}(y)$ is the unique leaf of $\cS_{\omega}$ such that $\Theta_{\omega}(y)(0)=\sigma$. 
	\end{lemma}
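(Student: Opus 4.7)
The plan is to exhibit $\Psi$ as a continuous bijection from a compact space onto $\cS(\omega,x_\omega)\subset\cB$; since $\cB$ is Hausdorff, such a map is automatically a homeomorphism. The three items I need to verify are well-definedness of the parameterization $\sigma\mapsto g^{\omega}_\sigma$, bijectivity of $\Psi$, and continuity of $\Psi$; I defer the compactness bookkeeping to the end.

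Well-definedness of $\sigma\mapsto g^{\omega}_\sigma$ amounts to the assertion that exactly one leaf of $\cS(\omega,x_\omega)$ crosses $E^{cs}(\omega,x_\omega)$ (in $\exp_{x_\omega}^{-1}$-coordinates) at the prescribed point $\sigma$. Existence is immediate from $\sigma\in\exp_{x_\omega}^{-1}\cS(\omega,x_\omega)$ together with \eqref{eq:stack}: some $y\in\bar U_\omega$ yields a leaf containing $\exp_{x_\omega}(\sigma)$, and the identification $\sigma\in E^{cs}(\omega,x_\omega)$ corresponds to the slice $u=0$, so $\Theta_\omega(y)(0)=\sigma$. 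Uniqueness of the leaf is exactly the content of Lemma \ref{UStaL}. Bijectivity of $\Psi$ is then straightforward: surjectivity is built into the definition \eqref{eq:stack} of $\cS(\omega,x_\omega)$, and for injectivity, if $\Psi(\sigma_1,u_1)=\Psi(\sigma_2,u_2)$ then applying $\pi^u_{(\omega,x_\omega)}\circ\exp_{x_\omega}^{-1}$ gives $u_1=u_2$, after which Lemma \ref{UStaL} forces the corresponding leaves to coincide, and evaluating at $0$ yields $\sigma_1=\sigma_2$.

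The main obstacle is continuity of $\Psi$. Given $(\sigma_n,u_n)\to(\sigma,u)$, I must show $g^{\omega}_{\sigma_n}(u_n)\to g^{\omega}_\sigma(u)$; the key reduction is to establish $g^{\omega}_{\sigma_n}\to g^{\omega}_\sigma$ in the $C^0$ norm on $B^u_{(\omega,x_\omega)}(2\delta\ell_0^{-3})$, since the evaluation map $(g,u)\mapsto g(u)$ is jointly continuous on $C^0\times B^u_{(\omega,x_\omega)}(2\delta\ell_0^{-3})$. Pick $y_n\in\bar U_\omega$ with $\Theta_\omega(y_n)(0)=\sigma_n$. By compactness of $\bar U_\omega$, every subsequence of $\{y_n\}$ admits a further convergent subsequence $y_{n_k}\to y^\ast\in\bar U_\omega$; the continuity of $\Theta_\omega$ from Lemma \ref{Unstacks}(2) then gives $\Theta_\omega(y_{n_k})\to\Theta_\omega(y^\ast)$ in $C^0$, and evaluating at $0$ yields $\sigma=\lim\sigma_{n_k}=\Theta_\omega(y^\ast)(0)$. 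Since $\sigma$ also equals $g^{\omega}_\sigma(0)$, the uniqueness part of the previous paragraph forces $\Theta_\omega(y^\ast)=g^{\omega}_\sigma$. Every subsequential $C^0$-limit of $\{\Theta_\omega(y_n)\}$ is therefore $g^{\omega}_\sigma$, and the full sequence converges, giving $\Psi(\sigma_n,u_n)\to\Psi(\sigma,u)$.

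It remains to verify compactness of the domain. The set $\cS(\omega,x_\omega)$ is compact, as remarked after \eqref{eq:stack}, so $\exp_{x_\omega}^{-1}\cS(\omega,x_\omega)$ is compact and $\Sigma_{(\omega,x_\omega)}$, being its closed intersection with $E^{cs}(\omega,x_\omega)$, is compact. The ball $B^u_{(\omega,x_\omega)}(2\delta\ell_0^{-3})$ is a closed ball in the finite-dimensional subspace $E^u(\omega,x_\omega)$, hence compact. Thus $\Sigma_{(\omega,x_\omega)}\times B^u_{(\omega,x_\omega)}(2\delta\ell_0^{-3})$ is compact, and a continuous bijection from a compact space onto the Hausdorff space $\cS(\omega,x_\omega)$ is a homeomorphism.
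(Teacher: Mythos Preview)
Your proof is correct and follows the same approach as the paper's: well-definedness of $\sigma\mapsto g^{\omega}_\sigma$ via Lemma \ref{UStaL}, continuity of $\Psi$ via the continuity of $\Theta_\omega$ from Lemma \ref{Unstacks}(2), and the continuous-bijection-from-compact-to-Hausdorff argument. The paper's own proof is a three-line sketch citing exactly these ingredients; you have simply supplied the details (the subsequence argument in $\bar U_\omega$ for continuity of $\sigma\mapsto g^{\omega}_\sigma$, the explicit verification of bijectivity, and the compactness bookkeeping).
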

	\begin{proof}				
		The  proof as same as the proof of \cite[Lemma 6.7]{Young17}.
		By Lemma \ref{UStaL}, the mapping $g^{\omega}_{\sigma}$ is well defined. 
		Since $\Psi$ is a bijection from two compact sets, we only need to show that $\Psi$ is continuous. 
		This is guaranteed by the fact that $\sigma\mapsto  g^{\omega}_{\sigma}$ is continuous, which in turn is a direct consequence of Lemma \ref{Unstacks}. 
	\end{proof}
 
	\begin{figure}[H]
	\includegraphics[width=1.0\linewidth]{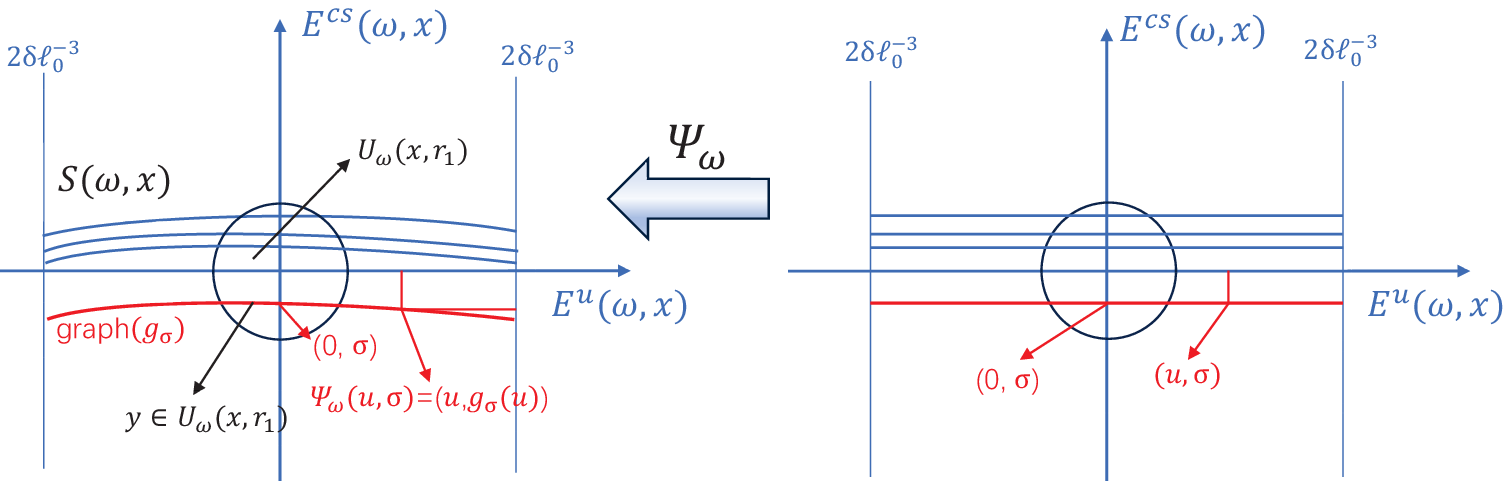}
	\caption{geometric picture of $\cS(\om,x)$}
	\label{Fig: picture of stacks}
    \end{figure}

	\subsection{Random unstable stacks}\label{SEC:Random US}
	We now construct the \textit{random unstable stacks}, which are random compact subset of $\Omega\times \cB$ with positive $\mu$-measure, and for each fixed $w$, the $\omega$-section $\cS_{\omega}$ has a measurable partition consisting of local unstable manifolds.
	
	Choose $\ell_0$ large enough such that $\mu(\Gamma_{\ell_0})>0$ and fix $\delta>0$ sufficiently small. 
	By Proposition $\ref{Prop:Trandom compact}$, for each $\epsilon>0$ there exists a compact subset $A_{\epsilon}\subset \cB$ such that
	$\bP\{\omega:A_{\omega}\subset A_{\epsilon}\}>1-\epsilon$. 
	This implies that $\mu(\Omega\times A_{\epsilon})>1-\epsilon$ and we can assume that 
	$$\mu(\Gamma_{\ell_0}\cap (\Omega\times A_{\epsilon}))>0.$$
	Let $r_2\ll \delta$, by compactness we choose a reference point $x_{\ast}$ such that 
	$$\mu(\Gamma_{\ell_0}\cap (\Omega\times (B(x_{\ast},r_2)\cap A_{\epsilon}))>0,$$
	where $B(x_{\ast},r_2):=\{y:|y-x_{\ast}|\leq r_2\}$.
	Denote by $A_{\ast}:= B(x_{\ast},r_2)\cap A_{\epsilon}$ which is also compact, and denote by $W^{u}_{\delta,(\oy)}:=\exp_{y}\graph(g_{(\omega,y)}|_{B^{u}_{(\oy)}(\delta \ell_{0}^{-3})})$. 
	Let 
	$$S_{\omega}:=\bigcup \{W^{u}_{\delta,(\oy)}\cap B(x_{\ast},2r_2): y\in A_{\ast} \ \text{such that} \ (\omega,y)\in \Gamma_{\ell_0} \},$$
	By chosen $r_2$ small enough (depend only on $\delta, \ell_{0}$), 
	we can ensure that: for each $y_i \in A_{\ast}$ with $(\omega,y_i)\in \Gamma_{\ell_0}$, $i=1,2$,
	$W^{u}_{\delta,(\omega,y_i)}\cap B(x_{\ast},2r_2)$ is connected, and either 
	$$W^{u}_{\delta,(\omega,y_1)}\cap B(x_{\ast},2r_2)=W^{u}_{\delta,(\omega,y_2)}\cap B(x_{\ast},2r_2),$$ 
	or the two terms in the above equation are disjoint.
	
	By Lemma \ref{Unstacks}, for fixed $\omega$, the subset $\cS_{\omega}$ is compact, and the mapping $y \mapsto W^{u}_{\delta,(\omega,x_y)}\cap B(x_{\ast},2r_2)$ is continuous on $\cS_{\om}$ with respect to the Hausdorff distance between compact subsets of $\cB$. Here $x_y\in A_{\ast}\cap (\Gamma_{\ell_{0}})_{\om}$ is the point such that $y\in W^{u}_{\delta,(\omega,x_y)}\cap B(x_{\ast},2r_2)$.
 
	To see this, we can choose a small number $r_{1}(\omega)$ such that for each $x\in (\Gamma_{\ell_0})_{\om}$ the unstable stack 
	$$\cS(\ox):=\bigcup_{y\in \bar{U}^{x}_{\om}} \exp_{x}(\graph \Theta_{\omega}^{x}(y))$$ 
    is defined, where $\bar{U}^{x}_{\om}=A_{\ast}\cap U_{\omega}(x,r_1)$ is compact and $\Theta_{\om}^{x}$ is as defined  in Lemma \ref{Unstacks}. 
	By choosing $r_2\ll \delta$, we can assume that for each $y\in \bar{U}^{x}_{\om}$
	\begin{equation}\label{eq:cWW}
	     W^{u}_{\delta,(\oy)}\cap B(x_{\ast},2r_2)=(\exp_{x}(\graph \Theta_{\omega}^{x}(y)))\cap B(x_{\ast},2r_2).
	\end{equation}
	Then, by compactness, there is a finite subset $\{x_{\omega,i}\}^{k(\om)}_{i=1}\subset (\Gamma_{\ell_0})_{\om} \cap A_{\ast}$
	such that 
	\begin{equation}\label{eq:cSom}
		\cS_{\omega}= \bigcup^{k(\om)}_{i=1} \cS(\om, x_{\omega,i})\cap B(x_{\ast},2r_2).
	\end{equation}
    Since each $\cS(\om, x_{\omega,i})$ is compact, it follows that $\cS_{\omega}$ is also compact.
    
	Fix $i\in \{1,\cdots,k(\om)\}$ and let $\Theta_{\omega}^{i}=\Theta_{\omega}^{x_{\om,i}}$. By Lemma \ref{Unstacks} and \ref{GUStacks}, it is easy to deduce that
	$y \mapsto \exp_{x_{\om,i}}\graph(\Theta_{\omega}^{i}(x_y))$ is continuous on $\cS(\om, x_{\omega,i})$, where $x_y\in \bar{U}^{x_{\om,i}}_{\om}$ is the point such that $y\in \exp_{x_{\omega,i}}\graph\Theta_{\omega}^{i}(x_y)$.
	Note that $\graph\Theta_{\omega}^{i}(z)$ is connected for any $z\in \bar{U}^{x_{\om,i}}_{\om}$. 
	Then, we have the mapping $y \mapsto (\exp_{x_{\om,i}}\graph\Theta_{\omega}^{i}(x_y))\cap B(x_{\ast},2r_2)$ is also continuous on $\cS(\om, x_{\omega,i})$.
	Together with \eqref{eq:cWW} and \eqref{eq:cSom}, we deduce that the mapping $y \mapsto W^{u}_{\delta,(\omega,x_y)}\cap B(x_{\ast},2r_2)$ is continuous on $\cS_{\om}$. 
	
	Define the subset $\cS$ of $\Om \times \cB$ by
	$$\cS:=\bigcup \{\{\omega\}\times S_{\omega}: \om\in \Om \ \text{satisfying} \ \exists y\in A_{\ast} \ \text{with} \ (\oy)\in\Gamma_{\ell_0} \}.$$
	\begin{lemma}\label{Lem:MMM}
		The set $\cS$ is random compact, and so it is measurable with $\mu(\cS)>0$.
	\end{lemma}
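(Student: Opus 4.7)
The plan is to verify condition (i) of Proposition~\ref{Prop:random compact} for the family $\{\cS_\omega\}_{\omega\in\Om}$, namely that $\{\omega:\cS_\omega\cap U\neq\emptyset\}\in\cF$ for every open $U\subset\cB$, and then invoke completeness of $(\Om,\cF,\bP)$ to conclude $\cS\in\cF\otimes\mathscr{B}(\cB)$. Compactness of each $\cS_\omega$ is already in hand from the discussion just above the lemma: either $\cS_\omega=\emptyset$, or it is the finite union \eqref{eq:cSom} of compact sets intersected with $B(x_*,2r_2)$.

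The central ingredient will be a jointly measurable parametrization of the local unstable leaves through points of $A_*$. By Remark~\ref{Rem:mofg} I can fix a measurable basis $\{e_i(\omega,y)\}_{i=1}^{m}$ of $E^u(\omega,y)$ with $m=\dim E^u$ constant (by ergodicity), and for each fixed $v\in\R^m$ the map $(\omega,y)\mapsto g_{(\omega,y)}(v_{(\omega,y)})$ is measurable on its natural measurable domain. Combining this slice-wise measurability with continuity in $v$ (from $g_{(\omega,y)}\in C^0$), Lemma~\ref{Lem:MPolish} yields joint measurability of
\[
\Phi(\omega,y,v):=y+v_{(\omega,y)}+g_{(\omega,y)}(v_{(\omega,y)})
\]
as a map from a measurable subset of $\Om\times A_*\times\R^m$ into $\cB$.

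Given an open $U\subset\cB$ I then form
\[
W_U:=\bigl\{(\omega,y,v):\,(\omega,y)\in\Gamma_{\ell_0}\cap(\Om\times A_*),\ |v_{(\omega,y)}|\leq\delta\ell_0^{-3},\ \Phi(\omega,y,v)\in B(x_*,2r_2)\cap U\bigr\}.
\]
Each of the three defining conditions is measurable, so $W_U\in\cF\otimes\mathscr{B}(A_*\times\R^m)$. Since $A_*\times\R^m$ is Polish and $(\Om,\cF,\bP)$ is complete, Proposition~\ref{Prop:projec P} delivers $\pi_\Om(W_U)\in\cF$; unwinding definitions, this projection is precisely $\{\omega:\cS_\omega\cap U\neq\emptyset\}$, establishing (i). For the positivity statement, since $g_{(\omega,y)}(0)=0$ by Theorem~\ref{Thm:Unstable}, every $(\omega,y)\in\Gamma_{\ell_0}\cap(\Om\times A_*)$ satisfies $y\in W^u_{\delta,(\omega,y)}\cap B(x_*,2r_2)\subset\cS_\omega$, so $\mu(\cS)\geq\mu(\Gamma_{\ell_0}\cap(\Om\times A_*))>0$ by the choice of $\ell_0$ and $x_*$.

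The hard step will be cleanly stitching together the joint measurability of $\Phi$: Remark~\ref{Rem:mofg} delivers measurability in $(\omega,y)$ for each fixed $v$ on a $v$-dependent measurable domain, so I will first check that $\{(\omega,y,v):\Lnorm{v_{(\omega,y)}}{(\omega,y)}\leq\delta\ell(\omega,y)^{-1}\}$ is itself jointly measurable (using measurability of the basis and of $\ell$) and only then appeal to Lemma~\ref{Lem:MPolish}. Everything else reduces to the projection and random-compact-set machinery already compiled in Section~\ref{SEC:Pre}.
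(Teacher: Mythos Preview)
Your proposal is correct and essentially follows the paper's proof: both verify condition (i) of Proposition~\ref{Prop:random compact} by exhibiting $\{\omega:\cS_\omega\cap U\neq\emptyset\}$ as the $\Om$-projection (Proposition~\ref{Prop:projec P}) of a measurable set built from Remark~\ref{Rem:mofg}, and both deduce $\mu(\cS)>0$ from the inclusion $\Gamma_{\ell_0}\cap(\Om\times A_*)\subset\cS$. The only cosmetic difference is that the paper projects from $\Om\times\cB$ and handles the leaf parameter $v$ via a countable dense subset $\{v_k\}\subset\R^m$, whereas you project from $\Om\times A_*\times\R^m$ and invoke Lemma~\ref{Lem:MPolish} for joint measurability.
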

	\begin{proof}
		By Proposition \ref{Prop:random compact}, it suffices to show for every open subset $U$ of $\cB$ the set $\{\omega:S_{\om}\cap U\neq \varnothing\}$ is measurable. 
		By the construction of $\cS$, we have
		$$
		\{\omega:S_{\om}\cap U\neq \varnothing\}=\pi_{\Omega}
		\{(\ox)\in \Gamma_{\ell_0}\cap (\Omega\times A_{\ast}):W^{u}_{\delta,(\ox)}\cap B(x_{\ast},2r_2)\cap U\neq \varnothing \},
		$$ 
		where $\pi_{\Omega}$ is the canonical projection from $\Omega\times \cB$ onto $\Omega$. 
		Then, by Proposition \ref{Prop:projec P}, we only need to show that 
		$$\{(\ox)\in \Gamma_{\ell_0}\cap (\Omega\times A_{\ast}): W^{u}_{\delta,(\ox)}\cap B(x_{\ast},2r_2)\cap U\neq \varnothing  
		\}$$
		is a measurable subset of $\Omega\times \cB$.
		
		Let $m:= \dim E^{u}$, and let $\{v_k\}_{k>0}$ be a countable dense subset of $\R^{m}$. 
		Choose a measurable basis $\{e_i{(\ox)}\}_{i=1}^{m}$ spanning $E^{u}(\ox)$, for each $v_k=(v_k^i)\in \R^{m}$, let $v_k(\ox):= \sum_{i}v_k^i e_i{(\ox)}$. 
		Then, the mapping $(\ox)\mapsto v_k(\ox)$ is measurable for every $k$.
		By density, restricted to the subset $\Gamma_{\ell_0}\cap (\Omega\times A_{\ast})$ we have 
		\begin{align*}
			\{(\omega,x): W^{u}_{\delta,(\omega,x)}\cap B(x_{\ast},2r_2)\cap U\neq \varnothing\}
			=\bigcup_{k>0}(\{(\ox): |v_k(\ox)|\leq \delta \ell^{-3}_0\}\bigcap&\\
			\{(\ox): v_k(\ox)+g_{(\ox)}v_k(\ox)+x\in B(x_{\ast},2r_2)\cap U\}&).
		\end{align*}
		By Remark \ref{Rem:mofg}, the map $(\ox) \mapsto g_{(\omega,x)}v_k(\om,x)$ is measurable on the measurable subset 
		$\{(\ox): |v_k(\ox)|\leq \delta \ell^{-3}_0\}$ for every $k$. 
		Therefore, one has   
		$$\{(\omega,x)\in \Gamma_{\ell_0}\cap (\Omega\times A_{\ast}): W^{u}_{\delta,(\omega,x)}\cap B(x_{\ast},2r_2)\cap U\neq \varnothing
		\}$$
		is measurable.
		
		The construction of $\cS$ implies that $\Gamma_{\ell_0}\cap (\Omega\times A_{\ast})$ is a subset of $\cS$, so one has $\mu(\cS)>0$.
	\end{proof}

	For each $\omega$, let $\xi_{\omega}$ be a partition of $\cS_{\omega}$ defined as follows
	$$\xi_{\omega}:=\{W^{u}_{\delta,(\oy)}\cap B(x_{\ast},2r_2): y\in A_{\ast} \ \text{such that} \ (\omega,y)\in \Gamma_{\ell_0} \}.$$
	Each element $W \in \xi_{\omega}$ is called an \textit{unstable leave} of $\cS_{\omega}$.
	Since $S_{\omega}$ is contained in the finite union of unstable stacks of local unstable manifolds, it follows that $\eta_{\omega}$ is a measurable partition of $\cS_{\omega}$ (by Lemma \ref{GUStacks}).
	We refer to the set $\cS$ as a random unstable stack with the center $x_{\ast}$.

	\subsection{Properties of maps on unstable stacks}\label{SEC:4.4}
	Let the random unstable stack $\cS$ be defined above.
	
	\begin{lemma}\label{Lem:inverse}
		For all $n\in \N$, the map $f^{-n}_{\omega}$ is defined and continuous on $S_{\omega}$.
	\end{lemma}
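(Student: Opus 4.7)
The plan is twofold: first verify that $f^{-n}_\omega$ is defined on $S_\omega$ leaf-by-leaf via Remark \ref{Rem:ivinm}, and then deduce continuity by recognizing $f^n_{\theta^{-n}\omega}$ as a continuous bijection between compact metric spaces. For definedness, any $y\in S_\omega$ lies on some local unstable manifold $W^u_{\delta,(\omega,x_y)}$ with $x_y\in A_{\ast}$ and $(\omega,x_y)\in \Gamma_{\ell_0}$, and Remark \ref{Rem:ivinm} identifies $f^{-n}_\omega(y)$ as the unique point in $W^u_\delta(\Phi^{-n}(\omega,x_y))$ mapping to $y$ under $f^n_{\theta^{-n}\omega}$; this defines $f^{-n}_\omega$ on all of $S_\omega$ unambiguously, since $f_\omega$ is injective on $\cB$ by (H1).

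For continuity, I would set $\hat S^{(n)}_\omega := f^{-n}_\omega(S_\omega)$. By (H1), $f^n_{\theta^{-n}\omega}$ is continuous and injective on $\cB$, so it restricts to a continuous bijection from $\hat S^{(n)}_\omega$ onto $S_\omega$. Since $S_\omega$ is compact by the discussion following \eqref{eq:cSom}, once I establish that $\hat S^{(n)}_\omega$ is also compact, a continuous bijection between compact metric spaces must be a homeomorphism, forcing $f^{-n}_\omega$ to be continuous on $S_\omega$.

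To obtain compactness of $\hat S^{(n)}_\omega$, decompose $S_\omega$ as the finite union of stacks $\cS(\omega,x_{\omega,i})\cap B(x_{\ast},2r_2)$ from \eqref{eq:cSom}, so $\hat S^{(n)}_\omega$ is the corresponding finite union of $f^{-n}_\omega$-images. For each $i$, the base points $\{f^{-n}_\omega(y):y\in A_{\ast}\cap U_\omega(x_{\omega,i},r_1)\}$ form a compact subset of $A_{\theta^{-n}\omega}$, since $A_{\ast}\cap U_\omega(x_{\omega,i},r_1)$ is compact and $f^{-n}_\omega$ is continuous on $A_\omega$ (as recorded in the remarks on (H2)). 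Because $\ell$ is $\Phi$-temperate one has $\Phi^{-n}(\omega,y)\in \Gamma_{\ell_0 e^{n\varepsilon_1}}$ for each such $y$, so Proposition \ref{Prop:CCS} together with a repetition of the constructions in Lemma \ref{Unstacks} and Lemma \ref{GUStacks} at $\theta^{-n}\omega$ on the uniform set $\Gamma_{\ell_0 e^{n\varepsilon_1}}$, based at $f^{-n}_\omega(x_{\omega,i})$, express each $f^{-n}_\omega$-image as a continuous graph family parametrized by a compact parameter space; compactness follows.

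The main obstacle will be this preimage-level stack construction: one must verify that the graph-over-$E^u$ representation built at the new base point $f^{-n}_\omega(x_{\omega,i})$ exhausts exactly $f^{-n}_\omega(\cS(\omega,x_{\omega,i})\cap B(x_{\ast},2r_2))$, uniformly in the base point. This ultimately rests on the $\Phi$-temperate size control of local unstable manifolds at time $-n$, together with the local forward-invariance of leaves from Theorem \ref{Thm:Unstable}.
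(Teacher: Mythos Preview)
Your approach is essentially the paper's: both prove definedness leaf-by-leaf via the local unstable manifold theorem, then obtain continuity by exhibiting the preimage as (contained in) a compact stack at time $-n$ and invoking that a continuous injection from a compact set is a homeomorphism onto its image.

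Two small points where the paper streamlines what you propose. First, you aim to show that $f^{-n}_\omega(\cS(\omega,x_{\omega,i})\cap B(x_\ast,2r_2))$ is \emph{exactly} a compact graph family; the paper instead only shows it is \emph{contained} in a compact stack $\cS_{n,x}$ built at $f^{-n}_\omega(x_{\omega,i})$, which already suffices for the homeomorphism argument and avoids the ``exhausts exactly'' verification you flag as the main obstacle. Second, the paper's stack $\cS_{n,x}$ is built from leaves of the (shrinking) size $\sim\delta(e^{n\varepsilon_1}\ell_0)^{-3}$ dictated by Lemma~\ref{Unstacks} at level $\ell_0e^{n\varepsilon_1}$; to ensure these small leaves still map forward onto the original ones, the paper restricts to $n\ge n_0$ and then recovers small $n$ by writing $f^{-n}_\omega=f^{n_0-n}_{\theta^{-n_0}\omega}\circ f^{-n_0}_\omega$. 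Your write-up should either incorporate this $n_0$ device or argue directly that the preimage pieces, being subsets of $W^u_\delta(\Phi^{-n}(\omega,y))$, already form a compact graph family without the covering step.
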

	\begin{proof}
		For each $n\in \N$, the map $f^{-n}_{\omega}$ is defined on $S_{\omega}$ follows from Theorem \ref{Thm:Unstable}. 
		We now prove it is continuous. 
		Let $r_1(\om)$ be a small number satisfying Lemma \ref{Unstacks}, so that for each $x\in (\Gamma_{\ell_0})_{\om}$ the unstable stacks 
		$$\cS(\ox):=\bigcup_{y\in \bar{U}} \exp_{x}(\graph \Theta_{\omega}(y))$$
		is defined, where $\bar{U}:=A_{\ast}\cap U_{\omega}(x,r_1)$. 
		Since there is a finitely subset $\{x_{\omega,i}\}^{k(\om)}_{i=1}\subset (\Gamma_{\ell_0})_{\om} \cap A_{\ast}$ such that 
		$$\cS_{\omega}\subset \bigcup^{k(\om)}_{i=1} (\cS(\om, x_{\omega,i})\cap B(x_{\ast},2r_2)).$$
		It suffices to show $f^{-n}_{\omega}$ is continuous on $\cS(\ox)$ for each fixed $x\in (\Gamma_{\ell_0})_{\om} \cap A_{\ast}$.
		
		We claim that, by shrinking $r_1>0$, there exists $n_0\in \N$, for each $n\geq n_0$ there exists a compact subset $V_{n,x}\subset (\Gamma_{\ell_0e^{n\varepsilon_1}})_{\te^{-n}\omega} $ such that one can define a compact unstable stack $\cS_{n,x}$ of local unstable manifolds with $f^{n}_{\te^{-n}\omega}(\cS_{n,x})\supset \cS(\ox)$. 
		
		Assuming the claim holds. For $n\geq n_0$, by $f^{n}_{\te^{-n}\omega}$ is injective, one has $f^{-n}_{\omega}$ is continuous on $\cS(\ox)$. For $n<n_0$, only need to write $f^{n}_{\omega}=f^{n_0-n}_{\te^{-{n_0}}\omega} f^{-n_0}_{\omega}$.
		
		It remains to show that the claim holds. 
		Note that $f^{-n}_{\omega} \bar{U}\subset (\Gamma_{\ell_0e^{n\varepsilon_1}})_{\te^{-n}\omega}$,
		so $z\mapsto E^{u}(\te^{-n}\omega,z)$ and $z\mapsto E^{cs}(\te^{-n}\omega,z)$ are continuous on $f^{-n}_{\omega} \bar{U}$. 
		Shrinking $r_1>0$, such that for every $z\in f^{-n}_{\omega} \bar{U}$, one has $d_H(E^{cs}(\te^{-n}\omega,z), E^{cs}(\te^{-n}\omega,\fo^{-n}x))$ and
		$d_H(E^{u}(\te^{-n}\omega,z), E^{u}(\te^{-n}\omega,\fo^{-n}x))$ small enough,
		so that the map $g^{\fo^{-n}z}_{\Phi^{-n}(\ox)}$ as in Lemma \ref{Unstacks} is defined on $D_{n,x}$, 
		where $D_{n,x}:=B^{u}_{\Phi^{-n}(\ox)}(2\delta(e^{n\varepsilon_1}\ell_0)^{-3})$.
		Let $V_{n,x}:=f^{-n}_{\omega} \bar{U}$, and let 
		$$\cS_{n,x}=\bigcup_{z\in V_{n,x}} \exp_{f^{-n}_{\omega}x} \graph(g^{\fo^{-n}z}_{\Phi^{-n}(\ox)}|_{D_{n,x}}).$$
		Shrinking $r_1>0$ if necessary, one may assume that for every $z\in V_{n,x}$
		$$\exp_{z} \graph (g_{(\te^{-n}\om,z)}|_{\wB^{u}_{(\te^{-n}\om,z)}(\frac{1}{6}\delta(e^{n\varepsilon_1}\ell_0)^{-3})})
		  \subset \exp_{f^{-n}_{\omega}x} \graph(g^{\fo^{-n}z}_{\Phi^{-n}(\ox)}|_{D_{n,x}}).$$
		By Theorem \ref{Thm:Unstable} (iii) and Lemma \ref{Unstacks} (1), we can choose $n_0$ large enough (depends only on $\ell_0$), such that for every $n\geq n_0$
		\begin{align*}
			& \wf^{n}_{(\te^{-n}\omega,z)}\graph (g_{(\te^{-n}\om,z)}|_{\wB^{u}_{(\te^{-n}\om,z)}(\frac{1}{6}\delta(e^{n\varepsilon_1}\ell_0)^{-3})})\\
	\supset & \graph (g_{(\om,f^{n}_{\te^{-n}\om}(z))}|_{\wB^{u}_{(\om,f^{n}_{\te^{-n}\om}(z))}(4\delta \ell_{0}^{-2})})\\
	\supset & \graph \Te_{\om}(f^{n}_{\te^{-n}\om}(z)).
		\end{align*}
		Therefore, we have $f^{n}_{\te^{-n}\omega}\cS_{n,x}\subset \cS(\ox)$ for each $n\geq n_0$.
	\end{proof}
	
	Let $\xi_{\omega}$ be the measurable partition of $\cS_{\omega}$ into unstable leaves. 
	Let $\xi:=\{\{\omega\}\times W:W\in \xi_{\omega}\}$ be the partition of $\cS$, since $\Omega$ is Polish space one has $\xi$ is also measurable. 
	For each $(\ox)\in \cS$, denote by $\xi(\ox)$ the element of $\xi$ which contained $(\ox)$ and denote by $\xi(\ox)_{\om}$ the $\om$-section of the subset $\xi(\ox)$ of $\Om\times \cB$. 
	Then by construction one has $\xi_{\omega}(x)=\xi(\ox)_{\om}:=\{y:(\oy)\in \xi(\ox)\}$.
	
	Recall that $\nu_{(\ox)}$ is the induced volume on $W^{u}(\ox)$ for $(\ox)\in \Gamma$ which we defined in Section \ref{SEC:Distortion}.
	\begin{lemma}\label{Lem:MofV}
		For every measurable set $E \subset \Om\times \cB$, the map $ (\ox) \mapsto \nu_{(\ox)}((E \cap \xi(\ox))_{\om})$ is measurable on $\Gamma_{\ell_0}\cap (\Om\times A_{\ast})$.
	\end{lemma}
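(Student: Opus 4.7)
The plan is to realize $\xi(\ox)_{\om}$ as the graph of a continuous map over a measurable domain in $E^{u}(\ox)$, rewrite the induced volume as an iterated integral using a measurable basis of $E^{u}(\ox)$, and then apply the Fubini--Tonelli theorem together with a monotone class argument.

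\textbf{Step 1 (Graph representation of the leaf).}
Fix $(\ox) \in \Gamma_{\ell_0} \cap (\Om \times A_{\ast})$. Since $x \in A_{\ast}$ and $(\ox) \in \Gamma_{\ell_0}$, $x$ itself is a legitimate center point in the construction of $\cS$. By Lemma \ref{UStaL}, the leaves of $\xi_{\om}$ are pairwise disjoint, so the leaf through $x$ must equal $W^{u}_{\delta,(\ox)} \cap B(x_{\ast},2r_2)$. Writing $\phi_{(\ox)}(u) := \exp_{x}(u + g_{(\ox)}(u))$ on $\wB^{u}_{(\ox)}(\delta \ell_0^{-3})$, we obtain
$$\xi(\ox)_{\om} = \phi_{(\ox)}(D_{(\ox)}), \qquad D_{(\ox)} := \{u \in \wB^{u}_{(\ox)}(\delta \ell_0^{-3}) : \phi_{(\ox)}(u) \in B(x_{\ast},2r_2)\}.$$

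\textbf{Step 2 (Parametrization by $\R^{m_u}$).}
Let $m_{u} = \dim E^{u}$ and let $\{e_i(\ox)\}_{i=1}^{m_u}$ be the measurable basis of $E^{u}(\ox)$ from Remark \ref{Rem:mofg}. For $v = (v_1,\dots,v_{m_u}) \in \R^{m_u}$, put $v_{(\ox)} := \sum_i v_i e_i(\ox)$ and define
$$\Psi(\ox, v) := \exp_{x}\bigl(v_{(\ox)} + g_{(\ox)}(v_{(\ox)})\bigr).$$
Applying the definition of $\nu_{(\ox)}$ from Section \ref{SEC:Induced volumes} together with the change of variables formula for Lebesgue measure, for any measurable $E \subset \Om \times \cB$,
\begin{equation*}
\nu_{(\ox)}\bigl((E \cap \xi(\ox))_{\om}\bigr) = \int_{\R^{m_u}} \chi_{E_{\om}}(\Psi(\ox,v))\, \chi_{D_{(\ox)}}(v_{(\ox)})\, J(\ox,v)\, dv,
\end{equation*}
where $J(\ox,v) = \det\bigl(D\Psi(\ox,\cdot)(v)\bigr)$ is the Jacobian of $v \mapsto \Psi(\ox,v)$ from $(\R^{m_u},\Leb)$ to $(T_{\Psi(\ox,v)}W^{u}(\ox), m_{T_{\Psi(\ox,v)}W^{u}(\ox)})$.

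\textbf{Step 3 (Joint measurability of $\Psi$ and $J$).}
For each fixed $v \in \R^{m_u}$, the map $(\ox) \mapsto \Psi(\ox,v)$ is measurable on $\{(\ox) : |v_{(\ox)}| \le \delta\ell_0^{-3}\}$ by Remark \ref{Rem:mofg} together with the measurability of $e_i(\cdot)$. For fixed $(\ox)$, $v \mapsto \Psi(\ox,v)$ is continuous by Theorem \ref{Thm:Unstable}. Since $\R^{m_u}$ is separable, Lemma \ref{Lem:MPolish} yields that $\Psi$ is jointly measurable. The Jacobian $J(\ox,v)$ is a determinant of a bounded operator on a finite-dimensional subspace; its joint measurability follows from the same scheme applied to $(\ox,v) \mapsto (\mathrm{Id} + Dg_{(\ox)}(v_{(\ox)}),\, E^{u}(\ox))$, using Remark \ref{Rem:SOTC} and Proposition \ref{Prop:CofDet} for the continuity of determinants in the operator (SOT) and subspace ($d_H$) variables, and Theorem \ref{Thm:Unstable} together with Remark \ref{Rem:mofg} for the measurability in $(\ox)$ and continuity in $v$. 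The constraint $\chi_{D_{(\ox)}}(v_{(\ox)})$ is jointly measurable because it is the product of $\chi_{\{|v_{(\ox)}| \le \delta\ell_0^{-3}\}}$ and $\chi_{B(x_{\ast},2r_2)}\circ \Psi$.

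\textbf{Step 4 (Reduction and conclusion via Fubini).}
First suppose $E = F \times C$ with $F \in \cF$ and $C \in \mathscr{B}$; then $\chi_{E_{\om}}(\Psi(\ox,v)) = \chi_{F}(\om)\chi_{C}(\Psi(\ox,v))$ is jointly measurable. For general measurable $E \in \cF \otimes \mathscr{B}$, a monotone class argument extends joint measurability of $(\ox,v) \mapsto \chi_{E_{\om}}(\Psi(\ox,v))$ from rectangles to arbitrary $E$. The integrand in the display of Step 2 is then jointly measurable and nonnegative on $(\Gamma_{\ell_0}\cap(\Om\times A_{\ast})) \times \R^{m_u}$, so Fubini--Tonelli gives measurability of $(\ox) \mapsto \nu_{(\ox)}((E \cap \xi(\ox))_{\om})$.

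The main technical obstacle is Step 3: establishing joint measurability of $\Psi$ and $J$ in $(\ox,v)$, which crucially depends on combining the fibrewise measurability statement of Remark \ref{Rem:mofg} with the fibrewise continuity of $v \mapsto g_{(\ox)}(v_{(\ox)})$ from Theorem \ref{Thm:Unstable}, via the Carath\'eodory-type Lemma \ref{Lem:MPolish}, and similarly for the determinant.
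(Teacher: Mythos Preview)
Your proof is correct and follows essentially the same route as the paper: parametrize the leaf through $x$ by $\R^{m_u}$ via a measurable basis of $E^{u}(\ox)$, rewrite $\nu_{(\ox)}((E\cap\xi(\ox))_{\om})$ as a Lebesgue integral with Jacobian $\det(Id+Dg_{(\ox)}(\cdot))$, verify measurability of the integrand, and conclude by Fubini--Tonelli. The only differences are cosmetic: the paper reduces to compact $E$ (since $\Om\times\cB$ is Polish) instead of your monotone class argument on rectangles, and it checks measurability of the integrand only for each fixed $t$ (leaving the joint measurability step implicit), whereas you make the Carath\'eodory-type appeal to Lemma~\ref{Lem:MPolish} explicit.
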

	\begin{proof}
		Let $m=\dim E^{u}$, choose a measurable basis $\{e_i{(\ox)}\}_{i=1}^{m}$ spanning $E^{u}(\ox)$. 
		For each $(\ox)\in \Gamma_{\ell_0}$ and each $v=(v_1,\cdots,v_m)\in \R^{m}$, let $v(\ox)=\sum_{i=1}^{m}v_ie_i{(\ox)}$, let 
		\begin{equation*}
			G_{(\ox)}(v)=
			\begin{cases}
				v(\ox)+g_{(\ox)}(v(\ox)) & \text{if} \  |v(\ox)|\leq\delta \ell_{0}^{-3};\\
				0 & \text{otherwise},
			\end{cases}
		\end{equation*}
		and let $DG_{(\ox)}(v):\R^m \rightarrow \cB$
		\begin{equation*}
			DG_{(\ox)}(v)u=
			\begin{cases}
				u(\ox)+Dg_{(\ox)}(v(\ox))u(\ox) & \text{if} \  |v(\ox)|\leq\delta \ell_{0}^{-3};\\
				0 & \text{otherwise}. 
			\end{cases}
		\end{equation*}
		
		Then, for each $(\ox)\in \Gamma_{\ell_0}\cap (\Omega\times A_{\ast})$ and for every measurable set $E \subset \Om\times \cB$, we have
		$$
		\nu_{(\ox)}((E \cap \xi(\ox))_{\om}):=\int_{E(\ox)}\det(DG_{(\ox)}(v))dv.
		$$ 
		Here $E(\ox):=\{v\in \R^{m}: G_{(\ox)}(v)+x\in E_{\om}\cap B(x_{\ast},2r_2)\}
		      \cap \{v\in \R^{m}:|v(\ox)|\leq \delta \ell^{-3}_0\}$ 
		and the ``$\det$" with respect to the Euclidean volume on $\R^{m}$ and the induced volume $m_{T_{G_{(\ox)}(v)} \wW^{u}_{\delta}(\ox)}$.	
		 
		By Remark \ref{Rem:mofg}, for fixed $v\in \R^{m}$ we have $(\ox) \mapsto G_{(\ox)}(v)$ is  measurable on $\Gamma_{\ell_0}$, and so $(\ox)\mapsto \chi_{E(\ox)}(v)$ is also a measurable function defined for $(\ox)\in \Gamma_{\ell_0}$.  
		
		Next, we prove that for each $v\in \R^{m}$ the function $(\ox)\mapsto \det(DG_{(\ox)}(v))$ is measurable. 
        By a similar discussion as in Lemma \ref{Lem:M3}, it suffices to show that for fixed $v\in \R^{m}$, the map $(\ox)\mapsto DG_{(\ox)}(v)$ is strongly measurable on $\Gamma_{\ell_0}$.
        
        If $|v(\ox)|\leq \delta \ell^{-3}_0$, for
        fixed $u\in\R^m$, let $\{t_n\}_{n\in \N}$ be a decreasing sequence of positive numbers with $t_n\rightarrow 0$. Then, we have $\Lnorm{(v+t_nu)(\ox))}{(\ox)}\leq \delta \ell^{-2}_0$ for $n$ large enough. Hence, we can write
        $$DG_{(\ox)}(v)u=u(\ox)+\lim_{n \rightarrow \infty}\dfrac{g_{(\ox)}((v+t_nu)(\ox))-g_{(\ox)}(v(\ox))}{t_n}.$$
        By Remark \ref{Rem:mofg}, we have that $(\ox)\mapsto DG_{(\ox)}(v)u$ is measurable 
        on $\{(\ox)\in \Gamma_{\ell_0}:|v(\ox)|\leq \delta \ell^{-3}_0\}$. 
        Thus, by the construction, we conclude that $(\ox)\mapsto DG_{(\ox)}(v)$ is strongly measurable on $\Gamma_{\ell_0}$.
        
        Consequently, for every $v\in \R^m$ we get that $(\ox) \mapsto \det(DG_{(\ox)}(v))\cdot \chi_{E(\ox)}(v)$ is measurable on $\Gamma_{\ell_0}\cap (\Om\times A_{\ast})$. This suffices to show that$ (\ox) \mapsto \nu_{(\ox)}((E \cap \xi(\ox))_{\om})$ is measurable on $\Gamma_{\ell_0}\cap (\Om\times A_{\ast})$.
	\end{proof}

	In the remainder of this section, we assume that $E^{c}(\ox)=0$ for every $(\ox)\in \Gamma$.
	Recall that $E^{u}(\omega,y)$ also denote the tangent space $T_y W^{u}(\ox)$ if $y\in W^{u}(\ox)$ for some $(\ox)\in \Gamma$.
	
	\begin{lemma}\label{Lem:CofU2}
		Assume $E^{c}=\{0\}$. For each $q\in \N$, the map $x\mapsto E^{u}(\omega,x)$ is continuous on $f^{-q}_{\omega} S_{\omega}$
	\end{lemma}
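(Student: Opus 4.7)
The plan is to reduce the continuity statement to the across-leaf convergence of local unstable manifolds pulled back $q$ steps, represent those leaves in a common reference frame at a limit anchor, and upgrade the $C^0$-continuity supplied by Lemma \ref{Unstacks} to $C^1$; the hypothesis $E^c=\{0\}$ becomes essential in this last step. Concretely, I would take a convergent sequence $x_n\to x_0$ in $f^{-q}_\omega S_\omega$ and push forward via $f^q_{\theta^{-q}\omega}$ (continuous by (H1)) to $y_n\to y_0$ in $S_\omega$. By the construction of $S_\omega$, each $y_n$ lies on a leaf $W^u_{\delta,(\omega,z_n)}\cap B(x_\ast,2r_2)$ with anchor $z_n\in A_\ast\cap(\Gamma_{\ell_0})_\omega$; after extracting a subsequence by compactness, $z_n\to z_0\in A_\ast\cap(\Gamma_{\ell_0})_\omega$ and $y_0\in W^u_{\delta,(\omega,z_0)}$. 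Setting $z'_n := f^{-q}_\omega z_n$, continuity of $f^{-q}_\omega$ on $A_\omega$ gives $z'_n\to z'_0:= f^{-q}_\omega z_0$; by the temperate estimate \eqref{eq:functionl} every anchor $(\theta^{-q}\omega,z'_n)$ lies in the uniformly set $\Gamma_{\ell'}$ with $\ell':=\ell_0 e^{q\varepsilon_1}$, and Theorem \ref{Thm:Unstable} places $x_n$ on the pulled-back leaf $W^u_\delta(\Phi^{-q}(\omega,z_n))$ based at $z'_n$.

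Next I would invoke Corollary \ref{Cor:CofT} to identify $E^u(\omega, x_n)$ with the tangent space to this pulled-back leaf at $x_n$, and then apply Lemma \ref{Unstacks} at the reference anchor $(\theta^{-q}\omega,z'_0)$ with parameter $\ell'$: each leaf is realised in the common frame at $z'_0$ as $\exp_{z'_0}(\graph g^{z'_n})$, with $g^{z'_n}:=g^{z'_n}_{(\theta^{-q}\omega,z'_0)}:B^u_{(\theta^{-q}\omega,z'_0)}(2\delta \ell'^{-3})\to E^{cs}(\theta^{-q}\omega, z'_0)$, and $g^{z'_n}\to g^{z'_0}$ in $C^0$ by Lemma \ref{Unstacks}(2). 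Writing $x_n = \exp_{z'_0}(v_n + g^{z'_n}(v_n))$ forces $v_n\to v_0$, and $E^u(\omega, x_n)=(\mathrm{Id}+Dg^{z'_n}(v_n))\,E^u(\theta^{-q}\omega,z'_0)$. By Lemma \ref{Lem:CdhA}, the required convergence in $d_H$ is equivalent to $Dg^{z'_n}(v_n)\to Dg^{z'_0}(v_0)$ in operator norm. Splitting
\begin{equation*}
Dg^{z'_n}(v_n) - Dg^{z'_0}(v_0) = \bigl[Dg^{z'_n}(v_n) - Dg^{z'_n}(v_0)\bigr] + \bigl[Dg^{z'_n}(v_0) - Dg^{z'_0}(v_0)\bigr],
\end{equation*}
the first bracket vanishes via the uniform bound $\Lip(Dg^{z'_n})\leq C\ell'$ from Theorem \ref{Thm:Unstable}(2) (transported to the common frame via Proposition \ref{Prop:CCS} and Lemma \ref{Lem:ProDh}) combined with $v_n\to v_0$.

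The second bracket is the principal obstacle: Lemma \ref{Unstacks} supplies only $C^0$-continuity in the anchor, and Arzel\`a--Ascoli is unavailable in the infinite-dimensional target $E^{cs}$, so a direct interpolation argument fails. This is exactly where I would use $E^c=\{0\}$: Corollary \ref{Cor:NlinP} then yields genuine contraction $(e^{-\lambda}+\delta)^k$ in the $cs$-direction (in place of the slow-growth estimate $(e^{\varepsilon_0}+\delta)^k$), which upgrades the forward graph transform of Lemma \ref{Lem:Gtran} from a $C^0$- to a $C^1$-contraction on $\Gamma_{\ell'}$ with uniform rate. Since $g^{z'_n}$ is obtained (as in the proof of Lemma \ref{Unstacks}) as the limit of forward compositions $\cT_{\Phi^{-1}(\theta^{-q}\omega,z'_n)}\circ\cdots\circ\cT_{\Phi^{-k}(\theta^{-q}\omega,z'_n)}\mathbf{0}$ of zero sections, and each finite-$k$ composition is $C^1$-continuous in the anchor, the uniform $C^1$-contraction promotes $z'\mapsto g^{z'}$ to a $C^1$-continuous family and thus $Dg^{z'_n}(v_0)\to Dg^{z'_0}(v_0)$, which closes the argument.
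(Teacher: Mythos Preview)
Your overall strategy is coherent but genuinely different from the paper's, and the step you flag as ``the principal obstacle'' is precisely where the paper takes a different, shorter path.

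The paper does not attempt to upgrade Lemma~\ref{Unstacks} to $C^1$-continuity in the anchor. Instead it observes that the only obstruction to rerunning the argument of Proposition~\ref{Prop:CCS} at a general point $x\in\cS_\omega$ is the absence of an intrinsic stable complement $E^{cs}(\omega,x)$ (such $x$ need not lie in $\Gamma$). It manufactures surrogates via the \emph{backward} graph transform of Lemma~\ref{Lem:GtranS}: taking an anchor $x_0\in(\Gamma_{\ell_0})_\omega$ with $x\in W^u_{\delta,(\omega,x_0)}$, set $F_0:=E^s(\omega,x_0)$ and apply Lemma~\ref{Lem:GtranS} to the linear cocycle $D\widetilde f_{-n}(x'_{-n})$, which by Lemma~\ref{Lem:NlinP} is a small perturbation of $D\widetilde f_{-n}(0)$. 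This produces closed finite-codimension subspaces $F_n$ with $D\widetilde f_{-n}(x'_{-n})(F_n)\subset F_{n-1}$ and forward rate $(e^{-\lambda}+\delta)$, hence splittings $\cB=E^u(\Phi^{-n}(\omega,x))\oplus F_n$ along the backward orbit of $x$. One then decomposes $v^k\in E^u(\omega,y_k)$ as $v^{k,u}+v^{k,s}\in E^u(\omega,x)\oplus F_0$ and repeats the estimate of Proposition~\ref{Prop:CCS} verbatim to force $|v^{k,s}|\to 0$; Lemma~\ref{Lem:gap} finishes. The hypothesis $E^c=\{0\}$ enters exactly because Lemma~\ref{Lem:GtranS} is only formulated in that case.

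Your route, by contrast, hinges on the claim that Corollary~\ref{Cor:NlinP} ``upgrades the forward graph transform \ldots\ to a $C^1$-contraction with uniform rate.'' This is morally the fiber-contraction mechanism from uniformly hyperbolic theory, and it can be made to work, but it is not a result proved anywhere in the paper, and the one-line justification is not adequate: Corollary~\ref{Cor:NlinP} concerns the nonlinear map on points, not the action of $\cT$ on derivatives $Dg$. To complete your argument you would need to show that the induced map $L\mapsto \pi^{cs}_{\Phi(\cdot)}\circ D\widetilde f_{(\cdot)}(z)\circ(\mathrm{Id}+L)\circ\bigl[\pi^u_{\Phi(\cdot)}\circ D\widetilde f_{(\cdot)}(z)\circ(\mathrm{Id}+L)\bigr]^{-1}$ is a uniform contraction on the relevant ball of linear maps $E^u\to E^s$, and then rerun the proof of Lemma~\ref{Unstacks} with the approximants $\phi^{z'_n}_{-k}$ converging in $C^1$ rather than $C^0$. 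That is real additional work; the paper's backward-transform argument avoids it entirely.

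One small correction: your appeal to Corollary~\ref{Cor:CofT} is misplaced, since that corollary assumes $(\omega,y)\in\Gamma$, whereas a generic $x_n\in f^{-q}_\omega\cS_\omega$ need not lie in $\Gamma$. This is harmless, though: by the convention recorded in Section~\ref{SEC:Distortion}, $E^u(\omega,x_n)$ already \emph{means} the tangent space to the unstable leaf through $x_n$, so the identification you want holds by definition rather than by that corollary.
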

	\begin{proof}
	    We give the proof for $q=0$, and the case for $q\geq 1$ can be proven similarly.
		
		Let $(\ox)\in \cS$ be fixed, the main difference with the argument in Proposition \ref{Prop:CCS} is that no intrinsically defined the stable subspace $E^{cs}(\ox)$. 
		As in \cite[proof of Lemma 7.6]{Young17}, we use the backward graph transform to overcome it. 
		
		For $(\ox)\in \cS$, there exists a point $(\om,x_0)\in \Gamma_{\ell_0}$ with the property that $x\in W^{u}_{\delta,(\om,x_0)}$.
		We write $\wf_{-n}:=\wf_{\Phi^{-n}(\om,x_0)}$, $\Lnorm{\cdot}{-n}:=\Lnorm{\cdot}{\Phi^{-n}(\om,x_0)}$ and
		${x}'_{-n}:=\fo^{-n}x-\fo^{-n}x_{0}$ for each $n\geq 0$. 
		
		Let $F_0=E^{s}(\om,x_0)$, then $\cB=E^{u}(\ox)\oplus F_0$.
		For $\delta$ small enough, by Lemma \ref{Lem:NlinP}, we may assume that $D\wf_{-n}({x}'_{-n})$ is sufficiently close to $D\wf_{-n}(0)$ so that the backward graph transform (Lemma \ref{Lem:GtranS}, also see \cite[Proposition 9]{Lian20}) can be applied to the linear map $D\wf_{-n}({x}'_{-n})$ for every $n\geq 0$.
		Then, by Lemma \ref{Lem:GtranS}, we get a sequence of subspace $\{F_n\}_{n\in \N}\subset \cG(\cB)$, such that $D\wf_{-n}({x}'_{-n})(F_n)\subset F_{n-1}$ and for each $v_{-n}\in F_n$, one has
		$$\Lnorm{D\wf_{-n}({x}'_{-n})v_{-n}}{-n+1}\leq (e^{-\lambda}+\delta) \Lnorm{v_{-n}}{-n}.$$
		Since $F_n$ is a graph of some linear map from $E^{s}(\Phi^{-n}(\om,x_0))$ to $E^{u}(\Phi^{-n}(\om,x_0))$, one has $\cB= E^{u}(\Phi^{-n}(\ox))\oplus F_{n}$.
		
		For every $y_k\in \cS_{\om}$ with $y_k\rightarrow x$ as $k\rightarrow \infty$. Let $v^k\in E^{u}(\om,y_k)$ and decompose $v^k=v^{k,u}+v^{k,s}$ with respect to the splitting $\cB=E^{u}(\ox)\oplus F_0$. 
		Using the same estimation in Proposition \ref{Prop:CCS}, one has $v^{k,s}\rightarrow 0$ as $k\rightarrow \infty$.
		Hence, we have $G(E^{u}(\om,y_k),E^{u}(\ox))\rightarrow 0$ as $k\rightarrow \infty$. 
		By Lemma \ref{Lem:gap}, this is enough to show that $x\mapsto E^{u}(\omega,x)$ is continuous on $\cS_{\om}$.   
	\end{proof}
	For $(\ox)\in \cS$, let $J^{u}(\ox)=\det(Df_{\omega}(x)|E^u(\ox))$. 
	By Proposition \ref{Prop:dis} (b), the following function is well defined on $\xi_{\omega}(x)$ and Lipschitz-continuous:
	$$z\mapsto \Delta^{\omega}(x,z):= \prod_{n=1}^{\infty} \dfrac{J^{u}(\Phi^{-n}(\ox))}{J^{u}(\Phi^{-n}(\omega,z))}, \quad z\in \xi_{\omega}(x).$$
	
	Define the function $q(\omega,z):\cS \rightarrow \R$ by
	$$q(\omega,z):=\dfrac{\Delta^{\omega}(x,z)}{\int_{\xi_{\omega}(x)} \Delta^{\omega}(x,y) d\nu_{(\ox)}(y)}, \quad z\in \xi_{\omega}(x).$$
	Since for every ${x}'\in \cB$ satisfying that $\xi_{\omega}(x)=\xi_{\omega}({x}')$, we have
	$$\Delta^{\omega}({x}',z)=\Delta^{\omega}(x,z) \cdot \Delta^{\omega}({x}',x).$$
	Hence, $q$ is not dependent on the choice of $x\in \xi_{\omega}(z)$ and so is well defined as a function of $(\om,z)$.
	Moreover, for fixed $z$, the function $\om\mapsto q(\om,z)$ is measurable on $\{\om:(\om,z)\in\cS\}$.
	Indeed, by Lemma \ref{Lem:MMM} for each $n>0$, the function $(\oy)\mapsto \Delta^{\om}_{n}(y,z)$ is measurable on $\Om\times \cB$, where 
	$$\Delta^{\om}_{n}(z,y)=\prod_{k=1}^{n} \dfrac{J^{u}(\Phi^{-k}(\om,z))}{J^{u}(\Phi^{-k}(\oy))}.$$
	By Lemma \ref{Lem:MofV} and standard arguments, the map $\om \mapsto \int_{\xi_{\omega}(z)} \Delta^{\om}_{n}(z,y) d\nu_{(\om,z)}(y)$ is measurable. 
	Recall that $\Delta^{\om}_{n}(z,\cdot)$ converge uniformly to $\Delta^{\om}(z,\cdot)$ on $\xi_{\omega}(z)$ (see Proposition \ref{Prop:dis}).
	Therefore, $\om \mapsto \int_{\xi_{\omega}(z)} \Delta^{\om}(z,y) d\nu_{(\om,z)}(y)$ is also measurable.
	This is enough to show that $\om \mapsto q(\om,z)$ is measurable.
	\begin{lemma}\label{Lem:CofFq}
		Assume $E^{c}=\{0\}$. For fixed $\omega$, the function $z\mapsto q(\omega,z)$ is continuous on $\cS_{\omega}$.
	\end{lemma}
	\begin{proof}
		We follow the proof of Lemma 7.9 in \cite{Young17}.
		
		Let $\omega$ be fixed. 
		For each $n$, by Lemma \ref{Lem:inverse} we have $\fo^{-n}|_{\cS_{\omega}}$ is continuous. 
		By Lemma \ref{Lem:CofU2}, we have 
		$x\mapsto E^{u}(\ox)$ is continuous on $\fo^{-n}{\cS_{\omega}}$. 
		Then,  by Proposition \ref{Prop:CofDet}, we have $x\mapsto \det(Df_{\te^{-n}\omega}(\fo^{-n}x)|E^u(\Phi^{-n}\ox))$ is continuous on $\cS_{\om}$.
		
		Let $r_1(\om)$ be a small number satisfying Lemma \ref{Unstacks}. 
		For each $(\ox)\in \Gamma_{\ell_0}\cap (\Om\times A_{\ast})$,
		define the unstable stack $\cS(\ox)$ as follows
		$$
		\cS(\ox):=\bigcup_{y\in \bar{U}} \exp_{x}(\graph \Theta_{\omega}(y))
		$$
		where $\bar{U}=A_{\ast}\cap U_{\omega}(x,r_1)$. 
		Since $\cS_{\om}$ is contained in the finite union of unstable stacks. 
		It suffices to show that $q(\om,\cdot)$ is continuous on $\cS(\ox)\cap B(x_{\ast},2r_2)$.
		
		By Lemma \ref{GUStacks}, there exists a homeomorphism 
		$\Psi_{\ox}: \Sigma_{(\ox)}\times B^{u}_{(\ox)}(2\delta \ell^{-3}_0)\rightarrow \cS(\ox)$, $\Psi_{\ox}(\sigma,u)=\exp_{x}(u+g_{\sigma}u)$,
		where $\Sigma_{(\ox)}$ and $g_{\sigma}$ are as defined in Lemma \ref{GUStacks}. 
		For every $z\in \cS(\ox)\cap B(x_{\ast},2r_2)$, denote $(\sigma_z,u_z):=\Psi^{-1}_{\ox}(z)$ and let $\sigma(z):=\Psi_{\ox}(\sigma_z,0)$.
		Define
		$$z\mapsto \bar{\Delta}_n(z):=\Delta^{\om}_{n}(\sigma(z),z),$$
		where $\Delta^{\om}_{n}(\sigma(z),z)$ as in $(\ref{eq:Delta n})$.
		Since $\sigma(z)$ depends continuously on $z$, one has $z\mapsto \bar{\Delta}_n(z)$ is continuous. 
		By Proposition \ref{Prop:dis}, 
		we have $\bar{\Delta}_n$ converges uniformly to $\bar{\Delta}$ which is also continuous on $\cS(\ox)$.
		
		It remains to show that $z\mapsto \int \bar{\Delta} d\nu_{(\om,z)}$ is continuous on $\cS(\ox)\cap B(x_{\ast},2r_2)$. 
		Denote by $E_z:=\pi^{u}_{(\ox)}(\exp^{-1}_{x}\xi_{\omega}(z))$.
		By the construction of $\cS_{\om}$, one has $z\mapsto \xi_{\omega}(z)$ is continuous.
		Thus, $z\mapsto E_z$ is also continuous with respect to the Hausdorff topology.
		
		By Lemma \ref{GUStacks} and the change of variables formula, we have
		$$
		\int_{\xi_{\omega}(\sigma(z))} \bar{\Delta}(z) dv_{(\om,\sigma(z))}=
		\int_{B^{u}_{(\ox)}(2\delta \ell^{-3}_0)} 
		\bar{\Delta}(z) \det(Id+Dg_{\sigma_z}(u))\chi_{E_z}(u)dm(u)$$	
		where $m$ is the induced volume on $E^{u}(\ox)$ and the ``$\det$" is with respect to $m$ and $m_{E^u(\om,z)}$.
	    By Lemma \ref{Lem:CofU2} we have that 
	    $(\sigma,u) \mapsto E^u(\om,\Psi_{\ox}(\sigma,u))$ 
	    is continuous on 
	    $\Sigma_{\ox} \times B^{u}_{\ox}(2\delta \ell^{-3}_0)$. 
	    So, by Lemma \ref{Lem:CdhA} the map $(\sigma,u) \mapsto Id+Dg_{\sigma}(u)$ is continuous.
	    From this and the properties of the ``det"-function (see Proposition \ref{Prop:CofDet}), 
	    we deduce that the mapping $z \mapsto \det(Id+g_{\sigma_z}(u))$ is continuous for each $u$.
	    Therefore, we have $z\mapsto \int \bar{\Delta} d\nu_{(\om,z)}$ is continuous. This completes the proof.
	\end{proof}
	\section{Proof of main Theorem}\label{SEC:5}
	In this section, we use the same method as in \cite{Young85} (also in \cite{Young17} and \cite{Li13}) to give a detailed proof of Theorem \ref{Thm:A} and Theorem \ref{Thm:B} when $\mu$ is ergodic.
	
	We recall some notations about partitions at the beginning. 
	Let $T: X\rightarrow X$ be a measure preserving transformation of a probability space $(X,\mu)$.
	For a measurable partition $\eta$ on $X$, we let $\eta(x)$ denote the atom of $\eta$ containing $x$ for each $x\in X$.
	For two measurable partitions $\eta_1$ and $\eta_2$ , we say $\eta_2$ is a refinement of $\eta_1$ if $\eta_1(x)\supset \eta_2(x)$ for $\mu$-almost every $x$, and written by $\eta_1\leq \eta_2$. 
	Let $ \eta_1 \vee \eta_2=\{A\cap B:A\in \eta_1, B\in\eta_2\}$ and let $T^{-1} \eta_1=\{T^{-1}(A):A\in\eta_1\}$. We say $\eta$ is $T$-decreasing if $\eta \leq T^{-1} \eta$.
	
	We fixed $\ell_0$ to be large enough such that $\mu(\Gamma_{\ell_0})>0$, and let the random stack $\cS$ be as defined in Section \ref{SEC:Random US}. 
	
	\subsection{Partitions subordinate to the unstable foliation}
	Let $\xi_{\omega}$ be the measurable partition of $\cS_{\omega}$ into unstable leaves, and let $\xi:=\{\{\omega\}\times W:W \in \xi_{\omega}\}$ be the measurable partition of $\cS$. 
	By definition, for each $(\ox)\in \cS$, one has $\xi(\ox)\subset \{\om\}\times \cB$.
	We denote by $\xi(\ox)_{\om}$ the $\om$-section of the subset $\xi(\ox)$ of $\Om\times \cB$.
	
	Denote by $\{\mu_{\xi(\ox)}\}_{(\ox)\in \cS}$ the canonical disintegration of $\mu|_{\cS}$, 
	which satisfied that
	$$\mu|_{\cS}=\int_{\cS} \mu_{\xi(\ox)} d\mu(\ox).$$
	The family of measures $\{\mu_{\xi(\ox)}\}$ on $W^{u}(\ox)$ (one can identify $\{\omega\}\times W^{u}(\ox)$ with $W^{u}(\ox)$) exists by Rokhlin's disintegration theorem, and any two families like that are equal on a full $\mu|_{\cS}$-measure subset of $\cS$. 
	Recall that $\nu_{(\ox)}$ is the induced volume on $W^{u}(\ox)$ for $(\ox)\in \Gamma$. 
 
	We give a formal definition of SRB measures for random dynamical systems on Banach spaces as follows:
	\begin{definition}\label{DEF:SRBm}
		We say $\mu$ is a random SRB measure if (i) it has a positive exponent for $\mu$-almost every $(\omega,x)$, and (ii) for every random stack $\cS$ with $\mu(\cS)>0$, $\mu_{\xi(\ox)}$ is absolutely continuous with respect to $\nu_{(\ox)}$ for $\mu$-almost every $(\ox)$ in $\cS$.
	\end{definition}
	
	\begin{definition}\label{DEF:PSubord}
		We say that a measurable partition $\eta$ is \textit{subordinate to the unstable foliation} $W^{u}$ if for $\mu$-almost every $(\ox)$ one has
		\begin{enumerate}
			\item[(i)]  $\eta(\ox)\subset \{\omega\}\times W^{u}(\ox)$,
			\item[(ii)] $\eta(\ox)_{\omega}$ contains a neighborhood of $x$ in $W^{u}(\ox)$, and
			\item[(iii)]$\eta(\ox)_{\omega} \subset f^{N}_{\te^{-n}\omega} W^{u}_{\loc}(\Phi^{-n}(\ox))$ for some $N\in \N$ depending on $(\ox)$.
	    \end{enumerate}	 
	\end{definition}
	
	\begin{lemma}
		The following are equivalent:
		\begin{enumerate}
			\item[(1)] $\mu$ is a random SRB measure;
			\item[(2)] there exists a partition $\eta$ subordinate to $ W^{u}$ with $\mu_{\eta(\ox)}\ll \nu_{(\ox)}$ for $\mu$-almost every $(\ox)$;
			\item[(3)] for every partition $\eta$ subordinate to $ W^{u}$, one has $\mu_{\eta(\ox)}\ll \nu_{(\ox)}$ for $\mu$-almost every $(\ox)$.
		\end{enumerate}
	\end{lemma}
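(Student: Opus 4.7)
The plan is to run the cycle $(1) \Rightarrow (3) \Rightarrow (2) \Rightarrow (1)$, based on two standard ingredients: transitivity of Rokhlin disintegrations under refinement of measurable partitions, and the intrinsic nature of the induced volume --- whenever $(\ox),(\oy) \in \Gamma$ with $y \in W^{u}(\ox)$, Corollary \ref{Cor:CofT} gives $W^{u}(\oy)=W^{u}(\ox)$, and the definition in Section \ref{SEC:Induced volumes} forces $\nu_{(\oy)} = \nu_{(\ox)}$ as measures on this common submanifold.

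For $(3) \Rightarrow (2)$, it suffices to exhibit one subordinate partition. I construct a countable family of random unstable stacks $\{\cS_{n}\}_{n \geq 1}$ by letting the level $\ell_{0}$ range over a sequence tending to infinity, the tightness parameter $\epsilon$ in Proposition \ref{Prop:Trandom compact} tend to $0$, and the centre $x_{\ast}$ range over a countable dense subset of $\cB$; each $\cS_{n}$ is random compact with positive $\mu$-measure by Lemma \ref{Lem:MMM}, and the construction can be arranged so that $\mu(\bigcup_{n} \cS_{n}) = 1$. Let $\xi_{n}$ be the partition of $\cS_{n}$ into unstable leaves and define
\[
\eta := \bigvee_{n \geq 1} \bigl( \widehat{\xi}_{n} \vee \{\cS_{n},\, (\Om \times \cB)\setminus \cS_{n}\} \bigr),
\]
where $\widehat{\xi}_{n}$ is $\xi_{n}$ extended trivially off $\cS_{n}$. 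Using the construction in Section \ref{SEC:Random US} and Theorem \ref{Thm:Unstable}, properties (i)--(iii) of Definition \ref{DEF:PSubord} follow directly: each atom sits inside a local unstable manifold, hence inside some $f^{N}_{\theta^{-N}\om} W^{u}_{\loc}(\Phi^{-N}(\ox))$, and at the first $n$ with $(\ox) \in \cS_{n}$ the point $x$ is interior to the corresponding leaf. Invoking (3) then yields (2).

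The core of the proof lies in $(2) \Rightarrow (1)$, with $(1) \Rightarrow (3)$ an essentially identical mirror argument. Let $\eta$ be a subordinate partition with $\mu_{\eta(\ox)} \ll \nu_{(\ox)}$ a.e., and let $\cS$ be any random stack with partition $\xi_{\cS}$. Form $\zeta := \eta \vee \widehat{\xi}_{\cS}$; for $(\ox) \in \cS$, $\zeta(\ox) = \eta(\ox) \cap \xi_{\cS}(\ox) \subset \{\om\}\times W^{u}(\ox)$. Transitivity through $\eta \leq \zeta$ identifies $\mu_{\zeta(\oy)}$ with the normalized restriction of $\mu_{\eta(\ox)}$ to $\zeta(\oy)$ for $\mu$-a.e.\ $(\oy)$, so $\mu_{\zeta(\oy)} \ll \nu_{(\oy)}$ on a full-$\mu$ set $G$. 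Transitivity now through $\widehat{\xi}_{\cS} \leq \zeta$ writes $\mu_{\xi_{\cS}(\ox)}$ as an integral of the $\mu_{\zeta(\oy)}$ over $(\oy) \in \xi_{\cS}(\ox)$; since $\mu_{\xi_{\cS}(\ox)}(G) = 1$ for $\mu$-a.e.\ $(\ox) \in \cS$ and the relevant $\nu_{(\oy)}$ all agree with $\nu_{(\ox)}$ on the common manifold, one obtains $\mu_{\xi_{\cS}(\ox)} \ll \nu_{(\ox)}$, which is (1). For $(1) \Rightarrow (3)$: given arbitrary subordinate $\eta$, cover $\mu$-a.e.\ point by stacks $\{\cS_{n}\}$ as above and set $\zeta_{n} := \eta \vee \widehat{\xi}_{n}$; (1) together with transitivity through $\widehat{\xi}_{n} \leq \zeta_{n}$ gives $\mu_{\zeta_{n}(\oy)} \ll \nu_{(\oy)}$ on a full-$\mu$ subset of $\cS_{n}$, and then disintegrating $\mu_{\eta(\ox)}$ through $\eta \leq \zeta_{n}$ at the first $n$ containing $(\ox)$ yields $\mu_{\eta(\ox)} \ll \nu_{(\ox)}$.

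The sole delicate point is the bookkeeping of exceptional sets: each application of Rokhlin's theorem is valid only up to a $\mu$-null set depending on the partition, and one must fix canonical choices of disintegrations once and for all and then intersect countably many such null sets to phrase all the required absolute-continuity statements relative to a single full-measure set. Beyond this there is no genuine analytic obstruction, since every needed ingredient --- induced volume, measurable unstable leaves, random stacks, and measurable conditional measures --- has been set up with the requisite measurability in Sections \ref{SEC:Pre}--\ref{SEC:4}.
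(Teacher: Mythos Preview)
Your overall strategy --- pass through a common refinement and use transitivity of Rokhlin disintegrations --- is exactly the mechanism the paper uses as well. The paper, however, organises the cycle differently and more economically: it proves $(2)\Rightarrow(3)$ in a single step (given $\eta_1$ as in (2) and any subordinate $\eta_2$, set $\eta=\eta_1\vee\eta_2$; absolute continuity passes from $\mu_{\eta_1(\ox)}$ down to the refinement $\mu_{\eta(\oy)}$ and then back up to $\mu_{\eta_2(\ox)}$), and then defers both $(1)\Rightarrow(2)$ and $(3)\Rightarrow(1)$ to Proposition~\ref{Prop:Exist P}, which manufactures a single subordinate partition out of one random stack and its forward iterates. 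Your $(2)\Rightarrow(1)$ and $(1)\Rightarrow(3)$ are correct and amount to special cases of the paper's $(2)\Rightarrow(3)$ step.

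There is a genuine gap in your $(3)\Rightarrow(2)$ construction. You let $\ell_0\to\infty$ along the sequence of stacks $\cS_n$, but the unstable leaves in the stack at level $\ell_0$ have diameter of order $\delta\ell_0^{-3}$ (see the definition of $W^{u}_{\delta,(\oy)}$ in Section~\ref{SEC:Random US}). A $\mu$-typical point will lie in infinitely many of your stacks, and the atom of $\eta=\bigvee_n(\widehat\xi_n\vee\{\cS_n,\,\cdot\,\})$ at such a point is the intersection of leaves of diameters tending to zero, hence may fail to contain any $W^{u}$-neighbourhood of $x$; condition~(ii) in Definition~\ref{DEF:PSubord} is not verified. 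Arguing ``at the first $n$'' does not help, since the refinement continues over all $n$. The clean fix is simply to invoke Proposition~\ref{Prop:Exist P}: it builds a subordinate partition from a \emph{single} stack via $\eta=\bigvee_{n\ge0}\xi^n$ with $\xi^n=\Phi^n\xi$, and the openness of atoms is obtained there by a careful Borel--Cantelli choice of the stack radius $r$.
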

	\begin{proof}
		We first prove (2) implies (3): Let $\eta_1$ be the partition satisfies (2), for each $\eta_2$ subordinate to 
		$W^{u}(\ox)$, we denote by $\eta=\eta_1 \vee \eta_2$ and let $\{\mu_{W}\}_{W\in \eta}$ be the canonical disintegration of $\mu$ with respect to $\eta$. Note that the condition (2) implies $\mu_{\eta(\ox)} \ll \nu_{(\ox)}$ for $\mu$-almost every $(\ox)$. Therefore, by the uniqueness of the canonical disintegration, for $\mu$-almost every $(\ox)$ one has 
  $$\mu_{\eta_2(\ox)}=\int_{\eta_2(\ox)} \mu_{\eta(\oy)} d\mu_{\eta_2(\ox)}(y).$$
  Hence, we have $\mu_{\eta_2(\ox)}\ll \nu_{(\ox)}$ for $\mu$-almost every $(\ox)$.
		
		The statement (1) implies (2) and, (3) implies (1) follow immediately from the proposition below.
	\end{proof}
	
	\begin{proposition}\label{Prop:Exist P}
		Assuming that $(\Phi,\mu)$ has a positive Lyapunov exponent, then there is a random stack $\cS$ with positive $\mu$-measure, and a measurable partition $ \eta$ on $\widetilde{\cS}:=\cup_{n\geq0}\Phi^n \cS$ such that 
		\begin{enumerate}
			\item[(a)] $\eta$ subordinate to $ W^{u}$,
			\item[(b)] $\eta$ is $\Phi$-decreasing,
			\item[(c)] for any measurable set $E \subset \Omega\times \cB$, the function $(\ox) \mapsto \nu_{(\ox)}(E_{\omega} \cap \eta(\ox)_{\omega})$ is finite-valued and measurable on a full $\mu$-measure set, and
			\item[(d)] $\bigvee_{n\geq 0}\Phi^{-n}\eta$ is a partition of $\widetilde{\cS}$ into points modulo  $\mu$-measure $0$.
		\end{enumerate}
	\end{proposition}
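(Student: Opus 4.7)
The plan is to adapt the construction of \cite{Young17} to the random setting. Starting from the random unstable stack $\cS$ of Section \ref{SEC:Random US} and its measurable partition $\xi$ into local unstable leaves, I would define $\eta$ on $\cS$ so that each atom is a piece of a local unstable manifold, and extend to $\widetilde{\cS}=\cup_{n\geq 0}\Phi^{n}\cS$ by forward iteration. The invariance of the unstable foliation (Theorem \ref{Thm:Unstable}) together with forward expansion on $W^{u}$ (Corollary \ref{Cor:NlinP}) are the geometric inputs; Poincar\'e recurrence (guaranteed by $\mu(\cS)>0$ in Lemma \ref{Lem:MMM}) will drive both (b) and (d).

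Concretely, for $(\ox)\in\widetilde{\cS}$ I would let $k(\ox)=\min\{k\geq 0:\Phi^{-k}(\ox)\in\cS\}$ and set $\eta(\ox):=\Phi^{k(\ox)}(\xi(\Phi^{-k(\ox)}(\ox)))$. The $\Phi$-decreasing property $\Phi(\eta(\ox))\supset\eta(\Phi(\ox))$ is automatic when $\Phi(\ox)\notin\cS$, since equality then holds by construction; at reentry points where $\Phi(\ox)\in\cS$ one uses the inclusion $f_{\om}(W^{u}_{\delta,(\om,x_{\om})})\supset W^{u}_{\delta,\Phi(\om,x_{\om})}$ from Theorem \ref{Thm:Unstable}, combined with a preliminary refinement of $\xi$ on $\cS$ that cuts atoms along the pullback of the $\xi$-atom at the first forward return (the standard Ledrappier--Young cutting trick, \cite{Young85}), after which the required containment is valid modulo $\mu$-null sets.

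Verification of (a) is immediate since $\xi$-atoms lie in local unstable graphs and forward $\Phi$-images stay in $W^{u}$ by invariance; the atoms contain neighborhoods of $x$ in $W^{u}(\ox)$ because $\xi$-atoms have uniform positive size for $(\ox)\in\Gamma_{\ell_{0}}$. For (d), suppose $(\om,y)$ lies in every $\Phi^{-n}\eta$-atom of $(\ox)$; then $y\in W^{u}(\ox)$ and $f_{\om}^{n}x,f_{\om}^{n}y$ sit in a common $\xi$-atom at each forward return $n$ to $\cS$. Since $\xi$-atoms have Lyapunov diameter $\leq 2\delta\ell_{0}^{-3}$ while Corollary \ref{Cor:NlinP}~(1) forces the separation to be at least $(e^{\lambda}-\delta)^{n}\Lnorm{y-x}{(\ox)}$, and returns occur infinitely often $\mu$-a.e.\ by Poincar\'e recurrence, we must have $y=x$.

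The main obstacle is (c). Measurability of $(\ox)\mapsto\nu_{(\ox)}(E_{\om}\cap\eta(\ox)_{\om})$ on $\cS$ comes from Lemma \ref{Lem:MofV} applied to the refined partition, with the countable intersection built into $\eta$ handled as a pointwise decreasing limit of measurable functions taken over finite truncations of the return-time sequence; the return times themselves are measurable because $\cS$ is random compact (Lemma \ref{Lem:MMM}) and $(\Omega,\cF,\bP)$ is complete (Proposition \ref{Prop:projec P}). The extension from $\cS$ to $\widetilde{\cS}$ uses the Jacobian change-of-variables formula for $\nu$ under $f_{\om}$ recorded in Section \ref{SEC:Induced volumes}, with the Jacobian $\det(Df_{\om}|E^{u})$ measurable by Lemma \ref{Lem:M3}~(iii). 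Finiteness of $\nu_{(\ox)}(\eta(\ox)_{\om})$ is automatic since $\eta$-atoms are bounded subsets of finite-dimensional unstable manifolds.
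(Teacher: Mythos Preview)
Your construction has a genuine gap at property (a)(ii), and it stems from an inconsistency between your definition of $\eta$ and the refinement you invoke to get (b).

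As you acknowledge, the first-return partition $\eta(\ox)=\Phi^{k(\ox)}(\xi(\Phi^{-k(\ox)}(\ox)))$ is \emph{not} $\Phi$-decreasing at reentry points, so you propose ``cutting atoms along the pullback of the $\xi$-atom at the first forward return.'' But a single such cut is not enough: once you cut, the new (smaller) atom has its own first forward return, and you must cut again, and so on. Iterating this procedure is exactly the infinite join $\eta=\vee_{n\geq 0}\xi^{n}$ that the paper uses (with $\xi^{n}$ the push-forward of $\xi$ by $\Phi^{n}$, completed to a partition of $\widetilde{\cS}$). After infinitely many cuts, your justification for (a)(ii) --- ``$\xi$-atoms have uniform positive size'' --- no longer applies: the atom $\eta(\ox)_{\om}$ is the intersection of the forward images of infinitely many $\xi$-atoms, and there is no a priori reason why these cuts do not accumulate at $x$.

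This is precisely the subtle point the paper handles by introducing a free radius parameter $r\in(1/2,1)$ in the definition of the stack $\cS^{r}$ and choosing $r$ via a Borel--Cantelli argument so that, for $\mu$-a.e.\ $(\ox)$, the backward orbit $f_{\om}^{-n}x$ stays at distance $\gtrsim e^{-n\lambda/2}$ from the boundary sphere $\{|z-x_{\ast}|=2rr_{2}\}$ for all large $n$. Combined with the backward contraction on $W^{u}$ from Theorem \ref{Thm:Unstable}(3), this guarantees that only finitely many cuts can come within any fixed positive distance of $x$, so a neighborhood survives. Without this step (or an equivalent one), your proof of (a) is incomplete.

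A minor remark on (d): your forward-expansion argument via Corollary \ref{Cor:NlinP}(1) is morally correct but requires $x,y$ to lie on the same local unstable leaf and the iterates to stay in the charts; the paper's route through the backward characterization in Lemma \ref{Lem:CUnstable} is cleaner here.
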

	\begin{proof}
		Using the notation in Section \ref{SEC:Random US}, we first construct a random stack.
		Let $r\in (1/2,1)$ and let
		$$\cS^{r}_{\omega}:=\bigcup \{W^{u}_{\delta,(\oy)}\cap \bar{B}(x_{\ast},2rr_2): y\in A_{\ast} \ \text{such that} \ (\omega,y)\in \Gamma_{\ell_0} \},$$
		and define $\cS^{r}:=\bigcup \{\{\omega\}\times S^{r}_{\omega}:   \om\in \Om \ \text{satisfying} \ \exists y\in A_{\ast} \ \text{with} \ (\oy)\in\Gamma_{\ell_0}\}$. 
		Let $\widetilde{\cS}:=\widetilde{\cS}^r=\bigcup_{n\geq0}\Phi^n \cS^r$. Since $\mu$ is $\Phi$-ergodic, it follows that $\mu(\widetilde{\cS})=1$.
		
		Let $\xi:=\{\{\omega\}\times W_{\omega}:W_{\omega}\in \xi_{\omega}\}$ be a measurable partition of $\cS^r$, where $\xi_{\omega}$ is the measurable partition of $\cS^r_{\omega}$ into unstable leaves. 
		For each $n\geq 0$, let $\xi^n=\{\Phi^n(W):W\in \xi\} \cup \{\widetilde{\cS}-\Phi^n(\cS^r)\}$ be a measurable partition of $\widetilde{\cS}$, and let $\eta=\bigvee_{n=0}^{\infty}\xi^n$. 
		Note that $\eta$ is a refinement of the partition $\eta_0:=\{\{\omega\}\times \cB\}_{\omega\in \Omega}$. 
		For each $(\omega,x)\in \widetilde{\cS}$, a point $(\omega,y)\in \eta(\ox)$ if and only if
		\begin{equation}\label{eq:eta}
			\left\{
			\begin{alignedat}{2}
				&f^{-n}_{\omega} y \in \xi_{\te^{-n}\omega}(f^{-n}_{\omega} x)
				\quad &\text{if}& \ \Phi^{-n}(\ox) \in \cS^r; \\
				&\Phi^{-n}(\oy) \notin \cS^r   &\text{if}& \ \Phi^{-n}(\ox) \notin \cS^r.
			\end{alignedat}
			\right.	
		\end{equation}
		for any $n\geq 0$. Therefore, one has that $\eta$ is $\Phi$-decreasing. This proves item (b).
		
		For each $(\omega,x)\in \widetilde{\cS}$, there exists $N(\ox)\in \N$ such that $\Phi^{-N}(\ox) \in \cS^r$. 
		Then, by (\ref{eq:eta}) we have $\eta(\ox)_{\omega}\subset f^{N}_{\te^{-n}(\omega)} W^{u}_{\loc}(\Phi^{-N}(\ox))$. 
		To show $\eta$ subordinate to $W^{u}$, we only requirement that $\eta(\ox)_{\omega}$ contains a neighborhood of $x$ in $W^{u}_{\loc}(\ox)$ for $\mu$-almost every $(\ox)$ left.
		 
		We now choose $r$ carefully to guarantee that. 
		Let 
		$$ \partial \cS^{r}_{\omega}:=\bigcup \{W^{u}_{\delta,(\oy)}\cap 
		\{z:|z-x_{\ast}|=2rr_2\}: y\in A_{\ast} \ \text{such that} \ (\omega,y)\in \Gamma_{\ell_0} \},$$
		and let 
		$$\partial \cS^r:=\bigcup \{\{\omega\}\times \partial \cS^{r}_{\omega}: \om\in \Om \ \text{satisfying} \ \exists y\in A_{\ast} \ \text{with} \ (\oy)\in\Gamma_{\ell_0} \}.$$
		For fixed $(\ox)\in \widetilde{\cS}\cap \Gamma$.
		Since $\{\omega\}\times \partial (\eta(\ox)_{\omega})\subset \bigcup_{n\geq0} \Phi^{n} (\partial (\cS^r))$, we have that for $0<\varepsilon<{\delta}'_{1}$ (${\delta}'_{1}$ be as in Theorem \ref{Thm:Unstable}), if $f^{-n}_{\omega}W^u_{\varepsilon}(\ox)\cap (\partial (\cS^r))_{\te^{-n}\omega}=\varnothing$ then $W^u_{\varepsilon}(\ox)\subset \eta(\ox)_{\omega}$. 
		To guarantee that, we needed that for every $y\in W^u_{\varepsilon}(\ox)$ and every $n\in \N$,
		$$|f^{-n}_{\omega}y-f^{-n}_{\omega}x|<
		|(2rr_2-|f^{-n}_{\omega}x-x_{\ast}|)|.$$
		For every $y\in W^u_{\varepsilon}(\ox)$, by Theorem \ref{Thm:Unstable} (iii) we have
		$$3|f^{-n}_{\omega}y-f^{-n}_{\omega}x|\leq \Lnorm{f^{-n}_{\omega}y-f^{-n}_{\omega}x}{\Phi^{-n}(\ox)}
		                                      \leq \Lnorm{x-y}{(\ox)}e^{\frac{-n\lambda}{2}}.$$
		Therefore, we need to take $\varepsilon(\ox)$ small enough such that
		$$\varepsilon< 
		\inf_{n\in \N}\{{\delta}'_1, 3\ell(\ox)e^{\frac{n\lambda}{2}} |(2rr_2-|f^{-n}_{\omega}x-x_{\ast}|)|\}.$$ 
		Take $r\in (1/2,1)$, then the following subset
		\begin{equation}\label{eq:51}
			\{r:\sum_{n=0}^{\infty}\mu(\{(\ox): |(|x-x_{\ast}|-2rr_2)|<e^{\frac{-n\lambda}{2}}\})<\infty \}
		\end{equation}
		have full Lebesgue measure on the interval $(1/2,1)$ by standard estimations (see \cite[Proposition 3.2]{Ledrappier82} for details). Since there are most countable choice of $r$ such that $\mu(\partial \cS^r)>0$, one can choose $r\in (1/2,1)$ such that $r$ belongs to the set (\ref{eq:51}), and $\mu(\partial \cS^r)=0$.
		Moreover, by $\mu$ is $\Phi$ invariant, we also have
		$$\sum_{n=0}^{\infty}\mu(\{(\ox): |(|f^{-n}_{\omega}x-x_{\ast}|-2rr_2)|<e^{\frac{-n\lambda}{2}}\})<\infty.$$
		By a Borel-Cantelli type argument, for $\mu$-almost every $(\ox)$ we have 
		$$ |(|f^{-n}_{\omega}x-x_{\ast}|-2rr_2)|\geq e^{\frac{-n\lambda}{2}}$$
		except for a finite number of integers $n\in \N$.
		So, one can choose $\varepsilon(\ox)>0$ such that $W^u_{\varepsilon}(\ox)\subset \eta(\ox)_{\om}$. 
		Item (a) be proved.

        By Lemma \ref{Lem:MofV}, the function
        $$
		(\ox) \mapsto \nu_{(\ox)}((E \cap  \xi(\ox))_{\omega})$$
        is measurable on $ \Gamma_{\ell_0}\cap(\Om\times A_{\ast})$.
		It follows clearly that 
		$$
		(\ox) \mapsto \nu_{(\ox)}(E_{\omega}\cap ((\bigvee_{i=0}^{n-1}\xi^{i})(\ox))_{\omega}), \ n\in \N$$
		is measurable on $\bigcup^{n-1}_{i=0 }\Phi^{i}(\Gamma_{\ell_0}\cap(\Om\times A_{\ast}))$, and decreasing for $n$. Therefore, we have 
		$$
		(\ox) \mapsto \nu_{(\ox)}(E_{\om} \cap \eta(\ox)_{\omega})=\lim_{n\rightarrow \infty}\nu_{(\ox)}((E\cap (\bigvee_{i=0}^{n-1}\xi^{i})(\ox))_{\omega})
		$$
		is measurable on the full $\mu$-measure set $\bigcup^{\infty}_{n=0}\Phi^{i}(\Gamma_{\ell_0}\cap(\Om\times A_{\ast}))$. Item (c) be proved.

        Note that for $\mu$-almost every $(\ox)$, there is a subsequence $\{k_n(\ox)\}_{n\in \N}$ of $\N$ such that $\Phi^{-k_n}(\ox)\in \cS^r$.
		Item (d) follows from (\ref{eq:eta}) and Theorem \ref{Thm:Unstable} directly.
	\end{proof}
	\subsection{SRB measures implies entropy formula}
	\begin{proof}[proof of Theorem \ref{Thm:A} (ergodic case)]
		Since Ruelle's inequality \cite{Ruelle82} (see also \cite{Li12} for the random case on Banach spaces) tells us that 
		$$h({\mu}'|\bP)\leq \int \sum_{i} m_i\lambda^{+}_{i}d{\mu}'$$
		for any $\Phi$-invariant probability measure ${\mu}'$ projection to $\bP$. 
		Then, it suffices to show that if $\mu$ is a random SRB measure the reverse inequality holds. 
		We assume that $\mu$ is ergodic here.
		Let $\eta$ be constructed in Proposition \ref{Prop:Exist P}. 
		Recall that $\eta_0:=\{\{\omega\}\times \cB\}_{\omega\in \Omega}$ and the definition of $h(\mu|\bP)$ in Section \ref{SEC:2.1}. 
		By $\eta$ is $\Phi$-decreasing and $\eta_0\leq \eta$, we have 
		\begin{align*}
	  h(\mu|\bP)&=h(\mu|\bP,(\Phi|_A)^{-1})\\
                  &\geq h(\mu|\bP,(\Phi|_A)^{-1},\eta)
                  =H_{\mu}(\eta|\bigvee^{\infty}_{i=1}\Phi^{i} \eta\vee \eta_{0})\\
                  &=H_{\mu}(\eta|\Phi\eta)=H_{\mu}(\Phi^{-1}\eta|\eta)=\frac{1}{n}H_{\mu}(\Phi^{-n}\eta|\eta).
		\end{align*}
		It suffices to show that 
		$$H_{\mu}(\Phi^{-1}\eta|\eta)=\int \log J^{u}(\ox)d\mu(\ox)=\sum m_i\lambda_{i}^{+},$$
		where $J^{u}(\ox)=\det(Df_{\omega}(x)|E^{u}(\ox))$.
		Note that $\log J^{u}\in L^1(\mu)$, the second equality follows immediately from the Birkhoff ergodic theorem and Proposition \ref{Prop:DetL}.
		
		By Proposition \ref{Prop:Exist P} (c), we let $\nu$ be the $\sigma$-finite measure defined by
		$$\nu(K)=\int v_{(\ox)}((\eta(\ox) \cap K)_{\omega}) d\mu(\ox), \ K \subset \Om\times \cB \ \text{is measurable}.$$
		By assumption, we have $\mu \ll \nu$. Let $\rho=\frac{d\mu}{d\nu}$, then for $\mu$-almost every $(\ox)$ one has
		$$\rho(\omega,y)=\frac{d\mu_{\eta(\ox)}}{d\nu_{(\ox)}}(y) \quad \text{for} \ \nu_{(\ox)}\text{-almost every}\ y\in \eta(\ox)_{\omega}.$$
		Recall that the information function $I(\Phi^{-1}\eta|\eta)$ is
		$$I(\Phi^{-1}\eta|\eta)(\ox)=-\log\mu_{\eta(\ox)}((\Phi^{-1}\eta)(\ox)_{\omega}).$$ 
		  For every measurable set $E\subset \cB$.
		On the one hand, since $\eta$ is $\Phi$-decreasing, one has
		\begin{align*}
			\mu_{(\Phi^{-1}\eta)(\ox)}(E)&=\dfrac{\mu_{\eta(\ox)}((\Phi^{-1}\eta)(\ox)_{\omega}\cap E) }{\mu_{\eta(\ox)}((\Phi^{-1}\eta)(\ox)_{\om})}\\
			&=e^{I(\Phi^{-1}\eta|\eta)(\ox)}\int_{((\Phi^{-1}\eta)(\ox))_{\omega}\cap E} \rho(\oy) d\nu_{(\ox)}(y).
		\end{align*}
		On the other hand, by $\mu$ is $\Phi$-invariant, we have for $\mu$-almost every $(\ox)$
		\begin{align*}
			\mu_{(\Phi^{-1}\eta)(\ox)}(E)&=\mu_{\eta(\Phi (\ox))}(f_{\omega}E)\\
			&=\int_{f_{\omega}(((\Phi^{-1}\eta)(\ox))_{\omega}\cap E)} \rho(\te(\om),z) d\nu_{\Phi(\ox)}(z)\\
			&=\int_{f_{\omega}(((\Phi^{-1}\eta)(\ox))_{\omega}\cap E)} \rho(\te(\om),z) J^{u}(\om,f^{-1}_{\te\omega}(z)) d((f_{\omega})_{\ast}\nu_{(\ox)})(z)\\
			&=\int_{((\Phi^{-1}\eta)(\ox))_{\omega}\cap E} \rho(\Phi(\oy)) J^{u}(\oy) d\nu_{(\ox)}(y),
		\end{align*}
		where we use the change of variables formula \eqref{eq:Var} in the third equality 
		(recall $f^{-1}_{\te\omega}(y)$ is the unique point such that $f_{\om}(f^{-1}_{\te\omega}(y))=y$).
		Consequently, by $E$ is arbitrary and $ I(\Phi^{-1}\eta|\eta)(\oy)= I(\Phi^{-1}\eta|\eta)(\ox)$ for any 
		$y\in ((\Phi^{-1}\eta)(\ox))_{\omega}$, we get that for $\mu$-almost every $(\ox)$
		$$I(\Phi^{-1}\eta|\eta)(\oy)=-\log L(\oy) \quad \text{for} \ \nu_{(\ox)}\text{-almost every}\ y\in ((\Phi^{-1}\eta)(\ox))_{\omega}.$$
		where $L(\oy)=\rho(\oy)(J^{u}(\oy))^{-1} (\rho(\Phi(\oy)))^{-1}$. Therefore, for $\mu$-almost every $(\ox)$ one has
		$$I(\Phi^{-1}\eta|\eta)(\ox)=\log J^{u}(\ox)+\log \dfrac{\rho(\Phi(\ox))}{\rho(\ox)}.$$
		Since $\log J^{u}\in L^1(\mu)$ and $I(\Phi^{-1}\eta|\eta)(\ox)\geq 0$, one has $\log^{-} \frac{\rho\circ \Phi}{\rho}\in L^1(\mu)$. Therefore, by Lemma \ref{Lem:Tem} one has $\int \log \frac{\rho\circ \Phi}{\rho}d\mu=0$. 
		So, we have
		$$H_{\mu}(\Phi^{-1}\eta|\eta)=
		\int \log J^{u}(\ox)d\mu(\ox).$$
		This completes the proof.
	\end{proof}
	\subsection{Entropy formula implies SRB measures}
	We assume that $(\Phi,\mu)$ satisfies (H1)-(H5) and $\mu$ is ergodic.  
	Let $\eta,\xi, \cS$ and $\widetilde{\cS}$ be as in Proposition \ref{Prop:Exist P}.
	We first prove the following:
	\begin{lemma}\label{Lem:last}
		Let $(\Phi,\mu)$ with a positive Lyapunov exponent for $\mu$-almost everywhere, and let $\eta$ be as in Proposition \ref{Prop:Exist P}. Assume
		$$H_{\mu}(\Phi^{-1}\eta|\eta)=\int \log \det(Df_{\om}(x)|E^{u}(\ox)) d\mu,$$
		then we have $\mu$ is a random SRB measure.
	\end{lemma}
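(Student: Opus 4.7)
The plan is to adapt the classical Ledrappier--Strelcyn/Ledrappier--Young strategy (as followed in \cite{Young17} and \cite{Li13}) to the random Banach setting. Because $\eta$ is $\Phi$-decreasing by Proposition \ref{Prop:Exist P}(b) and refines the trivial partition $\eta_0$, the same collapse used in the proof of Theorem \ref{Thm:A} gives
$$h(\mu|\bP,\eta) = H_\mu(\Phi^{-1}\eta \mid \eta) = -\int \log\mu_{\eta(\ox)}\bigl((\Phi^{-1}\eta)(\ox)_{\om}\bigr)\,d\mu,$$
so that the task reduces to showing that the hypothesis forces $\mu_{\eta(\ox)} \ll \nu_{(\ox)}$ for $\mu$-almost every $(\ox) \in \widetilde\cS$.

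I would introduce the $\sigma$-finite reference measure $\nu$ on $\widetilde\cS$ by $\nu(E) = \int \nu_{(\ox)}(E_\om \cap \eta(\ox)_\om)\,d\mu(\ox)$, which is well-defined by Proposition \ref{Prop:Exist P}(c), and perform the Lebesgue decomposition $\mu = \mu^a + \mu^s$ relative to $\nu$. Writing $\rho$ for the conditional density of the absolutely continuous part, the change-of-variables formula at the end of Section \ref{SEC:Induced volumes}, together with the $\Phi$-invariance of $\mu$ and uniqueness of Rokhlin disintegrations, yields on the AC support the cocycle identity $\log\rho(\Phi(\ox)) - \log\rho(\ox) = \log J^u(\ox) - I(\Phi^{-1}\eta \mid \eta)(\ox)$, exactly mirroring the final display in the proof of Theorem \ref{Thm:A}. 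As a reference probability on each atom I would then use
$$\lambda_{(\ox)} = \Bigl(\int_{\eta(\ox)_\om} q(\om,z)\,d\nu_{(\ox)}(z)\Bigr)^{-1} q(\om,\cdot)\,\nu_{(\ox)}\big|_{\eta(\ox)_\om},$$
where $q$ is the distortion-corrected density from Section \ref{SEC:4.4}, well-defined thanks to the Lipschitz bound of Proposition \ref{Prop:dis}. By construction $\lambda_{(\ox)}$ transforms under $\Phi$ with an exact $J^u$-weight, so $\lambda_{(\ox)}\bigl((\Phi^{-1}\eta)(\ox)_\om\bigr)$ admits a closed expression in terms of $J^u$. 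Applying Jensen's inequality to the convex function $-\log$ against $\mu_{\eta(\ox)}$, and integrating the comparison with $\lambda_{(\ox)}$ over $\mu$, will yield the master inequality
$$H_\mu(\Phi^{-1}\eta \mid \eta) \leq \int \log J^u\,d\mu,$$
with equality if and only if $\mu^s = 0$, i.e.\ $\mu_{\eta(\ox)} \ll \nu_{(\ox)}$ $\mu$-almost everywhere. Combined with the standing hypothesis, this gives the random SRB property of Definition \ref{DEF:SRBm}.

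The main obstacle will be executing the Jensen step rigorously in this setting. One must verify that $H_\mu(\Phi^{-1}\eta \mid \eta)$ is finite (which follows from $\log J^u \in L^1(\mu)$, proved along the way to Proposition \ref{Prop:DetL}, together with Ruelle's inequality), and one must show that the telescoping term $\int(\log\rho\circ\Phi - \log\rho)\,d\mu$ truly vanishes. For the latter, the cocycle identity forces at least one of $(\log\rho\circ\Phi - \log\rho)^{\pm}$ to be integrable, and Lemma \ref{Lem:Tem} then supplies the conclusion. The regularity of $q$ along leaves from Lemma \ref{Lem:CofFq} and the uniform distortion of Proposition \ref{Prop:dis} are precisely what make $\lambda_{(\ox)}$ a bona fide measurable family of probability measures and legitimize the pointwise Jensen comparison on $\mu$-almost every atom.
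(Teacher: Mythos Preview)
Your plan has the right ingredients and is essentially the paper's own strategy: build the reference conditional measures $\lambda_{(\ox)}=p(\om,\cdot)\,\nu_{(\ox)}$ on the atoms of $\eta$ (your $\lambda_{(\ox)}$ coincides with the paper's $\upsilon_{\eta(\ox)}$, since $q$ normalized over $\eta(\ox)_\om$ is exactly $p$), use the explicit transformation law $\lambda_{(\ox)}((\Phi^{-n}\eta)(\ox)_\om)=L(\Phi^n(\ox))\,L(\ox)^{-1}\,(\det(Df^n_\om(x)|E^u))^{-1}$, and compare with $\mu_{\eta(\ox)}$ via Jensen. Two points, however, deserve correction.

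First, the paragraph introducing the Lebesgue decomposition $\mu=\mu^a+\mu^s$ and the cocycle identity for $\rho$ is out of place here: that identity was derived in the proof of Theorem~\ref{Thm:A} \emph{assuming} $\mu\ll\nu$, which is exactly what you are trying to establish. It plays no role in the converse direction and should be dropped.

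Second, and more substantively, your claim that a single application of Jensen at level $n=1$ yields ``equality if and only if $\mu^s=0$'' is not correct as stated. The Gibbs/Jensen inequality applied to the partition $\Phi^{-1}\eta|_{\eta(\ox)}$ gives, under the hypothesis, only that $\mu_{\eta(\ox)}(A)=\lambda_{(\ox)}(A)$ for every atom $A$ of $\Phi^{-1}\eta$ inside $\eta(\ox)$; i.e.\ $\mu$ and $\upsilon$ agree on the sub-$\sigma$-algebra $\mathbb{B}_1$ generated by $\Phi^{-1}\eta$. To upgrade this to $\mu_{\eta(\ox)}=\lambda_{(\ox)}$ (hence $\mu_{\eta(\ox)}\ll\nu_{(\ox)}$) you must repeat the argument for every $n$---the hypothesis together with $nH_\mu(\Phi^{-1}\eta|\eta)=H_\mu(\Phi^{-n}\eta|\eta)$ gives the analogous equality on each $\mathbb{B}_n$---and then invoke Proposition~\ref{Prop:Exist P}(d), which says $\bigvee_{n\ge0}\Phi^{-n}\eta$ separates points $\bmod\ \mu$, so that $\mathbb{B}_n\nearrow \cF\otimes\mathscr{B}$. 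This generating step is indispensable and is exactly how the paper closes the argument; without it you have not identified $\mu_{\eta(\ox)}$ with an absolutely continuous measure.
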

	
	\begin{proof}
		Using the notations in Section \ref{SEC:4.4}.
		For $(\om,x)\in \cS$. 
		Note that we also denote by $E^{u}(\oy)$ the tangent space of $W^{u}(\ox)$ at the point $y\in W^{u}(\ox)$. Let
		$$J^{u}(\ox)=\det (Df_{\om}(x)|E^{u}(\ox)).$$ 
		
		Recall the Lipschitz-continuous function $z \mapsto \Delta^{\omega}(x,z)$ on $\xi(\ox)_{\om}$ which defined by
		$$z\mapsto \Delta^{\omega}(x,z):= \prod_{n=1}^{\infty} \dfrac{J^{u}(\Phi^{-n}(\ox))}{J^{u}(\Phi^{-n}(\omega,z))},$$
		and the function $q(\omega,z)$ on $\cS$ defined by
		$$q(\omega,z):=\dfrac{\Delta^{\omega}(x,z)}{\int_{\xi_{\omega}(x)} \Delta^{\omega}(x,y) d\nu_{(\ox)}(y)}, \quad z\in \xi(\ox)_{\om}.$$
		By Lemma \ref{Lem:MPolish} and Lemma \ref{Lem:CofFq}, one can deduce that $q$ is measurable on $\cS$.
	 
		Since $\eta$ is subordinate to $W^{u}$,
		for each $(\omega,x) \in \widetilde{\cS}$ there exists $n\geq 0$ such that $f^{-n}_{\omega}(\eta(\ox))$ contained in a leaf of $\cS_{\te^{-n}\om}$. 
		By assumption (H1) and Theorem \ref{Thm:Unstable}, $f^{n}_{\te^{-n}\omega}$ is a $C^2$ embedding on each unstable leaf. 
		Then, we have that for $\mu$-almost every $(\ox)$, the map $z\mapsto \Delta^{\omega}(x,z)$ is Lipschitz-continuous and bounded on $\eta(\ox)_{\omega}$. 
		By $\eta$ is subordinate to $W^{u}$, for $\mu$-almost every $(\omega,x)$ we have $\nu_{(\ox)}(\eta(\ox)_{\omega})>0$. 
		Therefore, the following function
		$$p(\omega,z):=\dfrac{\Delta^{\omega}(x,z)}{\int_{\eta(\ox)_{\omega}} \Delta^{\omega}(x,y)  d\nu_{(\ox)}(y)}, \ z\in \eta(\ox)_{\omega},$$
		is well defined for $\mu$-almost every $(\omega,x)$.
		
		Suppose we have that $\mu_{\eta(\ox)}\ll \nu_{(\ox)}$ for $\mu$-almost $(\ox)$. 
		Then there exist some function $\rho$ such that $d\mu_{\eta(\ox)}=\rho(\om,z) \nu_{(\ox)}$ for $\nu_{(\ox)}$-almost every 
		$z \in \eta(\ox)_{\om}$. 
		By the argument in the proof of Theorem A, the function $\rho$ must satisfy $\int_{\eta(\ox)_{\om}}\rho(\om,z)d\nu_{(\ox)}(z)=1$ and $\rho(\Phi(\om,z))J^{u}(\om,z)\rho(\om,z)^{-1}$ must be constant on $(\Phi^{-1}\eta)(\ox)_{\om}$.
		So, one can guess that $\rho=p$.
		
		The function $p$ is measurable on $\widetilde{\cS}$ by the measurability of $q$ with standard arguments. 
		So, we can define a probability measure $\upsilon$ on $\widetilde{\cS}$ by 
		$$\upsilon(K)=\int(\int_{(K\cap \eta(\ox))_{\omega}} p(\oy) dv_{(\ox)}(y))d\mu(\ox) $$
		for each $K$ measurable. 
		By construction, the measure $\upsilon$ and $\mu$ project to the same measure on the quotient space $\widetilde{\cS}/\eta$, and the conditional measure of $\upsilon$ on each element of $\eta$, which we denote by $\upsilon_{\eta(\ox)}$, is absolutely continuous with respect to $\nu_{(\ox)}$ with the densities function $p$.
		It suffices to prove that $\upsilon=\mu$.
		
		For each $n\in \N$, by the construction we have
		$$\upsilon_{\eta(\ox)} ((\Phi^{-n}\eta)(\ox)_{\omega})=
		\dfrac{\int_{(\Phi^{-n}\eta)(\ox)_{\omega}} \Delta^{\om}(x,y)d\nu_{(\ox)}(y)}{\int_{\eta(\ox)_{\omega}} \Delta^{\om}(x,y)d\nu_{(\ox)}(y)}.$$
		By the change of variables formula (recall that $f^{-n}_{\te^n\omega}(y)$ is the unique point such that 
		$f^n_{\om}(f^{-n}_{\te^n\omega}(y))=y$),
		\begin{align*}
	     &\int_{(\Phi^{-n}\eta)(\ox)_{\omega}} \Delta^{\om}(x,z)d\nu_{(\ox)}(z)\\
		=&\int_{f^{-n}_{\te^n\om}(\eta(\Phi^{n}(\ox))_{\te^{n}\omega})} \Delta^{\om}(x,z)d\nu_{(\ox)}(z)\\
		=&\int_{\eta(\Phi^{n}(\ox))_{\te^{n}\omega}} \Delta^{\om}(x,f^{-n}_{\te^n\om}(y))d((f^n_{\om})_{\ast}\nu_{(\ox)})(y)\\
		=&\int_{\eta(\Phi^{n}(\ox))_{\te^{n}\omega}} \dfrac{\Delta^{\om}(x,f^{-n}_{\te^n\om}(y))}{\det(Df^n_{\om}(f^{-n}_{\te^n\om}(y))|E^{u}(\om,f^{-n}_{\te^n\om}(y)))}
		d\nu_{\Phi^{n}(\ox)}(y)\\
		=&\dfrac{1}{\det(Df^n_{\om}(x)|E^{u}(\ox))} \int_{\eta(\Phi^{n}(\ox))_{\te^{n}\omega}} \Delta^{\te^n\om}(f^n_\om x,y)d\nu_{\Phi^{n}(\ox)}(y).
		\end{align*}
	    Let 
	    $L(\ox):=\int_{\eta(\ox)_{\omega}} \Delta^{\om}(x,y)d\nu_{(\ox)}(y)$, then we have
	    $$\upsilon_{\eta(\ox)} ((\Phi^{-n}\eta)(\ox)_{\omega})=
	    \dfrac{L(\Phi^n(\ox))}{L(\ox)}\cdot
	    \dfrac{1}{\det(Df^n_{\om}(x)|E^{u}(\ox))}.$$
		Using the same argument in the proof of Theorem A, for each $n>0$ we have
		\begin{equation}\label{eq:S7}
			\int -\log \upsilon_{\eta(\ox)} ((\Phi^{-n}\eta)(\ox)_{\omega})d\mu(\ox)=\int \log\det(Df^n_{\omega}(\ox)|E^{u}(\ox))d\mu(\ox).
		\end{equation}
		Note that the main assumption of the lemma tells us
		\begin{equation}\label{eq:S8}
			\begin{aligned}
				 \int -\log \mu_{\eta(\ox)} ((\Phi^{-n}\eta)(\ox)_{\omega})&d\mu(\ox)=H_{\mu}(\Phi^{-n}\eta|\eta)\\
			    =&\int \log\det(Df^n_{\omega}(\ox)|E^{u}(\ox))d\mu(\ox).
			\end{aligned}
		\end{equation}

		Consider the $\widetilde{\cS}/(\Phi^{-n}\eta)$-measurable function
		$$\psi_n(\ox)=\dfrac{\upsilon_{\eta(\ox)} ((\Phi^{-n}\eta)(\ox)_{\omega})}{\mu_{\eta(\ox)} ((\Phi^{-n}\eta)(\ox)_{\omega})},$$ 
		which is defined for $\mu$-almost everywhere. 
		Together with (\ref{eq:S7}) and (\ref{eq:S8}) we have $\int \log \psi_n d\mu=0$. 
		Let $\mathbb{B}_n$ be the sub-$\sigma$-algebra of measurable subsets that are unions of elements of $\Phi^{-n}\eta$, and let $\mu_n$ and $\upsilon_n$ be the restriction of $\mu$ and $\upsilon$ on $\mathbb{B}_n$ respectively. 
		Decompose $\upsilon_n=\upsilon_{n}^{\mu}+\upsilon^{\perp}_n$ where $\upsilon_{n}^{\mu}\ll \mu_n$ and $ \upsilon^{\perp}_n $ is mutually singular with $\mu_n$ ($\upsilon^{\perp}_n$ can be strictly positive), then $\psi_n=\frac{d\upsilon_{n}^{\mu}}{d\mu_{n}}$. 
		Thus, we have $\int \psi_n d\mu\leq 1$.
		
		By Jensen's inequality we have
		$$0=\int \log \psi_n d\mu \leq \log\int \psi_n d\mu\leq 0$$
		and the equality holds if and only if  $\psi_n$ is constant for $\mu$-almost everywhere. 
		So, $\psi_n\equiv1$ for $\mu$-almost every $(\ox)$.
		This implies $\mu$ and $\upsilon$ coincide on $\mathbb{B}_n$. Since $\Phi^{-n}\eta$ partition into points modulo $\mu$-measure 0, one has  $\mu=\upsilon$. This completes the proof.
	\end{proof}
	
	\begin{proof}[proof of Theorem \ref{Thm:B} (ergodic case)]
		By Lemma \ref{Lem:last}, it suffices to show that 
		$$H_{\mu}(\Phi^{-1}\eta|\eta)=h(\mu|\bP).$$
        Note that $h(\mu|\bP)\geq H_{\mu}(\Phi^{-1}\eta|\eta)$, we only need to prove the reverse inequality holds.      
		We will use the following lemma which needed that $A$ is visible:
		\begin{lemma}[\cite{Li13} Proposition 4.2]\label{Lem:visivle}
			Assume that $A$ is visible. 
			Let $\tau:A\rightarrow (0,+\infty)$ be a measurable function satisfied $\tau(\Phi^{\pm}(\ox))\leq e^{2\varepsilon_{1}}\Phi(\ox)$ for $\mu$-almost every $(\ox)$.
		    Then, there exists a measurable partition $\cP$ of $A$ such that
			$H_{\mu}(\cP|\eta_{0})<\infty$ and $$((\bigvee_{i=0}^{\infty}\Phi^{i}\cP)(\ox))_{\omega}\subset \bigcap_{i=0}^{\infty} f^{i}_{\te^{-i}\om}B(f^{-i}_{\om}x,\tau(\Phi^{-i}(\ox)))$$ 
            for $\mu$-almost every $(\ox)$.
		\end{lemma}
  
		Let $\tau=\delta\ell(\ox)^{-2}$.
		Using Lemma \ref{Lem:visivle}, there exists a partition $\cP$ of $A$ with finite conditional entropy.
		Let $\cP^{+}:=\bigvee_{n\geq 0}\Phi^{n}\cP$, for $\mu$-almost every $(\ox)$ one has 
		$$\Lnorm{\fo^{-n}y-\fo^{-n}x}{\Phi^{-n}(\ox)}\leq \delta \ell(\Phi^{-n}(\ox))^{-1} $$
        for any $y \in \cP^{+}(\ox)_{\om}$ and any $ n\in \N$.
		Then, by Lemma \ref{Lem:CUnstable} one has
		$\cP^{+}(\ox)_{\om}\subset W^{u}_{\delta}(\ox)$ (note that we assume $E^{c}=\{0\}$). 
		
		For each $r>0$. Choose a countable partition $\cP_0$ of $A$ with finite conditional entropy such that
		$h(\mu|\bP)<h(\mu|\bP,\cP_0)-r$.
		Let $\cQ:= \cP\vee\{\widetilde{\cS}\cap A,A-\widetilde{\cS}\}\vee \cP_0$. Then, we also have $h(\mu|\bP)<h(\mu|\bP,\cQ)-r$. Let $ \cQ^{+}=\bigvee_{n\geq 0}\Phi^{n}\cQ$. 
		Use the fact that $\cP^{+}(\ox)_{\om}\subset W^{u}_{\delta}(\ox)$ and the construction of $\eta$ we can check that $\eta(\ox)_{\om}\supset \cQ^{+}(\ox)_{\om}$ for $\mu$-almost every $(\ox)$. 
		This implies that $\eta \leq \cQ^{+}\vee \eta_0$.
		Then, we have
		\begin{align*}
			H_{\mu}(\Phi^{-1}\eta|\eta)&=\frac{1}{n} H_{\mu}(\Phi^{-n}\eta|\eta)\\
			&\geq \frac{1}{n} H_{\mu}(\bigvee_{j=0}^{n} \Phi^{-j}\cQ|\eta)-\frac{1}{n} H_{\mu}(\bigvee_{j=0}^{n} \Phi^{-j}\cQ|\Phi^{-n}\eta)\\
			&\geq \frac{1}{n} H_{\mu}(\bigvee_{j=0}^{n} \Phi^{-j}\cQ|\cQ^{+}\vee \eta_0)-
			      \frac{1}{n} H_{\mu}(\bigvee_{j=0}^{n} \Phi^{-j}\cQ|\Phi^{-n}\eta)\\
			&\geq h(\mu|\bP,\cQ)-\frac{1}{n} H_{\mu}(\bigvee_{j=0}^{n} \Phi^{-j}\cQ|\Phi^{-n}\eta),
		\end{align*}
		the second term in above can be shown to be $<r$ for large $n$ since $\bigvee_{n\geq0}\Phi^{-n} \eta$ partition into points $\mu$-modulo $0$. Therefore, we have that 
		$$H_{\mu}(\Phi^{-1}\eta|\eta)\geq   h(\mu|\bP,\cQ)-r>h(\mu|\bP)-2r.$$
		By arbitrarily of $r$, we have $ H_{\mu}(\Phi^{-1}\eta|\eta)=h(\mu|\bP)$.
		This completes the proof of Theorem B in the ergodic case.
	\end{proof} 
	\section{The non-ergodic case}\label{SEC:NON-ergodic}
	The case when $\mu$ is not ergodic can be proved in the same way with the following modifications.
	
	In the non-ergodic case of the Multiplicative Ergodic Theorem, we consider a measurable $\Phi$-invariant function $ \lambda_{\alpha}$ with $0>\lambda_{\alpha}>l_{\alpha}$. 
	Then there is a measurable function $r:\Gamma\rightarrow \N$ such that for every $(\ox)\in \Gamma$, there are $r(\ox)$ Lyapunov exponents
	$$\lambda_1(\ox)\geq \cdots \geq \lambda_{r(\ox)}(\ox)(> \lambda_{\alpha}(\ox))$$ 
	and a unique splitting
	$$\cB=E_1(\ox)\oplus \cdots E_{r(\ox)}(\ox) \oplus F(\ox)$$
	satisfying the properties (a)-(d) in Theorem \ref{MET}.
	Moreover, $\lambda_{i}(\ox)$ and $m_i(\ox):=\dim E_{i}(\ox)$ are measurable functions on $\{(\ox):r(\ox)\geq i\}.$ 
		
	The unstable, center, and stable subspace defined in Section \ref{SEC:Lyapunov norms} are still applicable. Let $\lambda^{\pm}(\ox)$ be as before, which are measurable functions with positive (negative) values but not be necessarily bounded away from $0$. 
	For $m,n\in \N$ and $p,q\in \Z^{+}$, let
	\begin{align*}
		\Gamma(m,n,p,q):=\{(\ox):&\dim E^{u}(\ox)=m, \dim E^{c}(\ox)=n; \\
		&\lambda^{+}(\ox)\geq \frac{1}{p}, \lambda^{-}(\ox)\leq -\frac{1}{q}\}.
	\end{align*}
	Each $\Gamma(m,n,p,q)$ is either empty or $\Phi$-invariant, and 
	$$\Gamma=\bigcup_{m,n,p,q}\Gamma(m,n,p,q).$$
	The limit in Proposition \ref{Prop:DetL} also exists for $\mu$-almost every $(\ox)$, but the equality becomes
	\begin{align*}
		\int_{\Gamma(m,n,p,q)} \lim_{n\rightarrow \infty} \frac{1}{n}\log
		\det(Df^{n}_{\omega}(x)|E^{u}(\omega,x)) d\mu(\ox)\\
		=\int_{\Gamma(m,n,p,q)} 
  \sum_{i} m_i(\ox)\lambda^{+}_{i}(\ox)d\mu(\ox)<\infty.
	\end{align*}
	Moreover, other results in Section \ref{SEC:3} and Section \ref{SEC:4} also hold on $\Gamma(m,n,p,q)$.
	
	Note that $\Gamma(m,n,p,q)$ is not disjoint, so we define
	$$\bar{\Gamma}(m,n,p,q)=\Gamma(m,n,p,q)\setminus \Gamma(m,n,p-1,q-1).$$
	Then, $\bar{\Gamma}$ is disjoint, and $\Gamma$ is a countable union of $\bar{\Gamma}(m,n,p,q)$. 
	As in \cite{Young17}, we can see that $\bar{\Gamma}$ as an alternative to ergodic decomposition .
	
	In non-ergodic cases, it may happen that the set $\widetilde{\cS}$ which is defined in Section \ref{SEC:5} does not have full $\mu$-measure set. However, it is no problem that since there is at most a countable union of disjoint sets of the form $\widetilde{\cS}$ will cover a full $\mu$-measure set.
	
	
	\printbibliography
 
\end{sloppypar}

\end{document}